\DeclareFontFamily{U}{wncy}{}
    \DeclareFontShape{U}{wncy}{m}{n}{<->wncyr10}{}
    \DeclareSymbolFont{mcy}{U}{wncy}{m}{n}
    \DeclareMathSymbol{\Sha}{\mathord}{mcy}{"58} 
    \DeclareMathSymbol{\Che}{\mathord}{mcy}{"51} 
    \DeclareMathSymbol{\El}{\mathord}{mcy}{"4C} 
        \DeclareMathSymbol{\Er}{\mathord}{mcy}{"52} 
\newcommand{\mvdots}{\raisebox{0pt}[10 pt]{$\vdots$}}
\newenvironment{sbm}
    {\left[ \begin{smallmatrix}
    }
    { 
     \end{smallmatrix} \right]
    }
\newcommand{\newword}[1]{\textbf{\emph{#1}}}
\newcommand{\into}{\hookrightarrow}
\newcommand{\onto}{\twoheadrightarrow}
\renewcommand{\t}{{\mathord{\to}}}
\renewcommand{\int}{{\mathord{\hookrightarrow}}}
\newcommand{\ont}{{\mathord{\twoheadrightarrow}}}
\newcommand{\forces}{\rightsquigarrow}
\newcommand{\forcs}{\mathord{\rightsquigarrow}}
\newcommand{\covered}{\lessdot}
\newcommand{\covers}{\gtrdot}
\newcommand{\lleq}{\le\!\!\!\le}
\newcommand{\union}{\cup}
\newcommand{\op}{\mathrm{op}}
\DeclareMathOperator{\Hom}{Hom}
\DeclareMathOperator{\Fact}{Fact}
\DeclareMathOperator{\Mult}{Mult}
\DeclareMathOperator{\Pos}{Pos}
\DeclareMathOperator{\Cov}{Cov}
\DeclareMathOperator{\Del}{Del}
\newcommand{\meet}{\wedge}
\newcommand{\Meet}{\bigwedge}
\newcommand{\join}{\vee}
\newcommand{\bigjoin}{\bigvee}
\renewcommand{\Join}{\bigvee}
\newcommand{\C}{C}
\newcommand{\NN}{\mathbb{N}}
\newcommand{\RR}{\mathbb{R}}
\newcommand{\ZZ}{\mathbb{Z}}
\newcommand{\cA}{\mathcal{A}}
\newcommand{\cF}{\mathcal{F}}
\newcommand{\cT}{\mathcal{T}}
\newcommand{\Mod}{\mathrm{mod}}
\newcommand{\Pairs}{\operatorname{Pairs}}
\newcommand{\JIrr}{\operatorname{Ji}}
\newcommand{\MIrr}{\operatorname{Mi}}
\newcommand{\Con}{\operatorname{Con}}
\newcommand{\Concc}{\operatorname{Con}^{\operatorname{cc}}}
\newcommand{\con}{\operatorname{con}}
\newcommand{\concc}{\operatorname{con}^{\operatorname{cc}}}
\newcommand{\Downsets}{\operatorname{Downsets}}
\newcommand{\tors}{\operatorname{tors}}
\newcommand{\set}[1]{\lbrace#1\rbrace}
\newcommand{\downonto}{\operatorname{down}_{\raisebox{7pt}{\scalebox{0.8}{\rotatebox{-90}{$\onto$}}}\hspace{-1pt}}}
\newcommand{\downinto}{\operatorname{down}_{\raisebox{7pt}[7pt][2pt]{\scalebox{0.75}{\rotatebox{-90}{$\into$}}}\hspace{-0pt}}}
\newcommand{\uponto}{\operatorname{up}_{\raisebox{7pt}{\scalebox{0.8}{\rotatebox{-90}{$\onto$}}}\hspace{-1pt}}}
\newcommand{\upinto}{\operatorname{up}_{\raisebox{7pt}[7pt][2pt]{\scalebox{0.75}{\rotatebox{-90}{$\into$}}}\hspace{-0pt}}}
\renewcommand{\2}{\mathbf{2}}
\newcommand{\Bricks}{\operatorname{Bricks}}
\newcommand{\kappad}{\kappa^d}
\newcommand{\kappau}{\kappa} 
\newcommand{\mi}{meet-ir\-re\-duc\-ible\xspace}
\newcommand{\ji}{join-ir\-re\-duc\-ible\xspace}
\theoremstyle{plain}
\newtheorem{thm}{Theorem}[section]
\newtheorem{theorem}[thm]{Theorem}
\newtheorem{prop}[thm]{Proposition}
\newtheorem{proposition}[thm]{Proposition}
\newtheorem{lemma}[thm]{Lemma}
\newtheorem{cor}[thm]{Corollary}
\newtheorem{corollary}[thm]{Corollary}
\newtheorem*{theorem*}{Theorem}
\theoremstyle{definition}
\newtheorem{eg}[thm]{Example}
\theoremstyle{definition}
\newtheorem{remark}[thm]{Remark}
\newcommand{\margincolor}{red}      
\definecolor{darkgreen}{rgb}{0,0.7,0}
\newcounter{margincounter}
\newcommand{\marginnum}{
\ifnum\value{margincounter}<10
\textcolor{\margincolor}{\begin{picture}(0,0)\put(2.2,2.4){\circle{9}}\end{picture}\footnotesize\arabic{margincounter}}
\else\ifnum\value{margincounter}<100
\textcolor{\margincolor}{\begin{picture}(0,0)\put(4.256,2.5){\circle{11}}\end{picture}\footnotesize\arabic{margincounter}}
\else
\textcolor{\margincolor}{\begin{picture}(0,0)\put(6.8,2.5){\circle{14}}\end{picture}\footnotesize\arabic{margincounter}}
\fi\fi
}
\newcommand{\switchmargin}{
\if@reversemargin
\normalmarginpar
\else
\reversemarginpar
\fi
}
\newcommand{\br}[1]{[#1]}
\newcommand{\ignore}[1]{}
\title[FTFSDL]{The Fundamental Theorem of Finite Semidistributive Lattices}
\author{Nathan Reading}
\address{Department of Mathematics,
Box 8205, NC State University,
Raleigh, NC, United States 27695-8205}
\email{reading@math.ncsu.edu}
\author{David E Speyer}
\address{Department of Mathematics,
University of Michigan,
2074 East Hall,
530 Church Street,
Ann Arbor, MI, United States 48109-1043}
\email{speyer@umich.edu}
\author{Hugh Thomas}
\address{Département de mathématiques, UQAM,
C.P. 8888, Succursale Centre-ville,
PK-5151,
Montréal, Qc, Canada H3C 3P8}
\email{thomas.hugh\_r@uqam.ca}
\thanks{Nathan Reading was partially supported by the National Science Foundation under Grant Number DMS-1500949 and by the Simons Foundation under award number 581608.
David Speyer was partially supported by the National Science Foundation under Grant Number DMS-1600223.
Hugh Thomas was partially supported by an NSERC Discovery Grant and the Canada Research Chairs program.
}
\subjclass[2010]{06B05, 06A15, 06B15, 06D75}
\begin{document}

\begin{abstract}
  We prove a Fundamental Theorem of Finite Semidistributive Lattices (FTFSDL), modelled on Birkhoff's Fundamental Theorem of Finite Distributive Lattices. Our FTFSDL is of the form ``A poset $L$ is a finite semidistributive lattice if and only if there exists a set $\Sha$ with some additional structure, such that $L$ is isomorphic to the admissible subsets of $\Sha$ ordered by inclusion; in this case, $\Sha$ and its additional structure  are uniquely determined by $L$.''
The additional structure on $\Sha$ is a combinatorial abstraction of the notion of torsion pairs from representation theory and has geometric meaning in the case of posets of regions of hyperplane arrangements.
We show how the FTFSDL clarifies many constructions in lattice theory, such as canonical join representations and passing to quotients, and how the semidistributive property interacts with other major classes of lattices.
Many of our results also apply to infinite lattices.
\end{abstract}

\maketitle

\setcounter{tocdepth}{1}
\tableofcontents

\section{Introduction}\label{intro}  
A \newword{lattice} is a partially ordered set such that every pair $x,y$ of elements has a \newword{meet} (greatest lower bound) $x\meet y$ and \newword{join} (least upper bound) $x\join y$.
A lattice is \newword{distributive} if the meet operation distributes over the join operation and the join distributes over the meet.
(These two distributivity conditions are equivalent.)

The simplest class of examples of distributive lattices are the lattices of downsets in a fixed poset $P$.
A \newword{downset} (or \newword{order ideal}) in $P$ is a subset $D$ of $P$ such that if $x\in D$ and $y\le x$ then $y\in D$. 
Let $P$ be a poset and let $\Downsets(P)$ be the set of downsets in $P$, partially ordered by containment.
It is readily verified that the union of two downsets is a downset and that the intersection of two downsets is a downset.
As a consequence, $\Downsets(P)$ is a distributive lattice.
Birkhoff \cite{BirkhoffFTFDL} showed that every finite distributive lattice is of this form $\Downsets(P)$.
This result is often called Birkhoff's Representation Theorem, but since that name also refers to other theorems in universal algebra, we follow~\cite{EC1} in calling it the Fundamental Theorem of Finite Distributive Lattices (FTFDL).

An element $j$ of a lattice $L$ is \newword{join-irreducible} if, for all finite subsets $X$ of $L$, if $j = \Join X$ then $j\in X$.  
Equivalently, $j\in L$ is {join-irreducible}
if $j$ is not minimal in $L$ and cannot be written as $x \join y$ for $x,y<j$. 
In a finite lattice $L$, this is equivalent to saying that $j$ covers exactly one element of $L$; we will denote that unique element by $j_*$.
We write $\JIrr L$ for the set of join-irreducible elements of $L$, with the partial order induced from $L$.
Similarly, an element $m$ is \newword{meet-irreducible} if, for all finite subsets $X$ of $L$, if $m = \Meet X$ then $m\in X$.
Equivalently, $m$ is not maximal and we cannot write $m=x \meet y$ for $m<x$, $y$ or, equivalently in the finite case, if $m$ is covered by exactly one element $m^*$ of $L$.
We write $\MIrr L$ for the set of meet-irreducible elements of $L$, again with the induced partial order. 

\begin{theorem}[FTFDL]\label{FTFDL}
A finite poset $L$ is a distributive lattice if and only if it is isomorphic to $\Downsets(P)$ for some finite poset $P$.
In this case, $P$ is isomorphic to $\JIrr L$.  
The map $x\mapsto\{ j \in \JIrr L: j \leq x \}$ is an isomorphism from $L$ to $\Downsets(\JIrr L)$, with inverse $X\mapsto\Join X$.
\end{theorem}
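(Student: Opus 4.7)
The ``if'' direction is essentially free: for any finite poset $P$, the family $\Downsets(P)$ is closed under arbitrary unions and intersections, so it is a lattice under containment, and the distributive laws are inherited from the distributivity of $\union$ over $\intersection$ (and vice versa) on sets. So the substantive content is the ``only if'' direction together with the identification of $P$ and the explicit inverse maps.

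Assume $L$ is a finite distributive lattice. My plan is to define $\varphi\colon L\to\Downsets(\JIrr(L))$ by $\varphi(x)=\{j\in\JIrr(L):j\le x\}$ and $\psi\colon\Downsets(\JIrr(L))\to L$ by $\psi(X)=\Join X$, and to show they are mutually inverse order-isomorphisms. First I would verify $\varphi(x)$ is indeed a downset of $\JIrr(L)$ (immediate) and that both $\varphi$ and $\psi$ are order-preserving (also immediate). The composition $\psi\circ\varphi=\Id_L$ amounts to the statement that every element of a finite lattice is the join of the join-irreducibles below it; I would prove this by induction on the height of $x$ in $L$, noting that if $x$ itself is not join-irreducible then either $x$ is the minimum (so $x=\Join\emptyset$) or $x=a\join b$ with $a,b<x$, and by induction each of $a,b$ is a join of join-irreducibles below it, all of which lie below $x$.

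The main obstacle, and the only place distributivity is actually used, is showing $\varphi\circ\psi=\Id$, i.e.\ that for every downset $X\subseteq\JIrr(L)$ one has $\{j\in\JIrr(L):j\le\Join X\}=X$. The $\supseteq$ inclusion is trivial. For $\subseteq$, suppose $j\in\JIrr(L)$ with $j\le\Join X$. Then
\[
j \;=\; j\meet\Join_{x\in X} x \;=\; \Join_{x\in X}(j\meet x),
\]
where the second equality is the distributive law (and uses finiteness of $X$). Since $j$ is join-irreducible, $j=j\meet x$ for some $x\in X$, so $j\le x$; because $X$ is a downset of $\JIrr(L)$, this forces $j\in X$. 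This is the step where distributivity is essential: without it, the join-irreducibles below $\Join X$ need not lie in $X$.

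Finally, for the uniqueness/identification statement, I would show that when $L\isom\Downsets(P)$, the join-irreducibles of $\Downsets(P)$ are precisely the principal downsets $\{q\in P:q\le p\}$ for $p\in P$, and that the assignment $p\mapsto\{q:q\le p\}$ is an order-isomorphism from $P$ onto $\JIrr(\Downsets(P))$. Combined with the first part, this yields $P\isom\JIrr(L)$ canonically, and the bijection $\varphi$ with inverse $\psi$ is exactly the isomorphism asserted.
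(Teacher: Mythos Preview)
Your proof is correct and is the standard argument for Birkhoff's FTFDL. Note, however, that the paper does not actually prove this theorem: it is stated in the introduction as classical background (attributed to Birkhoff), and when it is revisited in Section~\ref{FTFDL and FTFDSL} the authors explicitly write that the statements ``are easily proved, and we omit the proofs.'' So there is no proof in the paper to compare against; your argument stands on its own as a complete and correct demonstration of the result the paper takes for granted.
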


The main result of this paper is a theorem, analogous to Theorem~\ref{FTFDL}, that characterizes a larger class of lattices: finite semidistributive lattices (defined below).
Two important examples of semidistributive lattices are the weak order on a finite Coxeter group  and the lattice of torsion classes of a finite-dimensional algebra (see Section~\ref{rep ssec}).
These two examples have recently been connected
by a series of papers~\cite{Mizuno,IRRT,DIRRT} relating the torsion classes of Dynkin type preprojective algebras to weak orders of the corresponding Coxeter groups.
Moreover, the Cambrian lattice, which describes the structure of the corresponding cluster algebra, is a semidistributive lattice described as a quotient of weak order, and can also be described as a lattice of torsion classes.

The language and terminology of this paper deliberately echoes these motivating examples:
Finite semidistributive lattices are realized in terms of binary relations on a set $\Sha$ (the Cyrillic letter ``sha''), suggestive of the \newword{shards} \cite{hyperplane,congruence,shardint} that govern much of the lattice theory of the weak order on a finite Coxeter group.
The relations $\t$, $\int$, and $\ont$ echo the structure of an abelian category.
We discuss these motivating examples in more detail in Section~\ref{conn sec}.

A lattice $L$ is \newword{join semidistributive} if whenever $x,y,z\in L$ satisfy $x\join y=x\join z$, they also satisfy $x\join(y\meet z)=x\join y$.
This is equivalent to the following condition:  
If $X$ is a nonempty finite subset of $L$ such that $x \join y = z$ for all $x \in X$, then $\left( \Meet_{x \in X} x \right) \join y = z$. 
The lattice is \newword{meet semidistributive} if the dual condition $(x\meet y=x\meet z)\implies(x\meet(y\join z)=x\meet y)$ holds.
Equivalently, if $X$ is a nonempty finite subset of $L$ such that $x \meet y = z$ for all $x \in X$, then $\left( \Join_{x \in X} x \right) \meet y = z$. 
The lattice is \newword{semidistributive} if it is both join semidistributive and meet semidistributive.
The notion of semidistributivity goes back to J\'{o}nsson \cite{Jonsson}, who in particular showed that free lattices are semidistributive.

In a finite semidistributive lattice, for each join-irreducible element $j$, the set $\{ y : j \meet y = j_* \}$ has a maximum element, which we call $\kappau(j)$.
For each meet-irreducible element $m$, the set $\{ x : m \join x = m^* \}$
has a minumum element $\kappad(m)$.
The existence of these elements characterizes semidistributivity of finite lattices.
The maps $\kappau$ and $\kappad$ are inverse bijections between $\JIrr L$ and $\MIrr L$ (see Theorem~\ref{semi char} and preceding references).

Given a (binary) 
relation $\t$ on a set $\Sha$ and a subset $X\subseteq\Sha$, we define 
\[X^\perp = \set{y\in\Sha : x\not\to y\  \forall x \in X } \quad  \mbox{and}  \quad ^\perp X = \set{ y\in\Sha : y \not\to x\  \forall x \in X }.\]
A \newword{maximal orthogonal pair} is a pair $(X,Y)$ of subsets of $\Sha$ with $Y = X^\perp$ and $X={}^\perp Y$. 
If $(X,Y)$ and $(X',Y')$ are maximal orthogonal pairs, then $X\subseteq X'$ if and only if $Y\supseteq Y'$.
The \newword{lattice of maximal orthogonal pairs for $\t$} is the set $\Pairs(\t)$ of maximal orthogonal pairs, partially ordered by containment in the first component, or equivalently reverse containment in the second component.
As we will discuss further in Section~\ref{MarkSec}, it is a classical fact that all lattices can be described as lattices of maximal orthogonal pairs from some relation, and in many ways. We will now describe conditions on $(\Sha, \t)$ which imply that $\Pairs(\t)$ is semidistributive, and such that each finite semidistributive lattice has a unique such representation.

Let $\Sha$ be a set and let $\t$ be a reflexive relation on $\Sha$. 
We use $\t$ to define two other relations, $\ont$ and $\int$ on $\Sha$.
Define $x\onto y$ iff for all $y\to z$, we also have $x\to z$.  
Dually, define $x\into y$ if and only if for all $z\to x$, we also have $z\to y$.
Each of $\ont$ and $\int$ is obviously a preorder (reflexive and transitive, but perhaps not antisymmetric). 
We call $(\ont, \int)$ the \newword{factorization} of $\t$ and write $(\ont, \int) = \Fact(\t)$.

Similarly, we define an operation $\Mult$ called \newword{multiplication} that takes an ordered pair of preorders on $\Sha$ to a reflexive relation on $\Sha$.
Namely, define $\Mult(\ont,\int)$ to be the relation $\t$ where $x\to z$ if and only if there exists $y\in\Sha$ such that $x\onto y\into z$.

We suggest pronouncing $\t$, $\ont$ and $\int$ as ``to", ``onto" and ``into". The relations $X \to Y$, $X \onto Y$ and $X \into Y$ should roughly be thought of as analogous to ``there is a nonzero map from $X$ to $Y$", ``there is a surjection from $X$ onto $Y$" and ``there is an injection from $X$ into $Y$" in some category to be defined later. 
For precise statements along these lines, see Section~\ref{rep ssec}; we warn the reader that the precise interpretation of $\ont$ and $\int$ is more subtle that the rough statement here. 
The definitions of $\Fact$ and $\Mult$ are easy to motivate in the context of this analogy.

We define a \newword{factorization system} to be a tuple $(\Sha,\t,\ont,\int)$ such that $\t$, $\int$, and $\ont$ are relations on a set $\Sha$ having $\Fact(\t) = (\ont, \int)$ and ${\Mult(\ont, \int) = \t}$. 
Since, in a factorization system, $\t$ and $(\ont, \int)$ determine each other, any condition on a factorization system can be thought of either as a condition on $\t$ or a condition on $(\ont, \int)$.
A factorization system $(\Sha,\t,\ont,\int)$ is called \newword{finite} if the set $\Sha$ is finite.

We say that a factorization system obeys the \newword{order condition} if we do \textbf{not} have $x \onto y \onto x$ or $x \into y \into x$ with $x \neq y$; in other words, the order condition states that the preorders $\int$ and $\ont$ are partial orders.
We will say that a factorization system obeys the \newword{brick condition} if we do not have $x \onto y \into x$ with $x \neq y$.  
The name ``brick condition" comes from the notion of a ``brick" in representation theory, which is a module $X$ such that any nonzero map $X \to X$ is an isomorphism. 
If we had $X \onto Y \into X$ for some $Y \not\cong 0, X$, then the composite map would be neither $0$ nor an isomorphism; our brick condition rules out the combinatorial analogue of this. See Section~\ref{rep ssec} for more precise statements.
We will say that a factorization system is \newword{two-acyclic} if it obeys the order condition and the brick condition. 

The first main result of the paper is the Fundamental Theorem of Finite Semidistributive Lattices.  
If $L$ is a finite semidistributive lattce, we define three relations on $\JIrr L$, namely $i \to_L j$ if and only if $i \not\leq \kappau(j)$ in $L$, $i \onto_L j$ if and only if $i\ge j$ in $L$, and $i\into_L j$ if and only if $\kappau(i)\ge\kappau(j)$ in $L$.

\begin{theorem}[FTFSDL]\label{FTFSDL1}  
A finite poset $L$ is a semidistributive lattice if and only if it is isomorphic to $\Pairs(\t)$ for a finite two-acyclic factorization system $(\Sha,\t,\ont,\int)$.  
In this case, $(\Sha,\t, \ont, \int)$ and $(\JIrr L,\t_L,\ont_L,\int_L)$ are isomorphic.  
The map
\[x\mapsto( \{ j \in \JIrr L: j \leq x \},\  \kappad \left( \{ m \in \MIrr L : m \geq x \} \right) )\]
is an isomorphism from $L$ to $\Pairs(\t_L)$, with inverse $(X,Y)\mapsto\Join X=\Meet(\kappau(Y))$.
\end{theorem}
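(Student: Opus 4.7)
The plan is to prove the biconditional by constructing both directions and verifying they are mutually inverse. In the forward direction, I would show that for a finite semidistributive lattice $L$, the tuple $(\JIrr(L),\to_L,\onto_L,\into_L)$ is a finite two-acyclic factorization system and the displayed map is a lattice isomorphism $L \to \Pairs(\to_L)$. In the converse direction, I would show that for any finite two-acyclic factorization system $(\Sha,\to,\onto,\into)$, the poset $\Pairs(\to)$ is a finite semidistributive lattice, and that applying the forward construction to $\Pairs(\to)$ recovers $(\Sha,\to,\onto,\into)$. The uniqueness clause then follows by combining the two directions.

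The forward direction rests on two core tools: in a finite lattice every element $x$ equals $\Join\{j \in \JIrr(L): j \leq x\} = \Meet\{m \in \MIrr(L): m \geq x\}$, and in a finite semidistributive lattice $\kappau$ and $\kappad$ are mutually inverse bijections between $\JIrr(L)$ and $\MIrr(L)$. Unfolding definitions, for $X = \{j \leq x\}$ one has $y \in X^\perp$ iff $j \leq \kappau(y)$ for every $j \in X$, iff $x \leq \kappau(y)$, iff $\kappau(y) \in \{m \geq x\}$, iff $y \in \kappad(\{m \geq x\})$; this (and the dual calculation) shows the map lands in $\Pairs(\to_L)$ and is a bijection with inverse $(X,Y) \mapsto \Join X$, evidently order-preserving in both directions. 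Reflexivity of $\to_L$ is immediate from $j \not\leq \kappau(j)$, and $\Fact(\to_L) = (\onto_L,\into_L)$ reduces via the $\kappau$ bijection to the basic fact $i \geq j \Leftrightarrow \{m \in \MIrr(L) : m \geq i\} \subseteq \{m : m \geq j\}$, and its dual. The delicate identity $\Mult(\onto_L,\into_L) = \to_L$ is handled by a minimality argument: given $i \not\leq \kappau(k)$, choose $j$ minimal among join-irreducibles with $j \leq i$ and $j \not\leq \kappau(k)$; by minimality every join-irreducible $\leq j_*$ lies $\leq \kappau(k)$, so $j_* \leq \kappau(k)$, hence $j \meet \kappau(k) = j_*$ and $\kappau(k) \leq \kappau(j)$, yielding the factoring $i \onto_L j \into_L k$. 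The order condition is automatic from $\geq$ being a partial order; the brick condition reduces to showing that $i > j$ in $\JIrr(L)$ forces $\kappau(i) \not\geq \kappau(j)$, since otherwise $j \leq i_* \leq \kappau(i) \leq \kappau(j)$ would contradict $j \meet \kappau(j) = j_* < j$.

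For the converse, $\Pairs(\to)$ is automatically a complete lattice since $X \mapsto X^\perp$ is half of an antitone Galois connection. The main content is establishing semidistributivity and identifying the factorization system. I would (i) identify the join-irreducibles of $\Pairs(\to)$ as the principal pairs $P_x := ({}^\perp\{x\}^\perp, \{x\}^\perp)$ for $x \in \Sha$, using the order condition to see that $x \mapsto P_x$ is injective (if $\{x\}^\perp = \{x'\}^\perp$ then $x \onto x'$ and $x' \onto x$); (ii) show each $(X,Y) \in \Pairs(\to)$ equals $\Join_{x \in X} P_x$; (iii) compute $\kappau(P_x)$ explicitly in terms of $(\to,\onto,\into)$, using the brick condition to guarantee well-definedness; and (iv) derive the meet- and join-semidistributive laws from the orthogonality structure and the explicit form of $\kappau$. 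Applying the forward direction to $L = \Pairs(\to)$ then reproduces the original $(\Sha,\to,\onto,\into)$, closing the loop and delivering uniqueness.

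The main obstacle I anticipate is the converse direction, and specifically steps (iii) and (iv): writing $\kappau$ on $\Pairs(\to)$ in purely combinatorial terms, and then extracting the semidistributive laws from the brick condition together with the orthogonality structure, will be the technical heart of the theorem. In the forward direction, the multiplication identity $\Mult(\onto_L,\into_L) = \to_L$ is the other subtle point, but the minimal-join-irreducible argument above should resolve it cleanly.
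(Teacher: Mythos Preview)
Your proposal is correct and follows essentially the same approach as the paper, which first proves the general statement for well-separated $\kappa$-lattices (Theorem~\ref{2afs iff ws kappa}) and then specializes; your minimality argument for $\Mult(\onto_L,\into_L)=\to_L$ is precisely the paper's well-separation proof (Proposition~\ref{fin kappa is ws}), and your steps (i)--(iii) in the converse direction match Propositions~\ref{JIrr and MIrr Characterize} and~\ref{pairs kappa}. For your step (iv), the cleanest route---and the paper's---is to invoke the standard characterization that a finite lattice admitting $\kappa$-maps is semidistributive (Theorem~\ref{semi char}), rather than deriving the distributive-like laws directly from orthogonality.
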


\renewcommand{\1}{a}
\renewcommand{\2}{b}
\newcommand{\3}{g}
\newcommand{\4}{c}
\newcommand{\5}{f}
\newcommand{\6}{d}
\newcommand{\7}{e}
\newcommand{\labels}{\1,\2,\4,\6,\7,\5,\3} 
\begin{eg}\label{non unif} \label{2afs ex}  
We exhibit a two-acyclic factorization system $(\Sha,\t,\ont,\int)$, with $\Sha=\set{\labels}$.  
The non-reflexive relations of $\t$ and the Hasse diagrams of $\ont$ and $\int$ are shown in Figure~\ref{2afs fig}, and $L=\Pairs(\t)$ is shown in Figure~\ref{Pairs fig}.
The identification of $\Sha$ with $\JIrr L$ is marked on the Hasse diagram of $L$.
For each $j\in\JIrr L$, the element $\kappa(j)$ is also marked.
\end{eg}

In the course of proving Theorem~\ref{FTFSDL1}, we will prove some more general theorems that apply to some infinite lattices.
We therefore discuss some definitional distinctions which are trivial for finite lattices.
A \newword{complete lattice} is a poset $L$ such that every subset $X$ of $L$ has a \newword{meet} (greatest lower bound) $\Meet X$ and \newword{join} (least upper bound) $\Join X$.
The definition of a lattice given above is weaker:  It is equivalent to requiring that $\Meet X$ and $\Join X$ exist for all \emph{finite} subsets $X$.
In particular, all finite lattices are complete.

An element $j$ of a complete lattice $L$ is \newword{completely join-irreducible} if $j = \Join X$ implies $j\in X$ for subsets $X\subseteq L$.
Equivalently, $j$ is completely join-irreducible if and only if there exists an element $j_*$ such that $x<j$ if and only if $x\le j_*$.  
(In an infinite lattice, this is a stronger condition than simply requiring that $j$ covers exactly one element.)
The element $j_*$ is $\Join\set{x\in L:x<j}$.
(Recall that $j$ is join-irreducible if $j = \Join X$ implies $j\in X$ for \emph{finite} subsets $X\subseteq L$.
Thus every completely join-irreducible element is \ji, but not vice versa.)
A \newword{completely meet-irreducible} element $m$ is defined dually, and we write $m^*$ for the element $\Meet\set{x\in L:x>m}$ with the property that $x>m$ if and only if $x\ge m^*$.
We write $\JIrr^cL$ and $\MIrr^cL$ for the sets of completely join-irreducible elements and completely meet-irreducible elements.

\begin{figure}
\mbox{
\mbox{
  \begin{tikzpicture}[xscale=1,yscale=0.65]
  \node (3) at (0,3) {$\3$};
  \node (5) at (-2,2) {$\5$};
  \node (7) at (1.5,1) {$\7$};
  \node (6) at (2.25,0) {$\6$};
  \node (4) at (1.5,-1) {$\4$};
  \node (2) at (-2,-2) {$\2$};
  \node (1) at (0,-3) {$\1$};
  \draw[->] (2) -- (1);
  \draw[->] (3) -- (1);
  \draw[->] (3) -- (2);
  \draw[->] (3) -- (5);
  \draw[->] (3) -- (7);
  \draw[->] (4) -- (1);
  \draw[->] (4) -- (2);
  \draw[->] (5) -- (1);
  \draw[->] (5) -- (2);
  \draw[->] (5) -- (4);
  \draw[->] (5) -- (7);
  \draw[->] (6) -- (4);
  \draw[->] (7) -- (2);
  \draw[->] (7) -- (4);
  \draw[->] (7) -- (6);
  \end{tikzpicture}}
  \qquad
  \begin{tikzpicture}[xscale=1,yscale=1]
  \node (1) at (0,0) {$\1$};
  \node (2) at (0,1) {$\2$};
  \node (3) at (-0.5,2) {$\3$};
  \node (4) at (0.5,2) {$\4$};
  \node (5) at (0.5,3) {$\5$};
  \node (6) at (1,0) {$\6$};
  \node (7) at (1,1) {$\7$};
  \draw[->>] (2) -- (1);
  \draw[->>] (3) -- (2);
  \draw[->>] (4) -- (2);
  \draw[->>] (5) -- (4);
  \draw[->>] (7) -- (6);
  \end{tikzpicture}}
  \qquad
\mbox{
  \begin{tikzpicture}[xscale=1,yscale=1]
  \node (3) at (0,0) {$\3$};
  \node (5) at (0,-1) {$\5$};
  \node (1) at (-0.5,-2) {$\1$};
  \node (7) at (0.5,-2) {$\7$};
  \node (2) at (0.5,-3) {$\2$};
  \node (6) at (-1,0) {$\6$};
  \node (4) at (-1,-1) {$\4$};
  \draw[right hook->] (3) -- (5);
  \draw[right hook->] (5) -- (7);
  \draw[right hook->] (5) -- (1);
  \draw[right hook->] (7) -- (2);
  \draw[right hook->] (6) -- (4);
  \end{tikzpicture}}
\caption{A two-acyclic factorization system} 
\label{2afs fig}
\end{figure}
\begin{figure}
\mbox{
  \begin{tikzpicture}[xscale=0.85,yscale=0.85]
  \node (empty) at (0,0) [circle,fill,inner sep=1.5pt]{};
  \node (1) at (-1,1) {$\1$};
  \node (12) at (-2,2) {$\2$};
  \node (123) at (-4,4) {$\kappa(\4)=\3$\hspace*{24pt}};
  \node (124) at (-1,3) {$\4$};
  \node (6) at (2,2) {$\6$};
  \node (67) at (3,3) {\hspace*{24pt}$\7=\kappa(\1)$};
  \node (16) at (1,3) {$\kappa(\2)$};
  \node (1246) at (0,4) {$\kappa(\7)$};
  \node (12467) at (1,5) {$\kappa(\5)$};
  \node (1245) at (-2,4) {$\5$};
  \node (12345) at (-3,5) {$\kappa(\6)$};
  \node (124567) at (0,6) {$\kappa(\3)$};
  \node (1234567) at (-1,7) [circle,fill,inner sep=1.5pt]{};
  \draw[-] (empty) -- (1);
  \draw[-] (1) -- (12);
  \draw[-] (12) -- (123);
  \draw[-] (empty) -- (6);
  \draw[-] (6) -- (67);
  \draw[-] (1) -- (16);
  \draw[-] (6) -- (16);
  \draw[-] (16) -- (1246);
  \draw[-] (12) -- (124);
  \draw[-] (124) -- (1246);
  \draw[-] (1246) -- (12467);
  \draw[-] (124) -- (1245);
  \draw[-] (1245) -- (12345);
  \draw[-] (123) -- (12345);
  \draw[-] (12345) -- (1234567);
  \draw[-] (67) -- (12467);
  \draw[-] (12467) -- (124567);
  \draw[-] (124567) -- (1234567);
  \draw[-] (1245) -- (124567);
  \end{tikzpicture}}
\caption{The semidistributive lattice associated to Figure~\ref{2afs fig}} 
\label{Pairs fig}
\end{figure}

A complete lattice $L$ is \newword{spatial} if each element of $L$ can be written as a (possibly infinite) join of completely \ji elements. 
Equivalently, for all $x\in L$, we have $x=\Join_{j\leq x,\ j \in \JIrr^cL} j$.  
If the dual condition holds, we say that $L$ is \newword{dually spatial}. 
If a lattice is both spatial and dually spatial, we say it is \newword{bi-spatial}.

We will call a lattice $L$ a \newword{$\kappa$-lattice}  if it is complete, if it is bi-spatial, and if there are inverse bijections $\kappau:\JIrr^c L\to\MIrr^c L$ and $\kappad:\MIrr^cL\to\JIrr^cL$ such that $\kappau(j)$ is the maximum element of $\{ y : j \meet y = j_* \}$ and $\kappad(m)$ is the minimum element of $\{ x : m \join x = m^* \}$. 
It is a known result that a finite lattice is semidistributive if and only if it is a $\kappa$-lattice; we will reprove this as Corollary~\ref{fin kappa}.
We say that a $\kappa$-lattice $L$ is \newword{well separated} if whenever  $z_1 \not\leq z_2$, there is some $j \in\JIrr^cL$ with $z_1 \geq j$ and $\kappa(j)\geq z_2$.
(Note that the converse is clear: If there is some $j$ with $z_1 \geq j$ and $\kappa(j)\geq z_2$ then $z_1 \not\leq z_2$, as otherwise we would have $\kappa(j)\geq z_2 \geq z_1 \geq j$, which is absurd.)
See Lemma~\ref{weak-sep} for an equivalent description of the well separated property.

In any $\kappa$-lattice, we define relations $\t_L$, $\ont_L$, and $\int_L$ on $\JIrr^cL$ just as we did in the finite case: 
$i \to_L j$ if and only if $i \not\leq \kappau(j)$, $i \onto_L j$ if and only if $i\ge j$ in $L$, and $i\into_L j$ if and only if $\kappau(i)\ge\kappau(j)$ in $L$. 

We will prove the following theorem, which is a generalization of the Fundamental Theorem of Finite Semidistributive Lattices to well separated $\kappa$-lattices.
 
\begin{theorem}\label{2afs iff ws kappa}
A (not necessarily finite) poset $L$ is a well separated $\kappa$-lattice if and only if it is isomorphic to $\Pairs(\t)$ for a two-acyclic factorization system $(\Sha,\t,\ont,\int)$.  
If so, $(\Sha,\t, \ont, \int)$ is isomorphic to $(\JIrr^cL,\t_L,\ont_L,\int_L)$, and the map
\[x\mapsto( \{ j \in \JIrr^cL: j \leq x \},\  \kappad \left( \{ m \in \MIrr^cL : m \geq x \} \right) )\]
is an isomorphism from $L$ to $\Pairs(\t_L)$, with inverse $(X,Y)\mapsto\Join X=\Meet(\kappau(Y))$.
\end{theorem}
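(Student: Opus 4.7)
The plan is to prove the biconditional by establishing each direction and then verifying that the stated map is the promised isomorphism. I will treat the finite Theorem \ref{FTFSDL1} as a special case once the general result is established, since a finite lattice is automatically complete and generated by irreducibles, and (as indicated by Corollary \ref{fin kappa}) finite semidistributivity is equivalent to the $\kappa$-lattice condition; the well-separation property will also hold automatically in the finite case.

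For the forward direction, suppose $L$ is a well separated $\kappa$-lattice. I would first verify that $(\JIrr^c(L),\t_L,\ont_L,\int_L)$ is a factorization system. The identity $\Fact(\t_L) = (\ont_L,\int_L)$ amounts to two statements: $i \geq j$ in $L$ if and only if every $k$ with $j \not\le \kappau(k)$ also satisfies $i \not\le \kappau(k)$, and dually for $\int_L$ via $\kappau$. Both use meet-generation by $\MIrr^c$ together with the fact that $\kappau$ is a bijection $\JIrr^c(L) \to \MIrr^c(L)$. The identity $\Mult(\ont_L,\int_L) = \t_L$ says that $i \not\le \kappau(j)$ is equivalent to the existence of some $k$ with $i \geq k$ and $\kappau(k) \geq \kappau(j)$; one direction is immediate, and for the other direction take $k = j$, or, more carefully, a suitable join-irreducible below $i$ not below $\kappau(j)$, using join-generation by $\JIrr^c$. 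The order condition is then built in since $\leq$ in $L$ is a partial order and $\kappau$ is injective. For the brick condition, assume $i \geq j$ and $\kappau(i) \geq \kappau(j)$; then $j \leq i \meet \kappau(i) = i_*$ forces $j < i$, but $\kappau(i) \geq \kappau(j) \geq j$, so $j \leq \kappau(i)$ contradicts $j \leq i$ and $j \not\leq i_* = i \meet \kappau(i)$ unless $i = j$.

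Next I would verify that the displayed map $x \mapsto (X_x, Y_x) = (\{j \in \JIrr^c : j \leq x\},\ \kappad(\{m \in \MIrr^c : m \geq x\}))$ lands in $\Pairs(\t_L)$ and is an isomorphism. Showing $Y_x = X_x^{\perp}$ and $X_x = {}^{\perp}Y_x$ (so the pair is maximal orthogonal) reduces to the defining property of $\kappau, \kappad$ together with the generation hypotheses. Injectivity is precisely where well-separation enters: if $x \neq y$, say $x \not\le y$, then there is $j \in \JIrr^c(L)$ with $j \leq x$ and $\kappau(j) \geq y$, so $j \in X_x \setminus X_y$. For surjectivity, given a maximal orthogonal pair $(X,Y)$, form $x = \Join X$ (using completeness); then $X \subseteq X_x$ is obvious, and the reverse containment follows from the observation that every $k \in Y$ satisfies $\kappau(k) \geq i$ for all $i \in X$, hence $\kappau(k) \geq x$; so any $j \leq x$ satisfies $j \leq \kappau(k)$ for all $k \in Y$, placing $j \in {}^{\perp}Y = X$. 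The dual argument identifies $Y_x$ as $Y$ and shows $x = \Meet \kappau(Y)$. Order-preservation is immediate from the definition of the order on $\Pairs(\t_L)$.

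For the reverse direction, suppose $(\Sha,\t,\ont,\int)$ is a two-acyclic factorization system and consider $L = \Pairs(\t)$. The Galois connection $X \mapsto X^{\perp}$, $Y \mapsto {}^{\perp}Y$ gives completeness of $L$ via the standard construction. To identify $\JIrr^c(L)$, I expect to associate to each $s \in \Sha$ the pair $(X_s, Y_s)$ whose first component is the smallest closed $X$ containing $s$, and to show that these are precisely the completely join-irreducible elements, with the corresponding $\kappau$ map sending this pair to its meet-irreducible counterpart built from $s$ via $\int$. The order and brick conditions will be used here to ensure that $s \mapsto (X_s, Y_s)$ is injective and that distinct $s$ produce distinct covers, thereby giving the bijection between $\JIrr^c$ and $\MIrr^c$ required for the $\kappa$-lattice structure. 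Well-separation follows because whenever $(X_1, Y_1) \not\subseteq (X_2, Y_2)$, there is some $s \in X_1 \setminus X_2$, and $s \in X_2^{\perp} = Y_2$ through the brick and order conditions, yielding the requisite join-irreducible witness. The main obstacle I anticipate is precisely this step: assembling the brick and order conditions to produce the injectivity $s \mapsto (X_s, Y_s)$ and the compatibility $\kappau(X_s, Y_s)$ being a completely meet-irreducible pair — here one must carefully use $\Mult(\ont,\int) = \t$ to navigate between the three relations without circular arguments, and this is where the two-acyclicity hypothesis is doing all the work.
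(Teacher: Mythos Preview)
Your overall architecture matches the paper's, but there is a genuine gap in the forward direction: you have misidentified where well-separation is used.  You invoke well-separation to prove injectivity of $x\mapsto(X_x,Y_x)$, but injectivity needs only join-generation by irreducibles (if $x\not\le y$ then some $j\in\JIrr^c$ with $j\le x$ has $j\not\le y$, since $x=\Join X_x$).  The place where well-separation is essential is precisely the step $\Mult(\ont_L,\int_L)\supseteq\t_L$, and your argument there is incorrect.  You write that if $i\not\le\kappau(j)$ one can ``take $k=j$, or, more carefully, a suitable join-irreducible below $i$ not below $\kappau(j)$''.  Neither works: taking $k=j$ would require $i\ge j$, which is not implied by $i\not\le\kappau(j)$; and a join-irreducible $k\le i$ with $k\not\le\kappau(j)$ gives $k\to_L j$, not $k\into_L j$ (the latter needs $\kappau(k)\ge\kappau(j)$).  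Producing $\ell$ with $i\ge\ell$ and $\kappau(\ell)\ge\kappau(j)$ is exactly the content of well-separation applied to $z_1=i$, $z_2=\kappau(j)$; without it this factorization can genuinely fail (the paper's Theorem~\ref{kappa 2afs} notes that for a mere $\kappa$-lattice one may have $\Mult(\ont_L,\int_L)\subsetneq\t_L$).

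Two smaller errors: in your brick-condition argument you write ``$\kappau(i)\ge\kappau(j)\ge j$'', but $\kappau(j)\ge j$ is always false (since $j\meet\kappau(j)=j_*$).  The correct chain is: if $i>j$ then $j\le i_*\le\kappau(i)\le\kappau(j)$, contradicting $j\not\le\kappau(j)$.  And in the reverse direction, from $s\in X_1\setminus X_2$ you cannot conclude $s\in Y_2$; rather, there is $r\in Y_2$ with $s\to r$, which you factor as $s\onto q\into r$, and it is $q$ (lying in $X_1\cap Y_2$) that furnishes the join-irreducible witness for well-separation.
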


Section~\ref{main sec} contains the proof of Theorem~\ref{2afs iff ws kappa}, and Section~\ref{semi sec} shows 
how the Fundamental Theorem of Finite Semidistributive Lattices (Theorem~\ref{FTFSDL1}) follows as a special case.
In Section~\ref{inf sec}, we place Theorem~\ref{2afs iff ws kappa} in a broader context by discussing various conditions on a complete lattice that are equivalent, in the finite case, to semidistributivity.
In particular, we provide counterexamples to conceivable versions of FTFSDL that concern completely semidistributive infinite lattices.

Suppose $L$ is a finite semidistributive lattice, realized as the maximal orthogonal pairs for a two-acyclic factorization system $(\Sha,\t,\ont,\int)$.
It is apparent that any interval $[x,y]$ in $L$ is also a finite semidistributive lattice.
In Section~\ref{intervals sec}, given an interval $[x,y]$ in $\Pairs(\t)$, we explicitly construct a two-acyclic factorization system (defined on a subset of $\Sha$) whose lattice of maximal orthogonal pairs is isomorphic to $[x,y]$.  Our construction works for any lattice of the form $\Pairs(\t)$,  for a two-acyclic factorization system $(\Sha,\t,\ont,\int)$ (which need not be finite).

In Section~\ref{cjc sec}, we describe cover relations in the lattice of maximal orthogonal pairs, giving a particularly satisfying answer in the finite case.
A major tool for the study of finite semidistributive lattices is the notion of canonical join representation of elements.
Using the results on covers, we describe canonical join representations in the finite case in terms of the relation $\t$. 

It is known that every quotient of a finite semidistributive lattice is semidistributive.  
(See for example~\cite[Lemma~2.14(b)]{DIRRT} for a proof and for discussion of the infinite case.)
In Section~\ref{quot sec}, we describe all quotients of a finite semidistributive lattice in terms of the FTFSDL. 
We define a relation $\forcs$ on $\Sha$ and prove:

\begin{theorem}\label{thm finite quotients}
Suppose $(\Sha, \t, \ont, \int)$ is a finite two-acyclic factorization system and $\Che\subseteq\Sha$ is a $\forcs$-upset.
Then the restriction of $(\Sha, \t, \ont, \int)$ to $\Che$
is a two-acyclic factorization system and the map $(X,Y) \mapsto (X \cap \Che, Y \cap \Che)$ is a surjective lattice homomorphism between the corresponding lattices of maximal orthogonal pairs.
Every lattice quotient of $\Pairs(\t)$ arises in this way for a unique $\forcs$-upset~$\Che$.
\end{theorem}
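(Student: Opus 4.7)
The plan is to break the theorem into three parts: (I) restriction of a two-acyclic factorization system to a $\forcs$-upset $\Che$ is again a two-acyclic factorization system; (II) the map $\Phi \colon (X,Y) \mapsto (X \cap \Che,\, Y \cap \Che)$ is a surjective lattice homomorphism from $\Pairs(\t)$ onto $\Pairs(\t|_\Che)$; (III) every lattice quotient of $\Pairs(\t)$ arises from a unique such $\Che$. The definition of $\forcs$ in Section~\ref{quot sec} is tailored precisely so that (I) holds; then (II) is a routine verification, and (III) follows from the standard theory of congruences on finite semidistributive lattices together with the uniqueness clause of Theorem~\ref{FTFSDL1}.

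For (I), the order and brick conditions involve only pairs of elements of $\Che$ and so are inherited by any subset. The substantive content is the pair of identities $\Fact(\t|_\Che) = (\ont|_\Che,\, \int|_\Che)$ and $\Mult(\ont|_\Che,\, \int|_\Che) = \t|_\Che$. One direction of each is immediate: if $x \onto y$ holds in $\Sha$ and $x, y \in \Che$, then it holds tautologically in $\Che$; and if $x \onto y \into z$ in $\Che$, then $x \to z$ in $\Sha$ and hence in $\Che$. The reverse directions are the crux: one must rule out ``spurious'' new factorizations that appear only upon restriction. A witness to such a spurious factorization would be an element $z \in \Sha \setminus \Che$ with, say, $y \to z$ but $x \not\to z$; the definition of $\forcs$ must be arranged so that such a $z$ produces a forcing chain leading into $\Che$, contradicting the $\forcs$-upset hypothesis. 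I expect this to be the main obstacle of the proof, since it essentially demands that $\forcs$ be defined so as to capture exactly the compatibility needed for restriction.

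For (II), well-definedness and monotonicity of $\Phi$ are immediate once (I) is known. The lattice-homomorphism property is best handled by observing that joins in $\Pairs(\t)$ are computed by applying the closure $X \mapsto {}^\perp(X^\perp)$ to unions of left components, with meets described dually on right components; (I) implies that intersection with $\Che$ commutes with these closures. For surjectivity, given $(X', Y') \in \Pairs(\t|_\Che)$, define $X = \{x \in \Sha : x \not\to y \text{ for all } y \in Y'\}$ and $Y = X^\perp$, verify $(X,Y) \in \Pairs(\t)$, and check $\Phi(X,Y) = (X', Y')$; again the $\forcs$-upset hypothesis ensures that the orthogonality closures do not pull unwanted elements of $\Che$ into $X$ or $Y$.

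For (III), identify $\JIrr(\Pairs(\t))$ with $\Sha$ via the FTFSDL. Assign to each congruence $\Theta$ on $L = \Pairs(\t)$ the set $\Che_\Theta = \{j \in \JIrr(L) : j_* \not\equiv j \pmod{\Theta}\}$ of uncontracted join-irreducibles; the congruence theory of finite semidistributive lattices to be developed in Section~\ref{quot sec} gives a bijection between congruences of $L$ and $\forcs$-upsets of $\Sha$. To see that this bijection is realized by the restriction construction, verify that $L/\Theta$ and $\Pairs(\t|_{\Che_\Theta})$ have matching sets of join-irreducibles and matching induced relation, then invoke the uniqueness in Theorem~\ref{FTFSDL1} to conclude they are isomorphic. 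Uniqueness of $\Che$ then follows from the injectivity of $\Theta \mapsto \Che_\Theta$.
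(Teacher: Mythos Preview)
Your three-part decomposition matches the paper's exactly, and your instinct that the definition of $\forcs$ is engineered precisely so that restriction preserves the factorization axioms is correct: the key device (which you gesture at but do not name) is the image/co-image of an arrow $x \to z$, i.e.\ an $\ont$-minimal or $\int$-maximal element of $\{y : x \onto y \into z\}$, together with the observation that an image of $x \to z$ directly forces $z$ and a co-image directly forces $x$ (Lemma~\ref{image lemma}).  This is what produces, for any witness $z \in \Sha$ to a failure of $x \onto y$, a replacement witness inside $\Che$.

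There is a genuine gap in part (II).  You write that ``well-definedness and monotonicity of $\Phi$ are immediate once (I) is known,'' but knowing that $(\Che, \t|_\Che, \ont|_\Che, \int|_\Che)$ is a two-acyclic factorization system does \emph{not} by itself tell you that $(X \cap \Che,\, Y \cap \Che)$ is a maximal orthogonal pair for $\t|_\Che$ whenever $(X,Y)$ is one for $\t$.  One must show, for instance, that if $z \in \Che$ and $z \notin Y$, then some $y \in X \cap \Che$ has $y \to z$; this is not formal and again requires the image lemma (take an image of some $x \to z$ with $x \in X$).  The paper isolates this as Proposition~\ref{prop intersection is max orth}.  Relatedly, your plan to prove the homomorphism property by showing that intersection with $\Che$ commutes with closure is harder than necessary: the paper simply observes that meets in $\Pairs(\t)$ are intersections on first components, so $\Phi$ preserves meets by $\bigcap_i (X_i \cap \Che) = (\bigcap_i X_i) \cap \Che$, and dually for joins via second components.

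For part (III), your route via FTFSDL uniqueness differs from the paper's.  The paper defines $\Che_\Theta$ as you do, proves directly that it is a $\forcs$-upset (via Day's Lemma~\ref{Day lem}), and then checks that $\Theta$ and the congruence induced by restriction to $\Che_\Theta$ contract exactly the same covers (Lemma~\ref{cov diamond cong}), hence coincide.  Your proposal instead appeals to a bijection between congruences and $\forcs$-upsets; be careful that this bijection is Corollary~\ref{con pairs finite}, which the paper deduces \emph{from} the present theorem, so invoking it here risks circularity unless you independently identify the forcing preorder on $\JIrr(L)$ with the transitive closure of $\forcs$.
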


Theorem~\ref{thm finite quotients} allows us to give a precise description of the lattice $\Con(\Pairs(\t))$ of congruences of any finite semidistributive lattice.
As discussed in Section~\ref{quot sec}, the congruence lattice of a finite lattice $L$ can be understood in terms of a preorder on $\JIrr L$ called the forcing preorder.
For $x\in\Sha$, define $T(x) = \{ x' : x \onto x' \}$. 

\begin{cor} \label{con pairs finite}
Suppose $(\Sha, \t, \ont, \int)$ is a finite two-acyclic factorization system. 
The map $x\mapsto(T(x), T(x)^{\perp})$ is an isomorphism from the transitive closure of $\forcs$ on $\Sha$ to the forcing preorder on $\JIrr(\Pairs(\t))$. 
This map induces an isomorphism from the poset of $\forcs$-downsets under containment to the congruence lattice $\Con(\Pairs(\t))$.
\end{cor}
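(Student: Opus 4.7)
My plan is to combine Theorem~\ref{thm finite quotients} with the classical description of $\Con(L)$ for a finite (semidistributive) lattice $L$ via the forcing preorder on $\JIrr(L)$, transported along the FTFSDL bijection $\Sha \to \JIrr(\Pairs(\t))$, $x \mapsto (T(x), T(x)^\perp)$, provided by Theorem~\ref{FTFSDL1}.

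First, I would extract from Theorem~\ref{thm finite quotients} an order-preserving bijection from $\forcs$-upsets $\Che\subseteq\Sha$ to lattice quotients of $\Pairs(\t)$, sending $\Che$ to the quotient $(X,Y)\mapsto(X\cap\Che,Y\cap\Che)$. Taking complements converts this into an order-preserving bijection from $\forcs$-downsets to lattice quotients; passing from a quotient to its kernel then produces an isomorphism $\Phi$ from the poset of $\forcs$-downsets of $\Sha$ (under containment) to $\Con(\Pairs(\t))$. This $\Phi$ will be the isomorphism asserted in the second sentence of the corollary, once it is confirmed to be induced by the map $x\mapsto (T(x),T(x)^\perp)$.

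Next I would invoke the standard description of the congruence lattice of a finite semidistributive lattice: each congruence $\theta$ is determined by its contraction set $\{j\in\JIrr(L):j\equiv j_*\pmod{\theta}\}$, and the contraction sets are precisely the downsets of the forcing preorder on $\JIrr(L)$. Combining this with $\Phi$ gives an isomorphism between the poset of $\forcs$-downsets of $\Sha$ and the poset of forcing-preorder downsets of $\JIrr(\Pairs(\t))$. Since a finite preorder is determined by its downset lattice together with the collection of principal downsets, identifying this isomorphism on principal downsets shows that the FTFSDL bijection carries the transitive closure of $\forcs$ onto the forcing preorder, which is the first claim of the corollary.

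The main obstacle is this last identification of principal objects. Concretely, I must verify that, under $x\mapsto(T(x),T(x)^\perp)$, the principal $\forcs$-upset generated by $x$ corresponds (via Theorem~\ref{thm finite quotients}) to the set of join-irreducibles $(T(y),T(y)^\perp)$ that are contracted by the smallest congruence contracting the cover at $(T(x),T(x)^\perp)$. This reduces to checking that the one-step relation $x\forcs y$ (as defined in Section~\ref{quot sec}) captures exactly the statement that every quotient contracting the cover corresponding to $x$ also contracts the cover corresponding to $y$; the definition of $\forcs$ will presumably be arranged so that this holds essentially by construction, and transitive closure on both sides then yields the desired preorder isomorphism.
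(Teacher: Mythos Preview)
Your approach is correct and is essentially the paper's own argument (the paper deduces the finite statement from its infinite analogue, Corollary~\ref{con pairs infinite}, whose proof unpacks Theorem~\ref{thm lattice quotients} in exactly the way you describe). The one point that needs sharpening is your final paragraph. The ``obstacle'' is not resolved by inspecting the definition of $\forcs$, and the one-step relation $x\forcs y$ does \emph{not} coincide with ``every congruence contracting the cover at $x$ also contracts the cover at $y$'' (the latter is already transitive, the former is not). The clean resolution is instead a direct computation from the explicit form of the quotient map in Theorem~\ref{thm finite quotients}: for the congruence $\Theta_\Che$ associated to a $\forcs$-upset $\Che$, one has $(T_*(x),T_*(x)^\perp)\equiv(T(x),T(x)^\perp)\bmod\Theta_\Che$ if and only if $T_*(x)\cap\Che=T(x)\cap\Che$, i.e.\ if and only if $x\notin\Che$, since $T(x)\setminus T_*(x)=\{x\}$. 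Thus the contraction set of $\Theta_\Che$ is exactly $\{(T(y),T(y)^\perp):y\in\Sha\setminus\Che\}$, so your bijection $\Phi$ from $\forcs$-downsets to $\Con(\Pairs(\t))$ is precisely the one induced on downsets by $x\mapsto(T(x),T(x)^\perp)$. The preorder isomorphism then follows, as you say, because a finite preorder is recovered from its lattice of downsets.
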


We prove Theorem~\ref{thm finite quotients} and Corollary~\ref{con pairs finite} by generalizing them to the infinite case, with appropriate conditions on the two-acyclic factorization systems and congruences (Theorem~\ref{thm lattice quotients} and Corollary~\ref{con pairs infinite}).

Section~\ref{lattice types sec} relates the FTFSDL to work on other classes of finite lattices, namely distributive lattices, congruence uniform lattices, general finite lattices, and extremal lattices.
Section~\ref{conn sec} concludes the paper by discussing the motivating examples, namely posets of regions of tight hyperplane arrangements and lattices of torsion classes of finite-dimensional algebras.

\section{The fundamental theorem}\label{main sec} 
In this section, we prove the FTFSDL by proving its generalization Theorem~\ref{2afs iff ws kappa}.
We will separate out the following two auxiliary theorems: Theorem~\ref{kappa 2afs} is a useful fact about $\kappa$-lattices that are not necessarily well separated; Theorem~\ref{2afs kappa}  is a portion of Theorem~\ref{2afs iff ws kappa}.

\begin{theorem}\label{kappa 2afs}   
If $L$ is a $\kappa$-lattice, then the map
\[x\mapsto( \{ j \in \JIrr^cL: j \leq x \},\  \kappad \left( \{ m \in \MIrr^cL : m \geq x \} \right) )\]
is an isomorphism from $L$ to $\Pairs(\t_L)$, with inverse $(X,Y)\mapsto\Join X=\Meet(\kappau(Y))$.
The system $(\JIrr^cL,\t_L,\ont_L,\int_L)$ has all of the properties of a two-acyclic factorization system except that possibly $\Mult(\ont_L,\int_L)\subsetneq\t_L$.
\end{theorem}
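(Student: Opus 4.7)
The plan is to unpack all ingredients in parallel, relying throughout on the defining property of a $\kappa$-lattice that every $x\in L$ satisfies both $x = \Join\{j\in\JIrr^c(L):j\le x\}$ and $x = \Meet\{m\in\MIrr^c(L):m\ge x\}$, together with the $\kappau\leftrightarrow\kappad$ bijection. Write $\Phi(x) = (X(x),Y(x))$ where $X(x) = \{j\in\JIrr^c(L):j\le x\}$ and $Y(x) = \kappad(\{m\in\MIrr^c(L):m\ge x\})$. First I would verify that $\Phi(x)\in\Pairs(\to_L)$: orthogonality is immediate since $j\in X(x)$ and $j'\in Y(x)$ give $j\le x\le\kappau(j')$; for $X(x)^{\perp}\subseteq Y(x)$, any $j'\in X(x)^{\perp}$ satisfies $j\le\kappau(j')$ for every $j\in X(x)$, hence $x = \Join X(x)\le\kappau(j')$ by join-generation, so $j'\in Y(x)$; dually, ${}^{\perp}Y(x)\subseteq X(x)$ uses meet-generation.

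For the order isomorphism, join-generation gives $\Join X(x) = x$ and meet-generation gives $\Meet\kappau(Y(x)) = x$, so $\Phi$ is injective and both candidate inverse formulas agree on its image; order-preservation in either direction is immediate from $x\le y \iff X(x)\subseteq X(y)$. For surjectivity, given $(X,Y)\in\Pairs(\to_L)$, set $x' = \Meet\kappau(Y)$; the containments $X\subseteq X(x')$ and $Y\subseteq Y(x')$ are clear from the definitions, and the antitone correspondence between the two components of a maximal orthogonal pair then forces $X = X(x')$ and $Y = Y(x')$, so $(X,Y) = \Phi(x')$ and $\Join X = x' = \Meet\kappau(Y)$.

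Next I would verify the factorization-system axioms on $(\JIrr^c(L),\to_L,\onto_L,\into_L)$. Reflexivity of $\to_L$ follows from $j\wedge\kappau(j) = j_* < j$. The identity $\Fact(\to_L) = (\onto_L,\into_L)$ is the core computation: one direction is tautological, and for the converse, if $j\not\ge k$ then meet-generation of $j$ produces $n\in\MIrr^c(L)$ with $n\ge j$ and $k\not\le n$, so $m := \kappad(n)$ witnesses $k\to_L m$ but $j\not\to_L m$; the analogous statement for $\into_L$ is the dual argument using join-generation of $\kappau(k)$. The inclusion $\Mult(\onto_L,\into_L)\subseteq\to_L$ is immediate: if $i\ge j$, $\kappau(j)\ge\kappau(k)$, and $i\le\kappau(k)$, then $j\le i\le\kappau(k)\le\kappau(j)$ contradicts $j\not\le\kappau(j)$. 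The order condition holds because $\onto_L$ is $\ge$ on $\JIrr^c(L)$ and $\into_L$ transports $\ge$ on $\MIrr^c(L)$ along the bijection. For the brick condition, if $j\ge k$, $\kappau(k)\ge\kappau(j)$ and $j\ne k$, then $k\le j_*\le\kappau(j)\le\kappau(k)$ contradicts $k\not\le\kappau(k)$.

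The main obstacle is the surjectivity portion of the second paragraph, where the two generation hypotheses and the antitone duality of maximal orthogonal pairs have to be coordinated; the same coordination is what forces $\Join X = \Meet\kappau(Y)$ for every maximal orthogonal pair. The reason Theorem~\ref{kappa 2afs} only asserts $\Mult(\onto_L,\into_L)\subseteq\to_L$ and not equality is that, absent a well-separation hypothesis, a given $i\to_L k$ need not factor through any $j\in\JIrr^c(L)$ with $i\ge j$ and $\kappau(j)\ge\kappau(k)$; upgrading this inclusion to equality is precisely the content of the well-separation hypothesis used in Theorem~\ref{2afs iff ws kappa}.
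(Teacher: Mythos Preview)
Your proof is correct and follows essentially the same approach as the paper: the paper splits the argument into Propositions establishing $\Fact(\t_L)=(\ont_L,\int_L)$ and $\Mult(\ont_L,\int_L)\subseteq\t_L$ (via the general fact $\Mult(\Fact(\t))\subseteq\t$), the order and brick conditions, and the isomorphism $L\cong\Pairs(\t_L)$ via $x\mapsto(J_x,\kappad(M_x))$. The only minor difference is in surjectivity, where the paper takes $x=\Join X$ and computes $X^\perp$ directly, while you take the dual route $x'=\Meet\kappau(Y)$ and invoke the antitone Galois correspondence; these are equivalent.
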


\begin{theorem}\label{2afs kappa}   
If $(\Sha,\t,\ont,\int)$ is a (not necessarily finite) two-acyclic factorization system, then $\Pairs(\t)$ is a well separated $\kappa$-lattice.
Furthermore, writing $L$ for $\Pairs(\t)$, the system $(\Sha,\t, \ont, \int)$ is isomorphic to $(\JIrr^cL,\t_L,\ont_L,\int_L)$.
\end{theorem}

After some preliminary material in Sections~\ref{conv sec} and~\ref{fund sec}, we will prove Theorem~\ref{kappa 2afs} in Section~\ref{toL sec}.
We prove Theorem~\ref{2afs kappa} in Section~\ref{clos sec} and then complete the proof of Theorem~\ref{2afs iff ws kappa}.
Finally, in Section~\ref{semi sec}, we prove the Fundamental Theorem of Finite Semidistributive Lattices (Theorem~\ref{FTFSDL1}) by showing that it is the finite case of Theorem~\ref{2afs iff ws kappa}.

\subsection{Conventions about partially ordered sets}\label{conv sec}
In a two-acyclic factorization system, the relations $\ont$ and $\int$ are partial orders. When we use the language of partial orders to describe them, we consider $x$ to be greater than or equal to $y$ if $x \onto y$ or $x \into y$ respectively. 
For example, if we say ``$x$ is an $\ont$-minimal element of $S$", we mean that, for any $x' \in S$ other than $x$, we do not have $x \onto x'$. 
Throughout the paper, if we say that a set $P$ is a partially ordered set under the relation $R$ and if $R$ is some sort of arrow pointing from $x$ to $y$, then we intend $x\ R\ y$ to correspond to $x \geq y$.  

For a directed graph with vertex set $V$, we define a \newword{downset} of $V$ to be a set $D \subseteq V$ such that $x \in D$ and $x \to y$ implies $y \in D$; we define $U$ to be an \newword{upset} if $y \in U$ and $x \to y$ implies $x \in U$.
We allow ourselves to use this language even when the relation $\t$ is not transitive and not antisymmetric.
The downsets of $\t$ are the same as the downsets of the transitive closure of $\t$, and are in natural bijective correspondence with the downsets of the graph formed by collapsing each strongly connected component of $\t$ to a point.

In a partially ordered set $P$, we say that an element $x$ covers another element $y$, written $x \covers y$, if $x > y$ and there does not exist any $z$ with $x> z > y$.
The Hasse diagram of $P$ is the graph with vertex set $P$ and an edge from $x$ to $y$ if $x \covers y$.

\subsection{Fundamentals of factorization systems}\label{fund sec}
We now prove some basic facts about the operations $\Fact$ and $\Mult$ that were defined in the introduction.
\begin{prop}\label{Mult Fact facts}
Suppose $\t$ is a reflexive relation, and $\ont$ and $\int$ are reflexive and transitive relations, on a set $\Sha$. \begin{enumerate}[\qquad\rm1.]
\item \label{obv}
Given two pairs of reflexive and transitive relations $(\ont_1,\int_1)\subseteq(\ont_2,\int_2)$, then $\Mult(\ont_1,\int_1)\subseteq\Mult(\ont_2,\int_2)$.
\item \label{union}
$(\ont\cup \int)\subseteq\Mult(\ont,\int)$.
\item \label{mf}
$\Mult(\Fact(\t))\subseteq \t$.
\item \label{fm}
$\Fact(\Mult(\ont,\int))\supseteq (\ont,\int)$.
\item \label{mfm}
$\Mult(\Fact(\Mult(\ont,\int)))=\Mult(\ont,\int)$.
\end{enumerate}
The containment symbol between ordered pairs stands for containment in each component.
\end{prop}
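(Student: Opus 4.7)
The plan is to derive all five items directly from the definitions of $\Mult$ and $\Fact$; the only substantive inputs are reflexivity of $\t$, $\ont$, $\int$ and transitivity of $\ont$, $\int$. I would prove the items in the listed order, since (5) will be a formal consequence of (1), (3), and (4).

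For (1), the same intermediate $y$ certifying $x \t z$ under $\Mult(\ont_1, \int_1)$ also certifies it under $\Mult(\ont_2, \int_2)$, since the larger preorders contain the smaller. For (2), if $x \ont y$ then reflexivity of $\int$ supplies the chain $x \ont y \int y$, giving $x \t y$ in $\Mult(\ont, \int)$; the $\int$ case is dual via reflexivity of $\ont$.

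Item (3) is the only step requiring a small trick, and is the step I expect to carry the entire conceptual weight of the proposition. Suppose $x \t z$ in $\Mult(\Fact(\t))$, so there exists $y$ with $x \ont y$ and $y \int z$ in $\Fact(\t)$. I would first feed the reflexive instance $y \t y$ into the definition of $x \ont y$ to extract $x \t y$, and then feed the instance $x \t y$ into the definition of $y \int z$ to extract $x \t z$. Item (4) is a direct check: write $\t = \Mult(\ont, \int)$ and $(\ont', \int') = \Fact(\t)$. If $x \ont y$ and $y \t z$, then $y \ont w \int z$ for some $w$; transitivity of $\ont$ turns this into $x \ont w \int z$, i.e., $x \t z$, proving $x \ont' y$. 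The containment $\int \subseteq \int'$ is dual using transitivity of $\int$.

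Finally, (5) assembles the previous items: applying (3) to the relation $\Mult(\ont, \int)$ gives $\Mult(\Fact(\Mult(\ont, \int))) \subseteq \Mult(\ont, \int)$, while (4) gives $(\ont, \int) \subseteq \Fact(\Mult(\ont, \int))$, and (1) then promotes this containment into the reverse inclusion $\Mult(\ont, \int) \subseteq \Mult(\Fact(\Mult(\ont, \int)))$. The main obstacle, such as it is, is keeping the nested applications of $\Mult$ and $\Fact$ straight; item (3) is the sole point where any ingenuity (the double use of reflexivity of $\t$) is required, and everything else is formal bookkeeping.
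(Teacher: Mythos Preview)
Your proposal is correct and follows essentially the same approach as the paper's proof. The only cosmetic difference is in item~(3): the paper first uses the $\int$-definition (via Property~2) to get $z \to y$ and then the $\ont$-definition to conclude $x \to y$, whereas you use the $\ont$-definition first (with $y \to y$) and then the $\int$-definition; both orderings work and rely on the same single use of reflexivity of~$\t$.
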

\begin{proof}
Property~\ref{obv} is obvious. 

For Property~\ref{union}, write $\t$ for $\Mult(\ont,\int)$.  
Suppose $x\onto y$.
By definition of multiplication, since $\int$ is reflexive (and thus $y\into y$), we have $x\to y$. 
Similarly, if $x\into y$, then since $x\onto x$, we have $x\to y$.

For Property~\ref{mf}, write $(\ont',\int')$ for $\Fact(\t)$ and write $\t'$ for $\Mult(\Fact(\t))$.
If $x\to'y$, then there exists $z\in\Sha$ such that $x\onto' z\into' y$.
By the definition of $\int'$ and the fact that $\t$ is reflexive, we have $z \to y$. 
Then, by the definition of $\ont'$, we also have $x \to y$.

To prove Property~\ref{fm}, we write $\t'$ for $\Mult(\ont,\int)$, and write $(\ont',\int')$ for $\Fact(\Mult(\ont,\int))$.
We must show that, if $x \onto y$, then $x \onto' y$. 
Unpacking the definition of $\ont'$, we must show that, if $y \to' z$, then $x \to' z$.
Since $y\to' z$, there exists $w$ with $y\onto w\into z$.
Then by transitivity of $\ont$, we have $x\onto w\into z$, so $x\to' z$.
We have shown that $x\onto'y$.
The proof for $\int$ and $\int'$ is similar.

For Property~\ref{mfm}, we have $\Mult(\Fact(\Mult(\ont,\int)))\subseteq\Mult(\ont,\int)$ by Property~\ref{mf}.
Combining Properties~\ref{obv} and~\ref{fm}, we have the opposite containment as well.
\end{proof}

Proposition~\ref{Mult Fact facts}.\ref{union} implies the following fact, which is useful in computing examples.
\begin{prop}\label{useful}
If $(\Sha, \t, \ont, \int)$ is a factorization system, then $(\ont\cup \int)\subseteq\t$.
\end{prop}

Another useful fact to keep in mind, in computing examples, is the following proposition.

\begin{prop} \label{TorsionImpliesDownset} 
Let $\t$ be a relation on $\Sha$ and let $\Fact(\t) = (\ont, \int)$. Let $(T,F)$ be a maximal orthogonal pair for $\t$. Then $T$ is a downset for $\ont$ and $F$ is an upset for $\int$. In other words, if $x \onto x'$ and $x \in T$ then $x' \in T$ and, if $y' \into y$ and $y \in F$ then $y' \in F$.
\end{prop}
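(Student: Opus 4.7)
The plan is to unpack definitions and derive a direct contradiction in each case; no deep structural input is needed beyond the definition of $\Fact$ and the definition of maximal orthogonal pair.

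For the first claim, I would start with $x \in T$ and $x \onto x'$ and try to show $x' \in T$. Since $(T,F)$ is a maximal orthogonal pair, $T = {}^\perp F$, so it suffices to show that $x' \not\to y$ for every $y \in F$. Suppose for contradiction that $x' \to y$ for some $y \in F$. By the definition of $\onto$ in $\Fact(\t)$, the statement $x \onto x'$ means precisely that every $z$ with $x' \to z$ also satisfies $x \to z$. Applying this to $z = y$ yields $x \to y$, contradicting $x \in T$, $y \in F$, and $F = T^{\perp}$.

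The second claim is dual. Assume $y \in F$ and $y' \into y$; I would show $y' \in F = T^\perp$, i.e., $x \not\to y'$ for every $x \in T$. If instead $x \to y'$ for some $x \in T$, then the definition of $\int$ in $\Fact(\t)$, namely that $y' \into y$ iff every $z$ with $z \to y'$ also has $z \to y$, forces $x \to y$, again contradicting $x \in T$, $y \in F$, $T = {}^\perp F$.

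There is no real obstacle here: the two implications are symmetric and each is a one-line unpacking of the definition of $\Fact(\t)$ against the orthogonality hypothesis. The only subtlety worth flagging is the orientation convention from Section~\ref{conv sec}: ``$T$ is a downset for $\ont$'' must be read as ``$x \in T$ and $x \onto x'$ imply $x' \in T$,'' and dually for upsets of $\int$, which is exactly what the statement spells out.
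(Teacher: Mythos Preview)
Your proof is correct and follows essentially the same approach as the paper: both arguments unpack the definition of $\ont$ (respectively $\int$) against the orthogonality condition $T={}^\perp F$ (respectively $F=T^\perp$). The only difference is cosmetic---you phrase it as a contradiction, the paper as a contrapositive---but the logical content is identical.
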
 

\begin{proof}
We prove the claim about $T$; the claim about $F$ is analogous. Let $x \onto x'$ and suppose that $x \in T$. We must show that, if $x \not\to y$ then $x' \not\to y$. 
The contrapositive, that if $x' \to y$ then $x \to y$, is the definition of $\ont$. 
\end{proof}

\begin{remark}
The letters $T$ and $F$ suggest ``Torsion" and ``torsion-Free", terminology from the theory of torsion pairs. See Section~\ref{rep ssec}. 
\end{remark}

This would be an excellent moment for the reader to take another look at Example~\ref{2afs ex}.
In one direction, one can verify directly that $(\Sha,\t,\ont,\int)$ is a two-acyclic factorization system.
In light of Proposition~\ref{TorsionImpliesDownset}, the task in computing $\Pairs(\t)$ is to determine which $\ont$-downsets $X$ have the property that $(X,X^\perp)$ is a maximal orthogonal pair.
Equivalently, as will be explained in Section~\ref{clos sec}, the task is to find all sets $X$ such that ${}^\perp(X^\perp)=X$.
In the other direction, one can verify directly that the lattice $L$ shown in Figure~\ref{Pairs fig} is semidistributive and then construct $(\JIrr L,\t_L,\ont_L,\int_L)$ as in Theorem~\ref{FTFSDL1}.

\begin{prop} \label{AcyclicStronger}
Let $(\Sha, \t, \ont, \int)$ be a two-acyclic factorization system. If $x \to y \into x$ or $x \onto y \to x$, then $x = y$.
\end{prop}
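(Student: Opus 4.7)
The plan is to reduce each of the two hypotheses $x \to y \into x$ and $x \onto y \to x$ to an instance of the brick condition (followed by an application of the order condition), by using the factorization system axiom $\t = \Mult(\ont,\int)$ to factor the bare arrow $\t$ appearing in each hypothesis.

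For the first case, suppose $x \to y \into x$. Since $(\Sha,\t,\ont,\int)$ is a factorization system, $x \to y$ means there exists some $z \in \Sha$ with $x \onto z \into y$. Combining $z \into y$ with the given $y \into x$ and using transitivity of $\int$, we obtain $z \into x$. Thus we have $x \onto z \into x$, and the brick condition forces $x = z$. Substituting back, this yields $x \into y$, and combined with the assumed $y \into x$, the order condition (antisymmetry of $\int$) gives $x = y$.

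For the second case, suppose $x \onto y \to x$. Applying the factorization axiom to $y \to x$ produces some $w \in \Sha$ with $y \onto w \into x$. Using transitivity of $\ont$ on $x \onto y \onto w$ gives $x \onto w$, so $x \onto w \into x$; the brick condition forces $x = w$. Substituting gives $y \onto x$, which together with $x \onto y$ yields $x = y$ by the order condition (antisymmetry of $\ont$).

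There is no real obstacle here: the entire argument is a direct unpacking of the definitions of $\Mult$, the order condition, and the brick condition, with the only nontrivial step being the observation that the intermediate object produced by factorization can be pushed into the brick-condition hypothesis via transitivity of $\ont$ or $\int$. The two cases are completely dual under swapping $\ont \leftrightarrow \int$, so one could also simply prove the first case and invoke duality.
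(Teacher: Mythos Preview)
Your proof is correct and follows essentially the same approach as the paper: factor the bare $\t$ arrow via $\t=\Mult(\ont,\int)$, use transitivity to obtain an $x \onto z \into x$ situation, apply the brick condition, and finish with the order condition. The paper only spells out the first case and notes the second is similar, exactly as you observe in your final remark about duality.
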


\begin{proof}
We prove the first statement;
the second is similar. Let $x \to y \into x$. Since $\t = \Mult(\ont, \int)$, there is some $w$ with $x \onto w \into y \into x$. By transitivity of $\int$, we have $x \onto w \into x$ so, by the brick property $x=w$. Now we have $x=w \into y \into x$ so, by the order property, $x=y$ as well.
\end{proof}

Our definitions have a symmetry under reversing arrows, which we spell out in the easy proposition below. 
\begin{prop}\label{duality}
Let $(\Sha,\t, \ont, \int)$ be a factorization system. Define $\t^{\op}$ by $y \to^{\op} x$ if $x \to y$ and similarly define $x \into^{\op} y$ if $y \into x$ and $x \onto^{\op} y$ if $y \onto x$. 
Then $(\Sha,\to^{\op}, \int^{\op}, \ont^{\op})$ is a factorization system; to be explicit, $\int^{\op}$ has taken on the role $\ont$ and vice versa. 
The system $(\Sha,\to^{\op}, \int^{\op}, \ont^{\op})$ is two-acyclic if and only if $(\Sha,\t, \ont, \int)$ is.
The map $(X,Y)\mapsto(Y,X)$ is an anti-isomorphism from $\Pairs(\t^{\op})$ to $\Pairs(\t)$.
\end{prop}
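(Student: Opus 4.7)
The plan is to unfold the definitions and verify the four distinct claims in order: that $(\Sha,\t^{\op},\int^{\op},\ont^{\op})$ satisfies the two equations $\Fact(\t^{\op})=(\int^{\op},\ont^{\op})$ and $\Mult(\int^{\op},\ont^{\op})=\t^{\op}$ required to be a factorization system, that two-acyclicity is preserved, and that the ``swap'' map $(X,Y)\mapsto(Y,X)$ is an order-reversing bijection. No deep idea is involved; the main task is to keep the roles of $\ont$ and $\int$ straight, since the op-operation not only reverses arrows but also exchanges these two relations.

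For the factorization equations I would work directly from the definitions. Unpacking, $x$ stands in the ``$\ont$''-component of $\Fact(\t^{\op})$ to $y$ iff every $y\to^{\op} z$ forces $x\to^{\op} z$, which translates to ``every $z\to y$ forces $z\to x$'', i.e.\ $y\into x$, i.e.\ $x\into^{\op}y$. An analogous computation identifies the ``$\int$''-component of $\Fact(\t^{\op})$ with $\ont^{\op}$. For $\Mult$: $(x,z)\in\Mult(\int^{\op},\ont^{\op})$ iff there is $y$ with $x\int^{\op}y\ont^{\op}z$, i.e.\ $z\onto y\into x$, which by the hypothesis $\t=\Mult(\ont,\int)$ is the same as $z\to x$, i.e.\ $x\to^{\op}z$.

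Two-acyclicity is manifestly self-dual: antisymmetry of a partial order is preserved under taking the opposite order, so the order condition transfers, and the brick condition ``no $x\onto y\into x$ with $x\neq y$'' is invariant under swapping the names $x\leftrightarrow y$ together with exchanging $\ont$ and $\int$, which is exactly what op does. Finally, for the bijection, a direct computation gives $X^{\perp_{\op}}={}^{\perp}X$ and ${}^{\perp_{\op}}Y=Y^{\perp}$, from which $(X,Y)\in\Pairs(\t^{\op})$ iff $(Y,X)\in\Pairs(\t)$. Order reversal then drops out of the alternative description of the lattice order on $\Pairs$ as reverse containment in the second component.

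I do not anticipate any real obstacle: the whole statement is formal bookkeeping, and the most error-prone point is simply to be careful not to swap $\ont^{\op}$ and $\int^{\op}$ in the $\Fact$ equation, so I would spell both components out explicitly rather than appealing to symmetry.
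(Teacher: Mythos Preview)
Your proposal is correct and is precisely the routine definition-chase one would expect; the paper itself omits the proof entirely, introducing the proposition only as ``the easy proposition below'' that ``spell[s] out'' the symmetry under reversing arrows. Your verification of $\Fact(\t^{\op})=(\int^{\op},\ont^{\op})$, $\Mult(\int^{\op},\ont^{\op})=\t^{\op}$, the two-acyclicity transfer, and the identity $X^{\perp_{\op}}={}^{\perp}X$ are all accurate, and your caution about keeping the swapped roles of $\ont$ and $\int$ straight is exactly the only point where one could slip.
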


In a factorization system, the pair $(\ont, \int)$ determines $\t$. 
It is often practical to construct examples of factorization systems by giving $(\ont, \int)$ and defining $\t = \Mult(\ont, \int)$. 
It is therefore useful to state the hypotheses of being two-acyclic and factorization in terms solely of the partial orders $\ont$ and $\int$. 
After introducing some notation, we do this:

Given partial orders $\ont$ and $\int$ on $\Sha$ and a subset $X\subseteq\Sha$, we write $\downonto X$ for the set $\set{y\in\Sha:\exists x\in X,\,x\onto y}$ of elements of $\Sha$ that are
 below elements of $X$ in the sense of $\ont$.
Similarly, we write  $\uponto X$, $\downinto X$ and $\upinto X$.

\begin{prop}\label{posets only 2-afs}
Given two partial orders $\int$ and $\ont$ on a set $\Sha$, the tuple $(\Sha,\Mult(\ont,\int),\ont,\int)$ is a two-acyclic factorization system if and only if the following conditions hold.
\begin{enumerate}[\qquad\rm(i)]
\item \label{brick crit}
There do not exist distinct $x$ and $y$ in $\Sha$ with $x \into y$ and $y \onto x$.
\item \label{ont char}
$x\onto y$ if and only if $\downinto\downonto\set{y}\subseteq\downinto\downonto\set{x}$ for all $x,y\in\Sha$.
\item \label{int char}
$x\into y$ if and only if $\uponto\upinto\set{x}\subseteq\uponto\upinto\set{y}$ for all $x,y\in\Sha$.
\end{enumerate}
If the conditions hold, then for any $X\subseteq\Sha$, the sets $X^\perp$ and $^\perp X$ are described by 
\[
X^\perp = \Sha\setminus \bigl(\downinto(\downonto X)\bigr) \quad  and  \quad ^\perp X = \Sha\setminus \bigl(\uponto(\upinto X)\bigr).
\]
\end{prop}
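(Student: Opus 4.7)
The plan is to unwind the definitions of $\Mult$ and $\Fact$ and translate each required condition into one of the three given conditions. The key observation is that once we set $\t=\Mult(\ont,\int)$, we have
\[
x \to y \iff \exists\, z \in \Sha : x \onto z \int y \iff y \in \downinto(\downonto\set{y}) \cup \{y\},
\]
or more precisely $x \to y \iff y \in \downinto\downonto\set{x}$. From this description the formulas for $X^\perp$ and ${}^\perp X$ follow at once by taking the complement of the union over $x \in X$, and the dual identity $z \to x \iff z \in \uponto\upinto\set{x}$ gives the formula for ${}^\perp X$. I would record these two identities as a small preliminary lemma, since they are used repeatedly below.

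Next, I would observe that the structure $(\Sha,\t,\ont,\int)$ is automatically a reflexive relation together with two preorders, because $\ont$ and $\int$ are given to be partial orders and $x \onto x \into x$ forces $x \to x$. So it is a two-acyclic factorization system precisely when (a) $\Fact(\t) = (\ont,\int)$ (the equality $\Mult(\ont,\int)=\t$ holds by our choice of $\t$), (b) the order condition holds (automatic, since $\ont$ and $\int$ are given as partial orders), and (c) the brick condition holds, i.e., there is no pair of distinct elements with $x \onto y \int x$. Condition (c) is literally condition~(\ref{brick crit}) after swapping the roles of $x$ and $y$.

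It remains to show (a) is equivalent to conditions~(\ref{ont char}) and~(\ref{int char}). Writing $(\ont',\int') = \Fact(\t)$, the definition gives $x \onto' y$ iff every $z$ with $y \to z$ also satisfies $x \to z$, which by the preliminary lemma is exactly $\downinto\downonto\set{y} \subseteq \downinto\downonto\set{x}$. Thus $\ont = \ont'$ is the content of~(\ref{ont char}). Dually, $x \int' y$ iff every $z$ with $z \to x$ also satisfies $z \to y$, which translates to $\uponto\upinto\set{x} \subseteq \uponto\upinto\set{y}$, so $\int = \int'$ is the content of~(\ref{int char}). Combining these three equivalences proves the biconditional.

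The main obstacle is bookkeeping rather than any deep argument: one must be careful with the convention (stated in Section~\ref{conv sec}) that an arrow $x \onto y$ corresponds to $x \geq y$ in the partial order, and correspondingly that $\downonto\set{x} = \set{y : x \onto y}$ while $\uponto\set{x} = \set{y : y \onto x}$. Getting the direction of each inclusion right is essentially the only thing to verify, and it is forced by matching the definitions on both sides.
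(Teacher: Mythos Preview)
Your proposal is correct and follows essentially the same route as the paper's proof: both arguments observe that $\downinto\downonto X=\{y:\exists x\in X,\ x\to y\}$ and its dual, then note that conditions~(\ref{ont char}) and~(\ref{int char}) are precisely the statement $\Fact(\Mult(\ont,\int))=(\ont,\int)$, that the order condition is automatic for partial orders, and that condition~(\ref{brick crit}) is the brick condition. One small slip: in your first displayed equivalence you wrote $y\in\downinto(\downonto\set{y})\cup\{y\}$ where you meant $y\in\downinto\downonto\set{x}$, but you correct this in the next clause, so the argument is unaffected.
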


\begin{proof}
We write $\t$ for $\Mult(\ont,\int)$.  Then $\downinto(\downonto X)=
 {\{y: \exists x\in X, x\to y\}}$ and $\uponto(\upinto X)=\{y: \exists x\in X, y\to x\}$. 
Conditions \eqref{ont char} and \eqref{int char} amount to the assertion that $\Fact(\Mult(\ont,\int))=(\ont,\int)$.
Thus $(\Sha,\Mult(\ont,\int),\ont,\int)$ is a factorization system if and only if \eqref{ont char} and \eqref{int char} hold.
The order condition holds in any case, because $\ont$ and $\int$ are partial orders.
Condition~\eqref{brick crit} is a restatement of the brick condition. The final
statement is clear.
\end{proof}

\subsection{From $\kappa$-lattices to factorization systems}\label{toL sec} 
In this section, we prove Theorem~\ref{kappa 2afs} and begin the proof of Theorem~\ref{2afs iff ws kappa}.

\begin{prop} \label{Fact toL} 
Suppose $L$ is a $\kappa$-lattice.
Then $\Fact(\t_L)=(\ont_L,\int_L)$ and $\Mult(\ont_L,\int_L)\subseteq\t_L$.
\end{prop}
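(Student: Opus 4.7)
The plan is to prove the two claims separately. For $\Fact(\t_L) = (\ont_L, \int_L)$, I will unpack the definition of $\Fact$ and verify each of the two components as a biconditional on $\JIrr^c(L)$. After expanding $\t_L$, the $\ont$ statement becomes ``$i \geq j$ iff for every $k \in \JIrr^c(L)$ with $j \not\leq \kappau(k)$ one has $i \not\leq \kappau(k)$,'' and the $\int$ statement becomes ``$\kappau(i) \geq \kappau(j)$ iff for every $k$ with $k \not\leq \kappau(i)$ one has $k \not\leq \kappau(j)$.'' The forward direction in each case is a one-line transitivity argument.

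The nontrivial directions use the hypothesis that $L$ is meet and join generated by completely irreducibles. For the $\ont$ equation I will argue by contrapositive: if $j \not\leq i$, meet-generation yields some $m \in \MIrr^c(L)$ with $m \geq i$ and $m \not\geq j$; setting $k = \kappad(m)$ so that $\kappau(k) = m$ gives a $k \in \JIrr^c(L)$ with $i \leq \kappau(k)$ yet $j \not\leq \kappau(k)$, exactly contradicting the hypothesized implication. For the $\int$ equation I will rewrite the hypothesis as ``$k \leq \kappau(j)$ implies $k \leq \kappau(i)$ for every $k \in \JIrr^c(L)$'' and invoke join-generation to express $\kappau(j) = \bigvee\{k \in \JIrr^c(L) : k \leq \kappau(j)\}$; every $k$ in that join is below $\kappau(i)$, so $\kappau(j) \leq \kappau(i)$.

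For $\Mult(\ont_L,\int_L) \subseteq \t_L$, suppose $i \onto_L j \into_L k$, i.e., $i \geq j$ and $\kappau(j) \geq \kappau(k)$. Assume for contradiction that $i \leq \kappau(k)$. Then $j \leq i \leq \kappau(k) \leq \kappau(j)$, giving $j \leq \kappau(j)$; but the defining property $j \meet \kappau(j) = j_* < j$ forces $j \not\leq \kappau(j)$, a contradiction. Hence $i \not\leq \kappau(k)$, i.e., $i \t_L k$.

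I do not anticipate a substantive obstacle: the relations $\t_L$, $\ont_L$, $\int_L$ are defined precisely so that these identifications reduce to routine manipulations once one can move freely between order relations in $L$ and statements involving the completely irreducibles, which is exactly what the $\kappa$-lattice hypothesis delivers. The only conceptual point worth noting is that the reverse containment $\t_L \subseteq \Mult(\ont_L,\int_L)$ does \emph{not} come for free, and its failure is what must later be ruled out by the well-separated condition in order to obtain Theorem~\ref{2afs iff ws kappa}.
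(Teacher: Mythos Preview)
Your proposal is correct and follows essentially the same route as the paper: both unpack the definition of $\Fact$ on $\JIrr^c(L)$, handle the forward implications by transitivity, and supply the reverse implications using meet-generation (for the $\ont_L$ component) and join-generation (for the $\int_L$ component). The only minor difference is that for $\Mult(\ont_L,\int_L)\subseteq\t_L$ the paper simply invokes the general identity $\Mult(\Fact(\t))\subseteq\t$ (Proposition~\ref{Mult Fact facts}.\ref{mf}) once $\Fact(\t_L)=(\ont_L,\int_L)$ is known, whereas you give the direct two-line argument; both are equally short.
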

\begin{proof}
There are two facts to verify.
First, we want to show that $i\onto_L j$ if and only if whenever $j\to_L \ell$, we also have $i\to_L \ell$.
That is, we want $i\ge j$ if and only if $(j\not\le\kappau(\ell))\implies(i\not\le\kappau(\ell))$ for all $\ell\in\JIrr^cL$.
Taking the contrapositive, we want $i \geq j$ if and only if $\{ \ell \in \JIrr^cL : i \leq  \kappau(\ell) \} \subseteq \{ \ell \in \JIrr^c L: j  \leq  \kappau(\ell) \}$.
The forward implication is obvious, and, since the image of $\kappau$ is $\MIrr^c L$, the reverse follows because in a $\kappa$-lattice, every element is the meet of the set of completely meet irreducible elements above it.

Second, we want to show that $i\into_L j$ if and only if whenever $\ell\to_L i$, we also have $\ell\to_L j$. The argument is dual. 
We see that $\Fact(\t_L)=(\ont_L,\int_L)$.
Proposition~\ref{Mult Fact facts}.\ref{mf} thus implies that $\Mult(\ont_L,\int_L)\subseteq\t_L$.
\end{proof}

We now check that $(\JIrr^cL, \t_L, \ont_L, \int_L)$ is two-acyclic. We first check the order condition, and then the brick condition.

\begin{prop} \label{finite length}
Suppose $L$ is a $\kappa$-lattice.
If $x \onto_L y \onto_L x$ or $x \into_L y \into_L x$, then $x=y$.
\end{prop}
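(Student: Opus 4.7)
The plan is to simply unwind the definitions of $\onto_L$ and $\into_L$ on $\JIrr^c(L)$ and invoke antisymmetry of the underlying partial order on $L$, together with the fact that $\kappau$ is a bijection.

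For the first alternative, recall that for $x, y \in \JIrr^c(L)$ we have $x \onto_L y$ precisely when $x \ge y$ in $L$. Thus $x \onto_L y \onto_L x$ translates directly into $x \ge y$ and $y \ge x$ in $L$, and antisymmetry of the partial order on $L$ yields $x = y$.

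For the second alternative, recall that $x \into_L y$ means $\kappau(x) \ge \kappau(y)$ in $L$. So $x \into_L y \into_L x$ gives $\kappau(x) \ge \kappau(y)$ and $\kappau(y) \ge \kappau(x)$, whence $\kappau(x) = \kappau(y)$ by antisymmetry. Since $L$ is a $\kappa$-lattice, $\kappau \colon \JIrr^c(L) \to \MIrr^c(L)$ is a bijection and in particular injective, so we conclude $x = y$.

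There is no real obstacle here: both statements are immediate from the fact that $\onto_L$ and $\into_L$ are, by definition, pullbacks of the partial order $\le$ on $L$ along an identity map and along the injective map $\kappau$, respectively. The content of the proposition is really just a sanity check that the order condition of a two-acyclic factorization system is automatic for $(\JIrr^c(L), \t_L, \ont_L, \int_L)$ whenever $L$ is a $\kappa$-lattice.
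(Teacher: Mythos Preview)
Your proof is correct and follows essentially the same approach as the paper: unwind the definitions of $\onto_L$ and $\into_L$, then invoke antisymmetry of $\le$ on $L$ and injectivity of $\kappau$.
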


\begin{proof}
The definition of $x \onto_L y \onto_L x$ is that $x \leq y \leq x$, so this follows because $\leq$ is a partial order.
Similarly, $x \into_L y \into_L x$ means $\kappau(x) \leq \kappau(y) \leq \kappau(x)$ so $\kappau(x) = \kappau(y)$, and the map $\kappau$ is invertible so $x=y$.
\end{proof}

\begin{prop}\label{brick ontoL intoL}
Suppose $L$ is a $\kappa$-lattice and $x,y \in \JIrr^cL$.
If $x \onto_L y \into_L x$, then $x=y$.
\end{prop}
\begin{proof}
The hypotheses of the proposition are that $x \geq y$ and $\kappau(y) \geq \kappau(x)$. 
As before, let $x_{\ast}$ be the unique element covered by $x$. 
If $x > y$ then $x_* \geq y$ so $\kappau(y) \geq \kappau(x) \geq x_* \geq y$, contradicting that $\kappau(y) \not\geq y$. 
We deduce that $x$ is not strictly greater than $y$, so $x=y$.
\end{proof}

It remains to construct an isomorphism from $L$ to $\Pairs(\t_L)$.
We define $J_x = {\{ j \in \JIrr^cL : j \leq x \}}$ and $M_x = \{ m \in \MIrr^cL : m \geq x \}$.

\begin{prop}\label{isom}
Suppose $L$ is a $\kappa$-lattice.
The map $x\mapsto(J_x,\kappad(M_x))$ is an isomorphism from $L$ to $\Pairs(\t_L)$.
\end{prop}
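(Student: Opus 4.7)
The plan is to verify four things about the map $\Phi : x \mapsto (J_x, \kappad(M_x))$: (i) the image lies in $\Pairs(\t_L)$, (ii) $\Phi$ is injective, (iii) $\Phi$ is surjective, and (iv) $\Phi$ and the proposed inverse are order-preserving. The key computational input throughout will be the definition $i \to_L j \iff i \not\le \kappau(j)$, combined with the fact that every element of $L$ is both a join of completely join-irreducibles below it and a meet of completely meet-irreducibles above it.

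First I would unwind $\perp$ in terms of $\kappau$: for a set $A \subseteq \JIrr^c(L)$, an element $j' \in \JIrr^c(L)$ lies in $A^{\perp}$ exactly when $j \le \kappau(j')$ for every $j \in A$, equivalently when $\Join A \le \kappau(j')$. Applying this to $A = J_x$ and using $x = \Join J_x$ gives $J_x^{\perp} = \{j' \in \JIrr^c(L) : x \le \kappau(j')\}$; since $\kappau$ is a bijection $\JIrr^c(L) \to \MIrr^c(L)$, this equals $\kappad(M_x)$. The dual computation: $^\perp\!B = \{j : j \le \kappau(j') \text{ for all } j' \in B\}$; taking $B = \kappad(M_x)$ turns this into $\{j : j \le m \text{ for all } m \in M_x\} = \{j : j \le x\} = J_x$, using $x = \Meet M_x$. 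This gives (i), and also shows $\Phi(x) = (J_x, J_x^{\perp})$.

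For (ii), if $\Phi(x) = \Phi(y)$ then $J_x = J_y$, so $x = \Join J_x = \Join J_y = y$. For (iii), fix a closed set $X$ with orthogonal complement $Y$, and let $x := \Join X$; the claim is $J_x = X$ (and hence automatically $\kappad(M_x) = J_x^{\perp} = X^{\perp} = Y$). The inclusion $X \subseteq J_x$ is immediate since $X \subseteq \JIrr^c(L)$ and every $j \in X$ satisfies $j \le \Join X = x$. For $J_x \subseteq X$, I would use that $X$ is closed: take $j \in J_x$ and any $j' \in X^{\perp}$; by the first-paragraph unwinding, $i \le \kappau(j')$ for every $i \in X$, so $x = \Join X \le \kappau(j')$ and thus $j \le x \le \kappau(j')$. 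This says $j \not\to_L j'$ for all $j' \in X^{\perp}$, i.e.\ $j \in {}^\perp(X^{\perp}) = X$. Finally, (iv) is immediate: if $x \le y$ then $J_x \subseteq J_y$, and conversely $J_x \subseteq J_y$ forces $x = \Join J_x \le \Join J_y = y$, so $\Phi$ and its set-theoretic inverse $(X,Y) \mapsto \Join X$ are mutually inverse order isomorphisms. The identity $\Join X = \Meet \kappau(Y)$ then falls out by writing $x = \Join X$, observing $Y = \kappad(M_x)$, so $\kappau(Y) = M_x$, and using $x = \Meet M_x$.

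The main obstacle is really just the surjectivity step, where one needs the closure condition on $X$ to upgrade ``$j \le \Join X$'' to ``$j \in X$'' for completely join-irreducible $j$. Everything else is bookkeeping with the $\kappa$-lattice axioms (meet- and join-generation by irreducibles, and the bijection $\kappau \leftrightarrow \kappad$), neither of which requires any appeal to $\Mult(\ont_L,\int_L) = \t_L$ or to either piece of the two-acyclic condition; this is consistent with the claim in Theorem~\ref{kappa 2afs} that the isomorphism part holds for arbitrary $\kappa$-lattices even though $\Mult(\ont_L,\int_L)$ may be strictly smaller than $\t_L$.
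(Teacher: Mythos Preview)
Your proof is correct and follows essentially the same approach as the paper's: both compute $J_x^{\perp}=\kappad(M_x)$ and ${}^{\perp}(\kappad(M_x))=J_x$ using join/meet generation by irreducibles, and both handle surjectivity by setting $x=\Join X$ for a closed $X$ and identifying $X$ with $J_x$ via the closure condition. The only cosmetic difference is that the paper packages injectivity and order-preservation together as ``isomorphism onto the image'' and reuses the earlier ${}^{\perp}(\kappad(M_x))=J_x$ computation for surjectivity, whereas you separate the four verifications and redo the left-perp computation directly; the content is the same.
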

\begin{proof}
Since each $x\in L$ has $x=\Join J_x$, the map $x\mapsto(J_x,\kappad(M_x))$ is an isomorphism from $L$ to its image, where the image is partially ordered by containment in the first entry and equivalently reverse containment in the second entry.
Thus it remains only to show that the image is the set of maximal orthogonal pairs for $\t_L$.

For $x\in L$ the set $J_x^\perp$ is $\set{j\in\JIrr^cL:i\le\kappau(j),\,\forall i\in J_x}=\set{j\in\JIrr^cL:x\le\kappau(j)}=\kappad(\set{m\in\MIrr^cL: x\le m})=\kappad(M_x)$.
Also, $^\perp(\kappad(M_x))=\set{i\in\JIrr^cL:{i\le m,}\,{\forall m\in M_x}}$, which equals ${\set{i\in\JIrr^cL:i\le x}}=J_x$.
We see that $(J_x,\kappad(M_x))$ is a maximal orthogonal pair.

On the other hand, suppose $X$ and $Y$ are subsets of $\JIrr^cL$ such that $(X,Y)$ is a maximal orthogonal pair for $\t_L$ and write $x=\Join X$.
Then $Y$ equals $X^\perp$, which equals $\set{j\in\JIrr^cL:i\le\kappau(j),\,\forall i\in X}=\set{j\in\JIrr^cL:x\le \kappau(j)}=\kappad(M_x)$.
Computing $^\perp(\kappad(M_x))$ as in the previous paragraph, we see that $X=J_x$.
\end{proof}

This completes the proof of Theorem~\ref{kappa 2afs}. Theorem~\ref{kappa 2afs} deals with $\kappa$-lattices which may not be well separated, and constructs factorization systems which may not obey $\Mult(\ont_L, \int_L) = \t_L$. The following proposition shows that the hypothesis of well separation fixes this defect. 

\begin{prop}\label{well-sep kappa pairs}
If $L$ is a well separated $\kappa$-lattice, then $(\JIrr^cL, \t_L, \ont_L, \int_L)$ is a two-acyclic factorization system and $L$ is isomorphic to $\Pairs(\t_L)$.
\end{prop}

\begin{proof}
Since $L$ is a $\kappa$-lattice, by Theorem~\ref{kappa 2afs} it remains only to check that $\t_L\subseteq \Mult(\ont_L, \int_L)$.
Let $i$ and $j$ be completely \ji elements with $i \to_L j$ or, in other words, $i\not\leq \kappa(j)$. 
Since $L$ is well separated, there is a join irreducible element $\ell$ such that $i \geq \ell$ and $\kappa(\ell) \geq \kappa(j)$.
In other words, $i \onto_L \ell \into_L j$. 
\end{proof}

We have now proved all the parts of Theorem~\ref{2afs iff ws kappa} which state that a well separated $\kappa$-lattice has a particular form.
In Section~\ref{clos sec}, we turn to proving the reverse direction. 

\subsection{From factorization systems to $\kappa$-lattices}\label{clos sec}  
In this section, we prove Theorem~\ref{2afs kappa} and complete the proof of Theorem~\ref{2afs iff ws kappa}.  
We begin by quickly showing that $\Pairs(\t)$ is a complete lattice for any binary relation $\t$.

Given a binary relation $\t$ on a set $\Sha$, and a subset $X\subseteq\Sha$, we define $\overline X$ to be ${}^\perp(X^\perp)$. We call a set $X$ \newword{closed} if $\overline{X} = X$.
We quote the following standard results from Birkhoff~\cite{Birkhoff}. 
Note that Birkhoff works more generally with two sets and a relation between them (a situation which will occur for us in Section~\ref{MarkSec} ) and that his relation $\rho$ corresponds to our relation $\not \to$. 

\begin{prop}[\cite{Birkhoff}, Theorem V.19]  \label{closure} 
For any binary relation $\t$, the map $X\mapsto\overline X$ is a closure operator on $\Sha$. 
In other words, \begin{itemize} \item $X\subseteq\overline{X}$,
\item $\overline{\overline{X}}=\overline{X}$ for all $X\subseteq\Sha$, and
  \item $X \subseteq Y$ implies $\overline{X} \subseteq \overline{Y}$ for all $X,Y\subseteq\Sha$.\end{itemize}
\end{prop}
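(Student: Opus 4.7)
The plan is to deduce all three properties from the observation that $X\mapsto X^{\perp}$ and $Y\mapsto{}^{\perp}Y$ form an antitone Galois connection from the power set of $\Sha$ to itself. First I would check that both maps are antitone: if $X\subseteq Y$, then any $z$ satisfying $y\not\to z$ for every $y\in Y$ automatically satisfies $x\not\to z$ for every $x\in X$, so $Y^{\perp}\subseteq X^{\perp}$, and symmetrically ${}^{\perp}Y\subseteq{}^{\perp}X$. I would also record the two ``dual'' extensive inclusions $X\subseteq{}^{\perp}(X^{\perp})$ and $Y\subseteq({}^{\perp}Y)^{\perp}$, each of which is immediate from unwinding the definitions: if $x\in X$ and $y\in X^{\perp}$, then $y\in X^{\perp}$ forces $x\not\to y$, i.e.\ $x\in{}^{\perp}(X^{\perp})$, and the other inclusion is symmetric.

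The first of these inclusions is exactly $X\subseteq\overline{X}$. Monotonicity then follows formally: if $X\subseteq Y$, two applications of antitonicity give $\overline{X}={}^{\perp}(X^{\perp})\subseteq{}^{\perp}(Y^{\perp})=\overline{Y}$.

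For idempotency, one direction $\overline{X}\subseteq\overline{\overline{X}}$ is the extensive property applied to $\overline{X}$. For the reverse, I would prove the auxiliary equality $(\overline{X})^{\perp}=X^{\perp}$: the inclusion $(\overline{X})^{\perp}\subseteq X^{\perp}$ is antitonicity applied to $X\subseteq\overline{X}$, and the reverse inclusion is the second dual extensive inclusion applied to $Y=X^{\perp}$, which reads $X^{\perp}\subseteq({}^{\perp}(X^{\perp}))^{\perp}=(\overline{X})^{\perp}$. Applying ${}^{\perp}(\cdot)$ to the equality $(\overline{X})^{\perp}=X^{\perp}$ yields $\overline{\overline{X}}=\overline{X}$. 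There is no real obstacle here; the argument is purely formal once one recognizes the Galois connection, and the only thing to watch is keeping the direction of the reversed inclusions straight.
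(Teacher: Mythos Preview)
Your proof is correct. The paper does not give its own proof of this proposition; it simply cites Birkhoff~\cite[Theorem~V.19]{Birkhoff} as a standard reference. Your argument via the antitone Galois connection $X\mapsto X^{\perp}$, $Y\mapsto{}^{\perp}Y$ is exactly the classical proof one finds in Birkhoff, so there is nothing to compare.
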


\begin{prop}[\cite{Birkhoff}, Theorems V.1 and V.2]  \label{lattice}  
Let $X \mapsto \overline{X}$ be any closure operator on a set $\Sha$. Then the containment order on closed subsets of $\Sha$ is a complete lattice.
The meet operation is $\cap$.
\end{prop}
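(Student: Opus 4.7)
The plan is to establish two facts about the poset of closed subsets of $\Sha$: that the intersection of any family of closed sets is closed, and that $\Sha$ itself is closed. Together with the standard principle that a poset with all meets and a top element is a complete lattice, these facts immediately yield the proposition.

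First, I would check that $\Sha$ is closed. Extensivity gives $\Sha\subseteq\overline{\Sha}$, and trivially $\overline{\Sha}\subseteq\Sha$, so $\overline{\Sha}=\Sha$. In particular the collection of closed subsets is nonempty and contains a top element.

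Second, let $\{X_i\}_{i\in I}$ be any family of closed subsets and set $Y=\bigcap_{i\in I}X_i$. For each $i$, monotonicity of the closure operator applied to $Y\subseteq X_i$ gives $\overline{Y}\subseteq\overline{X_i}=X_i$; intersecting over $i$ yields $\overline{Y}\subseteq Y$, and combined with extensivity this forces $\overline{Y}=Y$. So $Y$ is closed. The set $Y$ is obviously contained in each $X_i$, and any closed set $Z$ contained in every $X_i$ is contained in $Y$; hence $Y$ is the greatest lower bound of $\{X_i\}$ in the poset of closed sets, showing that meets exist and are given by $\cap$.

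Third, to obtain arbitrary joins, given a family $\{X_i\}_{i\in I}$ of closed sets, consider the collection $\mathcal{U}$ of closed subsets of $\Sha$ that contain every $X_i$. Since $\Sha\in\mathcal{U}$, the collection is nonempty, so $J:=\bigcap\mathcal{U}$ is well-defined and closed by the previous step. By construction $J$ contains each $X_i$, and it is contained in any closed upper bound for the family, so $J$ is the least upper bound. Thus every family of closed sets has both a meet and a join, and the containment order on closed subsets is a complete lattice with meet equal to $\cap$, as claimed. The main (very mild) obstacle is simply remembering to invoke monotonicity to conclude that an arbitrary intersection of closed sets is again closed; everything else is routine.
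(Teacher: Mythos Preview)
Your proof is correct and is the standard argument; the paper does not give its own proof of this proposition but simply cites Birkhoff~\cite{Birkhoff}, Theorems V.1 and V.2, and your argument is essentially the classical one found there.
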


We also make some easy observations.

\begin{prop}\label{step} 
If $\t$ is a binary relation on a set $\Sha$, then $(X,Y)\mapsto X$ is an isomorphism from $\Pairs(\t)$ to the complete lattice of closed sets under containment order.
The inverse is $X\mapsto(X,X^\perp)$.
\end{prop}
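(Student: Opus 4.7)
The plan is to unpack the definition of a maximal orthogonal pair and observe that the condition $X = {}^\perp(X^\perp)$ is precisely the condition that $X$ be closed. Everything then reduces to routine bookkeeping.

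First I would verify that the map $(X,Y)\mapsto X$ lands in the collection of closed sets. If $(X,Y)$ is a maximal orthogonal pair, then by definition $Y=X^\perp$ and $X={}^\perp Y$, so $X={}^\perp(X^\perp)=\overline{X}$, meaning $X$ is closed.

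Next I would check that $X\mapsto(X,X^\perp)$ lands in $\Pairs(\t)$ when $X$ is closed. Set $Y=X^\perp$. Then $Y=X^\perp$ by construction, and ${}^\perp Y={}^\perp(X^\perp)=\overline{X}=X$ since $X$ is closed. Hence $(X,Y)$ satisfies both conditions to be a maximal orthogonal pair.

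The two maps are mutually inverse: starting from $(X,Y)\in\Pairs(\t)$ and applying $(X,Y)\mapsto X\mapsto(X,X^\perp)$ returns $(X,Y)$ because $Y=X^\perp$ by definition of a maximal orthogonal pair; starting from a closed set $X$ and applying $X\mapsto(X,X^\perp)\mapsto X$ trivially returns $X$. Finally, the partial order on $\Pairs(\t)$ is containment in the first component, and the partial order on closed sets is likewise containment, so the bijection is an order isomorphism. The main obstacle, if any, is simply remembering that ``closed'' was defined as $\overline{X}=X$ where $\overline{X}={}^\perp(X^\perp)$, so that the two conditions defining a maximal orthogonal pair collapse to closure of $X$ together with the tautological $Y=X^\perp$.
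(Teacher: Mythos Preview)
Your proof is correct and is exactly the routine verification one would expect; the paper itself treats this proposition as an easy observation and does not give a proof.
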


\begin{prop} \label{perp is closed}
If $\t$ is a binary relation on $\Sha$, and $S \subseteq \Sha$, then ${}^{\perp} S$ is closed.
\end{prop}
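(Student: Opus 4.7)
The plan is to verify directly that ${}^\perp S$ equals its own closure, i.e.\ that ${}^\perp(({}^\perp S)^\perp) = {}^\perp S$. One direction, ${}^\perp S \subseteq {}^\perp(({}^\perp S)^\perp)$, is free from Proposition~\ref{closure}: the property $X \subseteq \overline{X}$ applied to $X = {}^\perp S$ gives exactly this inclusion.

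For the reverse containment I would establish two elementary facts from the definitions and then combine them. First, that $S \subseteq ({}^\perp S)^\perp$; unwinding definitions, an element $s \in S$ lies in $({}^\perp S)^\perp$ precisely because every $x \in {}^\perp S$ satisfies $x \not\to s$ by the very definition of ${}^\perp S$. Second, that the operator $X \mapsto {}^\perp X$ is antitone, i.e.\ $A \subseteq B$ implies ${}^\perp B \subseteq {}^\perp A$, which is again immediate from the definition. Applying antitonicity to the inclusion $S \subseteq ({}^\perp S)^\perp$ yields ${}^\perp(({}^\perp S)^\perp) \subseteq {}^\perp S$, closing the argument.

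I do not anticipate any genuine obstacle: this is the standard Galois-connection fact that images of the operator ${}^\perp(\,\cdot\,)$ are automatically closed, and the proof is essentially a two-line diagram chase. Were there a subtlety to flag, it would be that the argument never touches the internal structure of $S$ or $\Sha$, so the dual statement that every set of the form $X^\perp$ is closed follows by the same reasoning with the roles of the two pairing operators swapped.
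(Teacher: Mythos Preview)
Your proof is correct and is exactly the standard Galois-connection argument the paper is invoking. The paper's own proof unwinds the closure condition to ${}^{\perp} S = {}^{\perp}\bigl(({}^{\perp} S)^{\perp}\bigr)$ and then simply cites Birkhoff for this identity; you have instead written out the two-line verification (one inclusion from Proposition~\ref{closure}, the other from $S \subseteq ({}^{\perp} S)^{\perp}$ plus antitonicity), which is precisely what lies behind the citation.
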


\begin{proof}
We must show that $\overline{{}^{\perp} S} = S$. Expanding the definition of closure, this says that ${}^{\perp} S  = {}^{\perp} {\big (} {\big (} {}^{\perp} S {\big )}^{\perp} {\big )}$. This is standard, see for example~\cite{Birkhoff}, the Corollary before Theorem V.19.
\end{proof}

We will often identify $\Pairs(\t)$ with the poset of closed sets and specify elements $(X,Y)$ of $\Pairs(\t)$ by giving only the closed set $X$.  

We now show that $\Pairs(\t)$ is a $\kappa$-lattice when $(\Sha,\t, \ont, \int)$ is a two-acyclic factorization system.
To this end, we study completely join-irreducible elements and completely meet-irreducible elements in $\Pairs(\t)$.  
For $x\in\Sha$, define 
\[T(x) = \{ x' : x \onto x' \} \quad\text{and}\quad F(x) = \{ x' : x' \into x \}.\]
\begin{prop}\label{TF prop}
Let $(\Sha, \t, \ont, \int)$ be a two-acyclic factorization system. 
For any $x \in \Sha$, we have $T(x) = {}^{\perp} (\{ x \}^{\perp})$ and $F(x) = ({}^{\perp} \{ x \})^{\perp}$. Moreover, $x$ is uniquely determined by $T(x)$ and is uniquely determined by $F(x)$. 
\end{prop}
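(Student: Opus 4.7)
The plan is to unpack the definitions directly for the two equalities, and then use reflexivity together with the order condition for the two uniqueness claims.

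For the first equality, I would just translate. Unpacking, $\{x\}^\perp = \{y : x \not\to y\}$, so
\[
{}^\perp(\{x\}^\perp) = \{x' : x' \not\to y \text{ for every } y \text{ with } x \not\to y\},
\]
and the contrapositive rewrites the condition as ``$x' \to y \Rightarrow x \to y$ for all $y$''. This is exactly the defining condition of $x \onto x'$, so ${}^\perp(\{x\}^\perp) = \{x' : x \onto x'\} = T(x)$. The identity $F(x) = ({}^\perp\{x\})^\perp$ is obtained by the dual computation (or by citing Proposition~\ref{duality}), using the definition $x' \into x$ iff $(z \to x' \Rightarrow z \to x)$ for all $z$.

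For the uniqueness claims, I would use reflexivity of $\ont$ and $\int$. Since $x \onto x$, we have $x \in T(x)$; similarly $x \in F(x)$. So if $T(x) = T(y)$, then $x \in T(y)$ and $y \in T(x)$, giving $y \onto x$ and $x \onto y$; the order condition then forces $x = y$. The argument for $F(x) = F(y)$ is identical, using the order condition on $\int$.

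The computations are essentially immediate from the definitions, and there is no real obstacle; the content is simply that the two preorders $\ont$ and $\int$ built into a factorization system record, on the nose, the closure relations $x' \in {}^\perp(\{x\}^\perp)$ and $x' \in ({}^\perp\{x\})^\perp$, and that the order condition promotes the principal-set bijection to injectivity of $x \mapsto T(x)$ and $x \mapsto F(x)$. The brick condition is not needed here.
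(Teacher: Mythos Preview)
Your proof is correct and follows essentially the same approach as the paper: both unpack ${}^\perp(\{x\}^\perp)$ directly, take the contrapositive to recognize the definition of $x \onto x'$, and then use the order condition on $\ont$ (resp.\ $\int$) for uniqueness. Your remark that the brick condition is unused is accurate.
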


\begin{proof} 
Checking that $T(x)$ is the closure $\overline{\{x\}} : = {}^{\perp} (\{ x \}^{\perp})$ is a matter of unfolding definitions: 
We have $x' \in \overline{\{ x \}}$ if and only if, for all $y \in \Sha$, if $x \not\to y$ then $x' \not\to y$. 
Taking the contrapositive, we have $x' \in \overline{\{ x \}}$ if and only if $x' \to y$ implies $x \to y$, which is the definition of $x \onto x'$.
If $T(x_1) = T(x_2)$, then $x_1 \onto x_2$ and $x_2 \onto x_1$, so the order condition implies that $x_1=x_2$.
We have checked both assertions about $T(x)$; the assertions about $F(x)$ are proved similarly.
\end{proof}

We can now describe the completely join-irreducible elements and completely meet-irreducible elements of $\Pairs(\t)$.  
We define $T_*(x) = T(x) \setminus \{ x \}$ and $F^*(x)=F(x)\setminus\set{x}$.

\begin{prop} \label{JIrr and MIrr Characterize}  
Let $(\Sha, \t, \ont, \int)$ be a two-acyclic factorization system. 
\begin{enumerate}[\qquad\rm1.]
\item
$\JIrr^c(\Pairs(\t))=\set{(T(x), T(x)^{\perp}):x\in\Sha}$.
The unique element covered by $(T(x), T(x)^{\perp})$ is $(T_*(x), T_*(x)^{\perp})$.
\item
$\MIrr^c(\Pairs(\t))=\set{({}^{\perp} F(x), F(x)):x\in\Sha}$.
The unique element covering $({}^{\perp} F(x), F(x))$ is $({}^{\perp} F^*(x), F^*(x))$.
\end{enumerate}
\end{prop}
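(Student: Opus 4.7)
The plan is to verify part 1 directly and then derive part 2 via the duality in Proposition~\ref{duality}. Proposition~\ref{TF prop} already gives $T(x) = \overline{\{x\}}$, so each $(T(x), T(x)^\perp)$ is a legitimate maximal orthogonal pair, and the map $x \mapsto T(x)$ is injective. Using the identification of $\Pairs(\t)$ with the lattice of closed sets from Proposition~\ref{step}, establishing the claim amounts to showing that $T_*(x)$ is the maximum among closed sets strictly contained in $T(x)$.

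I will split this into two steps. The easy step is that any closed $X \subsetneq T(x)$ is contained in $T_*(x)$: if $x \in X$, then closure forces $T(x) = \overline{\{x\}} \subseteq X$, a contradiction, so $X \subseteq T(x) \setminus \{x\} = T_*(x)$. The main step, and the one I expect to be the principal obstacle, is to show that $T_*(x)$ is itself closed. Since $\overline{T_*(x)} \subseteq \overline{T(x)} = T(x)$, it is enough to rule out $x \in \overline{T_*(x)}$. Unfolding the definition, $x \in \overline{T_*(x)}$ would mean that every $y$ with $x \to y$ admits some $x' \in T_*(x)$ with $x' \to y$. Specializing to $y = x$ using the reflexivity of $\t$ would produce an $x' \neq x$ satisfying $x \onto x'$ and $x' \to x$; this is precisely forbidden by Proposition~\ref{AcyclicStronger}, which is where the two-acyclic hypothesis is used. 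Combining the two steps proves that $(T(x), T(x)^\perp)$ is completely join-irreducible with covered element $(T_*(x), T_*(x)^\perp)$.

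For the converse direction of part 1, let $(J, J^\perp)$ be any completely join-irreducible element of $\Pairs(\t)$, with $(J_*, J_*^\perp)$ the unique element it covers. Pick $x \in J \setminus J_*$. Then $T(x) = \overline{\{x\}} \subseteq J$ is a closed subset containing $x$; if $T(x) \subsetneq J$, the defining property of $J_*$ would give $T(x) \subseteq J_*$, contradicting $x \notin J_*$. Hence $T(x) = J$, and the injectivity in Proposition~\ref{TF prop} shows that $x$ is uniquely determined by $J$.

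Finally, part 2 follows by applying part 1 to the dual factorization system $(\Sha, \t^{\op}, \int^{\op}, \ont^{\op})$ of Proposition~\ref{duality}: the anti-isomorphism $(X,Y) \mapsto (Y,X)$ exchanges completely join-irreducible and completely meet-irreducible elements, and in the opposite system the role of $T(x)$ is played by $F(x)$ and the role of $T_*(x)$ by $F^*(x)$, while ``covered by'' becomes ``covers''. I do not anticipate any difficulty beyond bookkeeping in transferring the statement across this duality.
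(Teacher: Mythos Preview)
Your proof is correct and follows essentially the same approach as the paper: both show $T_*(x)$ is closed by using Proposition~\ref{AcyclicStronger} to rule out $x\in\overline{T_*(x)}$, and both obtain part~2 by duality. The only cosmetic difference is in the converse direction of part~1, where the paper writes $T=\bigcup_{x\in T}T(x)=\Join_{x\in T}T(x)$ and invokes complete join-irreducibility directly, whereas you pick $x\in J\setminus J_*$ and compare $T(x)$ with $J_*$; both arguments are short and equivalent.
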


\begin{proof}  
We check the claim about completely join-irreducible elements; the claim about completely meet-irreducible elements is dual. 

The set $T(x)$ is closed by Proposition~\ref{TF prop}.  
If $T'$ is a closed set with $T'\subseteq T(x)$ and $T'\not\subseteq T_*(x)$, then $x \in T'$, and thus $T(x)\subseteq T'$ by Proposition~\ref{TorsionImpliesDownset}.
That is, every closed set strictly contained in $T(x)$ is contained in $T_*(x)$, so we can complete the proof by showing that $T_*(x)$ is closed.
Since closure preserves containment and $T(x)$ is closed, the closure of $T_*(x)$ is either $T_*(x)$ or $T(x)$. To see that $\overline{T_*(x)} \neq T(x)$, note that $x \in T_*(x)^{\perp}$ by Proposition~\ref{AcyclicStronger}. So $x \not \in {}^{\perp} (T_*(x)^{\perp}) =  \overline{T_*(x)}$ and we deduce that $T_*(x)$ is closed.

Conversely, suppose that $T$ is a completely join-irreducible closed set. 
By Proposition~\ref{TorsionImpliesDownset}, $T$ is a downset of $\ont$, so $T = \bigcup_{x \in T}T(x)$.
Each $T(x)$ is closed, so $T=\Join_{x \in T}T(x)$. 
By the definition of complete join-irreducibility, $T=T(x)$ for some $x\in T$, as desired.
\end{proof}

Thus in a two-acyclic factorization system, $(T(x), T(x)^{\perp}) \leftrightarrow ({}^{\perp} F(x), F(x))$ is a bijection between $\JIrr^c(\Pairs(\t))$ and $\MIrr^c(\Pairs(\t))$.
We will show that these bijections yield maps $\kappau$ and $\kappad$ manifesting that $\Pairs(\t)$ is a $\kappa$-lattice.

\begin{prop}\label{pairs kappa}  
Let $x\in\Sha$.  
Then ${}^{\perp} F(x)$ is the maximum element in the set of closed sets $T$ obeying $T(x) \cap T = T_*(x)$.
\end{prop}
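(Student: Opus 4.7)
The proof splits into two halves: first, verifying that ${}^\perp F(x)$ itself satisfies $T(x) \cap {}^\perp F(x) = T_*(x)$, and second, proving maximality.

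For the first half, reflexivity gives $x \to x$ with $x \in F(x)$, so $x \notin {}^\perp F(x)$, and combined with $T(x) = T_*(x) \sqcup \{x\}$ this yields ${}^\perp F(x) \cap T(x) \subseteq T_*(x)$. For the reverse inclusion, I will take $y \in T_*(x)$ and suppose for contradiction that $y \to z$ for some $z \in F(x)$. Factoring $y \onto w \into z$, combining with $x \onto y$ and $z \into x$ via transitivity of $\onto$ and $\int$, I obtain $x \onto w \into x$; the brick condition then forces $w = x$, the order condition forces $z = x$, and finally Proposition~\ref{AcyclicStronger} applied to $x \onto y \to x$ forces $y = x$, contradicting $y \in T_*(x)$.

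The main obstacle is the maximality step: given a closed $T$ with $T \cap T(x) = T_*(x)$, showing $T \subseteq {}^\perp F(x)$. My plan is a Galois reduction to the single claim $x \in T^\perp$. Indeed, by the definition of the perp operations, $T \subseteq {}^\perp F(x)$ is equivalent to $F(x) \subseteq T^\perp$; and since $F(x)$ is the principal $\int$-upset at $x$ and, by Proposition~\ref{TorsionImpliesDownset}, $T^\perp$ is an $\int$-upset, this is in turn equivalent to $x \in T^\perp$. To establish the latter, I note that the hypothesis gives $T(x) \setminus T = \{x\}$, so in particular $x \notin T$; closedness of $T$ then yields some $y \in T^\perp$ with $x \to y$. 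Factoring $x \onto u \into y$ and applying the $\int$-upset property of $T^\perp$ to $u \into y$ gives $u \in T^\perp$, hence $u \notin T$ (since $u \to u$); so $u \in T(x) \setminus T = \{x\}$, forcing $u = x \in T^\perp$.
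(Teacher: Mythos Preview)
Your proof is correct. The first half matches the paper's approach closely (both factor a hypothetical arrow and invoke the brick condition; you push the contradiction one step further to $y=x$ whereas the paper first notes $w\neq x$ and stops there, but the logic is the same).

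For the maximality step, your argument is genuinely different from the paper's and is cleaner. The paper proceeds by direct contradiction: it supposes some $u\in T\setminus{}^\perp F(x)$, factors an arrow from $u$ into $F(x)$ to produce $y\in T$ with $y\into x$, then separately uses closedness of $T$ to find $z\in T^\perp$ with $x\into z$, and finally combines $y\into x\into z$ to contradict $z\in T^\perp$. Your Galois reduction collapses all of this: you observe that $T\subseteq{}^\perp F(x)$ is equivalent to $F(x)\subseteq T^\perp$, and since $F(x)$ is the principal $\int$-upset at $x$ while $T^\perp$ is an $\int$-upset (Proposition~\ref{TorsionImpliesDownset}), this reduces to the single membership $x\in T^\perp$. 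You then establish that membership with one factoring and the observation $T(x)\setminus T=\{x\}$. This avoids the paper's second element-chase entirely and makes the structural reason for the result (the $\int$-upset property of $T^\perp$ doing all the work) more transparent.
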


\begin{proof}
We first check that $T(x) \cap ({}^{\perp} F(x)) = T_*(x)$.
Since $x \in F(x)$, we have $x \not\in {}^{\perp} F(x)$, so we only need to show that $T_*(x)\subseteq {}^{\perp}F(x)$.
That is, given $y\in\Sha$ with $x \onto y$ and $x \neq y$ and $z\in\Sha$ with $z\into x$, we need to show that $y\not\to z$.
But if $y \to z$, then there exists $w\in\Sha$ such that $y \onto w \into z$. 
By the transitivity and antisymmetry of $\ont$, we have $x \onto w$ and $x \neq w$. 
By the transitivity of $\int$, we have $w \into x$. 
But then $x \onto w \into x$, contradicting the brick condition.
By this contradiction, we conclude that $T(x) \cap ({}^{\perp} F(x)) = T_*(x)$.

It remains to show that any closed set $T$ with $T(x) \cap T = T_*(x)$ has $T \subseteq {}^{\perp} F(x)$. 
The hypothesis that $T(x) \cap T = T_*(x)$ can be restated as $T_*(x) \subseteq T$ and $x \not\in T$.

Suppose for the sake of contradiction that there is some $u \in T$ and $u \not\in {}^{\perp} F(x)$.
Thus $u \to v \into x$ for some $v$, so there exists $y$ such that $u \onto y \into v$.
Since $T$ is an order ideal for $\ont$, we have $y\in T$ and, by transitivity of $\int$, we have $y \into x$.

Now, $x \not \in T$ and $T$ is closed, so there is some $z \in T^\perp$ such that $x \to z$.
Thus, for some $w$, we have $x\onto w\into z$, and in particular $w\to z$ (Proposition~\ref{Mult Fact facts}.2).
If $w \neq x$ then $w \in T_*(x) \subseteq T$, but then the facts $w\to z$ and $z\in T^\perp$ contradict each other.
Therefore, we must have $x=w$, so that $x \into z$. 
We also showed above that $y \into x$, so now we conclude that $y \into z$ and thus $y\to z$ (using Proposition~\ref{Mult Fact facts}.2 again).
Since $y \in T$, this contradicts $z \in T^\perp$. 
We conclude that $T\subseteq {}^{\perp} F(x)$, and the proof is complete.
\end{proof}

Writing $F^*(x)$ for $F(x)\setminus\set{x}$, the following proposition is dual to Proposition~\ref{pairs kappa}.  

\begin{prop}\label{pairs kappa dual} 
Let $x\in\Sha$.  
Then $T(x)$ is the minimum element in the set of closed sets $T$ obeying ${}^\perp F(x) \join T = {}^\perp F^*(x)$.
\end{prop}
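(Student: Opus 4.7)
The statement is the formal dual of Proposition~\ref{pairs kappa}, so my first plan would be to invoke Proposition~\ref{duality}. Passing to the opposite factorization system $(\Sha,\t^{\op},\int^{\op},\ont^{\op})$ swaps $T$ with $F$ and ${}^\perp(\cdot)$ with $(\cdot)^\perp$; the anti-isomorphism $\Pairs(\t^{\op}) \to \Pairs(\t)$ exchanges maxima with minima and intersections (meets on the opposite side) with joins on our side. Under this dictionary, Proposition~\ref{pairs kappa} applied in the opposite system becomes exactly the statement we want; the only real check is that the closed sets $T(x)$, ${}^\perp F(x)$, ${}^\perp F^*(x)$ correspond on the opposite side to $T(x)^\perp$, $F(x)$, $F^*(x)$, which is immediate from Proposition~\ref{JIrr and MIrr Characterize}.

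If a self-contained proof parallel to Proposition~\ref{pairs kappa} is preferred, it breaks into two steps. First I would verify ${}^\perp F(x) \join T(x) = {}^\perp F^*(x)$. The containment $\subseteq$ reduces, via closedness of ${}^\perp F^*(x)$, to showing $T(x) \subseteq {}^\perp F^*(x)$; ruling out $y \to z$ with $x \onto y$, $z \into x$, $z \neq x$ is a short calculation after factoring $y \to z$ as $y \onto w \into z$ and invoking the brick and order conditions. The reverse containment $\supseteq$ is immediate from Proposition~\ref{JIrr and MIrr Characterize}: ${}^\perp F(x)$ is completely meet-irreducible with unique cover ${}^\perp F^*(x)$, and $x$ witnesses $T(x) \not\subseteq {}^\perp F(x)$.

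For minimality I would take a closed $T$ with ${}^\perp F(x) \join T = {}^\perp F^*(x)$ and apply $(\cdot)^\perp$ to both sides, using the standard closure identity $(\overline{S})^\perp = S^\perp$, to obtain the clean description $F^*(x) = F(x) \cap T^\perp$. This forces $x \notin T^\perp$, so some $t \in T$ satisfies $t \to x$; factoring as $t \onto y \into x$ gives $y \in F(x)$, and Proposition~\ref{TorsionImpliesDownset} places $y \in T$. If $y \neq x$ then $y \in F^*(x) \subseteq T^\perp$, contradicting $y \in T$ and reflexivity of $\t$; hence $y = x \in T$, and the $\ont$-downset property finishes the job. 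The only real obstacle in either route is bookkeeping: under duality, keeping straight which objects swap, and in the direct approach, spotting that orthogonalizing the hypothesis yields the intersection equation $F^*(x) = F(x) \cap T^\perp$ that drives everything.
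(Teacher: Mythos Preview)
Your proposal is correct, and your primary plan---invoke Proposition~\ref{duality} to reduce to Proposition~\ref{pairs kappa}---is exactly what the paper does: it simply declares the statement ``dual to Proposition~\ref{pairs kappa}'' and gives no further argument. Your supplementary direct proof is also sound and mirrors the structure of the paper's proof of Proposition~\ref{pairs kappa}; the key move of taking $(\cdot)^\perp$ of the join hypothesis to obtain $F^*(x) = F(x) \cap T^\perp$ is the correct dualization of the intersection hypothesis used there.
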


Propositions~\ref{pairs kappa} and~\ref{pairs kappa dual} combine to prove the following piece of Theorem~\ref{2afs kappa}.  
\begin{prop}\label{pairs kappa explicit}
If $(\Sha,\t,\ont,\int)$ is a two-acyclic factorization system, then $\Pairs(\t)$ is a $\kappa$-lattice.
Specifically,
\[
\kappau(T(x),T(x)^\perp)=({}^\perp F(x),F(x))\quad
\text{and} \quad
\kappad({}^\perp F(x),F(x))=(T(x),T(x)^\perp). \]
\end{prop}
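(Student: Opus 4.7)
The plan is to unpack the definition of a $\kappa$-lattice and verify each requirement by assembling the work already done in the preceding propositions; there is no new heavy lifting. Recall a $\kappa$-lattice must be (i) complete, (ii) both meet- and join-generated by completely irreducible elements, and (iii) equipped with inverse bijections $\kappau:\JIrr^c\to\MIrr^c$ and $\kappad:\MIrr^c\to\JIrr^c$ characterized by the stated maximum/minimum properties.

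For (i), completeness of $\Pairs(\t)$ is immediate from Proposition~\ref{lattice} via the identification of Proposition~\ref{step}, since the operator $X \mapsto \overline{X}$ is a closure operator (Proposition~\ref{closure}). For (ii), I would observe that the argument already carried out in the converse direction of Proposition~\ref{JIrr and MIrr Characterize} gives join-generation: every closed set $T$ satisfies $T = \bigcup_{x \in T} T(x) = \Join_{x \in T}(T(x),T(x)^\perp)$ because each $T(x)$ is closed. The first equality uses Proposition~\ref{TorsionImpliesDownset} (that closed sets are $\ont$-downsets), and the second uses that intersection is meet and $T(x) \subseteq T$ for each $x \in T$. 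The dual argument, using that closed sets of the form $^\perp F(x)$ are exactly the complete meet-irreducibles (Proposition~\ref{JIrr and MIrr Characterize}) and that every closed set is the intersection of $^\perp F(x)$ over its complement, gives meet-generation.

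For (iii), I would define $\kappau(T(x),T(x)^\perp) := ({}^\perp F(x), F(x))$ and $\kappad({}^\perp F(x), F(x)) := (T(x), T(x)^\perp)$. These are bijections between $\JIrr^c(\Pairs(\t))$ and $\MIrr^c(\Pairs(\t))$ because, by Proposition~\ref{TF prop}, each of $T(x)$ and $F(x)$ determines $x$ uniquely, and by Proposition~\ref{JIrr and MIrr Characterize} every completely irreducible element has the desired form. They are inverse to each other by construction (both come from the parametrization by $x \in \Sha$). To verify the maximum/minimum conditions in the $\kappa$-lattice definition, I invoke Propositions~\ref{pairs kappa} and~\ref{pairs kappa dual} directly: for the join-irreducible $j = (T(x), T(x)^\perp)$, its unique lower cover is $j_* = (T_*(x), T_*(x)^\perp)$ by Proposition~\ref{JIrr and MIrr Characterize}, so the meet condition $j \meet y = j_*$ translates (via meet = intersection) exactly to $T(x) \cap T = T_*(x)$, whose maximum solution is $^\perp F(x)$ by Proposition~\ref{pairs kappa}. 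The dual claim for $\kappad$ follows from Proposition~\ref{pairs kappa dual}.

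In short, there is no serious obstacle here: Propositions~\ref{pairs kappa} and~\ref{pairs kappa dual} were crafted precisely to supply the maximum/minimum characterizations, and Proposition~\ref{JIrr and MIrr Characterize} supplies the parametrization of irreducibles. The only point to watch is simply to match the definition of $\kappa$-lattice on the nose, ensuring that covers are replaced by the correct $j_*$ and $m^*$ and that the generation clauses are stated for all (possibly infinite) elements, which is handled by the observation $T = \Join_{x\in T}T(x)$ above.
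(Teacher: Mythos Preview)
Your proposal is correct and follows essentially the same approach as the paper. The paper's proof is just the one-line remark that Propositions~\ref{pairs kappa} and~\ref{pairs kappa dual} combine to prove this result; you have simply made explicit the verification of completeness (via Propositions~\ref{closure}--\ref{step}) and of meet- and join-generation by irreducibles (via the argument in Proposition~\ref{JIrr and MIrr Characterize}), which the paper leaves implicit since those facts were already established earlier in the section.
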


To prove that $\Pairs(\t)$ is well separated, we first point out a simple lemma.

\begin{lemma} \label{ws lemma}
Let $(\Sha, \t, \ont, \int)$ be a two-acyclic factorization system and let $(X_1, Y_1)$ and $(X_2, Y_2) \in \Pairs(\t)$. If $X_1 \cap Y_2 = \emptyset$ then $(X_1, Y_1) \leq (X_2, Y_2)$. 
\end{lemma}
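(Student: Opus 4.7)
The plan is to argue by contradiction, translating the conclusion $(X_1,Y_1)\leq(X_2,Y_2)$ into the set-theoretic statement $X_1\subseteq X_2$ via Proposition~\ref{step}. Since $(X_2,Y_2)$ is a maximal orthogonal pair, $X_2={}^\perp Y_2$, so the goal reduces to showing that every $x\in X_1$ satisfies $x\not\to y$ for all $y\in Y_2$.

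First I would suppose, for contradiction, that there exist $x\in X_1$ and $y\in Y_2$ with $x\to y$. The key move is to factor this arrow: since $(\Sha,\t,\ont,\int)$ is a factorization system, $\t=\Mult(\ont,\int)$, so there is some $z\in\Sha$ with $x\onto z\into y$.

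Next I would apply Proposition~\ref{TorsionImpliesDownset} twice. Applied to the maximal orthogonal pair $(X_1,Y_1)$, it tells us that $X_1$ is a $\ont$-downset, so $x\in X_1$ and $x\onto z$ force $z\in X_1$. Applied to $(X_2,Y_2)$, it tells us that $Y_2$ is a $\int$-upset, so $y\in Y_2$ and $z\into y$ force $z\in Y_2$. But then $z\in X_1\cap Y_2$, contradicting the hypothesis that this intersection is empty.

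There is no significant obstacle here: the lemma is essentially a direct unpacking of the factorization condition combined with the closure of the two components of a maximal orthogonal pair under $\ont$ and $\int$ respectively. The only point requiring a moment's care is the initial translation of $(X_1,Y_1)\leq(X_2,Y_2)$ into $X_1\subseteq{}^\perp Y_2$, which follows because maximal orthogonal pairs satisfy $X={}^\perp Y$.
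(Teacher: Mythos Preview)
Your proof is correct and follows essentially the same approach as the paper: the paper argues the contrapositive (starting from $X_1\not\subseteq X_2$, picking $p\in X_1\setminus X_2$, finding $r\in Y_2$ with $p\to r$, and factoring $p\onto q\into r$ to get $q\in X_1\cap Y_2$), which is logically the same as your argument by contradiction. The only cosmetic difference is that the paper leaves the downset/upset closure of $X_1$ and $Y_2$ implicit, whereas you explicitly invoke Proposition~\ref{TorsionImpliesDownset}.
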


\begin{proof}
We show the contrapositive. Suppose that $X_1 \not\subseteq X_2$. 
Then there is some $p \in X_1 \setminus X_2$. 
Since $p \not \in X_2$, there is some $r \in Y_2$ with $p \to r$. 
Factor this arrow as $p \onto q \into r$. Then $q \in X_1 \cap Y_2$.
\end{proof}

\begin{prop} \label{pairs is ws}
If $(\Sha, \t, \ont, \int)$ be a two-acyclic factorization system, then $\Pairs(\t)$ is well separated.
\end{prop}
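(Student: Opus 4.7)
The plan is to derive this directly from Lemma~\ref{ws lemma} together with the explicit description of $\JIrr^c$ and $\kappau$ given by Propositions~\ref{JIrr and MIrr Characterize} and~\ref{pairs kappa explicit}. Recall that well separatedness requires: whenever $z_1 \not\leq z_2$ in $L = \Pairs(\t)$, there exists a completely join-irreducible $j$ with $z_1 \geq j$ and $\kappa(j) \geq z_2$. Unwinding the bijection $x \leftrightarrow (T(x), T(x)^\perp)$, this amounts to producing, for any pair $(X_1, Y_1), (X_2, Y_2) \in \Pairs(\t)$ with $(X_1, Y_1) \not\leq (X_2, Y_2)$, a witness $x \in \Sha$ satisfying $T(x) \subseteq X_1$ and $F(x) \subseteq Y_2$.

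The first step is to apply the contrapositive of Lemma~\ref{ws lemma}: since $(X_1, Y_1) \not\leq (X_2, Y_2)$, we obtain some $x \in X_1 \cap Y_2$. This is essentially the only place work happens, and it has already been done.

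The remainder is a direct invocation of Proposition~\ref{TorsionImpliesDownset}. Since $X_1$ is closed and in particular a $\ont$-downset, the containment $x \in X_1$ upgrades to $T(x) = \{x' : x \onto x'\} \subseteq X_1$, giving $(T(x), T(x)^\perp) \leq (X_1, Y_1)$. Dually, since $Y_2$ is a $\int$-upset and $x \in Y_2$, we get $F(x) = \{x' : x' \into x\} \subseteq Y_2$, which by Proposition~\ref{pairs kappa explicit} is exactly $\kappau(T(x), T(x)^\perp) \geq (X_2, Y_2)$. So $j = (T(x), T(x)^\perp)$ is the sought-after join-irreducible witness.

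Since every step reduces to one of the named lemmas/propositions, there is no genuine obstacle; the only real content of the proof is noting that the $x$ produced by Lemma~\ref{ws lemma} automatically has the right relationship to both $(X_1, Y_1)$ and $(X_2, Y_2)$ through $T(x)$ and $F(x)$ respectively, thanks to the downset/upset structure of closed sets and their orthogonals.
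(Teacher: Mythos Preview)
Your proof is correct and essentially identical to the paper's own argument: both take $x \in X_1 \cap Y_2$ from the contrapositive of Lemma~\ref{ws lemma}, then use the $\ont$-downset and $\int$-upset structure (Proposition~\ref{TorsionImpliesDownset}) to conclude $T(x) \subseteq X_1$ and $F(x) \subseteq Y_2$, yielding the witness $j = (T(x), T(x)^\perp)$ via Propositions~\ref{JIrr and MIrr Characterize} and~\ref{pairs kappa explicit}.
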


\begin{proof}
For $z_1$ and $z_2 \in \Pairs(\t)$ with $z_1 \not\leq z_2$, we need to show that there is a completely join irreducible element $j$ of $\Pairs(\t)$ with $z_1 \geq j$ and $\kappau(j) \geq z_2$.  Let $z_1 = (X_1, Y_1)$ and $z_2 = (X_2, Y_2)$. By the contrapositive of Lemma~\ref{ws lemma}, the assumption that $z_1 \not\leq z_2$ implies that $X_1 \cap Y_2 \neq \emptyset$; let $q \in X_1 \cap Y_2$. Put $j = (T(q), T(q)^{\perp})$. By Proposition~\ref{JIrr and MIrr Characterize}, $j$ is completely join irreducible; by Proposition~\ref{pairs kappa}, $\kappa(j) = ({}^{\perp} F(q), F(q))$. Because $X_1$ and $Y_2$ are a $\ont$-downset and an $\int$-upset respectively, we have $X_1 \supseteq T(q)$ and $F(q) \subseteq Y_2$, so  $z_1 \geq j$ and $\kappau(j) \geq z_2$.
\end{proof}

To complete the proof of Theorem~\ref{2afs kappa}, we need to show that, if $L \cong \Pairs(\t)$ for a two-acyclic factorization system $(\Sha, \t, \ont, \int)$, then $(\Sha, \t, \ont, \int)$ is isomorphic to 
$(\JIrr^cL, \t_L, \ont_L, \int_L)$, for $\t_L$, $\ont_L$, and $\int_L$ as defined just before Theorem~\ref{2afs iff ws kappa}. 

If $L \cong \Pairs(\t)$, then Proposition~\ref{JIrr and MIrr Characterize} provides bijections $\Sha \leftrightarrow \JIrr^cL \leftrightarrow \MIrr^cL$ given by $x \leftrightarrow (T(x),  T(x)^{\perp}) \leftrightarrow ({}^{\perp}F(x), F(x))$.  
We now establish that these bijections turn $\ont$ and $\int$ into $\ont_L$ and $\int_L$. 

\begin{prop}\label{bijections turn}  
Suppose $x,y \in \Sha$.  Then $x \onto y$ if and only if $(T(x), {} T(x)^\perp)  \onto_L (T(y), {} T(y)^\perp)$ and $x \into y$ if and only if $({}^\perp F(x), F(x)) \into_L ({}^\perp F(y), F(y))$.
\end{prop}

\begin{proof}
We prove the claim about $\ont$; the claim about $\int$ is analogous. Unwinding definitions, we must show that $x \onto y$ if and only if $\{ x' : x \onto x' \} \supseteq {\{ x' : y \onto x' \}}$. This holds because $\ont$ is a partial order.
\end{proof}

We can complete the proof of Theorem \ref{2afs kappa} by showing that the same bijections turn $\t$ into $\t_L$ as well.   
By the definition of a factorization system, we have $\t = \Mult(\ont, \int)$.
By Propositions~\ref{pairs kappa dual} and~\ref{pairs is ws}, $\Pairs(\t)$ is a well separated $\kappa$-lattice so, by Proposition~\ref{well-sep kappa pairs}, we have $\t_L = \Mult(\ont_L, \int_L)$. 
Proposition~\ref{bijections turn} shows that $\ont = \ont_L$ and $\int = \int_L$, so we also have $\t = \t_L$ as required.
This completes the proof of Theorem~\ref{2afs kappa}.

We have now proved all the parts of Theorem~\ref{2afs iff ws kappa}: Theorem~\ref{kappa 2afs} combined with Proposition~\ref{well-sep kappa pairs}, shows that a well separated $\kappa$-lattice gives rise to a $2$-acyclic factorization system.
Theorem~\ref{2afs kappa} shows that every $2$-acyclic factorization system gives a well separated $\kappa$-lattice, and shows that these two constructions are related in the required manner.

\subsection{Finite semidistributive lattices} \label{semi sec} 
In this section, we recall and prove some basic facts about finite lattices that show that FTFSDL is a special case of Theorem~\ref{2afs iff ws kappa}.
We also use Proposition~\ref{posets only 2-afs} to give a formulation of the FTFSDL which references only $\ont$ and $\int$, and not $\t$.

\begin{prop}\label{Jx Mx}
If $L$ is a finite lattice and $x\in L$, then \[x=\Join \{ j \in \JIrr L : j \leq x \} =\Meet \{ m \in \MIrr L : m \geq x \}.\]
\end{prop}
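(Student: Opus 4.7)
The plan is to prove the first equality $x = \Join\{j \in \JIrr(L) : j \leq x\}$; the second equality follows by applying the first to the opposite lattice $L^{\op}$, whose join-irreducibles are exactly the meet-irreducibles of $L$. One direction is immediate: every $j$ in the indexing set satisfies $j \leq x$, so $\Join\{j \in \JIrr(L) : j \leq x\} \leq x$.

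For the reverse inequality, I would argue by contradiction, choosing $x$ to be minimal in $L$ with $\Join\{j \in \JIrr(L) : j \leq x\} < x$ (such a minimum exists since $L$ is finite). I then split into cases according to the structure of $x$. If $x$ is the minimum element of $L$, then no $j \in \JIrr(L)$ satisfies $j \leq x$ (join-irreducibles are by definition non-minimal), so the indexed set is empty and its join is the minimum element, namely $x$ itself, contradicting that $x$ is a counterexample. If $x$ is itself join-irreducible, then $x$ belongs to the set on the right-hand side, so $\Join\{j \in \JIrr(L) : j \leq x\} \geq x$, again a contradiction.

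In the remaining case, $x$ is neither minimal nor join-irreducible, so by the finite-case characterization of join-irreducibility recalled in the introduction, $x$ can be written as $x = a \join b$ with $a,b < x$. By the minimality of the counterexample $x$, both $a$ and $b$ satisfy $a = \Join\{j \in \JIrr(L) : j \leq a\}$ and $b = \Join\{j \in \JIrr(L) : j \leq b\}$. Combining these, $x = a \join b$ is the join of the union of these two sets of join-irreducibles, each of which lies below $x$; hence $x \leq \Join\{j \in \JIrr(L) : j \leq x\}$, contradicting the choice of $x$.

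There is no serious obstacle in this argument; the only care required is in the boundary case where $x$ is the minimum of $L$, for which one must use the convention that $\Join\emptyset = \hat{0}$ and recall that join-irreducibles are excluded from being minimal.
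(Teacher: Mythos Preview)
Your proof is correct and takes essentially the same approach as the paper: both argue the first equality (the second being dual), and both use induction on $L$ to reduce to the case where $x$ is not join-irreducible, writing it as a join of strictly smaller elements. You frame the induction as a minimal-counterexample argument and are more explicit about the boundary case $x=\hat 0$, but the substance is identical.
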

\begin{proof}
We argue the first equality; the other is dual.
It is enough to show that $x$ is the join of a set of join-irreducible elements.
If $x$ is not join-irreducible, then it is the join of a set $S$ of elements strictly lower than $x$ in $L$.
By induction in
$L$, each element of $S$ is the join of a set of join-irreducible elements, and thus $x$ is also.
\end{proof}

We use the following well-known characterization of semidistributivity in finite lattices.
See, for example, \cite[Theorem~3-1.4]{semi} or \cite[Theorem~2.56]{FreeLattices}.
(Note that these  sources disagree on which map is called~$\kappau$ and which is called~$\kappad$.)
Since the proof is short, we include it. 

\begin{theorem}\label{semi char}
Suppose $L$ is a finite lattice.  
\begin{enumerate}[\qquad\rm1.]
\item \label{meet kappa}   
$L$ is meet semidistributive if and only if for every join-irreducible element $j\in\JIrr L$, the set ${\set{x\in L: j\meet x=j_*}}$ has a maximum element $\kappau(j)$.
\item \label{join kappa}
$L$ is join semidistributive if and only if for every meet-irreducible element $m\in\MIrr L$, the set ${\set{x\in L:m\join x=m^*}}$ has a minimum element $\kappad(m)$.
\end{enumerate}
If $L$ is semidistributive then $\kappau$ is a bijection from $\JIrr L$ to $\MIrr L$ with inverse~$\kappad$.
\end{theorem}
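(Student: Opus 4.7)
The plan is as follows. By order duality (swapping meet and join), parts~1 and~2 are equivalent, so I will focus on part~1 and the final bijection claim.

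For the forward direction of part~1, I apply the equivalent formulation of meet semidistributivity recalled in the introduction directly to the nonempty finite set $S = \{x \in L : j \meet x = j_*\}$, which contains $j_*$. Taking $y := j$, $z := j_*$, and $X := S$ in that formulation yields $(\Join S) \meet j = j_*$, so $\Join S \in S$ is the maximum, giving $\kappau(j)$.

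For the backward direction, I would assume each $\kappau(j)$ exists and suppose for contradiction that meet semidistributivity fails: $x \meet y_1 = x \meet y_2 = w$ while $x \meet (y_1 \join y_2) > w$. Using Proposition~\ref{Jx Mx}, I extract $j \in \JIrr(L)$ with $j \leq x \meet (y_1 \join y_2)$ and $j \not\leq w$, forcing $j \not\leq y_1$ and $j \not\leq y_2$. The main tool is the dichotomy that, since $j$ covers only $j_*$ in a finite lattice, the interval $[j_*, j]$ equals $\{j_*, j\}$; thus for any $y \in L$, the element $j \meet (\kappau(j) \join y)$ lies in $\{j_*, j\}$, yielding either $y \leq \kappau(j)$ (via maximality of $\kappau(j)$) or $j \leq \kappau(j) \join y$. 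If both $y_i$ fall into the first case, then $y_1 \join y_2 \leq \kappau(j)$, contradicting $j \leq y_1 \join y_2$. Resolving the remaining cases is the main obstacle: as $N_5$ shows, $j \not\leq y_i$ does not by itself force $y_i \leq \kappau(j)$, so one must work harder, for example by choosing a counterexample that minimizes $y_1 \join y_2$ and invoking the existence of $\kappau$ at a carefully chosen join-irreducible lying below $\kappau(j) \join y_i$ but not below $\kappau(j)$.

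For the bijection claim, assuming semidistributivity, I show $\kappau(j)$ is meet-irreducible via the same dichotomy: if $\kappau(j) = a \meet b$ with $a, b > \kappau(j)$, the case $j \meet a = j_*$ is ruled out by maximality of $\kappau(j)$, so $j \leq a$ and similarly $j \leq b$, forcing $j \leq \kappau(j)$, a contradiction. Applying the dichotomy with $y := \kappau(j)^*$ shows $j \leq \kappau(j)^*$, whence $\kappau(j) \join j = \kappau(j)^*$, so $j$ lies in the set defining $\kappad(\kappau(j))$ and therefore $\kappad(\kappau(j)) \leq j$. Strict inequality would give $\kappad(\kappau(j)) \leq j_* \leq \kappau(j)$, so $\kappau(j) \join \kappad(\kappau(j)) \leq \kappau(j)$, contradicting the defining property of $\kappad$. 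Hence $\kappad \circ \kappau = \mathrm{id}_{\JIrr(L)}$, and by symmetry $\kappau \circ \kappad = \mathrm{id}_{\MIrr(L)}$.
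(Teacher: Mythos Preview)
Your forward direction of part~1 and the bijection argument are correct and essentially match the paper's approach.

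The backward direction of part~1, however, has a genuine gap that you yourself flag. Extracting $j\in\JIrr(L)$ via Proposition~\ref{Jx Mx} only guarantees $j\le x\meet(y_1\join y_2)$ and $j\not\le w$; it does \emph{not} guarantee $j_*\le w$, and without that you cannot conclude $j\meet y_i=j_*$, which is precisely what would force $y_i\le\kappau(j)$. Your dichotomy then leaves the cases $j\le\kappau(j)\join y_i$ unresolved, and the suggested minimization of $y_1\join y_2$ is not carried out.

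The paper's fix is simple and eliminates the case analysis entirely: instead of invoking Proposition~\ref{Jx Mx}, choose $j$ to be \emph{minimal} among all elements (not assumed join-irreducible a priori) satisfying $j\le x\meet(y_1\join y_2)$ and $j\not\le w$. Minimality forces $j$ to be join-irreducible (if $j$ covered two distinct elements $k_1,k_2$, each would lie below $w$ by minimality, hence so would $j=k_1\join k_2$), and simultaneously gives $j_*\le w\le y_i$. Combined with $j\not\le y_i$ (since $j\le x$ and $j\not\le w=x\meet y_i$), this yields $j\meet y_i=j_*$ directly, so $y_i\le\kappau(j)$ for both $i$, whence $j\le y_1\join y_2\le\kappau(j)$, a contradiction. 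The key point is that choosing a minimal \emph{element} rather than an arbitrary join-irreducible below the difference is what buys you $j_*\le w$ for free.
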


\begin{proof}
Suppose $L$ is meet semidistributive.  Suppose $j$ is a join-irreducible element.
Meet semidistributivity implies that the join $\Join\{ y : j \meet y = j_* \}$ is itself an element of $\{ y : j \meet y = j_* \}$.
Thus this set has a maximum element (i.e., an element greater than all the other elements in the set).  We denote it $\kappau(j)$.   

Conversely, suppose that ${\set{x\in L: j\meet x=j_*}}$ has a maximum element $\kappau(j)$ for every $j\in\JIrr L$.
Let $x$, $y$, and $z$ be elements of $L$ with $x\meet y=x\meet z$.
In any case, $x\meet(y\join z)\ge x\meet y$.
If $x\meet(y\join z)>x\meet y$, let $j$ be minimal among elements that are $\le x\meet(y\join z)$ and $\not\le x\meet y$.
(It is in choosing this minimal element $j$ that we make use of the hypothesis of finiteness.) 
If $j$ covers two distinct elements $k_1$ and $k_2$, then $j=k_1\join k_2$, but $k_1\join k_2\le x\meet y$ since minimality of $j$ implies $k_1\le x\meet y$ and $k_2\le x\meet y$.
Thus $j$ is join-irreducible.
Minimality of $j$ also implies that $j_* \le x\meet y\le y$.
Similarly, $j_* \le z$.
However, $y\join z\ge x\meet(y\join z)\ge j$, contradicting the existence of $\kappau(j)$.
We conclude that $x\meet(y\join z)=x\meet y$.

We have established the first numbered assertion; the second is dual.

We next check that the map $\kappa$ takes $\JIrr L$ to $\MIrr L$.
If $\kappau(j)$ is not \mi, let $X$ be a set of elements with $\Meet X=\kappau(j)$ but $\kappau(j)\not\in X$.
Then every element $x\in X$ has $x>\kappau(j)$, and thus $x\ge j$.
Thus $\kappau(j)=\Meet X\ge j$, contradicting the definition of $\kappau(j)$.
We conclude that $\kappau(j)\in\MIrr L$.

Finally, we must check that $\kappau$ and $\kappad$ are inverse.
Write $\kappau(j)^*$ for the unique element covering $\kappau(j)$.
If $L$ is semidistributive and $j\in\JIrr L$, then by definition of $\kappau(j)$, we have $j\le\kappau(j)^*$ and $j_* \le \kappau(j)$.
Thus $j$ is a minimal element of ${\set{x\in L:\kappau(j)\join x=\kappau(j)^*}}$, so that $\kappad(\kappau(j))=j$.
The dual argument shows that $\kappad$ maps $\MIrr L$ to $\JIrr L$ and that $\kappau(\kappad(m))=m$ for all $m\in\MIrr L$.
\end{proof}

Proposition~\ref{Jx Mx} and Theorem~\ref{semi char} combine to establish the following statement.
\begin{cor}\label{fin kappa}
A finite lattice is a $\kappa$-lattice if and only if it is semidistributive.
\end{cor}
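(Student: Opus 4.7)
The plan is to observe that this corollary is essentially a bookkeeping exercise combining Proposition~\ref{Jx Mx} and Theorem~\ref{semi char}, once we check that in the finite case each clause of the definition of a $\kappa$-lattice matches a clause already characterized in those earlier results.

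First I would handle the easy parts of the definition. In a finite lattice $L$, completeness is automatic, and every join-irreducible (resp.\ meet-irreducible) element is completely join-irreducible (resp.\ completely meet-irreducible), so $\JIrr^c(L) = \JIrr(L)$ and $\MIrr^c(L) = \MIrr(L)$. Proposition~\ref{Jx Mx} says that every $x \in L$ equals $\Join\set{j \in \JIrr(L) : j \leq x}$ and $\Meet\set{m \in \MIrr(L) : m \geq x}$, so $L$ is meet and join generated by irreducibles. Thus being a $\kappa$-lattice reduces in the finite case to the existence of inverse bijections $\kappau : \JIrr(L) \to \MIrr(L)$ and $\kappad : \MIrr(L) \to \JIrr(L)$ with $\kappau(j) = \max\set{y : j \meet y = j_*}$ and $\kappad(m) = \min\set{x : m \join x = m^*}$.

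For the forward implication, suppose $L$ is semidistributive. Then Theorem~\ref{semi char}.\ref{meet kappa} produces, for each $j \in \JIrr(L)$, the element $\kappau(j)$ as the maximum of $\set{y : j \meet y = j_*}$, and Theorem~\ref{semi char}.\ref{join kappa} produces $\kappad(m)$ as the minimum of $\set{x : m \join x = m^*}$ for each $m \in \MIrr(L)$. The final sentence of Theorem~\ref{semi char} asserts that these maps are inverse bijections between $\JIrr(L)$ and $\MIrr(L)$, which is exactly the remaining requirement in the definition of a $\kappa$-lattice.

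For the reverse implication, suppose $L$ is a $\kappa$-lattice. Then by definition, for every $j \in \JIrr(L)$ the set $\set{y : j \meet y = j_*}$ has a maximum element (namely $\kappau(j)$), and for every $m \in \MIrr(L)$ the set $\set{x : m \join x = m^*}$ has a minimum element. Theorem~\ref{semi char}.\ref{meet kappa} then gives meet semidistributivity and Theorem~\ref{semi char}.\ref{join kappa} gives join semidistributivity, so $L$ is semidistributive. There is no real obstacle here; the only thing to be careful about is not to confuse $\JIrr$ with $\JIrr^c$, which is why the first step above is needed.
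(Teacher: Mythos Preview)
Your proposal is correct and follows exactly the approach indicated in the paper, which simply states that the corollary is established by combining Proposition~\ref{Jx Mx} and Theorem~\ref{semi char}. You have merely spelled out in detail how the clauses of the definition of a $\kappa$-lattice line up with the conclusions of those two results, which is precisely what the paper intends the reader to do.
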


\begin{prop}\label{fin kappa is ws}
If $L$ is a finite $\kappa$-lattice, then $L$ is well separated.
\end{prop}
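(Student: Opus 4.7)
The plan is to exploit finiteness via a minimality argument, using Theorem~\ref{semi char} to convert the conclusion $\kappa(j) \geq z_2$ into the condition $j \meet z_2 = j_*$.

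First I would take two elements $z_1, z_2 \in L$ with $z_1 \not\leq z_2$ and let $j$ be a \emph{minimal} element of the set $S = \{x \in L : x \leq z_1, \, x \not\leq z_2\}$. This set is nonempty since $z_1 \in S$, and has a minimal element because $L$ is finite. I would then verify that $j$ must be join-irreducible: $j$ cannot equal the bottom element (else $j \leq z_2$), so suppose for contradiction that $j = a \join b$ with $a, b < j$. Both $a$ and $b$ are below $z_1$, so minimality of $j$ forces $a \leq z_2$ and $b \leq z_2$, hence $j \leq z_2$, a contradiction.

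The key step is then to show that $j \meet z_2 = j_*$. On one hand, $j \not\leq z_2$ gives $j \meet z_2 < j$, hence $j \meet z_2 \leq j_*$ because $j_*$ is the unique element covered by $j$. On the other hand, $j_* < j \leq z_1$ and minimality of $j$ in $S$ force $j_* \leq z_2$, hence $j_* \leq j \meet z_2$. Combining, $j \meet z_2 = j_*$, so $z_2$ lies in the set $\{y : j \meet y = j_*\}$. By Theorem~\ref{semi char}, which applies since $L$ is finite semidistributive by Corollary~\ref{fin kappa}, this set has maximum $\kappau(j)$, giving $z_2 \leq \kappau(j)$.

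The main obstacle is really only notational: making sure the minimality argument correctly produces a \ji element, and double-checking that the inequality $j \meet z_2 \leq j_*$ does indeed give \emph{equality} rather than just $\leq$. The verification that $j_* \leq z_2$, which comes from minimality together with the fact that $j_* \leq z_1$ automatically, is the crucial point that closes the argument. Since in the finite case $\JIrr^c(L) = \JIrr(L)$, the $j$ we constructed lies in $\JIrr^c(L)$, witnessing well-separatedness of $L$.
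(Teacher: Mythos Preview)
Your proof is correct and follows essentially the same route as the paper: take a minimal element $j$ of $\{x \leq z_1 : x \not\leq z_2\}$, check it is join-irreducible, observe $j_* \leq z_2$ so $j \meet z_2 = j_*$, and conclude $\kappa(j) \geq z_2$. The only superfluous step is your appeal to Theorem~\ref{semi char} via Corollary~\ref{fin kappa}: since $L$ is assumed to be a $\kappa$-lattice, the defining property of $\kappa(j)$ as the maximum of $\{y : j \meet y = j_*\}$ is already available directly from the hypothesis.
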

\begin{proof}
Suppose $z_1\not\le z_2$ in $L$.
The set $\set{x\in L:x\le z_1,\,x\not\le z_2}$ is not empty, because it contains $z_1$.
Thus the set has a minimal element $j$.
The element $j$ is join-irreducible; if $j=\Join X$ with $j\not\in X$, then $z_2$ is an upper bound for $X$, yielding the contradiction $j\le z_2$.  
Also $j_*\le z_2$, so $j\meet z_2=j_*$, and thus $\kappa(j)\ge z_2$.
\end{proof}

Corollary \ref{fin kappa}, Proposition~\ref{fin kappa is ws}, and the finite case of Theorem~\ref{2afs iff ws kappa} combine to prove Theorem \ref{FTFSDL1}, the Fundamental Theorem of Finite Semidistributive Lattices.

Using Proposition \ref{posets only 2-afs}, we can restate the FTFSDL 
referring only to $\ont$ and $\int$.  
We can define $\Pairs(\Mult(\ont,\int))$ directly as the set of pairs $(X,Y)$ with $Y = X^\perp$ and $X={}^\perp Y$ in the sense of Proposition~\ref{posets only 2-afs}, partially ordered by containment in the first component, or equivalently reverse containment in the second component.

\begin{theorem}[FTFSDL, restated]\label{FTFSDL2}  
A finite poset $L$ is a semidistributive lattice if and only if it is isomorphic to $\Pairs(\ont,\int)$ for partial orders $\ont$ and $\int$ on a set $\Sha$, satisfying the conditions of Proposition~\ref{posets only 2-afs}.
In this case, $(\Sha,\ont,\int)$ is isomorphic to $(\JIrr L,\ont_L,\int_L)$, where $i \onto_L j$ if and only if $i\ge j$ in $L$ and $i\into_L j$ if and only if $\kappau(i)\ge\kappau(j)$ in $L$.
The map
\[x\mapsto( \{ j \in \JIrr L: j \leq x \},\  \kappad \left( \{ m \in \MIrr L : m \geq x \} \right) )\]
is an isomorphism from $L$ to $\Pairs(\ont_L,\int_L)$, with inverse $(X,Y)\mapsto\Join X=\Meet(\kappau(Y))$.
\end{theorem}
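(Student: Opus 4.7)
The plan is to derive Theorem~\ref{FTFSDL2} as a direct translation of Theorem~\ref{FTFSDL1} via Proposition~\ref{posets only 2-afs}. The content is essentially bookkeeping: the data $(\Sha,\t,\ont,\int)$ of a two-acyclic factorization system is equivalent to the data of two partial orders $(\ont,\int)$ on $\Sha$ satisfying conditions (i)--(iii) of Proposition~\ref{posets only 2-afs}, because in a factorization system $\t$ is recovered from $(\ont,\int)$ as $\Mult(\ont,\int)$, while conversely any pair $(\ont,\int)$ subject to those conditions gives rise to a two-acyclic factorization system with $\t = \Mult(\ont,\int)$.

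For the forward direction, suppose $L$ is a finite semidistributive lattice. By Theorem~\ref{FTFSDL1} there is a finite two-acyclic factorization system $(\Sha,\t,\ont,\int)$ with $L \cong \Pairs(\t)$, and since $\t = \Mult(\ont,\int)$, Proposition~\ref{posets only 2-afs} guarantees that $(\ont,\int)$ satisfies conditions (i)--(iii). The final sentence of Proposition~\ref{posets only 2-afs} gives formulas for $X^\perp$ and ${}^\perp X$ purely in terms of $\downinto$, $\downonto$, $\upinto$, $\uponto$, so the set $\Pairs(\Mult(\ont,\int))$ defined in the restated theorem coincides, as a poset, with $\Pairs(\t)$. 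Hence $L \cong \Pairs(\ont,\int)$.

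For the reverse direction, given partial orders $\ont$ and $\int$ on a finite set $\Sha$ satisfying (i)--(iii), define $\t = \Mult(\ont,\int)$. By Proposition~\ref{posets only 2-afs}, the tuple $(\Sha,\t,\ont,\int)$ is a finite two-acyclic factorization system, and the same final sentence of that proposition identifies $\Pairs(\ont,\int)$ (in the sense of the restated theorem) with $\Pairs(\t)$ (in the original sense). Theorem~\ref{FTFSDL1} then yields that $L := \Pairs(\t)$ is a finite semidistributive lattice, and that the isomorphisms $(\Sha,\t,\ont,\int) \cong (\JIrr(L),\t_L,\ont_L,\int_L)$ and the explicit map $x \mapsto (J_x,\kappad(M_x))$ with stated inverse carry over verbatim. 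Restricting the latter isomorphism to its $(\ont,\int)$-components gives the isomorphism $(\Sha,\ont,\int) \cong (\JIrr(L),\ont_L,\int_L)$ claimed in Theorem~\ref{FTFSDL2}.

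Since every step is a direct invocation of already-proved machinery, there is no real obstacle; the only mild subtlety is verifying that the definition of $\Pairs(\Mult(\ont,\int))$ given in the paragraph preceding Theorem~\ref{FTFSDL2} (in terms of the $X^\perp$ and ${}^\perp Y$ of Proposition~\ref{posets only 2-afs}) agrees with $\Pairs(\t)$ for $\t = \Mult(\ont,\int)$, which is immediate from the last assertion of Proposition~\ref{posets only 2-afs}.
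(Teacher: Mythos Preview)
Your proposal is correct and matches the paper's approach exactly: the paper does not give a separate proof of Theorem~\ref{FTFSDL2} but simply introduces it with the sentence ``Using Proposition~\ref{posets only 2-afs}, we can restate the FTFSDL referring only to $\ont$ and $\int$,'' and you have spelled out precisely this translation in detail.
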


\section{The infinite case}\label{inf sec}
In this section, we discuss the infinite case further.  
We begin by explaining some choices we have made in the infinite case.

One choice we have made is to only consider lattices that are \emph{complete}.
This is for two reasons: 
First, one of our main motivations is the lattice of torsion classes (Section~\ref{rep ssec}) for a finite-dimensional algebra, and this lattice is complete.   
Second, we have seen in Proposition~\ref{step} that for any binary relation $\t$ on a set $\Sha$, the lattice $\Pairs(\t)$ is complete. 

We have similarly focused our attention on completely \ji and completely \mi elements.
Crucial here is the fact that $j$ is completely \ji if and only if there is an element $j_*$ such that $x<j$ if and only if $x\le j_*$.
If $j$ is \ji but not completely so, there is no such an element.

\subsection{Conditions on infinite complete lattices}  \label{inf summary}
We now discuss some conditions on complete lattices, each of which, in the finite case, is either equivalent to semidistributivity or trivially true.
The main theorem of this section describes the relationships between these conditions.
In Section~\ref{infinite main proof} we prove the main theorem, and in Section~\ref{counter sec}, we present numerous counterexamples to show that the claims of the theorem cannot be strengthened.

We begin with some discreteness conditions, which are easily seen to hold in all finite lattices. 
Recall that a complete lattice $L$ is \newword{bi-spatial} if for all $x \in L$, we have $x = \Join_{j \leq x,\ j \in \JIrr^cL} j = \Meet_{m \geq x,\ m \in \MIrr^cL} m$.
We say that $L$ is \newword{weakly atomic}, if, for all $x < y$, there exist $u$ and $v$ with $x \leq u \covered v \leq y$.    
We say that \newword{meets in $L$ are cover-determined} if whenever $x < y$ and $x \meet z < y \meet z$, there is a cover $x \leq u \covered v \leq y$ with $u \meet z < v \meet z$.
We say that \newword{joins in $L$ are cover-determined} if the dual condition holds.

A condition which implies all of the previous discreteness conditions is that a lattice be \newword{bi-algebraic}.
(See Propositions~\ref{cov det cov sep}, \ref{alg cov det}, and~\ref{alg gen}.)
An element $x$ of a lattice $L$ is called \newword{compact} if, for every subset $A$ of $L$, if $\bigjoin A \geq x$ then there is a finite subset $F$ of $A$ 
such that $\bigjoin F \geq x$. A lattice $L$ is called \newword{algebraic} if every element $x$ is the join of the set of compact elements which are $\leq x$. 
A lattice $L$ is called \newword{bi-algebraic} if it is algebraic and if the dual lattice is also algebraic. 
It is clear that any finite lattice is bi-algebraic. 
As we will discuss in Section~\ref{rep ssec},
the lattice of torsion classes for any algebra is bi-algebraic.

We continue by describing some conditions that, in the finite case, are equivalent to semidistributivity.
Recall that $L$ is join semidistributive if, for any nonempty finite subset $X$ of $L$ such that $x \join y = z$ for all $x \in X$, we have ${\left( \Meet_{x \in X} x \right) \join y = z}$.
A lattice $L$ is \newword{completely join semidistributive} if it is complete and if, for \emph{every} nonempty subset $X$ of $L$ such that $x \join y = z$ for all $x \in X$, we have ${\left( \Meet_{x \in X} x \right) \join y = z}$.   
\newword{Complete meet semidistributivity} is defined dually, and $L$ is called completely semidistributive if it is completely join semidistributive and completely meet semidistributive.
Our interest in complete semidistributivity arises in part from the lattice of torsion classes for a finite-dimensional algebra, as discussed in Section~\ref{rep ssec}.

Recall that a $\kappa$-lattice is a complete lattice $L$ that is bi-spatial, and has special bijections $\kappau$ and $\kappad$ between $\JIrr^cL$ and $\MIrr^cL$.
Recall also that Corollary~\ref{fin kappa} says that a finite lattice is a $\kappa$-lattice if and only if it is semidistributive.
Finally, recall that a $\kappa$-lattice is well separated if whenever  $z_1 \not\leq z_2$, there exists $j \in\JIrr^cL$ with $z_1 \geq j$ and $\kappa(j)\geq z_2$.

The following theorem relates all of these conditions on a complete lattice.

\begin{theorem} \label{infinite main}
Let $L$ be a complete lattice. 
The implications shown by solid arrows hold without additional hypotheses. 
The dashed implications hold under the additional hypothesis that $L$ is completely semidistributive.

%

\smallskip
\begin{tikzpicture}[xscale=4.5,yscale=1.9,double distance=1pt,arrows={-Stealth}]
\node [shape=rectangle,draw,text width=3.0cm,align=center] (a) at (0,0) {$L\cong\Pairs(\t)$ for a two-acyclic factorization system} ;
\node [shape=rectangle,draw,text width=3.0cm,align=center] (b) at (0,-1) {$L$ is a well separated $\kappa$-lattice};
\node [shape=rectangle,draw,text width=3.0cm,align=center] (c) at (0,-1.7) {$L$ is a $\kappa$-lattice};
\node [shape=rectangle,draw,text width=3.0cm,align=center] (d) at (1,0) {Joins and meets in $L$ are cover-determined};
\node [shape=rectangle,draw,text width=3.0cm,align=center] (e) at (1,-1) {$L$ is weakly atomic};
\node [shape=rectangle,draw,text width=3.0cm,align=center] (f) at (1,-1.7) {$L$ is bi-spatial};
\node [shape=rectangle,draw,text width=2.4cm,align=center] (g) at (2,0) {$L$ is bi-algebraic};
\draw[Stealth-Stealth, double] (a) -- (b);
\draw[double] (b) -- (c);
\draw[double] (d) -- (e);
\draw[double, dashed] (e) -- (f);
\draw[double] (g) -- (d);
\draw[double] (g.south) to [bend left=22] ([xshift=0.0mm]f.east);
\draw[double] ([yshift=-0.1cm]a.east) to  [bend right=0]  ([yshift=-0.1cm]d.west);
\draw[double, dashed] ([yshift=0.1cm]d.west) to  [bend right=0]  ([yshift=0.1cm]a.east);
\draw[double] ([yshift=-0.06 cm]c.east) to [bend right=0] ([yshift=-0.06 cm]f.west);
\draw[double, dashed] ([yshift=0.06 cm]f.west)  to [bend right=0] ([yshift=0.06 cm]c.east);
\draw[-,dotted] ([xshift=-0.05cm,yshift=+0.1cm]a.north west) -- ([xshift=0.05cm, yshift=0.1cm]d.north east)-- ([xshift=0.05cm, yshift=-0.12cm]d.south east) --([xshift=0.05cm,yshift=-0.1cm]a.south east) --
  ([xshift=0.05cm,yshift=-0.1cm]b.south east) --([xshift=-0.05cm,yshift=-0.1cm]b.south west) -- ([xshift=-0.05cm,yshift=0.1cm]a.north west);
\end{tikzpicture}

\end{theorem}

\begin{remark}
For finite lattices, the three conditions in the left column are all equivalent to each other, and are equivalent to being semidistributive.
All finite lattices obey the conditions in the middle and right column.
\end{remark}

\begin{remark}
We remind the reader of Theorem~\ref{kappa 2afs}: If $L$ is a $\kappa$-lattice, then $L \cong \Pairs(\t_L)$ and $\Pairs(\t_L)$ obeys all the conditions of a two-acyclic factorization system except that some $\t$ arrows may not factor as an $\ont$ followed by an $\int$. We have not incorporated the statement $L \cong \Pairs(\t_L)$ into the diagram because $\t_L$ is only defined when $L$ is a $\kappa$-lattice.
\end{remark}

\begin{remark}
We regard completely semidistributive lattices obeying the three equivalent conditions inside the region marked with a dotted line in the top left of the diagram 
as the ``good'' lattices.
All finite semidistributive lattices are in this class, as are lattices of torsion classes for finite-dimensional algebras, and we hope to construct lattices completing weak order on infinite Coxeter groups which will likewise obey these conditions. 
\end{remark}

\begin{remark}
Unfortunately, in the infinite case, none of the conditions we have mentioned imply semidistributivity (complete or otherwise).
See Example~\ref{obnoxious} for a truly frustrating counterexample.
We do not have a good replacement hypothesis which would imply semidistributivity.
\end{remark}

%
%

\subsection{Proof of Theorem~\ref{infinite main}} \label{infinite main proof}

We now establish the implications of Theorem~\ref{infinite main}.
We begin with the arrows which do not require complete semidistributivity.
We already established in Theorem~\ref{2afs iff ws kappa} that $L$ is a well separated $\kappa$-lattice if and only if it is isomorphic to $\Pairs(\t)$ for a two-acyclic factorization system $(\Sha, \to, \ont, \int)$.

\begin{proposition} \label{prop-cov-det} Let $(\Sha,\t,\ont,\int)$ be a two-acyclic factorization
  system. Then joins and meets in $\Pairs(\t)$ are cover-determined.
\end{proposition}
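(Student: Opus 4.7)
The plan is to invoke Proposition~\ref{duality} to reduce to the case of meets, so that the statement for joins follows by applying the meets case to the dual factorization system $(\Sha,\t^{\op},\int^{\op},\ont^{\op})$. So suppose $w < z$ in $\Pairs(\t)$ and $u \in \Pairs(\t)$ satisfy $w \meet u < z \meet u$; in particular $z \meet u \not\leq w$. By the well-separatedness established in Proposition~\ref{pairs is ws}, there is a completely join-irreducible $j = (T(p), T(p)^{\perp})$ with $j \leq z \meet u$ and $\kappa(j) = ({}^{\perp} F(p), F(p)) \geq w$. Identifying elements of $\Pairs(\t)$ with closed sets via Proposition~\ref{step}, this says $p \in z \cap u$ and $w \subseteq {}^{\perp} F(p)$.

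I would then define the closed sets $Y = \overline{w \cup T(p)}$ and $X = Y \cap {}^{\perp} F(p)$. Both sit inside $z$ (since $w \subseteq z$ and $T(p) \subseteq z$, the latter because closed sets are $\ont$-downsets by Proposition~\ref{TorsionImpliesDownset}), and both contain $w$ (using $w \subseteq {}^{\perp} F(p)$ for $X$). Because $p \to p$ forces $p \notin {}^{\perp} F(p)$, while $p \in T(p) \subseteq Y$, we have $p \in Y \setminus X$, so $X \subsetneq Y$; since also $p \in u$, this gives $X \meet u \subsetneq Y \meet u$. All that remains is to verify the cover relation $X \covered Y$, and this is the main obstacle.

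To prove the cover, suppose $V$ is a closed set with $X \subsetneq V \subseteq Y$; it suffices to show $p \in V$, since then $T(p) \subseteq V$ (downset property) and $w \subseteq X \subseteq V$ together force $V \supseteq \overline{w \cup T(p)} = Y$. The strict containment $X \subsetneq V$ implies $V \not\subseteq {}^{\perp} F(p)$, so there exist $a \in V$ and $b \in F(p)$ with $a \to b$. Factoring $a \onto s \into b$ via $\t = \Mult(\ont, \int)$, we get $s \in V$ (closed sets are $\ont$-downsets) and $s \into p$ (by transitivity of $\int$, since $b \into p$). Now assume for contradiction that $p \notin V$; then closedness of $V$ yields $y \in V^{\perp}$ with $p \to y$, and we factor $p \onto s' \into y$. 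If $s' \neq p$, then $s' \in T_*(p) = T(p) \cap {}^{\perp} F(p) \subseteq X \subseteq V$ (the equality is from the proof of Proposition~\ref{pairs kappa}); but then $s' \in V$ together with $s' \to y$ contradicts $y \in V^{\perp}$. Hence $s' = p$, so $p \into y$, and transitivity of $\int$ gives $s \into y$; by Proposition~\ref{Mult Fact facts}.\ref{union}, $s \to y$, again contradicting orthogonality of $V$ and $V^{\perp}$. This forces $p \in V$, completing the verification that $X \covered Y$.
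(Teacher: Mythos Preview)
Your proof is correct, and it takes a genuinely different route from the paper's.

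The paper first reduces to the case $u \leq z$, then invokes Theorem~\ref{thm-intervals} (the description of intervals via restricted factorization systems, proved later in Section~\ref{intervals sec}) to further reduce to the case where $w \meet u$ is the bottom and $z$ is the top of $\Pairs(\t)$. In that simplified setting it locates an element $q \in u \cap w^{\perp}$ and takes the cover ${}^{\perp} F(q) \covered {}^{\perp} F^{*}(q)$ coming from a completely \mi element.

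You instead work directly in the original lattice: well-separatedness (Proposition~\ref{pairs is ws}) gives you $p$ with $T(p) \leq z \meet u$ and ${}^{\perp} F(p) \geq w$, and you manufacture the cover as $Y \cap {}^{\perp} F(p) \covered Y$ with $Y = \overline{w \cup T(p)}$. Your verification that this really is a cover is a clean closed-set chase, essentially recycling the identity $T_*(p) = T(p) \cap {}^{\perp} F(p)$ from the proof of Proposition~\ref{pairs kappa}. The advantage of your approach is that it is fully self-contained at this point in the paper, with no forward reference to Section~\ref{intervals sec}; the paper's approach is shorter once the interval machinery is granted, and it makes transparent that the cover is labeled by a completely \mi element.
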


\begin{proof}
We prove the statement for meets. 
 Let $X<Y$ in $\Pairs(\t)$ and let $Z \in \Pairs(\t)$ be such that $X \meet Z < Y \meet Z$. 
 We first prove the result in the special case that $Z \leq Y$, so $Y \meet Z = Z$.
In this case, all of the objects $X$, $Y$, $Z$ and $X\meet Z$ lie in the interval $[X \meet Z, Y]$. 
By Theorem~\ref{thm-intervals}, this interval  also corresponds to a two-acyclic factorization system. 
We therefore may assume that $X \meet Z = (\emptyset,\Sha)$ and $Y = (\Sha,\emptyset)$. 
The hypothesis $X \meet Z < Y \meet Z$ now simplifies to $Z\neq(\emptyset,\Sha)$. 
We identify each pair with its first element, so our hypotheses now are that $X$, $Z$ are closed sets with $X \cap Z = \emptyset$ and $Z \neq \emptyset$. We want to show that there is a cover $X \leq C_1 \covered C_2$ with $C_1 \cap Z \subsetneq C_2 \cap Z$.

Since $Z \neq \emptyset$, we can find $p \in Z$. Since $X \cap Z = \emptyset$, we have $p \not\in X$ and thus there is $r \in X^{\perp}$ with $p \to r$. 
Factor this arrow as $p \onto q \into r$. Since $Z$ is an $\ont$-downset and $X^{\perp}$ is an $\int$-upset, we have $q \in X^{\perp} \cap Z$. 
Recall the notations $F(q) = \{ s\in\Sha : s \into q \}$ and $F^*(q) = F(q) \setminus \{ q \}$. 
We have $X \subseteq {}^{\perp} F(q) \covered {}^{\perp} F^*(q)$. 

Clearly, $q \not \in {}^{\perp} F(q)$ so $q \not \in {}^{\perp} F(q) \cap Z$.
Since $F(q) \setminus F^*(q) = \{ q \}$, we have $q \in  {}^{\perp} F^*(q)$, so  $q  \in {}^{\perp} F^*(q) \cap Z$.
This shows that $ {}^{\perp} F(q) \cap Z \neq {}^{\perp} F^*(q) \cap Z$, so ${}^{\perp} F^*(q) \covered {}^{\perp} F(q)$ is the desired cover.
This completes the proof in the case that $Z \leq Y$.
 
 We now tackle the general case.  Let $X<Y$ in $\Pairs(\t)$ and let $Z \in \Pairs(\t)$ be such that $X \meet Z < Y \meet Z$. 
 Put $Z' = Y \meet Z$. Note that $Z' \leq Y$ and note that $X \meet Z' = X \meet Y \meet Z = X \meet Z < Y \meet Z = Y \meet Z'$, so the hypotheses of the Proposition apply to $(X, Y, Z')$. So, by our earlier work, there is a cover $X \leq C_1 \covered C_2 \leq Y$ with $C_1 \meet Z' < C_2 \meet Z'$. Suppose for the sake of contradiction that $C_1 \meet Z = C_2 \meet Z$. Then $C_1 \meet Z' = C_1 \meet Z \meet Y = C_2 \meet Z \meet Y = C_2 \meet Z'$, which is indeed a contradiction.
 
%
%

\end{proof}

\begin{prop}\label{cov det cov sep}
If either joins in $L$ are cover-determined or meets in $L$ are cover-determined, then $L$ is weakly atomic.
\end{prop}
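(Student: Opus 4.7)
The plan is to reduce cover-separation directly to the definition of cover-determined by choosing $u$ so that meeting (or joining) with $u$ acts trivially on the interval $[w,z]$. Specifically, suppose meets in $L$ are cover-determined, and let $w < z$ in $L$. Take $u = z$. Then $w \meet u = w \meet z = w$ and $z \meet u = z$, so the hypothesis $w \meet u < z \meet u$ reduces to $w < z$, which holds. Applying the cover-determined hypothesis yields a cover $w \le x \covered y \le z$ with $x \meet z < y \meet z$. Since $x \le z$ and $y \le z$, this just says $x < y$, which is automatic. Hence $w \le x \covered y \le z$ is the cover witnessing cover-separation.

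The case where joins in $L$ are cover-determined is dual: given $w < z$, take $u = w$, so $w \join u = w$ and $z \join u = z$, and the hypothesis $w \join u < z \join u$ again reduces to $w < z$. The cover-determined property then produces a cover $w \le x \covered y \le z$ with $x \join w < y \join w$, which is automatic since $x, y \ge w$.

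There is really no obstacle here — the only subtlety is recognizing that the definition of cover-determined, when applied with $u$ equal to one of the endpoints of the interval $[w,z]$, collapses the extra inequality condition to something trivially implied by the cover relation, so cover-determinedness immediately delivers cover-separation.
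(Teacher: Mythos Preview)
Your proof is correct and takes essentially the same approach as the paper: both arguments apply the cover-determined hypothesis with a value of $u$ chosen so that meeting (or joining) with $u$ is trivial on $[w,z]$. The only difference is cosmetic---the paper takes $u=0$ (the minimum of $L$) in the join case, whereas you take $u=w$ (respectively $u=z$ in the meet case); either choice collapses the side condition to $w<z$ and yields the desired cover.
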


\begin{proof} 
We consider the case that meets are cover-determined; the case of joins is dual.
Let $x < y$ and let $1$ be the maximum element of $L$. 
Since meets are cover-determined and $x \meet 1 < y \meet 1$, there exist $u$ and $v$ with $x \leq u \covered v \leq y$. 
\end{proof}

\switchmargin
\begin{prop}\label{alg cov det}
If $L$ is algebraic then meets in $L$ are cover-determined. If $L$ is bi-algebraic then meets and joins in $L$ are cover-determined. 
\end{prop}

\begin{proof}
Suppose that $x < y$ and $x \meet z < y \meet z$. 
Since $L$ is algebraic, $y \meet z$ is the join of the compact elements below it, so there must be a compact element $k$ which is below $y \meet z$ and not below $x \meet z$. 
Since $z \geq y \meet z \geq k$ and $x \meet z \not\geq k$, we must have $x \not\geq k$.

Let $P$ be the set $\{ w : x \le w\le y\text{ and } w \not\geq k \}$. 
The set $P$ is nonempty, because we checked in the previous paragraph that $x \not\geq k$ and, clearly, $x\le x\le y$.
Let $C$ be a totally ordered subset of $P$. 
We claim that $\bigjoin C \in P$. 
It is obvious that $x\le\bigjoin C \le y$, so we just need to check that $\bigjoin C \not\geq k$. 
By the compactness condition, if $\bigjoin C \geq k$, then there is some finite subset $F$ of $C$ with $\bigjoin F = k$.  
But $\bigjoin F = f$ where $f$ is the largest element of $F$, and $f \in F \subseteq P$ so, by definition, $f \not\geq k$. 
This contradiction completes the verification that $\bigjoin C \in P$.

The argument of the previous paragraph shows that $P$ is a nonempty poset in which every totally ordered subset has an upper bound.  
Thus, by Zorn's lemma, there is an element $u \in P$ such that any $w>u$ is not in $P$. 
Put $v = u \join k$ and observe that $x\le u < v\le y$. 
We claim that $u \covered v$. If not, suppose that $u < w < v$. 
Then $w \not \in P$ and, since $x\le u<w<v\le y$, we must have $w \geq k$. 
But then $w$ is an upper bound for $u$ and $k$ which is smaller than $u \join k=v$, a contradiction.

Finally, $v\wedge z \geq k$, but $u\wedge z\not\geq k$, so $u\wedge z < v\wedge z$, as desired. \end{proof}

The following is~\cite[Theorem 1-4.25]{CLaD}.

\begin{prop}\label{alg gen}
If $L$ is algebraic then $L$ is dually spatial. If $L$ is bi-algebraic, then $L$ is bi-spatial.  
\end{prop}

The remaining undashed implications say that a well separated $\kappa$-lattice is a $\kappa$-lattice and that a $\kappa$-lattice is bi-spatial.
Both of these implications are true by definition.

We now turn to the implications which require complete semidistributivity.
We begin with the bottom row.

\begin{prop}\label{gen semi kappa}
Let $L$ be a completely semidistributive lattice which is bi-spatial. 
Then $L$ is a $\kappa$-lattice.
\end{prop}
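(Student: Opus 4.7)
My plan is to construct the bijections $\kappau : \JIrr^c(L) \to \MIrr^c(L)$ and $\kappad : \MIrr^c(L) \to \JIrr^c(L)$ directly from complete semidistributivity. Fix $j \in \JIrr^c(L)$ and let $S_j = \set{y \in L : j \meet y = j_*}$. This set is nonempty since $j_* \in S_j$, and since every $y \in S_j$ satisfies $j \meet y = j_*$, complete meet semidistributivity yields $j \meet \left( \Join S_j \right) = j_*$. Therefore $\Join S_j$ is itself the maximum element of $S_j$, which I take as $\kappau(j)$. Dually, complete join semidistributivity produces, for each $m \in \MIrr^c(L)$, a minimum element $\kappad(m)$ of $\set{x \in L : m \join x = m^*}$.

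The next step is to check that $\kappau(j)$ is completely meet-irreducible (so that the required cover $\kappau(j)^*$ exists); the analogous statement for $\kappad(m)$ is dual. Suppose $\kappau(j) = \Meet X$ with $\kappau(j) \notin X$. Every $x \in X$ strictly exceeds $\kappau(j)$ and therefore lies outside $S_j$, so $j \meet x \neq j_*$. On the other hand, $j_* \leq \kappau(j) \leq x$ together with $j \meet x \leq j$ and the complete join-irreducibility of $j$ pin $j \meet x$ into $\set{j_*, j}$; hence $j \meet x = j$, that is, $x \geq j$. Then $\kappau(j) = \Meet X \geq j$, contradicting $j \meet \kappau(j) = j_*$.

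It remains to show $\kappad \circ \kappau = \Id$ (the reverse composition is dual). Set $m = \kappau(j)$. Applying the same dichotomy to the single element $m^*$: since $m^* > m = \max S_j$ we have $m^* \notin S_j$, so $j \meet m^* \neq j_*$; combined with $j_* \leq m \leq m^*$ and complete join-irreducibility, this forces $j \meet m^* = j$, hence $m^* \geq j$. Because $j \not\leq m$, we have $m \join j > m$, so $m \join j \geq m^*$, and the reverse inequality $m \join j \leq m^*$ follows from $j \leq m^*$. Thus $m \join j = m^*$, placing $j$ in the set whose minimum is $\kappad(m)$, so $\kappad(m) \leq j$. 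If $\kappad(m) < j$ then complete join-irreducibility of $j$ gives $\kappad(m) \leq j_* \leq m$, contradicting $m \join \kappad(m) = m^* > m$; hence $\kappad(m) = j$. The main obstacle is the middle step: it is the complete join-irreducibility of $j$ that makes the dichotomy $j \meet x \in \set{j_*, j}$ available, while the strength of \emph{complete} meet semidistributivity (rather than merely the finite version) is what allows arbitrary joins $\Join S_j$ and arbitrary meets $\Meet X$ to be handled.
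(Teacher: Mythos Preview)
Your proof is correct and follows exactly the approach the paper intends: the paper's own proof simply reads ``This is proved exactly as in Theorem~\ref{semi char},'' and you have carried out precisely that adaptation, replacing finite (semi)distributivity by complete (semi)distributivity and join/meet-irreducibility by their complete versions. Your write-up in fact spells out more carefully than the paper's Theorem~\ref{semi char} the dichotomy $j \meet x \in \{j_*, j\}$ (which the paper uses implicitly when it writes ``and thus $x \geq j$'') and the verification that $j$ is the \emph{minimum}, not merely a minimal element, of $\{x : m \join x = m^*\}$.
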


\begin{proof}
This is proved exactly as in Theorem~\ref{semi char}.
\end{proof}

Before proving the other dashed implications, 
we need the following lemma.
This result is essentially \cite[Proposition~2.20]{DIRRT}.
\begin{lemma}\label{ji lab}
If $L$ is a completely semidistributive lattice and $u\covered v$ in $L$, then the set $\{ t \in L: t \join u = v \}$ contains a minimum element $\ell$, which is completely \ji and has $\ell_*\le u$.
\end{lemma}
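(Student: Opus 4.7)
The plan is to let $S = \{t \in L : t \join u = v\}$ and show directly that $\Meet S \in S$, is completely join-irreducible, and has $(\Meet S)_* \le u$.

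First I would note that $v \in S$, so $S$ is nonempty, and then apply complete join semidistributivity with the family $S$ and the fixed element $u$: every $t \in S$ satisfies $t \join u = v$, so $(\Meet S) \join u = v$. Thus $\ell := \Meet S$ lies in $S$ and is its minimum element.

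Next I would verify complete join-irreducibility of $\ell$. Suppose $\ell = \Join X$ with $\ell \notin X$. Any $x \in X$ satisfies $x \le \ell$, and $x \ne \ell$ (else $\ell \in X$), so $x < \ell$. By minimality of $\ell$ in $S$, we have $x \notin S$, i.e.\ $x \join u \ne v$. But $u \le x \join u \le \ell \join u = v$, and because $u \covered v$ the interval $[u,v]$ has only two elements, forcing $x \join u = u$, i.e.\ $x \le u$. Consequently $\ell = \Join X \le u$, giving $\ell \join u = u < v$, contradicting $\ell \in S$. Hence no such $X$ exists, and $\ell$ is completely \ji. Recall that this is equivalent to the existence of $\ell_* = \Join\{x \in L : x < \ell\}$.

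Finally, the very same cover argument shows $\ell_* \le u$: every $x < \ell$ is not in $S$ (by minimality), so as above $x \join u \in [u,v] = \{u, v\}$ and $x \join u \ne v$ forces $x \le u$. Taking the join over all such $x$ gives $\ell_* \le u$, as required.

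I do not foresee any real obstacle: the only nontrivial ingredient is complete join semidistributivity to guarantee $\Meet S \in S$, and the cover hypothesis is used twice in the routine way above to collapse $[u,v]$ to its two endpoints.
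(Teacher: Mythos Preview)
Your proof is correct and follows essentially the same approach as the paper's: both use complete join semidistributivity to get $\ell=\Meet S\in S$, then use the cover $u\covered v$ to show every element strictly below $\ell$ lies below $u$, yielding both complete join-irreducibility and $\ell_*\le u$. The only cosmetic difference is that the paper phrases the irreducibility step as ``$\Join(S\setminus\{\ell\})\le u<\ell$, hence $\ell\in S$'' rather than as a proof by contradiction assuming $\ell\notin X$.
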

\begin{proof}
Let $T=\{ t \in L: t \join u = v \}$.
Since $L$ is completely semidistributive, the element $\ell =\Meet T$ has $\ell\join u = v$, so it is the desired minimum element of $T$.
Suppose $\ell= \Join S$ for some set $S\subseteq L$.
We must show that $\ell\in S$.
In order to do this, we will show that $\Join (S \setminus \{ \ell \}) \leq u$; since $\ell \not \leq u$, this shows that $\Join(S \setminus \set{ \ell }) \neq \Join S$. 

If $s\in S\setminus\set{\ell}$, then $u \leq s \join u \leq \ell \join u = v$  so, since $u \covered v$, we must have $s \join u = u$ or $s \join u =v$. 
If $s \join u = v$ then $s$ is an element of $T$ that is strictly less than $\ell=\Meet T$.
By this contradiction, we see that $s \join u = u$.
We have shown that $s\le u$ 
for every $s\in S\setminus\set{\ell}$ and thus  $\Join (S \setminus \{ \ell \}) \leq u$ as promised.
We have shown that $\ell$ is completely \ji.

We have $u\le\ell_*\join u\le\ell\join u=v$, but the second inequality must be strict by the definition of $\ell$.
Since $u\covered v$, we see that $\ell_*\join u=u$, so that $\ell_*\le u$.
\end{proof}

\begin{remark}
The map in Lemma \ref{ji lab} from covers of a complete semidistributive
lattice to completely join-irreducible elements is known as the \newword{join-irreducible labelling}. It is well-known in the finite case. An analogous result in the
representation-theoretic setting was established in \cite{BCZ} (combining
Theorems 1.0.2, 1.0.3, and 1.0.5). \end{remark}

We next establish the downward dashed implication.

\begin{prop} \label{Jx Mx Infinite}
If $L$ is a completely semidistributive, weakly atomic lattice and $x\in L$, then \[x=\Join \{ j \in \JIrr^cL : j \leq x \} =\Meet \{ m \in \MIrr^cL : m \geq x \}.\]
\end{prop}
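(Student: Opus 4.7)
The plan is to prove the join identity; the meet identity then follows by the dual argument (using join-cover-separation and the dual of Lemma~\ref{ji lab}, which exists because $L$ is completely meet semidistributive as well).

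Set $y = \Join \{ j \in \JIrr^c(L) : j \leq x \}$. The inequality $y \leq x$ is immediate since each $j$ in the set satisfies $j \leq x$. The goal is to rule out $y < x$. Suppose for contradiction that $y < x$. Since $L$ is cover-separated, there exist elements $u, v \in L$ with
\[ y \leq u \covered v \leq x. \]
Now apply Lemma~\ref{ji lab} to the cover $u \covered v$: there is a completely join-irreducible element $\ell \in L$ such that $\ell \join u = v$ and $\ell_* \leq u$. In particular, $\ell \leq v \leq x$, so $\ell$ is a completely join-irreducible element below $x$, and hence $\ell \leq y$ by the definition of $y$.

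This is the contradiction: from $\ell \leq y \leq u$ we would get $\ell \join u = u$, but we also have $\ell \join u = v > u$. Hence $y = x$, as required. The dual argument establishes the meet formula: assume $x < \Meet\{ m \in \MIrr^c(L) : m \geq x \}$ does not hold, i.e., assume strict inequality $x < z$ where $z$ is the meet, find a cover $x \leq u \covered v \leq z$, and use the dual of Lemma~\ref{ji lab} to extract a completely \mi element $m$ with $v \leq m^*$ and $u \leq m$, so that $m \geq x$ yet $m \not\geq v$, contradicting $v \leq z \leq m$.

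The only mild subtlety is that Lemma~\ref{ji lab} as stated handles the join side; I would either cite a dual formulation (which is immediate since being completely semidistributive is self-dual) or re-derive it in two sentences. Beyond this bookkeeping, the argument is a direct combination of the two hypotheses: cover-separation produces a cover inside any strict gap, and the join-irreducible labelling converts that cover into a completely \ji witness that must sit below $y$, yielding the contradiction. There is no genuine obstacle; the proof is essentially the Hasse-diagram argument one uses in the finite case, with cover-separation supplying the single ingredient (existence of a cover between $y$ and $x$) that is automatic for finite lattices.
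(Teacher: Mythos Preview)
Your proof is correct and follows essentially the same approach as the paper: set up the join $y$ below $x$, use cover-separation to find a cover $u\covered v$ in the gap, apply Lemma~\ref{ji lab} to extract a completely \ji element $\ell\le v\le x$, and derive the contradiction $\ell\le y\le u$ versus $\ell\join u=v$. The paper phrases the contradiction as ``$\ell\not\le u$ hence $\ell\not\le y$,'' while you phrase it as ``$\ell\le y$ hence $\ell\le u$''; these are contrapositives of one another, and otherwise the arguments are identical.
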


\begin{proof}
We prove that $x=\Join \{ j \in \JIrr^cL : j \leq x \}$.
The other equality is dual.

Let $x'=\Join \{j \in \JIrr^cL  :  j\leq x \}$.
Then $x' \leq x$, so suppose for the sake of contradiction that $x'<x$.
Since $L$ is weakly atomic, there exist $u$ and $v$ with $x' \leq u \covered v \leq x$. 
Let $\ell$ be the minimum element of $\{ t : t \join u = v \}$, which exists and is completely \ji by Lemma~\ref{ji lab}. 

Since $\ell\join u = v\neq u$, we see that $\ell \not\leq u$ and hence $\ell\not \leq x'$. 
But $\ell$ is completely join-irreducible and $\ell\le v\le x$, so we have contradicted the definition of $x'$ as ${x'=\Join \{ j \in \JIrr^c L:  j \leq x \}}$.  We therefore reject the supposition that ${x'<x}$.
\end{proof}

Our final task is to prove the dashed implication in the top row.
In fact, we will prove the following proposition.
\begin{prop}\label{kappa 2afs j/m det}   
Suppose $L$ is a completely semidistributive lattice. 
If either joins are cover-determined or meets are cover-determined, then $L$ is a well separated $\kappa$-lattice.
\end{prop}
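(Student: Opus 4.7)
The plan is to reduce the claim to the weak separation condition of Lemma~\ref{weak-sep}, after which being a well separated $\kappa$-lattice is immediate. Assuming meets are cover-determined (the joins case is dual, via Proposition~\ref{duality}), Proposition~\ref{cov det cov sep} yields cover-separation, Proposition~\ref{Jx Mx Infinite} gives meet and join generation by irreducibles, and Proposition~\ref{gen semi kappa} upgrades these to the $\kappa$-lattice property.

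For the weak separation, given $i,j \in \JIrr^c(L)$ with $i \not\le \kappa(j)$, I will construct the required $\ell$ by applying meets cover-determined to the pair $w = \kappa(j)$, $z = \kappa(j) \join i$ with test element $u = i$. The hypothesis $i \not\le \kappa(j)$ gives $w \meet u = \kappa(j) \meet i < i = z \meet u$, so we obtain a cover $\kappa(j) \le x \covered y \le \kappa(j) \join i$ with $x \meet i < y \meet i$; let $\ell$ be the completely join-irreducible element attached to this cover by Lemma~\ref{ji lab}, so $\ell \join x = y$ and $\ell_* \le x$.

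The conditions $\ell \le i$ and $\ell \not\le \kappa(j)$ are routine. The element $(y \meet i) \join x$ lies in $\{x,y\}$ (since $x \covered y$) and cannot equal $x$ (which would force $y \meet i \le x \meet i$, contradicting $x \meet i < y \meet i$), hence $(y\meet i) \join x = y$; minimality in Lemma~\ref{ji lab} then gives $\ell \le y \meet i \le i$. If instead $\ell \le \kappa(j) \le x$, then $y = \ell \join x = x$, contradicting $x \covered y$.

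The main obstacle is showing $\kappa(\ell) \ge \kappa(j)$; I emphasize that in an infinite $\kappa$-lattice this is genuinely weaker than the more intuitive condition $\ell \meet \kappa(j) = \ell_*$, since the set $\{y : \ell \meet y = \ell_*\}$ need not coincide with the principal ideal $[0, \kappa(\ell)]$. I plan to prove $\kappa(j) \le \kappa(\ell)$ directly by verifying that every $z \in \JIrr^c(L)$ with $z \le \kappa(j)$ also satisfies $z \le \kappa(\ell)$; join-generation then gives $\kappa(j) = \Join\{z \in \JIrr^c(L) : z \le \kappa(j)\} \le \kappa(\ell)$. For the key implication, given such a $z$, the chain $z \le \kappa(j) \le x$ together with $\ell_* \le x$ yields $z \join \ell_* \le x$; since $\ell \not\le x$ (as $\ell \join x = y \ne x$), we conclude $\ell \not\le z \join \ell_*$, and the criterion of Lemma~\ref{j kappa} ($z \to_L \ell$ iff $z \join \ell_* \ge \ell$) translates this to $z \not\to_L \ell$, i.e., $z \le \kappa(\ell)$.
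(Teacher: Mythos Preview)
Your proof is correct and follows the same skeleton as the paper: reduce to weak separation via Lemma~\ref{weak-sep}, invoke cover-determination to produce a cover, extract $\ell$ from Lemma~\ref{ji lab}, and verify $\ell\le i$ together with $\kappa(\ell)\ge\kappa(j)$. The paper treats the joins case and places the cover in $[i\meet\kappa(j),\,i]$, so $\ell\le i$ is immediate and the work goes into $\kappa(\ell)\ge\kappa(j)$; you treat the meets case and place the cover in $[\kappa(j),\,\kappa(j)\join i]$, so $\kappa(j)\le x$ is immediate and the work shifts to $\ell\le i$ (handled via the minimality of $\ell$). These are complementary realizations of the same argument.

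Two minor points. First, your detour through individual $z\in\JIrr^c(L)$ is unnecessary: from $\kappa(j)\le x$ and $\ell_*\le x$ you get $\kappa(j)\join\ell_*\le x$, and since $\ell\not\le x$ you have $\ell\meet(\kappa(j)\join\ell_*)=\ell_*$, so $\kappa(j)\join\ell_*\le\kappa(\ell)$ by the defining \emph{maximum} property of $\kappa(\ell)$; your caution about $\{y:\ell\meet y=\ell_*\}$ not equaling $[0,\kappa(\ell)]$ is not relevant here, since only the upward-closed direction is used. Second, Proposition~\ref{duality} concerns factorization systems, not lattices; the duality you need (that ``well separated $\kappa$-lattice'' and the cover-determined hypotheses are self-dual) is plain lattice duality and needs no citation.
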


To prove the proposition, we will use an alternative characterization of the well separated property.
In a $\kappa$-lattice $L$, suppose $i,j\in\JIrr^cL$.
If there exists $\ell\in\JIrr^cL$ with $i\ge\ell$ and $\kappa(\ell)\ge\kappa(j))$, then $i\not\le\kappa(j)$.
(If $i\le\kappa(j)$, then $\ell\le i\le\kappa(j)\le\kappa(\ell)$, contradicting the definition of $\kappa(\ell)$.)  
We say that $L$ is \newword{weakly separated} if the converse holds:  
If $i,j\in\JIrr^cL$ have $i\not\le\kappa(j)$, then there exists $\ell\in\JIrr^cL$ with $i\ge\ell$ and $\kappa(\ell)\ge\kappa(j))$.

\begin{lemma}\label{weak-sep} A $\kappa$-lattice $L$ is well separated if and only if it is weakly separated.
\end{lemma}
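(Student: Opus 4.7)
The forward direction is immediate from the definitions: well separation is a statement about arbitrary pairs $z_1 \not\leq z_2$, and weak separation is simply its restriction to pairs of the form $(i,\kappa(j))$ with $i,j \in \JIrr^c(L)$. So if $L$ is well separated, one just instantiates the defining property at $z_1 = i$ and $z_2 = \kappa(j)$ to obtain the element $\ell$ required by weak separation.

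For the reverse direction, the plan is to reduce the general well-separation hypothesis $z_1 \not\leq z_2$ to the special case handled by weak separation, using the fact that a $\kappa$-lattice is meet and join generated by irreducibles. Specifically, I would write
\[
z_1 = \Join \{ i \in \JIrr^c(L) : i \leq z_1 \}, \qquad
z_2 = \Meet \{ m \in \MIrr^c(L) : m \geq z_2 \}.
\]
The assumption $z_1 \not\leq z_2$ then forces the existence of some completely join-irreducible $i \leq z_1$ and some completely meet-irreducible $m \geq z_2$ with $i \not\leq m$; otherwise every $i \leq z_1$ would lie below every $m \geq z_2$, giving $z_1 \leq z_2$.

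Next I would use the $\kappa$-lattice bijection to write $m = \kappa(j)$ for a unique $j \in \JIrr^c(L)$. Then $i \not\leq \kappa(j)$ is exactly the hypothesis of weak separation, so we obtain $\ell \in \JIrr^c(L)$ with $i \geq \ell$ and $\kappa(\ell) \geq \kappa(j)$. Chaining the inequalities gives $z_1 \geq i \geq \ell$ and $\kappa(\ell) \geq \kappa(j) = m \geq z_2$, which is precisely the well-separation property at $(z_1,z_2)$.

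I do not expect any serious obstacle here; the only thing to be careful about is invoking the two generation properties of a $\kappa$-lattice correctly to extract the irreducible witnesses $i$ and $m$, and to note that the surjectivity of $\kappa : \JIrr^c(L) \to \MIrr^c(L)$ in a $\kappa$-lattice is what allows us to rewrite $m$ as $\kappa(j)$. Everything else is a routine chain of inequalities.
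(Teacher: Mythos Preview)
Your proposal is correct and follows essentially the same argument as the paper's proof: both use join-generation by irreducibles to find $i \le z_1$ and meet-generation by irreducibles (together with the bijection $\kappa$) to find $m = \kappa(j) \ge z_2$ with $i \not\le \kappa(j)$, then invoke weak separation and chain inequalities. The only cosmetic difference is that the paper extracts $i$ and $m$ in two sequential steps whereas you find them simultaneously.
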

\begin{proof}
Clearly any well separated $\kappa$-lattice is weakly separated.  
We show the converse. 
Suppose $L$ is weakly separated and $z_1\not\le z_2$ in $L$.  
We will show that there exists $\ell\in\JIrr^cL$ with $z_1\ge\ell$ and $\kappa(\ell)\ge z_2$.
Since $L$ is spatial (as part of the definition of a $\kappa$-lattice), there exists $i\in\JIrr^cL$ with $i\le z_1$ but $i\not\le z_2$.
Since $i\not\le z_2$ and because $L$ is dually spatial, there exists $j\in\JIrr^cL$ such that $z_2\le\kappa(j)$ and $i\not\le\kappa(j)$.
By weak separation, there exists $\ell\in\JIrr^cL$ with $i\ge\ell$ and $\kappa(\ell)\ge\kappa(j)$.
Thus $z_1\ge i\ge\ell$ and $\kappa(\ell)\ge\kappa(j)\ge z_2$ as desired.
\end{proof}

\begin{proof}[Proof of Proposition~\ref{kappa 2afs j/m det}]
We consider the case that joins are cover-determined; the case of meets is dual. 
The implications we have already established show that $L$ is a $\kappa$-lattice.
It remains to establish well separation.
By Lemma~\ref{weak-sep}, it is enough to establish weak separation.

Suppose $i,j\in\JIrr^cL$ have $i\not\leq\kappau(j)$. 
Then $i\join\kappau(j)>\kappau(j)$ and $(i\meet\kappau(j)) \join\kappau(j)=\kappau(j)$.  
Since joins are cover-determined in $L$, there exist $u,v\in L$ with $i\meet\kappau(j)\le u \covered v\leq i$ and $u \join\kappau(j)\neq v\join\kappau(j)$. 
By Lemma~\ref{ji lab}, the set $\{ s\in L : s \join u = v \}$ has a minimum element $\ell$, which is completely join-irreducible and has $\ell_*\le u$.
We have $\ell \leq v \leq i$.

Suppose for the sake of contradiction that $\kappau(\ell) \not\geq \kappa(j)$.
We claim first that $\kappau(j)\join\ell_*\ge\ell$.
Indeed, if $\kappau(j)\join\ell_*\not\ge\ell$, then $(\kappau(j)\join\ell_*)\meet\ell<\ell$, so $(\kappau(j)\join\ell_*)\meet\ell=\ell_*$.
By the definition of $\kappau(\ell)$, we have $\kappau(j)\join\ell_*\le\kappau(\ell)$, contradicting $\kappau(\ell) \not\geq \kappa(j)$ and thus proving the claim.
By the claim, $\kappa(j)\join u\ge\ell$ as well.
Since $\ell\join u=v$, also $\kappa(j)\join v=\kappa(j)\join\ell\join u$,
which equals $\kappa(j)\join u$ because $\kappa(j)\join u\ge\ell$.
But we already know $u \join\kappau(j)\neq v\join\kappau(j)$, and by this contradiction, we see that $\kappau(\ell)\geq \kappa(j)$.
We have proven that $L$ is weakly separated, as desired.
\end{proof}

\subsection{Counterexamples}\label{counter sec}  
We now present a number of counter-examples, to sharpen the distinctions between the various conditions on complete lattices, and to demonstrate that the results in Theorem~\ref{infinite main} cannot be strengthened. We start with three examples to illustrate the relationship between semidistributivity, complete semidistributivity and being a well separated $\kappa$-lattice. 
We note that none of Examples~\ref{SDNotKappa}, \ref{NotCompleteSD} and~\ref{CoversNotSemiD} are algebraic.
Later, in Example~\ref{obnoxious}, we will give an example of a well separated $\kappa$-lattice which is bi-algebraic (and hence obeys all the other discreteness conditions by Theorem~\ref{infinite main}) but is still not semi-distributive. 

\begin{eg}[A semidistributive lattice that is not completely semidistributive and is not a $\kappa$-lattice]\label{SDNotKappa}
  Consider the lattice with elements $0$, $1$, $X$ and $Y_i$ for
 $i\in \mathbb Z$, 
  and relations that $0$ is the minimum, $1$ is the maximum 
and $Y_i \leq Y_j$ for $i \leq j$. This lattice is complete and semidistributive. 
For each $Y_i$, we have $X \join Y_i = 1$. However, $X \join \Meet_i Y_i = X \join 0 = X$.
Also, $X$ is completely join irreducible with $X_* = 0$, but there is no maximal element $\kappa(X)$ in the set $\{ Y : X \meet Y = 0 \}$. 
\end{eg}

\begin{eg}[A semidistributive $\kappa$-lattice that is not completely semidistributive  and not well separated] \label{NotCompleteSD}
Consider the lattice whose elements $L$ are called $0$, $1$, $X_i$ and $Y_i$, for $i \in \ZZ$, with relations that $0$ is the minimum, $1$ is the maximum, and $X_i \geq X_j$ and $Y_i \geq Y_j$ for $i \geq j$.  This lattice is complete and semidistributive.
However, it is not completely semidistributive for the same reason as in Example
\ref{SDNotKappa}: any $X_i$ can play the role of $X$ from that example.
Every element is both completely \ji and completely \mi except $0$ and $1$.
Also, $0=\Meet L$ and $1=\Join L$.
We have $\kappau(X_i) = X_{i-1}$ and $\kappau(Y_i) = Y_{i-1}$, and $\kappad$ is the inverse map, so $L$ is a $\kappa$-lattice. 
Also, $X_0 \not\geq Y_0$, yet there is no completely \ji element $J$ with $X_0 \geq J$ and $\kappa(J) \geq Y_0$, so $L$ is not well separated.
\end{eg}

\begin{eg}[A $\kappa$-lattice that is not semidistributive and not well separated]\label{CoversNotSemiD}
  Consider the lattice $L$ with elements $0$, $1$ and $X_i$, $Y_i$ and $Z_i$ for$i \in \mathbb Z$,  
  and relations that $0$ is the minimum, $1$ is the maximum, and $X_i \leq X_j$, $Y_i \leq Y_j$ and $Z_i \leq Z_j$ for $i \leq j$. As in Example \ref{NotCompleteSD}, every element is both completely \ji and completely \mi except $0$ and $1$, and 
we have $\kappau(X_i) = X_{i-1}$, $\kappau(Y_i) = Y_{i-1}$ and $\kappau(Z_i) = Z_{i-1}$. 
Thus $L$ is a $\kappa$-lattice.
However, $X_0 \meet Y_0 = X_0 \meet Z_0 = 0$ while $X_0 \meet (Y_0 \join Z_0) = X_0 \meet 1 = X_0$, so $L$ is not semidistributive.
This lattice is not well separated for the same reason as Example~\ref{NotCompleteSD}.
\end{eg}

\begin{eg}[$\kappa$-lattices not isomorphic to $\Pairs(\t)$ for a two-acyclic factorization system] \label{NotPairs}
Suppose $L$ is the lattice in Example~\ref{NotCompleteSD} or the lattice in Example~\ref{CoversNotSemiD}.
Thus $L$ is a $\kappa$-lattice.
By Theorem~\ref{2afs iff ws kappa}, if $L$ is isomorphic to $\Pairs(\t)$ for a two-acyclic factorization system $(\Sha, \t, \ont, \int)$, then this system is isomorphic to $(\JIrr^cL,\t_L,\ont_L,\int_L)$.
However, $\t_L$ is not $\Mult(\ont_L, \int_L)$.
Indeed, we have $\kappau(X_1) = X_0 \not\geq Y_1$ so $X_1 \to Y_1$, but the only compositions $X_1 \onto P \into Q$ are those of the form $X_1 \onto X_j \into X_k$ for $1 \geq j \geq k$, so we cannot factor $X_1 \to Y_1$ into an $\ont$ arrow and an $\int$ arrow.
\end{eg}

We next give an example where meets and joins are cover-determined, but the lattice is not dually spatial because it has no completely meet irreducible elements.  
It is therefore also not a $\kappa$-lattice (well separated or otherwise).
By Theorem~\ref{infinite main}, this example cannot be completely semidistributive; surprisingly, it is distributive!

\begin{eg}[A lattice where meets and joins are cover-determined, but $\MIrr^cL$ is empty] \label{vector space example}
Let $\Omega$ be an infinite set.
Let $L$ consist of the finite subsets of $\Omega$, as well as the set $\Omega$ itself, with $L$ ordered by containment.
It is easy to check that $L$ is a complete lattice and meets and joins are cover-determined.
However, $L$ has no meet irreducible elements, so it is not dually spatial.
\end{eg}

For completely semidistributive lattices, Theorem~\ref{infinite main} tells us that having joins and meets cover-determined implies being weakly atomic, which in turn implies being bi-spatial.
We now present two very similar examples, showing that these implications cannot be reversed.
As Theorem~\ref{infinite main} shows must be the case, these are $\kappa$-lattices but are not well separated. 

\begin{figure}[h]
\[ \begin{gathered}\xymatrix@R0.5pc@C0.5pc{
&&& y^{++} \ar@{-}[dl] & \\
&&y^+ \ar@{-}[dr]  \ar@{.}[dl] && \\
&x^{++} \ar@{-}[dl] & & y^{\circ} \ar@{.}[ddll]  \ar@{-}[dr] &\\
x^+ \ar@{-}[dr] & & && y^-  \ar@{-}[dl]    \\
&  x^{\circ}  \ar@{-}[dr] && y^{--} \ar@{.}[dl]  & \\ 
&& x^{-} \ar@{-}[dl] && \\
& x^{--}  &&& \\
 }\end{gathered}  \]
\caption{Portions of the lattices from Examples~\ref{comp SD not cov sep} and~\ref{comp SD not cov det}} \label{induced}
\end{figure} 

\begin{eg}[A completely semidistributive $\kappa$-lattice that is bi-spatial but not weakly atomic] \label{comp SD not cov sep}
We define a poset $L$ whose ground set is $5$ copies of the interval $[0,1]$ in the real numbers, with elements denoted by formal symbols $x^{++}$, $x^+$, $x^{\circ}$, $x^-$, and $x^{--}$ for each $x\in[0,1]$.  
For any $0 \leq x < y \leq 1$, the poset $L$ induces the order depicted on the $10$ elements shown in Figure~\ref{induced}.   
The solid lines in the figure represent cover relations in $L$; the dashed lines are order relations in $L$ that are not covers.
It is easy to check from Figure~\ref{induced} that $L$ is a lattice. 
To see that it is actually a complete lattice, note that any set of the form $\set{x^{++}:x\in A}$ has a join (either $(\sup A)^{++}$ or $(\sup A)^+$, where $\sup A$ is the supremum), and similarly, sets consisting of elements $x^+$, $x^{\circ}$, $x^-$ or $x^{--}$ have joins.
Thus we can compute the join of any subset of $L$ as the join of at most five elements, and similarly for meets.

Complete meet semidistributivity is verified in the following table.
For any $u<v$, the table gives the unique maximal $p$ with $v \meet p = u$, where $u$ is given in the row headings and $v$ in the column headings, assuming $x<y$.  
Complete join semidistributivity is dual.
\[
\begin{array}{|r||c|c|c|c|c|c|c|c|c|c|}
\hline 
 & x^{--} & x^{-} & x^{\circ} & x^{+} & x^{++} & y^{--} & y^{-} & y^{\circ} & y^{+} & y^{++} \\ \hline \hline
x^{++} &&&&&1^{++}&&&&x^{++}&x^{++} \\ \hline
x^{+} &&&&1^{++}&x^{+}&&&&x^{+}&x^{+} \\ \hline
x^{\circ} &&&1^{++}&1^{\circ}&1^{\circ}&&&x^{++}&x^{\circ}&x^{\circ} \\ \hline
x^{-} &&1^{++}&1^{-}&1^{-}&1^{-}&x^{++}&x^{++}&x^{-}&x^{-}&x^{-} \\ \hline
x^{--} &1^{++}&x^{--}&x^{--}&x^{--}&x^{--}&x^{--}&x^{--}&x^{--}&x^{--}&x^{--} \\ \hline
\end{array}
\]
It is easy to check that it is bi-spatial; this,
in combination with the fact that it is completely semidistributive, allows
us to conclude by Theorem \ref{infinite main} that it is a $\kappa$-lattice.

However, this lattice is not weakly atomic: There are no covers in the interval $[0^{\circ},1^{\circ}]$. It follows from Theorem \ref{infinite main} that it is not well separated. 
\end{eg}

\begin{eg}[A completely semidistributive $\kappa$-lattice which is weakly atomic but where joins and meets are not cover-determined] \label{comp SD not cov det} 
Take Example~\ref{comp SD not cov sep} and remove the elements of the form $x^{\circ}$. 
As before, we can verify that this is a complete, completely semidistributive lattice, and it is now weakly atomic. Theorem \ref{infinite main} therefore shows that it is a $\kappa$-lattice.

However, neither meets nor joins are cover-determined. 
For all $x$, we have $x^- \join 0^+ = x^{--} \join 0^+ = x^+$. 
So, for $0 \leq x<z \leq 1$, we have $x^- \join 0^+ < z^{--} \join 0^+$ but, for any cover $x^- < y^{--} \covered y^- < z^{--}$, we have $y^- \join 0^+ = y^{--} \join 0^+$. Since meets and joins are not cover-determined, Theorem~\ref{infinite main} shows that the lattice is not well separated. 
\end{eg}

Well separated $\kappa$-lattices obey all the discreteness conditions in the middle column of Theorem~\ref{infinite main}. However, they need not be algebraic, as the next example shows. 

\begin{eg}[A well separated, completely semidistributice $\kappa$-lattice which is not algebraic] \label{EGKappaNotPairs}  
Consider the following factorization system. 
The elements of $\Sha$ are called $\{ x_1, x_2, x_3, \ldots, y, z \}$.  
For $i \geq j$, we have $x_i \to x_j$ and, for all $i$, we have $y \to x_i$. 
Finally, we have $z \to y$ and the reflexive relations at $y$ and $z$. 
The relations $\ont$ and $\int$ are the transitive closure of the relations shown on the left hand side of Figure~\ref{PairsNotAlgebraic}.  
In words, every $\t$-arrow is also an $\ont$-arrow except that $z\not\onto y$ and every $\t$-arrow is also an $\int$-arrow except that $y\not\into x_i$ for all $i$. (Note that, unlike $\ont$ and $\int$, $\t$ is \emph{not} transitive, since $z\not\to x_i$.)
The closed sets are $X[j] := \{ x_i : i \leq j \}$, $Z[j] := X[j] \cup \{ z \}$, $X[\infty] := \{ x_i \}$, $Y = X[\infty] \cup \{ y \}$ and the whole set $\Sha$, so the lattice $\Pairs(\t)$ is depicted on the right hand side of Figure~\ref{PairsNotAlgebraic}.

We see that $\bigjoin_{j<\infty} Z[j] = \Sha \geq Y$. However, for any finite set of indices $\{ j_1, \ldots, j_r \}$, we have $\bigjoin_{s=1}^r Z[j_s] = Z[\max(j_1, j_2, \ldots, j_r)] \not\geq Y$. So $Y$ is not compact. The join of all the compact elements $\leq Y$ is $X[\infty]$, so $\Pairs(\t)$ is not algebraic.
It is also straightforward to check that $\Pairs(\t)$ is completely semidistributive.
%
\end{eg}

\begin{figure}
\mbox{
  \begin{tikzpicture}[yscale=1.5]
\node (z) at (0,0) {$z$};
\node (y) at (0,-1) {$y$};
\node (dots) at (0,-2) {$\vdots$};
\node (x3) at (0,-3) {$x_3$};
\node (x2) at (0,-4) {$x_2$};
\node (x1) at (0,-5) {$x_1$};
\draw[right hook->]  (z) to (y);
\draw[->>]  (y) to (dots);
\draw[right hook->>]  (dots) to (x3);
\draw[right hook->>]  (x3) to (x2);
\draw[right hook->>]  (x2) to (x1);
\end{tikzpicture}}
\qquad\qquad\quad
\mbox{
  \begin{tikzpicture}[yscale=1.8]
\node (Xinfty) at (0,-1) {$X[\infty]$} ;
\node (Y) at (1, -0.5) {$Y$};
\node (Zinfty) at (2,0) {$\Sha$} ;
\node (dots1) at (0,-1.5) {$\vdots$};
\node (dots2) at (2,-0.5) {$\vdots$};
\node (X2) at (0,-2) {$X[2]$} ;
\node (X1) at (0,-3) {$X[1]$} ;
\node (X0) at (0,-4) {$X[0]$} ;
\node (Z2) at (2,-1) {$Z[2]$} ;
\node (Z1) at (2,-2) {$Z[1]$} ;
\node (Z0) at (2,-3) {$Z[0]$} ;
\draw (Zinfty) to (Y) ;
\draw (Y) to (Xinfty) ;
\draw (Zinfty) to (dots2) ;
\draw (Xinfty) to (dots1) ;
\draw (dots2) to (Z2) ;
\draw (dots1) to (X2) ;
\draw (X2) to (X1) ;
\draw (X1) to (X0) ;
\draw (Z2) to (Z1) ;
\draw (Z1) to (Z0) ;
\draw (Z2) to (X2);
\draw (Z1) to (X1);
\draw (Z0) to (X0);
  \end{tikzpicture}
}
\caption{The factorization system and lattice $\Pairs(\t)$ in Example~\ref{EGKappaNotPairs}} \label{PairsNotAlgebraic}
\end{figure}

\begin{eg}[A completely semidistributive lattice which violates all other conditions in Theorem~\ref{infinite main}]\label{JIrr empty}  
Let $L$ be the interval $[0,1]$ in the real numbers with the standard total order. 
This is a complete and completely semidistributive (even distributive!) lattice, but it has no completely join-irreducible or completely meet-irreducible elements at all.
Thus $L$ is neither spatial nor dually spatial, and thus Theorem~\ref{infinite main} shows that this lattice also violates the other conditions there.
\end{eg}

We conclude with an example that shows that semidistributivity is independent of all of the other conditions in Theorem~\ref{infinite main}.
This example is particularly frustrating, as we do not know what additional condition on the two-acyclic factorization system would rule out this lattice.

\begin{eg}[A two-acyclic factorization system $(\Sha, \t, \ont, \int)$ such that the $\kappa$-lattice $\Pairs(\t)$ is not semidistributive.]  \label{obnoxious}  
We describe a lattice $L$ by giving a two-acyclic factorization system so, by Theorem~\ref{infinite main}, it is a well separated $\kappa$-lattice, joins and meets are cover determined, it is cover separated and bi-spatial. 
We will also check that $L$ is bi-algebraic. However, $L$ is not semi-distributive.

Let $\Sha$ consist of elements $y$, $z$, and $x_i$ for $i\in\ZZ$. 
Define  $\to$ by $x_i \to x_j$ for $i \geq j$, by $y \to x_{2i}$ and $z\to x_{2i+1}$ for $i\in\ZZ$, as well as $y \to y$ and $z \to z$. 
We define $(\ont,\int)$ to be $\Fact(\t)$. Explicitly, 
\begin{itemize}
\item $x_i \onto x_j$ if and only if $i \geq j$, 
\item $x_i \into x_j$ if and only if $i \geq j$ and $i \equiv j \bmod 2$,
\item $y \into x_{2i}$ and $z\into x_{2i+1}$ for all $i\in\ZZ$, and
\item $y \into y$, $y \onto y$, $z \into z$ and $z \onto z$.
\end{itemize}
The relations $\ont$ and $\int$ are shown in the left picture of Figure~\ref{non SD eg}.
\begin{figure}
\mbox{
  \begin{tikzpicture}[yscale=1.5]
  \node (y) at (0,0) {$y$};
  \node (z) at (1.5,0) {$z$};
  \node (dl) at (0, -1) {$\mvdots$};
  \node (dr) at (1.5,-1) {$\mvdots$};
  \node (x2) at (0,-2) {$x_2$};
  \node (x1) at (1.5,-3) {$x_1$};
  \node (x0) at (0,-4) {$x_0$};
  \node (xm1) at (1.5,-5) {$x_{-1}$};
  \node (xm2) at (0,-6) {$x_{-2}$};
  \node (dbl) at (0,-7) {$\mvdots$};
  \node (dbr) at (1.5,-7) {$\mvdots$};
  \draw[right hook->] (y) -- (dl);
  \draw[right hook->] (z) -- (dr);
  \draw[right hook->] (dl) --(x2);
  \draw[right hook->] (dr) -- (x1);
  \draw[right hook->] (x2) -- (x0);
  \draw[right hook->] (x0) -- (xm2);
  \draw[right hook->] (xm2) -- (dbl);
  \draw[right hook->] (x1) -- (xm1);
  \draw[right hook->] (xm1) -- (dbr);
  \draw[->>] (x2) -- (x1);
  \draw[->>] (x1) -- (x0);
  \draw[->>] (x0) -- (xm1);
  \draw[->>] (xm1)--(xm2);
  \draw[->>] (dr.south west) -- (x2);
  \draw[->>] (xm2) -- (dbr);
  \end{tikzpicture}} \qquad\qquad
\mbox{
  \begin{tikzpicture}[xscale=1.7]
    \node (S) at (0,0) {$\Sha$};
    \node (Yi) at (-1.5,-1) {$Y[\infty]$};
    \node (Zi) at (1.5,-1) {$Z[\infty]$};
    \node (Xi) at (0,-2) {$X[\infty]$};
    \node  (ld) at (-1.5,-3) {$\mvdots$};
    \node  (cd) at (0,-3) {$\mvdots$};
    \node (rd) at (1.5,-3) {$\mvdots$};
    \node (X2) at (0,-4) {$X[2]$};
    \node (X1) at (0,-5) {$X[1]$};
    \node (X0) at (0,-6) {$X[0]$};
    \node (Xm1) at (0,-7) {$X[-1]$};
    \node (Xm2) at (0,-8) {$X[-2]$};
    \node (cld) at (0,-9) {$\mvdots$};
    \node (lld) at (-1.5,-9) {$\mvdots$};
    \node (rld) at (1.5,-9) {$\mvdots$};
    \node (Xmi) at (0,-11) {$X[-\infty]$}; 
    \node (Z1) at (1.5,-4) {$Z[1]$};
    \node (Y0) at (-1.5,-5) {$Y[0]$};
    \node (Zm1) at (1.5,-6) {$Z[-1]$};
    \node (Ym2) at (-1.5,-7) {$Y[-2]$};
    \node (Zm3) at (1.5,-8) {$Z[-3]$};
    \node (Ymi) at (-1.5,-10) {$Y[-\infty]=\{y\}$};
    \node (Zmi) at (1.5,-10) {$Z[-\infty]=\{z\}$};
    \draw (S)--(Yi) -- (ld) -- (Y0)--(Ym2) -- (lld) --(Ymi);
    \draw (Ymi) -- (Xmi);
    \draw (S)--(Zi) -- (rd) -- (Z1) -- (Zm1) -- (Zm3) -- (rld) -- (Zmi);
    \draw (Zmi) --(Xmi);
    \draw (Xi)--(cd) --(X2) --(X1) --(X0)--(Xm1)--(Xm2)--(cld)--(Xmi);
    \draw (Yi)--(Xi)--(Zi);
    \draw (ld)--(X2); \draw (Z1)--(X1);\draw (Y0)--(X0);\draw (Zm1)--(Xm1);
    \draw (Ym2)--(Xm2); \draw (Zm3)--(cld);
\end{tikzpicture}}
\caption{A factorization system for which $\Pairs(\t)$ is not semidistributive} \label{non SD eg}
  \end{figure}

We see that if $p \to q$, then either $p \into q$ or $p \onto q$ (or both), so $\Mult(\ont, \int) \supseteq \t$.
By Proposition~\ref{Mult Fact facts}.\ref{mf} we conclude that $(\Sha, \t, \ont, \int)$  is a factorization system. 
The union of the relations $\t$, $\ont$, and $\int$ is acyclic, so in particular $(\Sha, \t, \ont, \int)$ is two-acyclic. 

For $j\in\ZZ\cup\set{\pm\infty}$, we put $X[j] = \{ x_i : i \leq j \}$, $Y[j] =\{ y \} \cup \{ x_i : i \leq j \}$ and $Z[j] =\{ z \} \cup \{ x_i : i \leq j \}$. 
The closed sets are $X[i]$, $Y[2i]$ and $Z[2i+1]$, for $i\in\ZZ\cup\set{\pm\infty}$, as well as the whole set $\Sha$. 
The lattice of closed sets is shown as the right picture of Figure~\ref{non SD eg}.

The compact elements are all elements except for $\Sha$, $X[\infty]$, $Y[\infty]$ and $Z[\infty]$; it is easy to check that this makes the lattice algebraic. Likewise, the co-compact elements are all but $X[-\infty]$, $Y[-\infty]$ and $Z[-\infty]$ and this lattice is co-algebraic.

Finally, we check that $L$ is not semidistributive. 
We have $X[0] \meet Y[-\infty] = X[0] \meet Z[-\infty] = X[-\infty]$, but $X[0] \meet (Y[-\infty] \join Z[-\infty]) = X[0] \meet \Sha = X[0]$. 
\end{eg}

\section{Factorization systems for intervals} \label{intervals sec}
Let $L$ be a finite semidistributive lattice. 
By the FTFSDL, it can be described as $\Pairs(\t)$ for a finite two-acyclic factorization system $(\Sha,\t,\ont,\int)$.  
An interval in $L$ is also a finite semidistributive lattice, so it is also describable by a finite two-acyclic factorization system.
In this section, we show that this can be done very explicitly.
More generally, we show that, if $L \cong \Pairs(\t)$ for a possibly infinite, two-acyclic factorization system $(\Sha,\t,\ont,\int)$, then every interval of $L$ can be described as $\Pairs(\t')$ for some two-acyclic factorization system $(\Sha',\t',\ont',\int')$.

We begin by describing lower intervals in $\Pairs(\t)$.
We use notation such as $\t|_X$ for the restriction of relations to subsets $X\subseteq\Sha$.

\begin{prop}\label{prop-lower-intervals}  
Suppose $(\Sha,\t,\ont,\int)$ is a two-acyclic factorization system and let $(X,Y)\in\Pairs(\t)$. 
Then $(X, \t|_X, \ont|_X, \int')$ is a two-acyclic factorization system, where $\int'$ is given by $\Fact(\t|_X)=(\ont|_X,\int')$.
The map $(U,V) \mapsto (U, V \cap X)$ is an isomorphism from the interval $\bigl[(\emptyset, \Sha), (X,Y)\bigr]$  in $\Pairs(\t)$ to the lattice $\Pairs(\t|_X)$.
The closure operators associated to $(\Sha,\t,\ont,\int)$ and $(X, \t|_X, \ont|_X, \int')$ agree on subsets of $X$.
\end{prop}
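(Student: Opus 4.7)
The first thing I would observe is that since $X$ is the first component of a maximal orthogonal pair, it is closed, and by Proposition~\ref{TorsionImpliesDownset} it is an $\onto$-downset in $\Sha$. This single fact drives every subsequent step: whenever $p \in X$ and $p \to q$ factors in the original system as $p \onto w \into q$, the intermediate element $w$ is automatically forced to lie in $X$.

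To set up the factorization system structure, I would first verify that the first component of $\Fact(\t|_X)$ is exactly $\onto|_X$. One inclusion is immediate. For the other, suppose $p, q \in X$ satisfy the restricted condition that $q \to z$ implies $p \to z$ for all $z \in X$; to promote this to $p \onto q$ in $\Sha$, take any $z \in \Sha$ with $q \to z$, factor as $q \onto w \into z$, note $w \in X$ by the downset property, apply the restricted hypothesis to obtain $p \to w$, factor $p \to w$ as $p \onto v \into w$, and use transitivity of $\into$ on $v \into w \into z$ to conclude $p \to z$. The identity $\Mult(\onto|_X, \int') = \t|_X$ then follows: one containment is Proposition~\ref{Mult Fact facts}.\ref{mf}, and for the other, given $p \to q$ in $X$, factor in $\Sha$ as $p \onto w \into q$ with $w \in X$ and observe that $w \into q$ in $\Sha$ automatically restricts to $w \int' q$, since its defining universal quantifier ranges over a larger set.

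The main obstacle is two-acyclicity, specifically the antisymmetry of $\int'$. The brick condition is short: if $p \onto|_X q$ and $q \int' p$ with $p, q \in X$, then reflexivity $q \to q$ combined with $q \int' p$ yields $q \to p$, so $p \onto q \to p$ in $\Sha$ forces $p = q$ by Proposition~\ref{AcyclicStronger}. For the order condition (noting $\int'$ is automatically a preorder), suppose $p \int' q$ and $q \int' p$ with $p \neq q$ in $X$. Reflexivity gives $p \to q$ and $q \to p$ in $\Sha$. Factor $p \to q$ as $p \onto w \into q$ with $w \in X$; then $w \to q$, and $q \int' p$ applied to $z = w$ yields $w \to p$, so $p \onto w \to p$ in $\Sha$ forces $p = w$ by Proposition~\ref{AcyclicStronger} and thus $p \into q$. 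The symmetric argument gives $q \into p$, contradicting antisymmetry of the original $\into$. Thus $p = q$.

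Finally, I would prove the closure agreement and isomorphism together. I would show that for every $U \subseteq X$, the closure of $U$ in the restricted system equals the closure $\overline{U}$ in the original system. Since $X$ is closed and closure is monotonic, $\overline{U} \subseteq X$, from which one inclusion is immediate upon unfolding definitions. For the reverse, suppose $y$ is in the restricted closure but $y \to s$ for some $s \in U^\perp$; factor $y \onto w \into s$, note $w \in X$ by the downset property, and check $w \in U^\perp$: if $u \in U$ had $u \to w$, then factoring $u \to w$ and composing with $w \into s$ via transitivity of $\into$ would give $u \to s$, contradicting $s \in U^\perp$. Then $y \to w$ with $w \in U^\perp \cap X$, contradicting the assumption that $y$ is in the restricted closure. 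Consequently, the closed subsets of $X$ in the restricted system are exactly the closed subsets of $\Sha$ contained in $X$, which in turn are the first components of elements of $[(\emptyset, \Sha), (X, Y)]$. Translating this bijection through the $(U, V) \leftrightarrow U$ identification and using $V \cap X = U^\perp \cap X$ yields the claimed order-isomorphism $(U, V) \mapsto (U, V \cap X)$.
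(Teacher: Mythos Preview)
Your proof is correct and follows essentially the same route as the paper's: both hinge on $X$ being an $\ont$-downset so that factorizations of arrows with source in $X$ land in $X$, and both verify $\ont'=\ont|_X$, $\Mult(\ont|_X,\int')=\t|_X$, two-acyclicity, and closure agreement in the same order with the same mechanism. The only tactical difference is that for two-acyclicity you invoke Proposition~\ref{AcyclicStronger} directly in both the brick and order arguments, whereas the paper unwinds the original brick and order conditions and then bootstraps the restricted brick condition to handle antisymmetry of $\int'$; this is a minor variation, not a different approach.
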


\begin{proof}  
Write $\Fact(\t|_X)=(\ont', \int')$.
We have $(\ont', \int')\supseteq (\ont|_X, \int|_X)$ because $(\Sha,\t,\ont,\int)$ is a factorization system.
We now check that $\ont|_X \supseteq \ont'$. 
Suppose that $p$ and $q \in X$ with $p \not\onto q$; we will check that $p \not \! \ont' q$. 
Then there is some $s \in \Sha$ with $p \not\to s$ and $q \to s$. 
Factor $q \to s$ as $q \onto r \into s$; since $X$ is an $\ont$-downset by Proposition~\ref{TorsionImpliesDownset},
we have $r \in X$. 
If $p \to r$ then there is a factorization $p\onto t\into r$ and thus $p\onto t\into s$, contradicting the assumption that $p\not\to s$.  
We conclude that $p \not\to r$. 
Now $r \in X$ with $p \not\to r$ and $q \to r$, so $p\not\onto' q$.

To show that $(X, \t|_X, \ont|_X, \int')$ is a factorization system, it remains to show that $\Mult(\ont|_X,\int')=\t|_X$.
By Proposition~\ref{Mult Fact facts}.\ref{mf}, we have $\Mult(\ont|_X,\int')\subseteq\t|_X$.
Suppose that $x$ and $z \in X$ and $x \to z$. 
Then $x \onto y \into z$ for some $y \in \Sha$. 
Since $X$ is a downset for $\ont$, we have $y \in X$.
Using our above observation that $\int'\supseteq\int|_X$, we have $x \onto y \into' z$, so $x$ is related to $z$ by $\Mult(\ont|_X,\int')$.

We now check the two-acyclicity of $(X, \t|_X, \ont|_X, \int')$: 
The relation $\ont|_X$ inherits acyclicity from $\ont$.
Suppose $p \onto q\into' p$.  
Since $\Mult(\ont|_X,\int')=\t|_X$, we have $q\to p$, so there exists $z\in\Sha$ such that $q \onto z \into p$.
Thus $p\onto z\into p$, so $z=p$ by the brick condition on $(\Sha,\t,\ont,\int)$.
Thus $q\onto p\onto q$, so that $p=q$ by the acyclicity of $\onto$.

To complete the proof of two-acyclicity\, suppose $p\into'q\into'p$.  
Again, since $q\into'p$, there exists $z\in\Sha$ such that $q \onto z \into p$.  
Since $X$ is a downset for $\ont$, we have $z\in X$.
Thus by the observation that $\int'\supseteq\int|_X$, we have $q \onto z \into' p$, so $q\onto z\into'q$.
By the brick condition on $(X, \t|_X, \ont|_X, \int')$, established above, we have $z=q$, so $q\into p$.
Similarly, since $p\into'q$, there exists $w\in X$ with $p\onto w\into q$, and thus $p\onto w\into'q$, so $p\onto w\into'p$, and thus $w=p$, so that $p\into q$.
By the acyclicity of $\int$, we see that $p=q$.

Writing $\triangle$ for orthogonality operations with respect to $\t|_X$, we now establish that ${}^{\triangle} (U^{\triangle}) = {}^{\perp} (U^{\perp})$ for any $U\subseteq X$.  
Unpacking definitions yields
\[ ^{\triangle}\!\bigl(U^{\triangle}\bigr) = {}^{\perp}\bigl( U^{\triangle} \bigr) \cap X =  {}^{\perp}{\bigl(} U^{\triangle} \cup Y {\bigr)} =  {}^{\perp} {\Big(} (U^{\perp} \cap X) \cup Y {\Big)}  . \]
It is clear that $U^{\perp} \supseteq (U^{\perp} \cap X) \cup Y$, and thus ${}^{\perp} (U^{\perp} ) \subseteq {}^{\triangle} (U^{\triangle})$.  

For the reverse containment, suppose $p \not\in {}^{\perp} (U^{\perp})$.  
Since $U\subseteq X$, we have $U^\perp\supseteq X^\perp=Y$ and thus ${}^{\perp} (U^{\perp})\subseteq Y^\perp=X$.  
If $p\not\in X$, then $p\not\in {}^{\triangle} (U^{\triangle})$, so we may as well assume $p\in X$.
Since $p \not\in {}^{\perp} (U^{\perp})$, there exists $r \in U^{\perp}$ with $p \to r$. 
Factor $p \to r$ as $p \onto q \into r$. 
Since $X$ is an $\ont$-downset and $p\in X$, we have $q \in X$.
If $q\not\in U^\perp$, then there exists $u\in U$ with $u\rightarrow q$. 
Since $q\into r$, this implies $u\rightarrow r$, contradicting $r\in U^\perp$.
Thus $q\in U^\perp$.
Since also $q\in X$, we have $q\in U^{\triangle}$.
Since $p\to q$, we see that $p\not\in {}^{\triangle}(U^{\triangle})$, as desired.

Since the closure operations coincide on subsets of $X$, the lattices of closed sets coincide.
Thus $(U, U^{\perp}) \mapsto (U, U^{\triangle})$ is an isomorphism from $\bigl[(\emptyset, \Sha), (X,Y)\bigr]$ to $\Pairs(\to')$. 
The equality $U^{\triangle} = U^{\perp} \cap X$ gives the formulation of this result in the statement of the Theorem.
\end{proof}

We explicitly state the dual result for upper intervals of $\Pairs(\t)$.

\begin{prop}\label{prop-upper-intervals} 
Suppose $(\Sha,\t,\ont,\int)$ is a two-acyclic factorization system and let $(X,Y)\in\Pairs(\t)$. 
Then $(Y, \t|_Y, \ont', \int|_Y)$ is a two-acyclic factorization system, where $\ont'$ is given by $\Fact(\t|_Y)=(\ont',\int|_Y)$.
The map $(U,V) \mapsto (U \cap Y, V)$ is an isomorphism from the interval $\bigl[(X,Y),(\Sha,\emptyset)\bigr]$ in $\Pairs(\t)$ to the lattice $\Pairs(\t|_Y)$.
\end{prop}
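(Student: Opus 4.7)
The plan is to deduce Proposition~\ref{prop-upper-intervals} from Proposition~\ref{prop-lower-intervals} by conjugating with the duality of Proposition~\ref{duality}. Given $(\Sha,\t,\ont,\int)$ and $(X,Y)\in\Pairs(\t)$, I would first pass to the opposite factorization system $(\Sha,\t^{\op},\int^{\op},\ont^{\op})$. Under the anti-isomorphism $\Pairs(\t)\to\Pairs(\t^{\op})$ of Proposition~\ref{duality}, the pair $(X,Y)$ is sent to $(Y,X)$, and the upper interval $\bigl[(X,Y),(\Sha,\emptyset)\bigr]$ in $\Pairs(\t)$ is sent to the lower interval $\bigl[(\emptyset,\Sha),(Y,X)\bigr]$ in $\Pairs(\t^{\op})$.

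Next I would apply Proposition~\ref{prop-lower-intervals} to the opposite system at the element $(Y,X)$. Recall that in the opposite system $\int^{\op}$ plays the role of $\ont$ and $\ont^{\op}$ plays the role of $\int$, so the hypothesis of Proposition~\ref{prop-lower-intervals} delivers a two-acyclic factorization system on $Y$ whose first relation is $\t^{\op}|_Y=(\t|_Y)^{\op}$, whose ``$\ont$-part'' (in the opposite world) is $\int^{\op}|_Y=(\int|_Y)^{\op}$, and whose third relation is determined by the factorization of $(\t|_Y)^{\op}$. The same proposition supplies an isomorphism from this lower interval in $\Pairs(\t^{\op})$ to $\Pairs\bigl((\t|_Y)^{\op}\bigr)$ given by $(V,U)\mapsto(V,\,U\cap Y)$.

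Finally, I would apply Proposition~\ref{duality} a second time, now to the factorization system produced on $Y$ in the previous step. This converts it to one of the form $(Y,\t|_Y,\ont',\int|_Y)$, where $\ont'$ is exactly the relation satisfying $\Fact(\t|_Y)=(\ont',\int|_Y)$ in the original (non-opposite) sense. The round trip interchanges the roles of $\ont$ and $\int$ twice, so the orientations line up correctly, and composing the two anti-isomorphisms with the lower-interval isomorphism above transports $(V,U)\mapsto(V,\,U\cap Y)$ into the upper-interval isomorphism $(U,V)\mapsto(U\cap Y,\,V)$ asserted in the proposition.

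I do not anticipate any real obstacle: the entire argument is transport along an involutive anti-isomorphism of lattices of maximal orthogonal pairs. The step that deserves the most care is bookkeeping, namely checking that the third relation produced by Proposition~\ref{prop-lower-intervals} in the opposite setting, when dualized back, coincides with the $\ont'$ characterized by $\Fact(\t|_Y)=(\ont',\int|_Y)$; this is an immediate consequence of the fact that $\Fact$ intertwines with the opposite construction in the manner spelled out by Proposition~\ref{duality}.
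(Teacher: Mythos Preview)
Your approach is correct and is exactly what the paper does: the paper states Proposition~\ref{prop-upper-intervals} as ``the dual result'' to Proposition~\ref{prop-lower-intervals} and gives no separate proof, relying implicitly on precisely the duality of Proposition~\ref{duality} that you spell out. Your bookkeeping is accurate, including the check that $\Fact$ commutes with taking opposites so that the relation produced on $Y$ really is the $\ont'$ characterized by $\Fact(\t|_Y)=(\ont',\int|_Y)$.
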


Concatenating Propositions~\ref{prop-lower-intervals} and~\ref{prop-upper-intervals}, we obtain:  

\begin{theorem}\label{thm-intervals} 
Suppose $(\Sha,\t,\ont,\int)$ is a two-acyclic factorization system and let $\bigl[(X_1,Y_1),(X_2, Y_2)\bigr]$ be an interval in $\Pairs(\t)$.
Let $\t'$ be the restriction of $\t$ to $X_2 \cap Y_1$ and let $(\ont', \int') = \Fact(\t')$.  
Then $(X_2 \cap Y_1, \t', \ont', \int')$ is a two-acyclic factorization system.
The map $(U,V) \mapsto (U \cap Y_1, V \cap X_2)$ is an isomorphism from $\bigl[(X_1,Y_1),(X_2, Y_2)\bigr]$ to $\Pairs(\to')$. 
\end{theorem}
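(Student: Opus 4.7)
The plan is to derive Theorem~\ref{thm-intervals} by applying Propositions~\ref{prop-upper-intervals} and~\ref{prop-lower-intervals} in succession: first I would pass to the upper interval $\bigl[(X_1,Y_1),(\Sha,\emptyset)\bigr]$, and then I would pass to a lower interval inside the resulting factorization system.

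First, I would invoke Proposition~\ref{prop-upper-intervals} applied to $(X_1,Y_1)$. This produces a two-acyclic factorization system on $Y_1$ with underlying relation $\t|_{Y_1}$, together with an isomorphism $\varphi_1\colon \bigl[(X_1,Y_1),(\Sha,\emptyset)\bigr]\to\Pairs(\t|_{Y_1})$ sending $(U,V)\mapsto(U\cap Y_1,V)$. Note that the bottom of this interval does map to the bottom of $\Pairs(\t|_{Y_1})$, since $X_1\cap Y_1 = X_1\cap X_1^\perp=\emptyset$ by reflexivity of $\t$. Under $\varphi_1$, the top element $(X_2,Y_2)$ is carried to the maximal orthogonal pair $(X_2\cap Y_1,\,Y_2)$ for $\t|_{Y_1}$.

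Next, I would apply Proposition~\ref{prop-lower-intervals} within the factorization system on $Y_1$, to the pair $(X_2\cap Y_1,Y_2)$. The resulting two-acyclic factorization system has underlying set $X_2\cap Y_1$ and underlying relation $(\t|_{Y_1})|_{X_2\cap Y_1}=\t|_{X_2\cap Y_1}=\t'$. Its $(\ont,\int)$-data must then equal $\Fact(\t')=(\ont',\int')$, because in any factorization system the pair of preorders is determined by the underlying relation via $\Fact$. Thus the system is precisely $(X_2\cap Y_1,\,\t',\,\ont',\,\int')$, and the accompanying isomorphism $\varphi_2\colon\bigl[(\emptyset,Y_1),(X_2\cap Y_1,Y_2)\bigr]\to\Pairs(\t')$ sends $(U',V')\mapsto(U',\,V'\cap(X_2\cap Y_1))$.

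Composing the two isomorphisms gives $(U,V)\mapsto(U\cap Y_1,\,V\cap X_2\cap Y_1)$. Since any $(U,V)$ in the original interval satisfies $V\subseteq Y_1$ (because $(U,V)\ge(X_1,Y_1)$ translates to $V\subseteq Y_1$), this simplifies to $(U\cap Y_1,\,V\cap X_2)$, matching the formula in the statement. The only step requiring thought beyond citing the two propositions is the identification of the iterated $(\ont,\int)$-data with $\Fact(\t')$, and that is immediate from the definition of a factorization system, so I do not anticipate any significant obstacle.
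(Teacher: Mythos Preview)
Your proposal is correct and follows essentially the same approach as the paper: concatenating Propositions~\ref{prop-lower-intervals} and~\ref{prop-upper-intervals}. The only difference is the order---the paper first passes to the lower interval below $(X_2,Y_2)$ and then to the upper interval above the image of $(X_1,Y_1)$, whereas you do upper first and then lower; both orders work and the resulting isomorphism is the same.
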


\begin{proof}  
By Proposition~\ref{prop-lower-intervals}, $(X_2, \t|_{X_2}, \ont|_{X_2}, \int'')$ is a two-acyclic factorization system, where $\int''$ is defined by $\Fact(\t|_{X_2}) = (\ont|_{X_2}, \int'')$.   Furthermore, $(X_1, X_2 \cap Y_1)$ is in $\Pairs(\t|_{X_2})$. 
The relation $\to'$ is the restriction of $\t|_{X_2}$ to $X_2 \cap Y_1$, so Proposition~\ref{prop-upper-intervals} states that $(X_2 \cap Y_1, \t', \ont', \int')$ is a two-acyclic factorization system.
By Proposition~\ref{prop-lower-intervals}, the interval  $\bigl[(X_1,Y_1),(X_2, Y_2)\bigr]$ in $\Pairs(\t)$ is isomorphic to the interval $\bigl[(X_1, X_2 \cap Y_1), (X_2, \emptyset) \bigr]$ in $\Pairs(\t|_{X_2})$.
By Proposition~\ref{prop-upper-intervals}, the latter is isomorphic to $\Pairs(\to')$. 
The isomorphisms given in the theorems are $(U,V) \mapsto (U, V \cap X_2) \mapsto (U \cap Y_1, V \cap X_2)$. 
\end{proof}

We conclude with a corollary of Theorem~\ref{thm-intervals} which will be useful in the next section.
\begin{lemma} \label{cor-element-moves}  
Suppose $(\Sha,\t,\ont,\int)$ is a two-acyclic factorization system and suppose $(X_1,Y_1)\le(X_2, Y_2)$ in $\Pairs(\t)$. 
  Then $(X_2, Y_2)$ covers $(X_1, Y_1)$ if and only if $X_2 \cap Y_1$ is a singleton. 
\end{lemma}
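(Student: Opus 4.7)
The plan is to reduce the statement to the case where $(X_1,Y_1)$ and $(X_2,Y_2)$ are the minimum and maximum of a two-acyclic factorization system on a new set, and then count closed sets in the two cases.

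First, I would invoke Theorem~\ref{thm-intervals} to get a two-acyclic factorization system $(\Sha', \t', \ont', \int')$ with ground set $\Sha' = X_2 \cap Y_1$ and an isomorphism from the interval $\bigl[(X_1,Y_1),(X_2,Y_2)\bigr]$ in $\Pairs(\t)$ to $\Pairs(\t')$. Under this isomorphism, $(X_1,Y_1)$ corresponds to the bottom element of $\Pairs(\t')$, whose first coordinate is $\emptyset$, and $(X_2,Y_2)$ corresponds to the top element, whose first coordinate is $\Sha'$. So the cover relation $(X_1,Y_1) \covered (X_2,Y_2)$ holds in $\Pairs(\t)$ if and only if $\Pairs(\t')$ has exactly two elements.

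Next I would handle the three cases for $|\Sha'|$. If $\Sha'$ is empty, then $\Pairs(\t')$ has only one element, so $(X_1,Y_1) = (X_2,Y_2)$ and there is no cover. If $\Sha' = \{x\}$, then by reflexivity the only relation is $x \to x$, the closed sets are exactly $\emptyset$ and $\{x\}$, and $\Pairs(\t')$ has exactly two elements, giving a cover. This gives the ``if'' direction.

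For the ``only if'' direction, suppose $|\Sha'| \geq 2$ and pick distinct $x,y \in \Sha'$. I would apply Proposition~\ref{TF prop} to $(\Sha', \t', \ont', \int')$: the sets $T'(x) = \{ z \in \Sha' : x \onto' z \}$ and $T'(y)$ are closed subsets of $\Sha'$, both nonempty by reflexivity, and distinct because each uniquely determines its generator. Since they are distinct, they cannot both equal $\Sha'$, so at least one is a proper nonempty closed subset of $\Sha'$, producing a third element of $\Pairs(\t')$ strictly between the bottom and top. Hence $(X_2,Y_2)$ does not cover $(X_1,Y_1)$.

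The only genuinely non-routine step is the ``only if'' direction, but even there the work has been done in Proposition~\ref{TF prop} and Theorem~\ref{thm-intervals}; the main thing to verify is that the reduction via Theorem~\ref{thm-intervals} really does identify the endpoints of the original interval with the bottom and top of $\Pairs(\t')$, which follows directly from the explicit isomorphism $(U,V)\mapsto(U\cap Y_1, V\cap X_2)$.
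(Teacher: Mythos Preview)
Your proof is correct and follows essentially the same approach as the paper: both reduce via Theorem~\ref{thm-intervals} to a two-acyclic factorization system on $\Sha' = X_2 \cap Y_1$ and then argue that $\Pairs(\t')$ has exactly two elements if and only if $|\Sha'|=1$. The paper phrases the last step abstractly using the bijection $\Sha' \leftrightarrow \JIrr^c(\Pairs(\t'))$ from Theorem~\ref{2afs iff ws kappa}, while you make it explicit via Proposition~\ref{TF prop}, but these are the same argument.
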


\begin{proof}
By definition, $(X_2, Y_2)$ covers $(X_1, Y_1)$ if and only if $\bigl[ (X_1,Y_1), (X_2, Y_2) \bigr]$ has precisely two elements.  
Equivalently (by Theorem~\ref{2afs iff ws kappa}), the lattice $\Pairs(\t')$ of Theorem~\ref{thm-intervals} has $|\JIrr^c(\Pairs(\t'))|=1$. In other words, ${|X_2 \cap Y_1|=1}$.
\end{proof}

\begin{remark}
See \cite[Theorem~3.3(b)]{DIRRT} for a similar result about covers in the lattice of torsion classes.
\end{remark}

\section{Covers and canonical join representations}\label{cjc sec}  
In this section, we describe cover relations in the lattice of maximal orthogonal pairs of a two-acyclic factorization system. 
We then use the description of covers to describe canonical join representations in the finite case. 
Throughout the section, $(\Sha,\t,\ont,\int)$ will be a two-acyclic factorization system.
All references to orthogonality operations $\perp$ refer to this system, and references to closures and closed sets refer to the closure operator $X\mapsto\overline X={}^\perp(X^\perp)$.

\subsection{Covers}  
Given $c\in X$, define $\Del(X,c)=X\setminus\set{x\in X:x\onto c}$.
This notation suggests ``deleting $c$ from $X$'', as $\Del(X,c)$ is the largest $\ont$-downset contained in $X$ but not containing $c$. 
Given a closed set $X$, define $\Cov(X)$ to be the set of elements $c\in X$ such that $\Del(X,c)$ is closed and $\overline {\Del(X,c)\cup\{c\}}= X$. 
We will prove the following theorem. 

\begin{theorem} \label{new cov thm}
Let $(\Sha,\t,\ont,\int)$ be a two-acyclic factorization system and let $X$ be a closed set.
Then $\Del(X,c)\covered X$ in the lattice of closed sets for all $c\in\Cov(X)$, and every element covered by $X$ is $\Del(X,c)$ for a unique $c\in\Cov(X)$.
\end{theorem}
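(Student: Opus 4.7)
The plan is to prove the two directions separately, then handle uniqueness. Throughout I will use Proposition~\ref{TorsionImpliesDownset} (closed sets are $\ont$-downsets), Lemma~\ref{cor-element-moves} (a cover $W \covered X$ is characterized by $|X \cap W^{\perp}| = 1$), and the basic properties of the closure operator.

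For the forward direction, suppose $c \in \Cov(X)$. Then $\Del(X,c)$ is closed by assumption and $c \in X \setminus \Del(X,c)$, so $\Del(X,c) \subsetneq X$. Let $Z$ be any closed set with $\Del(X,c) \subsetneq Z \subseteq X$. Pick $x \in Z \setminus \Del(X,c)$; then $x \in X$ and $x \onto c$. Since $Z$ is a $\ont$-downset (Proposition~\ref{TorsionImpliesDownset}), we get $c \in Z$, so $\Del(X,c) \cup \{c\} \subseteq Z$. Taking closures gives $X = \overline{\Del(X,c) \cup \{c\}} \subseteq Z$, hence $Z = X$. This establishes the cover.

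For the reverse direction, let $W \covered X$. By Lemma~\ref{cor-element-moves}, $X \cap W^{\perp} = \{c\}$ for a unique $c$; I claim $W = \Del(X,c)$. The containment $W \subseteq \Del(X,c)$ is routine: if $x \in W$ and $x \onto c$, then $c \in W$ since $W$ is an $\ont$-downset, contradicting $c \in W^{\perp}$ (because $c \to c$). For the reverse containment, fix $x \in X \setminus W$. Since $\overline{W \cup \{x\}}$ is a closed set strictly above $W$ and contained in $X$, the cover hypothesis forces $\overline{W \cup \{x\}} = X$, and in particular $c \in \overline{W \cup \{x\}}$. Since $c \to c$, the definition of closure supplies some $w \in W \cup \{x\}$ with $w \to c$; since $c \in W^{\perp}$, we must have $w = x$, i.e.\ $x \to c$. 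Factor $x \onto y \into c$. Then $y \in X$ since $X$ is an $\ont$-downset, and $y \in W^{\perp}$: indeed, if $w' \to y$ for some $w' \in W$, then $y \into c$ gives $w' \to c$, violating $c \in W^{\perp}$. Therefore $y \in X \cap W^{\perp} = \{c\}$, so $y = c$ and $x \onto c$, proving $X \setminus W \subseteq \{x \in X : x \onto c\}$. I expect this factor-and-trap-in-the-singleton step to be the main technical hurdle, as it is the only place where the interplay of the brick and factorization conditions is genuinely used.

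It then remains to verify $c \in \Cov(X)$ and uniqueness. Closedness of $\Del(X,c) = W$ is immediate, and $\overline{\Del(X,c) \cup \{c\}} = X$ follows because $W \cup \{c\} \not\subseteq W$ yet is contained in $X$, so its closure lies strictly above $W$ and equals $X$ by the cover property. For uniqueness, suppose $\Del(X,c_1) = \Del(X,c_2)$ with both $c_i \in \Cov(X)$. Then $\{x \in X : x \onto c_1\} = \{x \in X : x \onto c_2\}$; both $c_1$ and $c_2$ belong to this common set, and each is the $\ont$-minimum of it (any $x$ in the set satisfies $x \onto c_i$). Since $\ont$ is a partial order, the minimum is unique, so $c_1 = c_2$.
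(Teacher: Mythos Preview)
Your proof is correct. The forward direction and the uniqueness argument essentially match the paper's, but your reverse direction takes a genuinely different route. The paper first shows that the element $c$ (with $X\cap W^{\perp}=\{c\}$) is $\int$-maximal in $X$, using that $W^{\perp}$ is an $\int$-upset; it then invokes Lemma~\ref{cover-lemma} to conclude that $\Del(X,c)$ is closed, and finishes by sandwiching: $W\subseteq\Del(X,c)\subsetneq X$ together with $W\covered X$ forces $W=\Del(X,c)$. You instead establish both inclusions $W\subseteq\Del(X,c)$ and $\Del(X,c)\subseteq W$ directly, the second via the ``factor-and-trap'' argument that pins the middle term $y$ of a factorization $x\onto y\into c$ into the singleton $X\cap W^{\perp}$. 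Your approach is self-contained and bypasses Lemma~\ref{cover-lemma} entirely (so closedness of $\Del(X,c)$ falls out from $W=\Del(X,c)$ rather than being needed as input); the paper's approach is shorter because the sandwiching avoids the harder containment altogether. One small remark: your aside that this step uses ``the brick and factorization conditions'' overstates things---only the definition of $\int$ (i.e., $\Fact(\t)=(\ont,\int)$) and reflexivity are used there, while the brick condition enters only indirectly through Lemma~\ref{cor-element-moves}.
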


Proposition~\ref{step} lets us restate Theorem~\ref{new cov thm} in terms of maximal orthogonal pairs:
Given $(X,Y)\in\Pairs(\t)$, the elements covered by $(X,Y)$ are precisely the pairs $(\Del(X,c),\Del(X,c)^\perp)$ for $c\in\Cov(X)$, with exactly one $c$ for each cover.

We will also prove some alternative descriptions of $\Cov(X)$. 
The first is general and explicit, but unilluminating. 
The second is general but comes at the cost of computing $\Fact(\t|_X)$. 
The third applies only to special cases, including the finite case, but provides some insight into the general case.

\begin{proposition} \label{reform}
$\Cov(X)$ consists of those elements $c$ of $X$ which have the following properties: \begin{itemize}
\item $c$ is $\int$-maximal in $X$,
\item for all $y \in \Sha$, if there exists $x \in X$ such that $x \into y$, then there exists $x'\in \Del(X,c)\cup \{c\}$ such that $x'\into y$.  
\end{itemize}
\end{proposition}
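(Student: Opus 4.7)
The plan is to align the definition $\Cov(X)=\{c\in X : \Del(X,c)\text{ is closed and }\overline{\Del(X,c)\cup\{c\}}=X\}$ with the two bullets. Write $D=\Del(X,c)$ and $D'=D\cup\{c\}$. I will proceed in three steps: (I) the second bullet is equivalent to $\overline{D'}=X$; (II) the first bullet implies closedness of $D$; and (III) closedness of $D$ together with the second bullet forces the first bullet. Step (III) is the main obstacle, because of the case in which $c'\onto c$.

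For step (I), I first note that $D'$ is a $\onto$-downset: $D$ inherits this from $X$, and the order condition on $\onto$ rules out $c\onto w\onto c$ with $w\ne c$, so $c\onto w$ places $w$ in $D$ whenever $w\ne c$. Because $X$ and $D'$ are $\onto$-downsets, the existence of $x\to y$ with $x$ in the set is equivalent to the existence of $x\into y$ with $x$ in the set: factor $x\to y$ as $x\onto w\into y$ and observe that $w$ remains in the set. The condition $\overline{D'}=X$ is equivalent to $(D')^{\perp}=X^{\perp}$, and after this $\to$-to-$\into$ replacement it unpacks to exactly the second bullet.

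For step (II), assume $c$ is $\into$-maximal in $X$ and suppose $d\in D$ satisfies $d\to c$. Factor $d\onto w\into c$ with $w\in X$. If $w=c$ then $d\onto c$, contradicting $d\in D$; if $w\ne c$, then $w\in X$ with $w\into c$ violates $\into$-maximality. Hence $c\in D^{\perp}$. Every $u\in X\setminus D$ satisfies $u\onto c$ and therefore $u\to c$, and $c\in D^{\perp}$ now yields $u\notin\overline{D}$. Thus $\overline{D}\subseteq D$ and $D$ is closed.

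For step (III), assume $D$ is closed and the second bullet holds, and suppose for contradiction that some $c'\in X$ with $c'\ne c$ satisfies $c'\into c$. The goal is to produce $d\in D$ with $d\into c$, after which a short calculation will show $c\in\overline{D}$, contradicting closedness. If $c'$ does not satisfy $c'\onto c$, then $c'\in D$ and $d:=c'$ works. Otherwise $c'\onto c$, and here the second bullet is essential: apply it to $y=c'$ to obtain $x'\in D'$ with $x'\into c'$. Antisymmetry of $\into$ rules out $x'=c$ (which would give $c\into c'$ together with $c'\into c$, forcing $c=c'$), so $x'\in D$, and transitivity yields $d:=x'$ with $d\into c$. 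With such a $d$ in hand, take any $v$ with $c\to v$, factored as $c\onto w\into v$. When $w=c$ the chain $d\into c\into v$ gives $d\to v$ with $d\in D$; when $w\ne c$, the order condition forces $w$ not to satisfy $w\onto c$, so $w\in D$, and $w\into v$ gives $w\to v$. Either way $v\notin D^{\perp}$, so $c\in\overline{D}\setminus D$, contradicting closedness of $D$.
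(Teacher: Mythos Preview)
Your proof is correct. Steps (I) and (II) parallel the paper's forward argument (and its Lemma~\ref{cover-lemma}) closely: both observe that $D'$ is an $\onto$-downset, that the second bullet is exactly $(D')^{\perp}=X^{\perp}$, and that $\int$-maximality forces $c\in D^{\perp}$ and hence closedness of $D$.

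The genuine difference is in the converse direction, your Step (III). To show that $c\in\Cov(X)$ implies $\int$-maximality, the paper simply cites the proof of Theorem~\ref{new cov thm}, which in turn uses Lemma~\ref{cor-element-moves} (covers correspond to singletons $X_2\cap Y_1$) and hence the interval machinery of Section~\ref{intervals sec}. You replace this with a self-contained argument: starting from a hypothetical $c'\in X\setminus\{c\}$ with $c'\into c$, you produce $d\in D$ with $d\into c$ (invoking the second bullet with $y=c'$ in the delicate case $c'\onto c$, and using antisymmetry of $\int$ to rule out $x'=c$), and then verify directly that $c\in\overline{D}$, contradicting closedness. This avoids any appeal to the interval theory or to the cover characterization, at the cost of a short case analysis; it is arguably a cleaner route if one wants Proposition~\ref{reform} to stand independently of Section~\ref{intervals sec}.
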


\begin{theorem}\label{cov thm fact}
Let $(\Sha,\t,\ont,\int)$ be a two-acyclic factorization system and let $X$ be a closed set.
Write $\Fact(\t|_X)=(\ont|_X,\int')$ as in Proposition~\ref{prop-lower-intervals}.
Then $\Cov(X)$ is the set of $\int'$-maximal elements of $X$.
\end{theorem}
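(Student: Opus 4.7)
The approach is to reduce to the sub-factorization system $(X,\t|_X,\ont|_X,\int')$ supplied by Proposition~\ref{prop-lower-intervals}. Since that proposition tells us the closure operator on subsets of $X$ agrees in both systems, the set $\Del(X,c)$ and the defining conditions of $\Cov(X)$ are unchanged. In the sub-system, $X$ itself is the top element of $\Pairs(\t|_X)$, so the problem reduces to characterizing those $c\in X$ for which $\Del(X,c)\covered X$ in the lattice of closed sets of the sub-system. The condition $\overline{\Del(X,c)\cup\{c\}}=X$ is then automatic once $\Del(X,c)\covered X$, since $c\in X\setminus\Del(X,c)$ forces the closure of $\Del(X,c)\cup\{c\}$ to strictly exceed $\Del(X,c)$.

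For the reverse containment, suppose $c$ is $\int'$-maximal in $X$. The key observation is that any arrow $x\to' c$ factors as $x\onto z\into' c$ in the sub-system, and $\int'$-maximality of $c$ forces $z=c$; hence $x\to' c$ if and only if $x\onto c$. This identifies $\Del(X,c)$ with ${}^{\perp}\{c\}$ (orthogonality with respect to $\t|_X$), which is closed by Proposition~\ref{perp is closed}. Moreover, since $F(c)=\{c\}$ in the sub-system, Proposition~\ref{JIrr and MIrr Characterize} applied inside $(X,\t|_X,\ont|_X,\int')$ produces $({}^{\perp}\{c\},\{c\})$ as a completely meet-irreducible element whose unique cover is the top $(X,\emptyset)$. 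Therefore $\Del(X,c)\covered X$, and so $c\in\Cov(X)$.

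Conversely, suppose $c\in\Cov(X)$. A short argument using Proposition~\ref{TorsionImpliesDownset} shows that any closed $Y$ with $\Del(X,c)\subseteq Y\subseteq X$ either contains $c$ (in which case $Y\supseteq\overline{\Del(X,c)\cup\{c\}}=X$) or is an $\ont$-downset avoiding $c$ (in which case $Y\subseteq\Del(X,c)$), so $\Del(X,c)\covered X$. Lemma~\ref{cor-element-moves} applied in the sub-system then forces $\Del(X,c)^{\perp}$ to be a singleton $\{y\}$, and Proposition~\ref{TorsionImpliesDownset} (now for $\int'$-upsets) shows $y$ is $\int'$-maximal in $X$. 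The main obstacle, and the principal content of the forward direction, is identifying $y=c$: the relation $y\onto c$ is immediate from $y\in X\setminus\Del(X,c)$, while $c\to' y$ follows by unpacking $\overline{\Del(X,c)\cup\{c\}}=X$ as $(\Del(X,c)\cup\{c\})^{\perp}=\emptyset$, i.e., $\{y\}\cap\{c\}^{\perp}=\emptyset$. Factoring $c\to' y$ as $c\onto z\into' y$ and applying $\int'$-maximality of $y$ collapses this to $c\onto y$, and then antisymmetry of $\ont$ (the order condition) gives $c=y$.
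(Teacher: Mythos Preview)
Your proof is correct and follows essentially the same route as the paper: reduce via Proposition~\ref{prop-lower-intervals} to the sub-factorization system $(X,\t|_X,\ont|_X,\int')$, where $X$ becomes the top element, and then identify $\Cov(X)$ with the $\int'$-maximal elements (this is the content of Proposition~\ref{cover-Sha-prop}). The only difference is in the forward direction: the paper simply cites the $\int$-maximality already established inside the proof of Theorem~\ref{new cov thm}, whereas you extract the singleton $\{y\}=\Del(X,c)^\perp$ via Lemma~\ref{cor-element-moves} and then argue directly that $y=c$ using $y\onto c$, $c\to' y$, and the order condition --- a slightly more explicit but equivalent bookkeeping.
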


\begin{theorem}\label{cov thm fin}
Let $(\Sha,\t,\ont,\int)$ be a two-acyclic factorization system and let $X$ be a closed set.
Let $\C(X)$ be the set of $\int$-maximal elements of $X$. 
Then every element of $\Cov(X)$ is an $\ont$-maximal element of~$\C(X)$.
If $\Sha$ is finite, or more generally if $X$ is finite, then $\Cov(X)$ is precisely the set of $\ont$-maximal elements of $\C(X)$.
\end{theorem}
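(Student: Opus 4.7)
The plan is to use Proposition~\ref{reform} as the working characterization of $\Cov(X)$, reducing both parts of the theorem to direct verifications of its two bulleted conditions. Proposition~\ref{reform} already gives the first bullet ($c$ is $\int$-maximal in $X$), which is precisely the statement that $c \in \C(X)$, so the theorem is really about the relationship between the second bullet of Proposition~\ref{reform} and $\ont$-maximality within $\C(X)$.

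For part~1, assume $c \in \Cov(X)$ and suppose for contradiction that some $c' \in \C(X)$ satisfies $c' \onto c$ with $c' \neq c$. Since $c' \in X$ and $c' \into c'$ by reflexivity, the second bullet of Proposition~\ref{reform} applied with $y = c'$ produces some $x' \in \Del(X,c) \cup \{c\}$ with $x' \into c'$. Note that $c' \notin \Del(X,c) \cup \{c\}$, since $c' \onto c$ and $c' \neq c$ place $c'$ in the deleted set $\{x \in X : x \onto c\}$. If $x' = c$, then $c \into c'$ combines with $c' \onto c$ to violate the brick condition. If $x' \in \Del(X,c)$, then $x' \in X$ with $x' \into c'$ forces $x' = c'$ by the $\int$-maximality of $c'$, contradicting $c' \notin \Del(X,c)$. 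Either way we have a contradiction, so $c$ is $\ont$-maximal in $\C(X)$.

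For part~2, assume $X$ is finite and let $c$ be $\ont$-maximal in $\C(X)$. The first bullet of Proposition~\ref{reform} is immediate. For the second bullet, suppose $y \in \Sha$ and some $x_0 \in X$ satisfies $x_0 \into y$. Finiteness of $X$ together with the fact that $\int$ is a partial order (two-acyclicity) lets us walk upward in $\int$ from $x_0$ to reach an $\int$-maximal element $x \in X$; transitivity of $\int$ preserves $x \into y$, so $x \in \C(X)$ with $x \into y$. Now either $x \in \Del(X,c) \cup \{c\}$ and we are done, or $x \in X \setminus (\Del(X,c) \cup \{c\})$, which by the definition of $\Del$ means $x \onto c$ with $x \neq c$; but then $x \in \C(X)$ witnesses that $c$ is not $\ont$-maximal in $\C(X)$, a contradiction.

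The main obstacle is not a technical one: the proof is a direct unpacking of Proposition~\ref{reform} paired with the brick condition. The only substantive role played by finiteness is in part~2, to ensure that the $\int$-upward walk from $x_0$ terminates at an $\int$-maximal element of $X$; if desired, one could replace ``$X$ is finite'' with the weaker hypothesis that every element of $X$ lies $\int$-below some element of $\C(X)$, and the argument still goes through verbatim.
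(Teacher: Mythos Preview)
Your proof is correct and takes a genuinely different route from the paper. The paper argues both directions through the lattice of closed sets: for part~1 it observes that if $c,c'\in\C(X)$ with $c'\onto c$ and $c'\neq c$, then Lemma~\ref{cover-lemma} makes $\Del(X,c')$ closed and strictly sandwiched between $\Del(X,c)$ and $X$, contradicting $\Del(X,c)\covered X$; for part~2 it uses finiteness to find a cover $X'\covered X$ with $X'\supsetneq\Del(X,c)$, then invokes Theorem~\ref{new cov thm} to write $X'=\Del(X,c')$ and derive $c'\onto c$. You instead stay entirely inside the combinatorial description of Proposition~\ref{reform}, never touching the lattice of closed sets. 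Your approach is more elementary and self-contained, and it makes transparent exactly where finiteness enters (the $\int$-ascent inside $X$ must terminate), yielding the mild generalization you note at the end. The paper's approach, on the other hand, ties the result more visibly to the cover structure of $\Pairs(\t)$ and reuses machinery already developed for Theorem~\ref{new cov thm}.
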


We now proceed to prove Theorem \ref{new cov thm}. Our proof uses the following lemma. 

\begin{lemma} \label{cover-lemma}
If $(X,Y)\in\Pairs(\t)$ and $c$ is $\int$-maximal in $X$, then
\[ \Del(X,c) = X \setminus\set{x\in X: x \to c} = {}^{\perp} (Y \cup \{ c \}) \]
Consequently $\Del(X,c)$ is closed. 
\end{lemma}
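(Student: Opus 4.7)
The plan is to prove the two equalities separately and then invoke Proposition~\ref{perp is closed} to deduce closedness. Both equalities should fall out from unpacking definitions, with $\int$-maximality of $c$ being used in exactly one place.

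For the first equality $\Del(X,c) = X \setminus \set{x\in X: x \to c}$, I will establish the two inclusions separately. The inclusion ``$\supseteq$'' is automatic: by Proposition~\ref{Mult Fact facts}.\ref{union}, $\ont \subseteq \t$, so any $x \onto c$ satisfies $x \to c$; taking complements inside $X$ gives the containment. For the reverse inclusion ``$\subseteq$'', suppose $x \in X$ with $x \to c$. Since $\t = \Mult(\ont,\int)$, there exists $y$ with $x \onto y \into c$. By Proposition~\ref{TorsionImpliesDownset}, $X$ is an $\ont$-downset, so $y \in X$. Now the $\int$-maximality of $c$ in $X$ forces $y = c$, and hence $x \onto c$. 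This is the only place where the maximality hypothesis enters.

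For the second equality $X \setminus \set{x\in X: x \to c} = {}^\perp(Y \cup \set{c})$, I just unpack the definition of ${}^\perp$: we have ${}^\perp(Y \cup \set{c}) = {}^\perp Y \cap {}^\perp \set{c}$, and since $(X,Y) \in \Pairs(\t)$ gives $X = {}^\perp Y$, this equals $X \cap \set{x : x \not\to c} = X \setminus \set{x \in X : x \to c}$. Finally, closedness of $\Del(X,c)$ is immediate from its realization as ${}^\perp(Y \cup \set{c})$, by Proposition~\ref{perp is closed}.

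There is no serious obstacle in this lemma; the only substantive content is the observation that combining the $\ont$-downset property of $X$ (Proposition~\ref{TorsionImpliesDownset}) with the $\int$-maximality of $c$ collapses an arbitrary factorization $x \onto y \into c$ to the trivial factorization $x \onto c \into c$, which is what makes the relations $\t$ and $\ont$ coincide on arrows into $c$ from within $X$.
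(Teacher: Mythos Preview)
Your proof is correct and follows essentially the same approach as the paper: both arguments reduce the first equality to showing that $x \onto c \Leftrightarrow x \to c$ for $x \in X$, use the factorization $x \onto y \into c$ together with Proposition~\ref{TorsionImpliesDownset} and $\int$-maximality to collapse $y$ to $c$, and then obtain the second equality and closedness by the same unpacking of ${}^\perp(Y \cup \{c\})$ and appeal to Proposition~\ref{perp is closed}.
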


\begin{proof}
The first equality says that $x\onto c$ if and only if $x\to c$.
If $x\onto c$, then $x\onto c\into c$, so $x\to c$.
If $x\to c$ then $x \onto z \into c$ for some $z\in\Sha$.
Proposition~\ref{TorsionImpliesDownset} says that $X$ is a downset for $\ont$, so $z \in X$.
Since $c$ is $\int$-maximal in $X$, we have $z = c$, so $x \onto c$ as desired.
For the second equality, note that ${}^{\perp} (Y \cup \{ c \}) = \left( {}^{\perp} Y \right) \cap \left( {}^{\perp} \{ c \} \right) = X \cap \left( {}^{\perp} \{ c \} \right) = X \setminus \{ x \to c \}$.
The set $\Del(X,c)$ is closed because any set of the form ${}^\perp S$ is closed (Proposition~\ref{perp is closed}).  
\end{proof}

\begin{proof}[Proof of Theorem \ref{new cov thm}]
  For $c\in \Cov(X)$, we have that $\Del(X,c)$ is a closed set, and any strictly larger closed set
  contains $c$, so contains the closure of $\Del(X,c)\cup\{c\}$, which is
  $X$. This shows $\Del(X,c)\lessdot X$.
  
  Suppose now that $(X',Y')\covered (X,X^\perp)$ in the lattice of maximal orthogonal pairs. 
  From Lemma~\ref{cor-element-moves}, we have $X \cap Y' = \{ c \}$ for some $c \in X$. We will establish that $c\in \Cov(X)$ and $X'=\Del(X,c)$. 

If there exists $c'\in X\setminus\set{c}$ with $c' \into c$, then since $Y'$ is an $\int$-upset, we have $c' \in Y'$.  
Thus because $X \cap Y'$ is the singleton $\set{c}$, we see that $c$ is $\int$-maximal in $X$.
Thus by Lemma~\ref{cover-lemma}, $\Del(X,c)$ is closed.

Since $X'$ is a downset for $\ont$ and $c\not \in X'$, we must have
$X'\subseteq \Del(X,c)$. It therefore follows from the fact that
$X'\lessdot X$ that $X'=\Del(X,c)$. Since we now know that
$\Del(X,c)\lessdot X$, the closure of $\Del(X,c)\cup \{c\}$ must be $X$.
We have proved that $c\in\Cov(X)$.

The uniqueness of $c$ such that $X'=\Del(X,c)$ follows because $c$ is the unique $\ont$-minimal element of $\set{x\in X: x \onto c}$ by the acyclicity of $\ont$. \end{proof}

We now prove Proposition \ref{reform} and Theorems~\ref{cov thm fact} and ~\ref{cov thm fin}.

\begin{proof}[Proof of Proposition \ref{reform}]
Suppose $c$ satisfies the conditions of the proposition. Since $c$ is $\int$-maximal in $X$, by Lemma \ref{cover-lemma}, $\Del(X,c)$ is closed.

Let $y\in \Sha$. Then $y\not\in X^\perp$ if and only if there exists some $x \in X$ with $x\to y$. 
By factoring $x \to y$ as $x \onto z \into y$, and noting that $X$ is a downset for $\onto$, we see that the existence of $x \in X$ with $x \to y$ is equivalent to the existence of $x \in X$ with $x \into y$.
By the hypothesis, this is equivalent to the existence of $x'\in \Del(X,c)\cup \{c\}$ with $x'\into y$, which is equivalent to $y \not \in (\Del(X,c)\cup\{c\})^\perp$.  
This shows $X$ and $\Del(X,c)\cup\{c\}$ have the same right perpendiculars, so their closures agree. We have now shown that $c\in \Cov(X)$.

Conversely, suppose that $c\in \Cov(X)$. 
The argument in the previous paragraph also proves that $c$ satisfies the second condition of the proposition.
The fact that $c$ in $\int$-maximal in $X$ was established in the proof of Theorem \ref{new cov thm}.
  \end{proof}

Theorem~\ref{cov thm fact} follows from the following proposition, in light of Proposition~\ref{prop-lower-intervals}.
   
\begin{prop}\label{cover-Sha-prop} 
Let $(\Sha,\t,\ont,\int)$ be a two-acyclic factorization system. 
Then $\Cov(\Sha)$ is the set of $\int$-maximal elements of $\Sha$.
\end{prop}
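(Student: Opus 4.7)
The plan is to prove the two inclusions separately.

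The inclusion $\Cov(\Sha)\subseteq\set{c\in\Sha:c \text{ is } \int\text{-maximal}}$ is already a consequence of the proof of Theorem~\ref{new cov thm} specialized to $X=\Sha$. Indeed, any $c\in\Cov(\Sha)$ yields a cover $\Del(\Sha,c)\covered\Sha$ in the lattice of closed sets, and the argument in that proof shows that the second coordinate of such a cover must be a singleton $\set{c}$ with $c$ being $\int$-maximal in the larger set (any $c'\neq c$ with $c'\into c$ would lie in the $\int$-upset that is the second coordinate, violating the singleton condition).

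For the reverse inclusion, let $c$ be $\int$-maximal in $\Sha$. My strategy is to exploit Proposition~\ref{JIrr and MIrr Characterize}: the $\int$-maximality of $c$ means $F(c)=\set{c}$, so $F^*(c)=\emptyset$, and Proposition~\ref{JIrr and MIrr Characterize} then tells me that the unique element covering $({}^{\perp}F(c),F(c))=({}^{\perp}\set{c},\set{c})$ in $\Pairs(\t)$ is $({}^{\perp}F^*(c),F^*(c))=({}^{\perp}\emptyset,\emptyset)=(\Sha,\emptyset)$. Identifying closed sets with their maximal orthogonal pairs via Proposition~\ref{step}, this says ${}^{\perp}\set{c}\covered\Sha$ in the lattice of closed sets. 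By Lemma~\ref{cover-lemma} applied to $(X,Y)=(\Sha,\emptyset)$, we have $\Del(\Sha,c)={}^{\perp}\set{c}$, and in particular $\Del(\Sha,c)$ is closed. Since $c\to c$ by reflexivity, $c\notin{}^{\perp}\set{c}$, and so $\overline{\Del(\Sha,c)\cup\set{c}}$ is a closed set strictly containing $\Del(\Sha,c)$; because $\Del(\Sha,c)\covered\Sha$ is a cover, this closure must equal $\Sha$. Hence $c\in\Cov(\Sha)$.

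The main obstacle I anticipate is the closure equality $\overline{\Del(\Sha,c)\cup\set{c}}=\Sha$. A direct attack would need to produce, for each $y\in\Sha$, either $c\to y$ or some $x\in\Del(\Sha,c)$ with $x\to y$; the awkward case is $y\onto c$ with $y\neq c$ and $c\not\to y$, which does not seem to yield a contradiction just from the brick and order conditions in isolation. Routing through the classification of completely meet-irreducible covers in Proposition~\ref{JIrr and MIrr Characterize} sidesteps this case analysis entirely by leveraging the already-established structural description of the covers of the top element $(\Sha,\emptyset)$.
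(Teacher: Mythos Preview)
Your proof is correct and follows essentially the same route as the paper's. Both directions use the same ingredients: the forward inclusion cites the proof of Theorem~\ref{new cov thm}, and the reverse inclusion combines Lemma~\ref{cover-lemma} (to identify $\Del(\Sha,c)={}^{\perp}\{c\}$) with Proposition~\ref{JIrr and MIrr Characterize} (to see that $({}^{\perp}\{c\},\{c\})\covered(\Sha,\emptyset)$ via $F(c)=\{c\}$, $F^*(c)=\emptyset$). Your final paragraph makes explicit the verification that $\overline{\Del(\Sha,c)\cup\{c\}}=\Sha$, which the paper leaves implicit, but the argument is otherwise the same.
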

\begin{proof} 
We showed in the proof of Theorem~\ref{new cov thm} that each element of $\Cov(X)$ is $\int$-maximal in $X$.
Conversely, if $c$ is $\int$-maximal in $\Sha$, then since $\Sha^\perp=\emptyset$, Lemma \ref{cover-lemma} says that $\Sha \setminus \set{x\in\Sha: x \onto c}$ is ${}^\perp\set{c}$.
Since $c$ is $\int$-maximal, $\set{c}=F(c)$,
so Proposition~\ref{JIrr and MIrr Characterize} says in particular that $(\Sha \setminus \set{x\in\Sha: x \onto c},\set{c})$ is a maximal orthogonal pair.
Since $\set{c}$ is a singleton, this pair is covered by $(\Sha,\emptyset)$.
\end{proof} 

\begin{proof}[Proof of Theorem~\ref{cov thm fin}]
The proof of Theorem~\ref{new cov thm} established that each element $c$ of $\Cov(X)$ is in $\C(x)$.
If $c'\neq c$ is also in $\C(X)$ and $c'\onto c$, then $X\supsetneq\Del(X,c')\supsetneq\Del(X,c)$, because $\ont$ is a partial order.
By Lemma~\ref{cover-lemma}, these are all closed sets, contradicting the fact that $\Del(X,c)\covered X$.
Thus $c$ is $\ont$-maximal in $\C(X)$.

Now suppose $X$ is finite.
We will show that every $\ont$-maximal element $c$ of $\C(X)$ is in $\Cov(X)$.
If $X$ does not cover $\Del(X,c)$, then $X$ must cover some $X'$ with $X' \supsetneq \Del(X,c)$.
By Theorem~\ref{new cov thm}, the set $X'$ is $\Del(X,c')$ for some $c' \in \Cov(X)$.  
In particular, $c'\in\C(X)$.
But then the containment $\Del(X,c') \supsetneq \Del(X,c)$ implies that $c' \onto c$ with $c' \neq c$, contradicting the fact that $c$ is $\ont$-maximal in $\C(x)$.
We conclude that $X$ covers $\Del(X,c)$, so $c\in\Cov(X)$ by Theorem~\ref{new cov thm}.
\end{proof}

We pause to point out a lemma that will be useful later.
\begin{lemma}\label{cov diamond}
Let $(\Sha,\t,\ont,\int)$ be a two-acyclic factorization system and let $X$ be a closed set.
If $c\in\Cov(X)$, then $\Del(X,c)\meet T(c)=T_*(c)$ and $\Del(X,c)\join T(c)=X$.
\end{lemma}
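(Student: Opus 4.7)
My plan is to exploit the two descriptions of the lattice operations on closed sets (meet is intersection by Proposition~\ref{lattice}, and join is closure of the union), together with the observation that $T(c)\subseteq X$ for any $c\in X$, since $X$ is an $\ont$-downset by Proposition~\ref{TorsionImpliesDownset}.

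For the meet identity, I first note that, because $T(c)\subseteq X$, we have
\[\Del(X,c)\cap T(c)=T(c)\setminus\{x\in T(c):x\onto c\}.\]
If $x\in T(c)$, then $c\onto x$; if additionally $x\onto c$, then acyclicity of $\ont$ (the order condition) forces $x=c$. Hence $\{x\in T(c):x\onto c\}=\{c\}$, which gives $\Del(X,c)\cap T(c)=T(c)\setminus\{c\}=T_*(c)$. Using that $T(c)$ is closed (Proposition~\ref{TF prop}) and $\Del(X,c)$ is closed (which follows from $c\in\Cov(X)$), the intersection is indeed the meet in the lattice of closed sets.

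For the join identity, the inclusion $\Del(X,c)\cup T(c)\subseteq X$ (again using $T(c)\subseteq X$ and $\Del(X,c)\subseteq X$) gives $\overline{\Del(X,c)\cup T(c)}\subseteq\overline{X}=X$. Conversely, since $c\in T(c)$, we have $\Del(X,c)\cup T(c)\supseteq\Del(X,c)\cup\{c\}$, and the definition of $c\in\Cov(X)$ gives $\overline{\Del(X,c)\cup\{c\}}=X$, so monotonicity of closure yields $\overline{\Del(X,c)\cup T(c)}\supseteq X$.

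I do not anticipate any serious obstacle; the only subtlety is remembering that $T(c)\subseteq X$ (so that intersecting with $X$ does not remove anything from $T(c)$) and invoking acyclicity of $\ont$ to identify the set removed from $T(c)$ by $\Del(X,c)$ as precisely $\{c\}$.
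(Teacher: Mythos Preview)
Your proof is correct and is essentially a fleshed-out version of the paper's own argument, which simply says ``The first assertion follows from the definitions.  The second follows from the definitions and the fact that $\Del(X,c)\covered X$.''  The only minor difference is that for the join you invoke the closure condition $\overline{\Del(X,c)\cup\{c\}}=X$ from the definition of $\Cov(X)$ directly, whereas the paper phrases it via the cover relation $\Del(X,c)\covered X$ (established in Theorem~\ref{new cov thm}); these are equivalent here and your version is arguably more self-contained.
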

\begin{proof}
The first assertion follows from the definitions.
The second follows from the definitions and the fact that $\Del(X,c)\covered X$.
\end{proof}

\subsection{Canonical join representations}
For $S$ and $S'$ subsets of $L$, we write $S \lleq S'$ if, for every $s \in S$, there exists $s' \in S'$ with $s \leq s'$. 
The relation $\lleq$ is a preorder, and becomes a partial order when restricted to antichains. 

We say that  $S$ is the \newword{canonical join representation} of $x\in L$ if
\begin{enumerate}
\item $x = \Join S$
\item  $S\lleq S'$ for any set $S'$ such that $x=\Join S'$ and
\item $S$ is an antichain.
\end{enumerate}

\begin{remark}
We make several remarks on this definition: Since $\lleq$ is a partial order on antichains, $x$ has at most one canonical join representation.
If $L$ is finite and $S$ obeys the first two conditions to be the canonical join representation of $x$, then the set of maximal elements of $S$ is the canonical join representation of $x$.
Finally, if $S$ is the canonical join representation of $x$ and $S' \subsetneq S$, then $\Join S' < x$.
\end{remark}

The following well-known fact can be found, for example in \cite[Theorem~3-1.4]{semi} or \cite[Theorem~2.24]{FreeLattices}.

\begin{theorem}\label{cjr semi}
Suppose $L$ is a finite lattice.  
Then $L$ is join semidistributive if and only if every element of $L$ has a canonical join representation.
\end{theorem}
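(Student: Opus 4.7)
The plan is to handle the two directions separately; the ``if'' direction is nearly formal, while the ``only if'' direction requires the standard construction using join semidistributivity to take minima.

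For the ``if'' direction, suppose every element has a canonical join representation and let $x \join y = x \join z = w$. Let $S$ be the canonical join representation of $w$. Since $\Join\{x,y\} = w$ and $\Join\{x,z\} = w$, the domination property gives $S \lleq \{x,y\}$ and $S \lleq \{x,z\}$, so each $s \in S$ satisfies $s \leq x$ or $s \leq y$, and $s \leq x$ or $s \leq z$. Hence each $s \in S$ satisfies $s \leq x$ or $s \leq y \meet z$, whence $w = \Join S \leq x \join (y \meet z)$. Combined with the trivial reverse inequality, this is join semidistributivity.

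For the ``only if'' direction, assume $L$ is finite and join semidistributive, and fix $x \in L$. For each $y$ covered by $x$, the set $\{z \in L : z \join y = x\}$ is nonempty (it contains $x$) and closed under binary meets by join semidistributivity; by finiteness it has a minimum element $j_y$. I will show that the set $S$ of $\leq$-maximal elements of $\{j_y : y \covered x\}$ is the canonical join representation of $x$. It is an antichain by construction, so the two remaining verifications are (i) $\Join S = x$, and (ii) $S \lleq S'$ for any $S'$ with $\Join S' = x$.

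For (i), one inequality is immediate from $j_y \leq x$; if $\Join S < x$ then a standard finiteness argument produces some $y \covered x$ with $\Join S \leq y$, forcing $j_y \leq y$ and contradicting $j_y \join y = x$. For (ii), fix $s = j_y \in S$ and $S'$ with $\Join S' = x$; then $\Join_{s' \in S'}(s' \join y) = x \join y = x$, and because $y \covered x$ each term $s' \join y$ lies in $\{y, x\}$, so some $s' \join y$ equals $x$ and minimality of $j_y$ gives $j_y \leq s'$. The main (minor) obstacle is to check that passing from $\{j_y : y \covered x\}$ to its set of maximal elements preserves both the join value and the $\lleq$-domination property, which is routine: non-maximal $j_y$'s are dominated by elements retained in $S$, so any $s' \in S'$ dominating a maximal $j_y$ also dominates those below it.
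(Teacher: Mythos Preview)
The paper does not actually prove this theorem: it is stated as a ``well-known fact'' with citations to \cite[Theorem~3-1.4]{semi} and \cite[Theorem~2.24]{FreeLattices}, and no argument is given in the paper itself. So there is no proof in the paper to compare against.

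Your proof is correct and is essentially the standard argument found in those references. A couple of small comments on presentation: in step~(ii) you should note explicitly that each $s' \in S'$ satisfies $s' \leq \Join S' = x$, which is what guarantees $s' \join y \leq x$ and hence $s' \join y \in \{y,x\}$; and your final sentence about ``passing to maximal elements'' is slightly muddled---the real point is simply that you already showed $j_y \leq s'$ for some $s' \in S'$ for \emph{every} $y \covered x$, so in particular this holds for the maximal $j_y$'s that constitute $S$, giving $S \lleq S'$ directly. These are cosmetic; the mathematics is sound.
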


Thus, if $(\Sha, \t, \ont, \int)$ is a finite two-acyclic factorization system, we deduce that each element of $\Pairs(\t)$ has a canonical join representation.
We will now give an explicit description of this representation. 
Recall the notation $T(x)$ for $\{ x' : x \onto x'\}$.

\begin{theorem} \label{thm-cjr-description}  
Let $(\Sha, \t, \ont, \int)$ be a finite two-acyclic factorization system.
If $X$ is a closed set, then the canonical join representation of $X$ is $\{T(c) : c \in \Cov(X) \}$. 
\end{theorem}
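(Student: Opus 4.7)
The plan is to verify directly the three defining conditions of a canonical join representation for the set $S = \{T(c) : c \in \Cov(X)\}$, making essential use of the cover description in Theorem~\ref{new cov thm} and the characterization of $\Cov(X)$ in Theorem~\ref{cov thm fin}.

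First I would check that $\Join S = X$. Each $T(c)$ with $c \in X$ is contained in $X$ because $X$ is closed and is a $\ont$-downset by Proposition~\ref{TorsionImpliesDownset}, so $\Join S \leq X$. If the join were strictly less than $X$ then, since the interval $[\Join S, X]$ is nonempty, there would be some element covered by $X$ above $\Join S$. By Theorem~\ref{new cov thm}, every such cover has the form $\Del(X,c)$ for some $c \in \Cov(X)$, but $T(c) \not\subseteq \Del(X,c)$ (since $c \in T(c)$ and $c \notin \Del(X,c)$), contradicting $T(c) \leq \Join S$.

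Next I would establish the $\lleq$-property. Let $S'$ be any collection of closed sets with $\Join S' = X$, and fix $c \in \Cov(X)$; I need some $s' \in S'$ with $T(c) \subseteq s'$. Since each $s'$ is an $\ont$-downset, this is equivalent to asking that $c \in s'$ for some $s' \in S'$. Suppose not. Then for every $s' \in S'$, any $y \in s'$ with $y \onto c$ would force $c \in s'$ (downset property); hence $s' \subseteq \Del(X,c)$ for each $s' \in S'$. Consequently $\bigcup S' \subseteq \Del(X,c)$, and since $\Del(X,c)$ is closed by Lemma~\ref{cover-lemma}, taking closure gives $X = \Join S' \subseteq \Del(X,c)$, contradicting $c \in X \setminus \Del(X,c)$.

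Finally I would check that $S$ is an antichain. Suppose $T(c) \subseteq T(c')$ for distinct $c, c' \in \Cov(X)$; since $c \in T(c) \subseteq T(c')$, this means $c' \onto c$. By Theorem~\ref{cov thm fin}, both $c$ and $c'$ lie in $\C(X)$ and are $\ont$-maximal there, so having $c' \onto c$ with $c \neq c'$ in $\C(X)$ contradicts the $\ont$-maximality of $c$ (using the order condition to rule out $c \onto c' \onto c$).

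The main obstacle I anticipate is the antichain condition: the other two conditions follow from fairly direct closure-and-downset manipulations, but antichainness genuinely requires the refined information in Theorem~\ref{cov thm fin} that $\Cov(X)$ consists precisely of the $\ont$-maximal elements among the $\int$-maximal elements of $X$ (and so uses the finiteness hypothesis). With all three conditions verified, Theorem~\ref{cjr semi} guarantees that $S$ is the canonical join representation of $X$.
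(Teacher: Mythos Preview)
Your proof is correct and follows essentially the same three-step argument as the paper's proof. Two small remarks: the final appeal to Theorem~\ref{cjr semi} is unnecessary, since you have verified the three defining conditions directly; and you have slightly mislocated the use of finiteness---it is actually needed in your first step (to guarantee a cover of $X$ exists above $\Join S$), whereas the direction of Theorem~\ref{cov thm fin} you use for the antichain condition (that elements of $\Cov(X)$ are $\ont$-maximal in $\C(X)$) holds without any finiteness assumption.
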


\begin{proof}
Let $S=\set{T(c):c\in\Cov(X)}$.
We first verify that $\Join S = X$. 
For each $c \in\Cov(X)$, we have $T(c) \subseteq X$, so  $\Join S  \subseteq X$. 
Suppose for the sake of contradiction that this containment were strict. 
Then there would be some $X'$, covered by $X$, with $\Join S \subseteq X' \subsetneq X$.
(This is the one use of finiteness in this proof.)  
By Theorem~\ref{new cov thm}, $X'=\Del(X,c)$ for some $c \in \Cov(X)$ and then $T(c) \not\subseteq X'$, a contradiction. 

We next show that, if $X = \Join S'$, then $S \lleq S'$. 
We must show that, for all $c \in \Cov(X)$, there is some $X' \in S'$ with $T(c) \subseteq X'$. 
We have $T(c) \subseteq X'$ if and only if $c \in X'$ so suppose, for the sake of contradiction that $c$ is not in any $X' \in S'$. 
Since $c \not \in X'$, we have $X' \cap \{ x\in\Sha: x \onto c \} = \emptyset$, and we deduce that the closed set $\Del(X,c)$ contains $X'$ for all $X' \in S'$. 
But then $\Join S' \subseteq \Del(X,c) \subsetneq X$, a contradiction.

Theorem~\ref{cov thm fin} implies in particular that $\Cov(X)$ is an antichain in the partial order $\ont$.
Therefore also $S$ is an antichain.   
\end{proof}

By Proposition~\ref{prop-lower-intervals}, the conclusion of Theorem ~\ref{thm-cjr-description} also holds when the hypothesis that $\Sha$ is finite is replaced with the weaker hypothesis that $X$ is finite.

For ease of exposition, let $L$ be a finite lattice.
We say that $S\subseteq L$ \newword{joins canonically} if $S$ is the canonical join representation of an element (which is necessarily $\Join S$).
When $L$ is join semidistributive,  the collection of all $S\subset L$ such that $S$ joins canonically is a simplicial complex \cite[Proposition~2.2]{arcs}.
This simplicial complex is called the \newword{canonical join complex}.
Its vertex set is $\JIrr L$. 

A simplicial complex is called \newword{flag} if all of its minimal non-faces are edges, or equivalently, if it is the clique complex of its $1$-skeleton.
The following theorem of Barnard~\cite[Theorem~1.1]{Barnard} characterizes finite semidistributive lattices among all finite join semidistributive lattices in terms of their canonical join complexes.

\begin{theorem}\label{flag}
A finite join semidistributive lattice is semidistributive if and only if its canonical join complex is flag.
\end{theorem}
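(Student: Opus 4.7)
My plan is to deduce both directions from the FTFSDL. By Theorem~\ref{FTFSDL1}, I may write $L \cong \Pairs(\t)$ for a finite two-acyclic factorization system $(\Sha, \t, \ont, \int)$, so that $\JIrr(L)$ corresponds to $\Sha$ via $c \leftrightarrow T(c)$, and by Theorem~\ref{thm-cjr-description}, a set $S \subseteq \Sha$ joins canonically if and only if $\Cov(X_S) = S$, where $X_S := \overline{\bigcup_{c \in S} T(c)}$.

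For the forward direction, suppose $S = \{c_1, \ldots, c_n\}$ is pairwise compatible in the CJC. I will show $\Cov(X_S) = S$ in three steps. First, applying Theorem~\ref{cov thm fin} to each pair's $\Cov(X_{c_i,c_j}) = \{c_i, c_j\}$, I extract that $S$ is an $\ont$-antichain and that $c_i \not\to c_j$ for $i \neq j$: a factorization $c_i \onto w \into c_j$ places $w$ in the $\ont$-downset $X_{c_i,c_j}$ with $w \into c_j$, so $\int$-maximality of $c_j$ forces $w = c_j$, contradicting the $\ont$-antichain property. Second, I verify each $c_i \in \Cov(X_S)$ using Theorem~\ref{cov thm fin}. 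For any $y \in X_S$ with $y \into c_i$ and $y \neq c_i$, applying the closure definition $X_S = {}^{\perp}((\bigcup_k T(c_k))^{\perp})$ to $y \to y$ yields some $c_k$ with $c_k \to y$, factoring as $c_k \onto w \into y$; then $w \into c_i$ by transitivity of $\int$, so $c_k \to c_i$, contradicting step one when $k \neq i$ and two-acyclicity when $k = i$. A parallel argument for $\ont$-maximality among $\int$-maximal elements starts from $y \in X_S$ that is $\int$-maximal with $y \onto c_i$ and $y \neq c_i$, uses $\int$-maximality of $y$ to force $w = y$ in the factorization, and derives the same contradictions. Third, since the CJR is $\lleq$-minimal among antichain join representations, $\Cov(X_S) \lleq S$; combining this with $S \subseteq \Cov(X_S)$ and the $\ont$-antichain property of $\Cov(X_S)$ forces $\Cov(X_S) = S$.

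For the reverse direction, I argue the contrapositive. Assume $L$ is join semidistributive but not meet semidistributive. By Theorem~\ref{semi char}, some $j \in \JIrr(L)$ has $\{y : j \meet y = j_*\}$ without a maximum, so in the finite case this set contains distinct maximal elements $y_1, y_2$ satisfying $j \leq y_1 \vee y_2$ but $j \not\leq y_k$. I will construct a minimal non-face of the CJC by starting from the canonical join representations of $y_1$ and $y_2$ and selecting a subset whose join admits a strictly smaller CJR (due to $j$'s redundancy in $y_1 \vee y_2$) while each pair remains a CJR.

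The main obstacle is the reverse direction: making the non-flag construction fully explicit requires tracking how $j$ fits into the canonical join representation of $y_1 \vee y_2$, and this is precisely where the failure of meet-semidistributivity must be leveraged. The forward direction, by contrast, flows cleanly from the FTFSDL and Theorem~\ref{cov thm fin} once the pairwise arrow constraints are established.
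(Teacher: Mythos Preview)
The paper does not prove Theorem~\ref{flag}; it is quoted as a result of Barnard~\cite[Theorem~1.1]{Barnard} and then used to prove Theorem~\ref{cjc to}. So there is no proof in the paper to compare against.

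Your forward direction (semidistributive $\Rightarrow$ flag) is a correct and self-contained argument via the FTFSDL, and it is not circular: Theorems~\ref{FTFSDL1}, \ref{thm-cjr-description}, and~\ref{cov thm fin} are all proved in the paper without invoking Theorem~\ref{flag}. The one step that deserves a sentence of justification is your use of ``$y\to y$'' in Step~2: from $y\in X_S={}^{\perp}\bigl((\bigcup_k T(c_k))^{\perp}\bigr)$ and $y\to y$ you conclude $y\notin(\bigcup_k T(c_k))^{\perp}$, hence some $x\in T(c_k)$ has $x\to y$, and then $c_k\onto x$ together with a factorization of $x\to y$ gives $c_k\to y$. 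This is fine, but state it; as written the sentence is elliptical. With that filled in, your forward argument is essentially an unpacked proof of the direction of Theorem~\ref{cjc to} that does not depend on Barnard.

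Your reverse direction, however, is not a proof. You yourself flag it as the ``main obstacle,'' and indeed nothing after ``I will construct a minimal non-face'' is an argument: you have distinct maximal elements $y_1,y_2$ of $\{y:j\meet y=j_*\}$ with $j\le y_1\vee y_2$, but you have not specified which subset of the canonical join representations of $y_1$ and $y_2$ to take, nor why each pair from that subset joins canonically, nor why the whole set fails to. The phrase ``due to $j$'s redundancy in $y_1\vee y_2$'' gestures at the right phenomenon but does not pin down a non-face. This is exactly the substantive content of Barnard's theorem, and it requires real work (Barnard's proof goes through a careful analysis of how canonical joinands interact with the $\kappa$ map). If you want to supply an independent proof you must actually carry this out; otherwise, cite Barnard as the paper does.
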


In particular, to understand the canonical join complex of a finite semidistributive lattice $L$, it suffices to know which $2$-element subsets of $\JIrr L$ join canonically. We will address this after proving the following lemma, which gives an alternate description of the $\t_L$ relation:
\begin{lemma}\label{j kappa}   
Let $L$ be a $\kappa$-lattice and let $i,j\in\JIrr^cL$.
Then $i\to_Lj$ if and only if $i\join j_* \ge j$, where $j_*$ is the unique element of $L$ covered by $j$.
\end{lemma}
\begin{proof}
Since $j_*\le\kappau(j)$, we have $i\le\kappau(j)$ if and only if $i\join j_*\le\kappau(j)$.
Since $j\not\le\kappau(j)$, if $i\join j_*\le\kappau(j)$, then $i\join j_*\not\ge j$.
On the other hand, if $i\join j_*\not\le\kappau(j)$, then since $j_*\le i\join j_*$, by definition of $\kappa$, we have $i\join j_*\ge j$.
\end{proof}

We now explain how to detect which $2$-element subsets of $\JIrr L$ join canonically in a  finite semidistributive lattice $L$, and thus describe the canonical join complex of $L$.

\begin{theorem}\label{cjc to}
Let $L$ be a finite semidistributive lattice, identified with $\Pairs(\t_L)$ for $\t_L$ as in Theorem~\ref{FTFSDL1}.
The faces of the canonical join complex of $L$ are the subsets $S$ of $\JIrr L$ such that $x\not\to_L y$ for all distinct $x,y\in S$.
\end{theorem}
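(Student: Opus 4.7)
The plan is to prove Theorem~\ref{cjc to} by combining Barnard's theorem with a direct two-element analysis via Theorem~\ref{thm-cjr-description}. Since $L$ is finite and semidistributive, Theorem~\ref{flag} says its canonical join complex is flag, so $S \subseteq \JIrr(L)$ is a face if and only if every two-element subset of $S$ is a face (the condition on $S$ in the statement is vacuous on singletons and the empty set and is visibly inherited by subsets). The bijection $a \leftrightarrow T(a)$ identifies $\Sha$ with $\JIrr(L)$ and converts $\t$ into $\t_L$ by Theorem~\ref{2afs kappa}, and by Theorem~\ref{thm-cjr-description} the canonical join representation of a closed set $X$ is $\{T(c) : c \in \Cov(X)\}$. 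So it suffices to prove: for distinct $a, b \in \Sha$, $\Cov(\overline{\{a,b\}}) = \{a,b\}$ if and only if $a \not\to b$ and $b \not\to a$.

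For the forward direction, suppose $\Cov(X) = \{a,b\}$ with $X = \overline{\{a,b\}}$. By Theorem~\ref{cov thm fin}, both $a$ and $b$ are $\int$-maximal in $X$ and $\ont$-maximal in $\C(X)$. Assume for contradiction that $a \to b$ and factor as $a \onto c \into b$. Since $X$ is an $\ont$-downset (Proposition~\ref{TorsionImpliesDownset}) containing $a$, we have $c \in X$; the $\int$-maximality of $b$ then forces $c = b$, so $a \onto b$. But $a, b \in \C(X)$ with $a \neq b$, contradicting the $\ont$-maximality of $b$ in $\C(X)$. By symmetry, $b \not\to a$.

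For the backward direction, assume $a \not\to b$ and $b \not\to a$ and set $X = \overline{\{a,b\}} = {}^{\perp}(\{a,b\}^{\perp})$. I first show $b$ is $\int$-maximal in $X$ (by symmetry the same argument works for $a$). Suppose $b' \in X$ with $b' \into b$ and $b' \neq b$. Because $b' \in X$, any $y$ with $b' \to y$ satisfies $a \to y$ or $b \to y$; applying this to the reflexive arrow $b' \to b'$ gives $a \to b'$ or $b \to b'$. In the second case, factor $b \onto c \into b'$; composing with $b' \into b$ gives $b \onto c \into b$, forcing $c = b$ by the brick condition, whence $b \into b'$ and then $b = b'$ by the order condition, a contradiction. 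In the first case, factor $a \onto c \into b'$ and compose with $b' \into b$ to obtain $a \onto c \into b$, i.e.\ $a \to b$, again a contradiction. The same reflexivity trick applied to any $\int$-maximal $c \in X$ shows that either $a \onto c$ or $b \onto c$, so $\C(X) \subseteq T(a) \cup T(b)$. Since $\onto \subseteq \to$ by Proposition~\ref{Mult Fact facts}, the hypotheses give $a \not\onto b$ and $b \not\onto a$, so antisymmetry and transitivity of $\ont$ force any $\ont$-maximal element of $\C(X) \subseteq T(a) \cup T(b)$ to be either $a$ or $b$; both in fact are $\ont$-maximal because neither $b \onto a$ nor $a \onto b$ can hold (the only way an element of $\C(X) \setminus \{a,b\}$ could sit above $a$ in $\ont$ would be to lie in $T(b)$ and thereby force $b \onto a$). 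Theorem~\ref{cov thm fin} then gives $\Cov(X) = \{a,b\}$.

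The delicate step is the backward direction's reflexivity trick, which converts the abstract closure membership $b' \in \overline{\{a,b\}}$ into the concrete statement $a \to b'$ or $b \to b'$ via $b' \to b'$; once this device is in hand the brick and order conditions dispatch the remaining cases, and the rest reduces to the cover descriptions of Theorems~\ref{new cov thm} and~\ref{cov thm fin}.
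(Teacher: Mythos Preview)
Your proof is correct, but it takes a genuinely different route from the paper's argument. Both proofs begin with the same reduction via Theorem~\ref{flag} to the two-element case. From there, however, the paper works directly in $L$: using Lemma~\ref{j kappa}, if $i\to_L j$ then $i\join j_*=i\join j$, so $\{i,j_*\}\lleq\{i,j\}$ witnesses that $\{i,j\}$ cannot join canonically; conversely, if $\{i,j\}$ does not join canonically, one compares $\{i,j\}$ with the actual canonical join representation $S$ of $i\join j$, observes $S\lleq\{i,j\}$ with (say) $j\notin S$, and deduces $S\lleq\{i,j_*\}$, hence $i\join j_*=i\join j$ and $i\to_L j$. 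This is a short, purely lattice-theoretic argument that never unpacks the factorization system.

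Your approach instead stays entirely inside the factorization system: you translate ``$\{T(a),T(b)\}$ joins canonically'' into ``$\Cov(\overline{\{a,b\}})=\{a,b\}$'' via Theorem~\ref{thm-cjr-description}, and then analyze $\C(X)$ and its $\ont$-maximal elements using Theorem~\ref{cov thm fin}. The key device---feeding the reflexive arrow $b'\to b'$ into the definition of $\overline{\{a,b\}}$ to force $a\to b'$ or $b\to b'$---is a nice trick that has no analogue in the paper's proof. Your argument is longer and leans on more of the machinery (Theorems~\ref{thm-cjr-description} and~\ref{cov thm fin}, Propositions~\ref{TorsionImpliesDownset} and~\ref{Mult Fact facts}), but it has the virtue of illustrating concretely how those structural results about covers are used, and it avoids Lemma~\ref{j kappa} entirely. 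The paper's proof is more economical; yours is more ``internal'' to the factorization-system point of view.
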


\begin{proof}
In light of Theorem~\ref{flag}, we need only prove that for $i,j\in\JIrr L$, the set $\set{i,j}$ joins canonically if and only neither $i\to_L j$ nor $j\to_L i$.
If $i\to_L j$ then $i\join j_*=i\join j$ by Lemma~\ref{j kappa}.  
Since $\set{i,j_*}\lleq\set{i,j}$, the set $\set{i,j}$ does not join canonically.
Conversely, suppose $\set{i,j}$ does not join canonically and write $x=i\join j$.
If $x=\Join S$ is the canonical join representation of $x$, then $S\lleq\set{i,j}$. 
Since $x=\Join S$ is irredundant, $S\not\supseteq\set{i,j}$, so without loss of generality, $j\not\in S$.
By the definition of $\lleq$, and since $j\not\in S$, every $s\in S$ has either $s\le i$ or $s\le j_*$. 
Thus $S\lleq\set{i,j_*}$, so that $x=\Join S\le i\join j_*\le i\join j=x$.
We see that $ i\join j_*=i\join j$, so that $i\to_L j$.
\end{proof}

\begin{remark}\label{flag insight}
\textit{A priori}, it is not unreasonable to hope to find a Fundamental Theorem of Finite Join-Semidistributive Lattices along the same lines as FTFSDL.
However, in view of Theorem~\ref{flag}, we expect that such a hypothetical FTFJDL would not simply involve binary relations between the elements of $\Sha$, but rather relations between $k$-tuples of elements for $k>2$. 
\end{remark}

\section{Quotient lattices}  \label{quot sec} 
In this section, we prove Theorem~\ref{thm finite quotients} and Corollary~\ref{con pairs finite}, which describe all lattice congruences and quotients of a finite semidistributive lattice.
We prove these results as special cases of results on lattices of maximal orthogonal pairs for not-necessarily-finite two-acyclic factorization systems.

To begin the section, we give background and terminology necessary to understand the results and their generalizations.
Further background and proofs, from a point of view compatible with the present treatment, can be found in \cite[Section~9-5]{regions9} and \cite[Section~2]{DIRRT}.  

A \newword{lattice homomorphism} is a map $\eta: L_1 \to L_2$ with
$\eta(x \meet y) = \eta(x) \meet \eta(y)$ and $\eta(x \join y) = \eta(x) \join \eta(y)$ for all $x,y\in L_1$.
Equivalently, $\eta(\Join X)=\Join(\eta X)$ for all finite, non-empty 
subsets $X\subseteq L_1$.
A \newword{complete lattice homomorphism} has $\eta(\Join X)=\Join(\eta X)$ for infinite subsets $X$ as well.

A \newword{congruence} on a lattice $L$ is an equivalence relation $\Theta$ on $L$ that respects the meet and join operations in the following sense:    
If $x_1\equiv y_1$ and $x_2\equiv y_2$ mod $\Theta$, then $x_1\meet x_2\equiv y_1\meet y_2$ and $x_1\join x_2\equiv y_1\join y_2$ mod $\Theta$.
Equivalently, if $I$ is a finite, non-empty indexing set and $(x_i:i\in I)$ and $(y_i:i\in I)$ are elements of $L$ such that $x_i\equiv y_i\bmod \Theta$ for all $i\in I$, then $\Meet_{i\in I}x_i\equiv\Meet_{i\in I}y_i$ and $\Join_{i\in I}x_i\equiv\Join_{i\in I}y_i$ mod $\Theta$.
If the same condition holds for all infinite indexing sets, then $\Theta$ is a \newword{complete congruence}.
The \newword{quotient} $L/\Theta$ is the set of congruence classes,
with the obvious lattice structure.
The map sending an element to its congruence class is a surjective lattice homomorphism from $L$ to $L/\Theta$.
On the other hand, given a (complete) surjective lattice homomorphism $\eta:L_1\to L_2$, the fibers of $\eta$ are the congruence classes of a (complete) congruence $\Theta$ on $L_1$, and $\eta$ induces an isomorphism from $L_1/\Theta$ to $L_2$.
Every congruence class of a complete lattice congruence is an interval in $L$.

The set of all equivalence relations on a set $X$ forms a lattice (the lattice of set partitions of $X$), where $\Theta_1$ is smaller than $\Theta_2$ if $\Theta_1$ \newword{refines} $\Theta_2$ (in other words, if $x \equiv y \bmod \Theta_1$ implies $x \equiv y \bmod \Theta_2$).
The set $\Con(L)$ of all congruences on $L$ is a sublattice of the lattice of set partitions of~$L$.
While the lattice of set partitions is not even semidistributive (for $|X|>2$), the lattice $\Con(L)$ is distributive for any lattice $L$ \cite[Theorem~149]{Gratzer}. 

In particular, when $L$ is finite, we can understand $\Con(L)$ by way of the FTFDL.
Given a congruence $\Theta$ on a finite lattice $L$ and an edge $a\covered b$ in the Hasse diagram of $L$, we say that $\Theta$ \newword{contracts} $a\covered b$ if $a\equiv b$ mod $\Theta$.
There is a unique finest congruence contracting $a\covered b$ (the meet in $\Con(L)$ of all congruences contracting $a\covered b$), and we write $\con(a,b)$ for this congruence.
The following proposition is well known; see, for example, \cite[Proposition~9.5.14]{regions9}.

\begin{prop} \label{JIrr of ConL}
Let $L$ be a finite lattice and $\Theta$ a congruence on $L$. The following are equivalent:
\begin{enumerate}[\qquad\rm(i)]
\item $\Theta$ is a join-irreducible element in the lattice $\Con(L)$.
\item $\Theta$ is of the form $\con(a,b)$ for some 
  cover $a \covered b$ of $L$.
\item $\Theta$ is of the form $\con(j_*,j)$ for some join-irreducible element $j$.
\item $\Theta$ is of the form $\con(m,m^*)$ for some meet-irreducible element $m$.
\end{enumerate}
\end{prop}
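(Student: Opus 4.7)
The plan is to prove (ii) $\Leftrightarrow$ (i) and (ii) $\Leftrightarrow$ (iii), with (ii) $\Leftrightarrow$ (iv) following by passage to the opposite lattice (since $\MIrr(L) = \JIrr(L^\op)$ and the notion of a congruence is self-dual). For (ii) $\Rightarrow$ (i), the key step is the following claim: if $a \covered b$ is a cover and $a \equiv b$ mod $\Theta_1 \vee \Theta_2$, then one of $\Theta_1, \Theta_2$ already contracts $a \covered b$. Since the join of congruences is the transitive closure of their union, there is a chain $a = z_0, z_1, \ldots, z_n = b$ with each consecutive pair related by some $\Theta_k$. Applying the map $z \mapsto (z \meet b) \join a$, which preserves both $\Theta_k$, produces a chain $c_0, \ldots, c_n$ with $c_0 = a$, $c_n = b$, and every $c_i \in [a,b] = \{a,b\}$; some consecutive pair $c_i = a$, $c_{i+1} = b$ is then related by one of the $\Theta_k$, proving the claim and hence that $\con(a,b)$ is join-irreducible.

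For (i) $\Rightarrow$ (ii), I would establish that every $\Theta \in \Con(L)$ satisfies $\Theta = \bigvee\{\con(a,b) : a \covered b,\ a \equiv b \bmod \Theta\}$. The inequality $\geq$ is clear. For $\leq$, given $x \leq y$ with $x \equiv y \bmod \Theta$, the congruence class of $x$ is an interval containing $[x,y]$, so every cover in a maximal chain from $x$ to $y$ is contracted by $\Theta$; concatenating these gives $x \equiv y$ modulo the right-hand side. Since $L$ is finite the join is finite, and join-irreducibility of $\Theta$ forces $\Theta = \con(a,b)$ for some cover.

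Implication (iii) $\Rightarrow$ (ii) is immediate. For (ii) $\Rightarrow$ (iii), given a cover $a \covered b$, I would choose $j$ minimal in the set of join-irreducibles $j' \leq b$ with $j' \not\leq a$ (nonempty by Proposition~\ref{Jx Mx} applied to $b$, since $b \not\leq a$), and claim $\con(j_*, j) = \con(a, b)$. Minimality forces every join-irreducible $k \leq j_*$ (equivalently $k < j$) to satisfy $k \leq a$, and Proposition~\ref{Jx Mx} applied to $j_*$ then yields $j_* \leq a$; combined with $a \covered b$ and $j \leq b$ this forces $a \join j = b$, so $\con(j_*, j)$ contracts $a \covered b$ and $\con(j_*, j) \geq \con(a, b)$. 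Conversely, from $a \equiv b \bmod \con(a,b)$ one gets $a \meet j \equiv j$; since $a \meet j < j$ and $j$ is join-irreducible, $a \meet j \leq j_*$, and combined with $j_* \leq a$ this gives $a \meet j = j_*$, so $\con(a,b) \geq \con(j_*, j)$. The most delicate step in the whole argument is this derivation of $j_* \leq a$ from the minimality of $j$; the remaining manipulations are standard applications of the fact that congruences respect lattice operations.
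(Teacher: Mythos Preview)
The paper does not prove this proposition: it is stated as well known and referenced to \cite[Proposition~9.5.14]{regions9}, so there is no in-paper argument to compare against.

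Your proof is correct and is essentially the standard one. A couple of minor observations. In (ii)~$\Rightarrow$~(i), your claim actually shows that $\con(a,b)$ is join-\emph{prime} in $\Con(L)$ (if $\con(a,b)\le\Theta_1\vee\Theta_2$ then $\con(a,b)\le\Theta_i$ for some~$i$); you then specialize to $\Theta_1\vee\Theta_2=\con(a,b)$ to get join-irreducibility, which is fine and does not even require the distributivity of $\Con(L)$. In (i)~$\Rightarrow$~(ii), you silently reduce to comparable pairs $x\le y$; this is justified because $x\equiv y$ iff $x\wedge y\equiv x\vee y$, so the reduction is harmless. Your handling of the ``delicate step'' $j_*\le a$ via Proposition~\ref{Jx Mx} is correct: every join-irreducible $k\le j_*$ satisfies $k<j\le b$, so minimality of $j$ forces $k\le a$, whence $j_*=\Join\{k\in\JIrr:k\le j_*\}\le a$.
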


Since $\Con(L)$ is a distributive lattice, the FTFDL states that it is isomorphic to the lattice of downsets of $\JIrr (\Con(L))$. 
Proposition~\ref{JIrr of ConL} states that $j \mapsto \con(j_*,j)$ gives a surjection from $\JIrr L$ to $\JIrr (\Con(L))$.
We define the \newword{forcing preorder} on $\JIrr L$ by saying that $j$ \newword{forces} $j'$ if and only if $\con(j_*,j) \geq \con(j'_*,j')$ or, in other words, if every congruence contracting $j_{\ast} \covered j$ also contracts $j'_{\ast} \covered j'$.
Forcing is a preorder on $\JIrr L$, meaning it is reflexive and transitive, but perhaps not antisymmetric. 
Thus $\Con(L)$ is isomorphic to the poset of downsets for the forcing preorder on $\JIrr L$. 

On a finite lattice, congruences are determined by the set of covers that they contract.
The same is not true for infinite complete lattices. 
For example, consider the $\NN\union\set{\infty}$ with the obvious partial order.
There are two complete congruences that contract all covers, one with a single congruence class and one with two classes $\NN$ and $\set{\infty}$.
We will consider the congruences that \emph{are} determined by the covers they contract.

Given a congruence $\Theta$ on a lattice~$L$, if $x\le y$ and $x\equiv y\bmod\Theta$, then for all $u,v$ with $x\le u\covered v\le y$, we have $u\equiv v\bmod\Theta$.
The congruence $\Theta$ is \newword{cover-determined} if the converse holds as well:
If $x\le y$ and $u\equiv v\bmod\Theta$ for all $u,v$ with $x\le u\covered v\le y$, then $x\equiv y\bmod\Theta$.

We have given the lattice-theoretic background to Theorems~\ref{thm finite quotients} and its generalization.
We now need two more definitions related to two-acyclic factorization systems.

Let $(\Sha, \t, \ont, \int)$ be a two-acyclic factorization system.   
Given $x$, $y\in\Sha$, we write $x\forces y$ and say $x$ \newword{directly forces} $y$ if and only if either  
\begin{enumerate}[\qquad\rm(i)]
\item \label{forcs1}
$x$ is $\ont$-minimal in $F(y) = \{ x'\in\Sha : x'\into y\}$, or
\item \label{forcs2}
$x$ is $\int$-maximal in $T(y) = \{ x'\in\Sha : y \onto x' \}$.
\end{enumerate}  

\begin{eg}\label{forcs ex}
This example continues Example~\ref{2afs ex}.
The relation $\forcs$ on $\Sha=\set{\labels}$ is shown in Figure~\ref{forcs fig}. 
(Arrows of the form $x\forces x$ are omitted.)
For example, $F(\7)=\set{\7,\5,\3}$ and all three elements of $F(\7)$ are $\ont$-minimal in $F(\7)$, so $\5\forces\7$ and $\3\forces\7$.
Also, $T(\7)=\set{\6,\7}$ and both elements of $T(\7)$ are $\int$-maximal in $T(\7)$, so $\6\forces\7$.
\begin{figure}
  \mbox{
    \begin{tikzpicture}[->,decoration = {snake,pre length=3pt,post length=3pt,amplitude=1.5pt,segment length = 10pt}]
      \node (1) at (1,1) {$\1$};  
      \node (2) at (2,0) {$\2$};  
      \node (4) at (3,1) {$\4$};  
      \node (6) at (3,0) {$\6$};  
      \node (7) at (3,-1) {$\7$}; 
      \node (5) at (4,0) {$\5$};  
      \node (3) at (5,-1) {$\3$}; 
      \draw[decorate] (1)--(2); \draw[decorate] (1)--(4); \draw[decorate] (2)--(4); \draw[decorate] (6)--(4);
      \draw[decorate] (6)--(7); \draw[decorate] (5)--(7); \draw[decorate] (3)--(5); \draw[decorate] (3)--(7);
      \draw[decorate] (4)--(5); \draw[decorate] (7)--(2);
      \end{tikzpicture}}
        
\caption{The $\forcs$ relation for Example~\ref{2afs ex} (Figure~\ref{2afs fig})} 
\label{forcs fig}
\end{figure}

\end{eg}

Let $x \to z$, so $\{ y\in\Sha : x \onto y \into z \}$ is nonempty. We define an $\ont$-minimal element of $\{ y\in\Sha : x \onto y \into z \}$ to be an \newword{image} of $x \to z$ and define an $\int$-maximal element to be a \newword{co-image} of $x \to z$.
We will say that a factorization system $\Sha$ satisfies the \newword{image condition} if, for every $x$ and $z \in \Sha$ with $x \to z$, there is at least one image and at least one co-image of $x \to z$.

Recall that a \newword{$\forcs$-upset} means a subset $\Che$ of $\Sha$ such that, if $x \forces y$ and $y \in \Che$, then $x \in \Che$. 
We will prove the following generalization of Theorem~\ref{thm finite quotients}.

\begin{theorem}\label{thm lattice quotients}
Suppose $(\Sha, \t, \ont, \int)$ is a two-acyclic factorization system that obeys the image condition and $\Che\subseteq\Sha$ is a $\forcs$-upset.
Then the restriction of $(\Sha, \t, \ont, \int)$ to $\Che$ is a two-acyclic factorization system that obeys the image condition.
The map $(X,Y) \mapsto (X \cap \Che, Y \cap \Che)$ is a surjective complete lattice homomorphism between the corresponding lattices of maximal orthogonal pairs.
The complete congruence associated to $(X,Y) \mapsto (X \cap \Che, Y \cap \Che)$ is cover-determined.
Every cover-determined congruence on $\Pairs(\t)$ arises from a $\forcs$-upset in this way.
\end{theorem}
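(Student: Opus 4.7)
The proof breaks into a \textbf{forward direction} (each $\forcs$-upset $\Che$ yields a cover-determined congruence via $\eta:(X,Y)\mapsto(X\cap\Che,Y\cap\Che)$) and a \textbf{reverse direction} (each cover-determined congruence arises uniquely this way). I would handle them separately; the forward direction rests on a single key observation about images.

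Forward direction: First verify that the restriction $(\Che,\t|_\Che,\ont|_\Che,\int|_\Che)$ is a two-acyclic factorization system obeying the image condition. The order and brick conditions descend trivially; the content is $\Mult(\ont|_\Che,\int|_\Che)=\t|_\Che$. The key observation is that any image $w$ of $x\to z$ in $\Sha$ is automatically $\ont$-minimal in all of $F(z)$, not merely in $S=\{y:x\onto y\int z\}$: for if $w'\in F(z)$ satisfies $w\onto w'$ strictly, then transitivity of $\onto$ gives $x\onto w'$, placing $w'$ in $S$ and contradicting minimality. So for $x,z\in\Che$ the image $w$ satisfies $w\forces z$ by clause~(i) of direct forcing, giving $w\in\Che$; dually, using transitivity of $\int$, any co-image lies in $\Che$ via clause~(ii). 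The image condition on $\Che$ follows at once. The claim $(X\cap\Che,Y\cap\Che)\in\Pairs(\t|_\Che)$ reduces to the same observation together with the $\ont$-downset property of $X$ (Proposition~\ref{TorsionImpliesDownset}), and surjectivity is immediate by lifting $(X',Y')$ to $({}^\perp Y',({}^\perp Y')^\perp)$. That $\eta$ is a complete lattice homomorphism follows because meets commute with $\Che$-intersection automatically while joins reduce to $\overline{\bigcup_i X_i}^\Sha\cap\Che=\overline{\bigcup_i(X_i\cap\Che)}^\Che$, once again via the image-in-$\Che$ lemma. Cover-determinedness comes from Theorem~\ref{new cov thm}: each cover corresponds to a moving element $c\in\Cov(X)$, and exhausting an interval by a chain of covers shows that $X_1\cap\Che=X_2\cap\Che$ whenever every cover in $[X_1,X_2]$ is contracted.

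Reverse direction: Given a cover-determined congruence $\Theta$, set $\Che=\{c\in\Sha:T_*(c)\not\equiv T(c)\bmod\Theta\}$, so that $\Che$ records which canonical covers (Proposition~\ref{JIrr and MIrr Characterize}) survive $\Theta$; uniqueness of $\Che$ is then immediate. To verify $\Che$ is a $\forcs$-upset, suppose $c\forces c'$ with $c'\in\Che$; by case analysis on the two clauses of direct forcing I would show $\con(T_*(c'),T(c'))\leq\con(T_*(c),T(c))$, so that $c\notin\Che$ would force $c'\notin\Che$. Clause~(i), where $c$ is $\ont$-minimal in $F(c')$, produces explicit lattice witnesses built from the closed sets $T(\cdot)$ and $F(\cdot)$ and invokes the standard projectivity criterion for congruences; clause~(ii) is dual. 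Finally, $\Theta$ agrees with the $\Che$-congruence: they contract the same canonical covers by construction, and since every cover of $\Pairs(\t)$ generates the same congruence as some canonical cover, cover-determinedness of both forces them to coincide.

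Main obstacle: The deepest step is the case analysis in the reverse direction showing $\Che$ is a $\forcs$-upset. The two clauses of direct forcing play asymmetric roles: clause~(ii), using $T(y)$, is directly parallel to the description of covers in Theorem~\ref{new cov thm}, while clause~(i), using $F(y)$, encodes a projectivity on the meet-irreducible side of $\Pairs(\t)$. Producing the explicit witnesses requires careful lattice-theoretic bookkeeping for each clause separately. In contrast, the forward direction becomes remarkably clean once the ``images and co-images of arrows between $\Che$-elements lie in $\Che$'' observation is in hand.
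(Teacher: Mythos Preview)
Your overall architecture matches the paper's closely: the forward direction is built on the observation that images and co-images of arrows between $\Che$-elements lie in $\Che$ (the paper isolates this as Lemma~\ref{image lemma}), and the reverse direction defines $\Che_\Theta=\{c:T_*(c)\not\equiv T(c)\}$ and uses Day's projectivity lemma exactly as in Propositions~\ref{all quotients forcs upset} and~\ref{all quotients inf}. Three points deserve comment.

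\textbf{A small omission.} You verify $\Mult(\ont|_\Che,\int|_\Che)=\t|_\Che$ but never address the other half of being a factorization system, $\Fact(\t|_\Che)=(\ont|_\Che,\int|_\Che)$. The containment $\supseteq$ is automatic, but $\subseteq$ is not: one must show that if $\{y\in\Che:x_1\to y\}\supseteq\{y\in\Che:x_2\to y\}$ then already $\{y\in\Sha:x_1\to y\}\supseteq\{y\in\Sha:x_2\to y\}$. This is another application of the same image/co-image trick (take a co-image of $x_2\to z$ to pull the witness into $\Che$); the paper does it in Proposition~\ref{prop is factorization}.

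\textbf{Join preservation.} Your closure computation works, but the paper's argument is a one-liner: joins in $\Pairs(\t)$ are intersections on the \emph{second} coordinate, and $\bigcap_i(Y_i\cap\Che)=(\bigcap_i Y_i)\cap\Che$.

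\textbf{The real gap.} Your cover-determinedness argument, ``exhausting an interval by a chain of covers,'' is only valid when $\Sha$ is finite. In the infinite case an interval of $\Pairs(\t)$ need not be exhausted by any chain of covers: already for $\Sha=\mathbb N$ with trivial relations, $\Pairs(\t)\cong 2^{\mathbb N}$, and the interval $[\emptyset,\mathbb N]$ admits no such chain. The paper sidesteps this completely. Since the restriction to $\Che$ is itself a two-acyclic factorization system, the quotient $\Pairs(\t|_\Che)$ is cover-separated by Theorem~\ref{infinite main}, and a complete congruence is cover-determined if and only if its quotient is cover-separated (Proposition~\ref{cov det sep}). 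A direct repair of your argument is possible---given $c\in(X_2\setminus X_1)\cap\Che$, a co-image of an arrow from $c$ into $Y_1$ lands in $X_2\cap Y_1\cap\Che$ and indexes a single non-contracted cover inside the interval via Theorem~\ref{thm-intervals}---but the cover-separation route is much cleaner and is what you should use.
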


We note that it is not obvious that $(X \cap \Che, Y \cap \Che)$ is a maximal orthogonal pair in $\Pairs(\t|_\Che)$; this is checked as Proposition~\ref{prop intersection is max orth}.

Since every finite factorization system satisfies the image condition and every congruence on a finite lattice is complete and cover-determined, Theorem~\ref{thm finite quotients} is an immediate corollary of Theorem~\ref{thm lattice quotients}.

We now proceed to prove Theorem~\ref{thm lattice quotients}.
Following that proof, we will prove Corollary~\ref{con pairs finite} as an immediate corollary of an infinite version (Corollary~\ref{con pairs infinite}).

\medskip
\noindent
\textbf{Conventions:}   
For the rest of the section, we will assume that $(\Sha, \t, \ont, \int)$ is a two-acyclic factorization system that obeys the image condition.
We will also assume that $\Che \subset \Sha$ is a $\forcs$-upset. 
The symbols $\t'$, $\ont'$, and $\int'$ will stand for the restrictions of $\t$, $\ont$ and $\int$ to $\Che$. 
\medskip

Throughout the proof, the image condition will almost always be used through the following lemma:

\begin{lemma}\label{image lemma}
If $y$ is an image of $x \to z$, then $y \forces z$.
If $y$ is a co-image of $x \to z$ then $y \forces x$.
\end{lemma}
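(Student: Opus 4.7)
The plan is to prove each of the two claims by directly verifying one of the two conditions (i) and (ii) in the definition of $\forces$, using only transitivity of $\ont$ and $\int$ together with the minimality/maximality built into the definitions of image and co-image.

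For the first claim, assume $y$ is an image of $x\to z$, that is, $y$ is $\ont$-minimal in $\{w : x\onto w \into z\}$. I would verify clause (i), namely that $y$ is $\ont$-minimal in $F(z)=\{w:w\into z\}$. Certainly $y\in F(z)$ since $y\into z$. Suppose for contradiction that there is some $y'\in F(z)$ with $y\onto y'$ and $y'\neq y$. Transitivity of $\ont$ applied to $x\onto y\onto y'$ yields $x\onto y'$, and $y'\into z$ by assumption, so $y'\in\{w:x\onto w\into z\}$. But $y\onto y'$ with $y'\neq y$ then contradicts the $\ont$-minimality of $y$ in this set, establishing $y\forces z$.

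For the second claim, assume $y$ is a co-image, i.e.\ $\int$-maximal in $\{w:x\onto w\into z\}$, and verify clause (ii): $y$ is $\int$-maximal in $T(x)=\{w:x\onto w\}$. Certainly $y\in T(x)$. Suppose some $y'\in T(x)$ satisfies $y'\into y$ and $y'\neq y$. Then $x\onto y'$, and transitivity of $\int$ applied to $y'\into y\into z$ yields $y'\into z$, so $y'\in\{w:x\onto w\into z\}$. Then $y'\into y$ with $y'\neq y$ contradicts the $\int$-maximality of $y$, establishing $y\forces x$.

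There is no real obstacle here; the argument is a short symmetric pair of transitivity-plus-contradiction moves. The only subtlety to watch is the order convention spelled out in Section~\ref{conv sec}: since $\ont$ and $\int$ point from ``larger'' to ``smaller'' elements, ``$\ont$-minimal'' means no strictly smaller element (in the $\ont$ sense) lies in the set, and ``$\int$-maximal'' means no strictly larger element (in the $\int$ sense) lies in the set. Once that is kept straight, the two arguments are dual and essentially write themselves.
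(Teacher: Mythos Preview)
Your proof is correct and essentially identical to the paper's own argument: the paper proves the first claim by the same contradiction (if $y$ were not $\ont$-minimal in $F(z)$, transitivity of $\ont$ would produce a smaller element of $\{w:x\onto w\into z\}$) and dismisses the second as dual. You have simply written out both halves explicitly.
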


\begin{proof}
We prove the first claim; the second is dual. 
Suppose that $y \not\forces z$.
By hypothesis, $y \into z$, so $y$ must not be $\ont$-minimal in $\{ y' : y' \into z \}$. 
Thus there exists $y' \neq y$ with $y \onto y' \into z$.
But then $x \onto y' \into z$, contradicting the fact that $y$ is an image.
\end{proof}

We prove Theorem~\ref{thm lattice quotients} as a series of propositions.

\begin{proposition} \label{prop is factorization}
$(\Che, \t', \ont', \int')$ is a two-acyclic factorization system that obeys the image condition.
\end{proposition}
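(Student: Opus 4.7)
The plan is to check all five required properties for $(\Che,\t',\ont',\int')$: the order and brick conditions, the equalities $\Mult(\ont',\int')=\t'$ and $\Fact(\t')=(\ont',\int')$, and the image condition. The order and brick conditions are immediate, since restricting partial orders to a subset keeps them partial orders, and the brick condition on $\Sha$ forbids $x\into y\onto x$ with $x\neq y$ everywhere in $\Sha$, hence in particular on $\Che$. Throughout the rest of the argument I would use a recurring two-step move: given an arrow in $\Sha$, invoke the image condition on $\Sha$ to produce an image or a co-image, and then use Lemma~\ref{image lemma} together with $\Che$ being a $\forcs$-upset to conclude that this intermediary itself lies in $\Che$.

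First I would check $\Mult(\ont',\int')=\t'$ together with the image condition on $\Che$. For $x,z\in\Che$ with $x\to z$, the image condition on $\Sha$ yields an image $y\in\Sha$ with $x\onto y\into z$, and Lemma~\ref{image lemma} gives $y\forces z$, so $y\in\Che$ because $\Che$ is a $\forcs$-upset. Thus $x$ is related to $z$ by $\Mult(\ont',\int')$, and the same $y$ witnesses the image condition on $\Che$: $\ont$-minimality in $\{y'\in\Sha:x\onto y'\into z\}$ trivially forces $\ont'$-minimality in the corresponding subset of $\Che$. The co-image case is dual. The reverse containment $\Mult(\ont',\int')\subseteq\t'$ is immediate from $\Mult(\ont,\int)=\t$.

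The only real work is verifying $\Fact(\t')=(\ont',\int')$. The containment $\Fact(\t')\supseteq(\ont',\int')$ follows from Proposition~\ref{Mult Fact facts}(\ref{fm}) applied to $(\ont',\int')$, combined with $\Mult(\ont',\int')=\t'$ just established. For the reverse, suppose $x,y\in\Che$ satisfy the condition that $y\to w$ implies $x\to w$ for every $w\in\Che$; I need to upgrade this to $x\onto y$ in $\Sha$. Given any $z\in\Sha$ with $y\to z$, apply the image condition on $\Sha$ to pick a \emph{co-}image $w$ of $y\to z$, so $y\onto w\into z$; by Lemma~\ref{image lemma} we have $w\forces y$, and since $y\in\Che$ and $\Che$ is a $\forcs$-upset we conclude $w\in\Che$. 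Because $y\onto w$ implies $y\to w$, the hypothesis gives $x\to w$; factoring this as $x\onto u\into w$ and using transitivity of $\int$ with $w\into z$ yields $x\to z$, as required. The case of $\int'$ is handled dually.

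The main obstacle is this last step, whose key subtlety is the choice of co-images rather than images: this is what makes Lemma~\ref{image lemma} direct the forcing relation toward $y\in\Che$ instead of toward the a priori unavailable $z$, and it is this choice that lets the $\forcs$-upset property of $\Che$ pull the intermediate element $w$ into $\Che$ so that the hypothesis becomes applicable.
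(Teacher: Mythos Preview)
Your proof is correct and follows essentially the same approach as the paper's: both establish $\Mult(\ont',\int')=\t'$ by taking an image of $x\to z$ and pulling it into $\Che$ via Lemma~\ref{image lemma}, and both establish $\Fact(\t')\subseteq(\ont',\int')$ by taking a co-image of the relevant arrow so that forcing points toward the element already known to lie in $\Che$. Your final step is in fact slightly cleaner than the paper's, which writes ``$x_1\onto y\into z$'' where only $x_1\to y$ follows from the hypothesis; you correctly factor $x\to w$ as $x\onto u\into w$ and use transitivity of $\int$.
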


\begin{proof}  
Two-acyclicity is obvious, and it is clear that $\Mult(\ont', \int') \subseteq \t'$ and $\Fact(\t') \supseteq (\ont', \int')$. 
We now prove the latter containments are equalities.

In order to show that $\Mult(\ont', \int') \supseteq \t'$, we suppose $x,z \in \Che$ have $x \to z$ and show that there exists $y \in \Che$ with $x \onto y \into z$. 
The image condition says there is an image $y$ of $x \to z$. 
By Lemma~\ref{image lemma}, $y \forces z$, and since $\Che$ is a $\forcs$-upset, we have $y \in \Che$, as desired. 
(We pause to note that $y$ is also an image of $x \to z$ in $\Che$.)  

To show that $\Fact(\t') \subseteq (\ont', \int')$, let $\Fact(\t') = (\ont'', \int'')$.
We show that $\ont''\subseteq \ont'$;  the proof that $\int''\subseteq\int'$ is dual.
We first suppose that $x_1 \onto'' x_2$ for some $x_1$ and $x_2 \in \Che$ and show that $x_1 \onto x_2$. 
In other words, the hypothesis is that $\{ y \in \Che : x_1 \to y \} \supseteq \{ y \in \Che : x_2 \to y \}$, and the desired conclusion is that $\{ z \in \Sha : x_1 \to z \} \supseteq \{ z \in \Sha : x_2 \to z \}$. 
Suppose $x_2 \to z$ and let $y$ be a co-image
of $x_2 \to z$. 
By Lemma~\ref{image lemma}, $y \forces x_2$, and thus $y\in\Che$ because $\Che$ is a $\forcs$-upset.
Since $y$ is a co-image of $x_2\to z$, in particular, $x_2\onto y\into z$.
By the hypothesis, $x_1 \onto y\into z$, so that $x_1 \to z$ as desired. 
(Again, we note that $y$ is also a co-image of $x_2\to z$ in $\Che$.)

We have shown that $(\Che, \t', \ont', \int')$ is a two-acyclic factorization system.
In the process, we have shown how to find both an image and a co-image for an arbitrary $\t'$-arrow, so $(\Che, \t', \ont', \int')$ obeys the image condition.
\end{proof}

\begin{prop} \label{prop intersection is max orth}
If $(X,Y)\in\Pairs(\t)$, then $(X\cap\Che,Y\cap\Che)\in\Pairs(\t')$. 
\end{prop}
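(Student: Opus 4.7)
The plan is to verify the two defining equalities of a maximal orthogonal pair, $(X\cap\Che)^{\perp'} = Y\cap\Che$ and ${}^{\perp'}(Y\cap\Che) = X\cap\Che$, where $\perp'$ denotes orthogonality with respect to $\t'$ inside $\Che$. The easy containments $Y\cap\Che\subseteq(X\cap\Che)^{\perp'}$ and $X\cap\Che\subseteq{}^{\perp'}(Y\cap\Che)$ are immediate from $\t'\subseteq\t$ and the identities $Y=X^{\perp}$ and $X={}^{\perp}Y$, so all of the work is in the two reverse inclusions.

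For $(X\cap\Che)^{\perp'}\subseteq Y\cap\Che$, I would argue by contradiction. Suppose $y\in\Che$ is in $(X\cap\Che)^{\perp'}$ but not in $Y$; then some $x\in X$ has $x\to y$. Invoking the image condition on $(\Sha,\t,\ont,\int)$, let $w$ be an image of $x\to y$, so $x\onto w\into y$. By Lemma~\ref{image lemma}, $w\forces y$, and since $y\in\Che$ and $\Che$ is a $\forcs$-upset, we conclude $w\in\Che$. Proposition~\ref{TorsionImpliesDownset} says $X$ is an $\ont$-downset, so $x\onto w$ together with $x\in X$ yields $w\in X$, and hence $w\in X\cap\Che$. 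Finally, $w\into y$ gives $w\to y$ by Proposition~\ref{Mult Fact facts}(\ref{union}), and restricting to $\Che$ yields $w\to' y$, contradicting $y\in(X\cap\Che)^{\perp'}$.

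The dual inclusion ${}^{\perp'}(Y\cap\Che)\subseteq X\cap\Che$ runs in parallel but with the symmetric choice: given $x\in\Che$ orthogonal on the left to $Y\cap\Che$ under $\t'$, and supposing for contradiction that $x\to y$ for some $y\in Y$, I would now take a \emph{co}-image $w$ of $x\to y$. Lemma~\ref{image lemma} gives $w\forces x$, so $w\in\Che$ by the $\forcs$-upset property; the dual of Proposition~\ref{TorsionImpliesDownset} makes $Y$ an $\int$-upset, so $w\into y\in Y$ puts $w\in Y\cap\Che$; and $x\onto w$ produces $x\to' w$, contradicting $x\in{}^{\perp'}(Y\cap\Che)$. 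The main subtlety is simply the bookkeeping of which side of an arrow is being pulled into $\Che$: image (which forces the codomain) for the first inclusion and co-image (which forces the domain) for the dual. Once that pairing is fixed, the rest is a direct application of results already established.
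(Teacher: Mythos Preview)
Your proof is correct and follows essentially the same approach as the paper: both argue the easy containments directly and then, for the reverse inclusions, use an image (respectively co-image) of the offending arrow together with Lemma~\ref{image lemma} and Proposition~\ref{TorsionImpliesDownset} to produce an element of $X\cap\Che$ (respectively $Y\cap\Che$) that yields a contradiction. The only differences are cosmetic, in variable names and notation for the restricted orthogonality.
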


\begin{proof}  
We write $X'$ for $X\cap\Che$ and $Y'$ for $Y\cap\Che$.
We use the symbol $\triangle$ for $\perp$ with respect to $(\Che, \t', \ont', \int')$.
Since $(X,Y)$ is a maximal orthogonal pair for $(\Sha, \t, \ont, \int)$, we have $Y'\subseteq (X')^\triangle$ and $X'\subseteq{}^\triangle(Y')$.
We will argue that $Y' \supseteq (X')^{\triangle}$;  the argument that $X'\supseteq{}^\triangle(Y')$ is dual.

To show that $Y' \supseteq (X')^{\triangle}$, we must show that, if $z \in \Che$ and $z \not \in Y$, then there exists $y \in X'$ with $y \to z$. 
Since $z \not \in Y$, by definition, there is some $x \in X$ with $x \to z$. 
Let $y$ be an image of $x \to z$, so that $y \forces z$ by Lemma~\ref{image lemma}. 
But $\Che$ is a $\forcs$-upset, so this implies that $y \in \Che$. 
Also, $x \onto y$, so $y \in X$ by Proposition~\ref{TorsionImpliesDownset}, and thus $y \in X'$.
Since $y\into z$, we have $y \to z$, as desired.
\end{proof}

\begin{prop} \label{prop map is quotient}
The map $(X, Y) \mapsto (X\cap\Che,Y\cap\Che)$ is a surjective complete lattice homomorphism from $\Pairs(\t)$ to $\Pairs(\t')$.
\end{prop}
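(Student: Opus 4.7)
The plan is to verify three things: well-definedness, preservation of all meets and joins, and surjectivity. Well-definedness is immediate from Proposition~\ref{prop intersection is max orth}. Throughout, I let $\triangle$ denote orthogonality with respect to $\t'$; since $\t'$ is the restriction of $\t$ to $\Che$, for $S \subseteq \Che$ we have $S^\triangle = S^\perp \cap \Che$ and ${}^\triangle S = ({}^\perp S) \cap \Che$.

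For preservation of arbitrary meets, I identify elements of $\Pairs(\t)$ and $\Pairs(\t')$ with their closed first components via Proposition~\ref{step}, so that meets become intersections. The identity $(\bigcap_i X_i) \cap \Che = \bigcap_i (X_i \cap \Che)$ then shows that the map preserves arbitrary meets. For joins, I work with second components: a standard Galois-connection argument shows that the second component of $\bigvee_i (X_i, Y_i)$ in $\Pairs(\t)$ equals $\bigcap_i Y_i$, and analogously in $\Pairs(\t')$. The identity $(\bigcap_i Y_i) \cap \Che = \bigcap_i (Y_i \cap \Che)$ then gives preservation of joins, using Proposition~\ref{prop intersection is max orth} to know that the pairs involved all lie in $\Pairs(\t')$ and that a maximal orthogonal pair is determined by its second component.

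For surjectivity, given $(X',Y') \in \Pairs(\t')$, I define $Y := (X')^\perp$ and $X := {}^\perp Y$, computed in $\Sha$, so $(X,Y) \in \Pairs(\t)$ by construction. The equality $Y \cap \Che = Y'$ follows directly: for $z \in \Che$, the condition $z \in Y$ says $x \not\to z$ for all $x \in X' \subseteq \Che$, which is equivalent to $x \not\to' z$ for all $x \in X'$, i.e., $z \in (X')^\triangle = Y'$. For $X \cap \Che = X'$, the containment $X' \subseteq X \cap \Che$ is the standard Galois-closure inequality $X' \subseteq {}^\perp((X')^\perp)$. Conversely, given $z \in X \cap \Che$, since $X'$ is $\triangle$-closed, showing $z \in X'$ reduces to showing $z \not\to' w$ for every $w \in Y'$; any such $w$ lies in $Y' \subseteq Y$, and $z \in X = {}^\perp Y$ forces $z \not\to w$, which since $z,w \in \Che$ is the same as $z \not\to' w$.

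The main obstacle: there is essentially no serious obstacle at this stage. The argument is pure Galois-closure bookkeeping, with the crucial technical fact being that the perp operations in $\Sha$ and $\Che$ agree on subsets of $\Che$. All substantive work involving the image condition and the $\forcs$-upset hypothesis has already been absorbed into Propositions~\ref{prop is factorization} and~\ref{prop intersection is max orth}; from the current proposition's standpoint, those just guarantee that $\Pairs(\t')$ exists and contains the expected intersections.
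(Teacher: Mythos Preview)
Your proof is correct and follows essentially the same approach as the paper's. The one cosmetic difference is in the surjectivity argument: the paper starts from $Y'$ by setting $X := {}^\perp Y'$ and then invokes Proposition~\ref{prop intersection is max orth} to get $Y \cap \Che = Y'$ for free, whereas you start from $X'$ by setting $Y := (X')^\perp$ and verify both intersections directly; these are dual versions of the same Galois-closure computation.
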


\begin{proof}  
Identifying $\Pairs(\t)$ with the lattice of closed sets, the meet is intersection.
The assertion that the map respects arbitrary meets is $\bigcap_i (X_i \cap \Che) = \left( \bigcap X_i \right) \cap \Che$, which is obvious.
The proof that the map respects joins 
is dual (identifying $\Pairs(\t')$ with the dual notion of the lattice of closed sets by considering the second entries in pairs).

We now show that this map is surjective. 
Let $(X', Y')$ be a maximal orthogonal pair for $\t'$. 
Let $X = {}^\perp Y'$ (using this $\perp$ operator in the sense of $(\Sha, \t, \ont, \int)$). 
Then $X$ is closed (in the sense of $\t$), and unfolding the definitions immediately shows that $X \cap \Che = X'$. 
Let $Y = X^{\perp}$. 
Then $(X, Y)$ is a maximal orthogonal pair, so $(X \cap \Che, Y \cap \Che)$ is a maximal orthogonal pair by Proposition~\ref{prop intersection is max orth}.
Since $X \cap \Che  = X'$ and $(X', Y')$ is a maximal orthogonal pair, we also have $Y \cap \Che = Y'$.
\end{proof}

The following is \cite[Proposition~2.6]{DIRRT}.

\begin{prop}\label{cov det sep}
A complete lattice congruence $\Theta$ on a complete lattice $L$ is cover-determined if and only if the quotient $L/\Theta$ is weakly atomic.
\end{prop}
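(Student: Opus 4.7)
The plan is to prove the two directions separately, relying on the fact, noted earlier in the excerpt, that every congruence class of a complete lattice congruence $\Theta$ on a complete lattice $L$ is an interval in $L$ and hence has both a maximum and a minimum.

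For the forward direction, suppose $\Theta$ is cover-determined and let $[w] < [z]$ in $L/\Theta$. I would first pick representatives with $w \le z$ in $L$ (for example, replace original representatives $w_0, z_0$ by $w := w_0 \meet z_0$ and $z := w_0 \join z_0$, which preserve classes since $[w_0] \le [z_0]$). Then $w \not\equiv z$, so the contrapositive of cover-determined produces a cover $u \covered v$ in $[w, z]$ with $u \not\equiv v$. It remains to show $[u] \covered [v]$ in $L/\Theta$: for any $t$ with $[u] \le [t] \le [v]$, the element $u' := u \join (t \meet v)$ lies in $[u, v]$ and so equals $u$ or $v$; in the first case $t \meet v \le u$ gives $[t] = [t \meet v] \le [u]$, while in the second case $[v] = [u] \join [t \meet v] = [u] \join [t] = [t]$ (using $t \meet v \equiv t$ from $[t] \le [v]$).

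For the backward direction, suppose $L/\Theta$ is cover-separated, and argue contrapositively: given $x \le y$ in $L$ with $x \not\equiv y$, cover-separatedness supplies a cover $[u] \covered [v]$ in $L/\Theta$ with $[x] \le [u] \covered [v] \le [y]$. I would lift this by juggling joins and meets: set $v := (v \join x) \meet y$ and then $u := (u \join x) \meet v$, which preserves the classes (using $[x] \le [u] \le [v] \le [y]$) and arranges $x \le u \le v \le y$ in $L$ itself. Because $[u] \covered [v]$ in the quotient, every element of $[u, v]$ lies in $[u] \cup [v]$. Using completeness of $\Theta$, define $N := \Meet \bigl( [v] \cap [u, v] \bigr) \in [v]$ and then $M := \Join \bigl( [u] \cap [u, N] \bigr) \in [u]$. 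For any $t$ with $M \le t \le N$, the class $[t]$ must be $[u]$ or $[v]$; in the first case $t \le M$ by maximality, and in the second case $t \ge N$ by minimality, so $[M, N] = \{M, N\}$. Since $[u] \neq [v]$ forces $M \neq N$, the pair $M \covered N$ is a cover in $[x, y]$ with $M \not\equiv N$, contradicting the assumption that every cover in $[x, y]$ is contracted and completing the proof.

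The main obstacle I anticipate is the last step of the backward direction: the naive choice of $\max([u] \cap [u, v])$ and $\min([v] \cap [u, v])$ as candidates for a cover need not produce comparable elements of $L$ (as the diamond lattice with classes $\{0, a\}$ and $\{b, 1\}$ illustrates), so they cannot form a cover directly. The fix is to shrink first to the sub-interval $[u, N]$ by locating the minimal $[v]$-representative, and only then take the maximal $[u]$-representative inside that smaller interval; this automatically forces $M \le N$ and yields the desired cover.
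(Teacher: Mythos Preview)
The paper does not prove this proposition at all; it simply cites it as \cite[Proposition~2.6]{DIRRT}. Your argument, by contrast, is a complete and correct self-contained proof.

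Both directions are sound. In the forward direction, the verification that a non-contracted cover $u\covered v$ in $L$ descends to a cover $[u]\covered[v]$ in $L/\Theta$ is exactly right: the trick of forming $u':=u\join(t\meet v)\in\{u,v\}$ and using $[t\meet v]=[t]$ (from $[t]\le[v]$) is the clean way to handle this.

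In the backward direction, your two-step construction---first $N:=\Meet\bigl([v]\cap[u,v]\bigr)$, then $M:=\Join\bigl([u]\cap[u,N]\bigr)$---is correct and, as you note, necessary: taking the maximum of $[u]\cap[u,v]$ and the minimum of $[v]\cap[u,v]$ simultaneously can fail to give comparable elements. Shrinking to $[u,N]$ before maximizing over the $[u]$-class forces $M\le N$, and then the cover property of $[u]\covered[v]$ in the quotient pins down $[M,N]=\{M,N\}$. The appeals to completeness of $\Theta$ (to ensure $N\in[v]$ and $M\in[u]$) are used exactly where needed.
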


Theorem~\ref{infinite main} says that $\Pairs(\t')$ is weakly atomic, and thus Proposition~\ref{cov det sep} implies the following proposition.

\begin{prop}\label{Che cov det}
The complete congruence associated to $(X, Y) \mapsto (X\cap\Che,Y\cap\Che)$ is cover-determined.
\end{prop}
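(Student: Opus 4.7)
The plan is to assemble three results already established: Proposition~\ref{prop is factorization}, Theorem~\ref{infinite main}, and Proposition~\ref{cov det sep}. The strategy is to identify the quotient of $\Pairs(\t)$ by the congruence $\Theta$ associated to $(X,Y)\mapsto(X\cap\Che,Y\cap\Che)$ with the lattice $\Pairs(\t')$, show that this quotient lattice is cover-separated, and then invoke the equivalence between cover-determined congruences and cover-separated quotients.

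First I would recall from Proposition~\ref{prop is factorization} that $(\Che,\t',\ont',\int')$ is itself a two-acyclic factorization system. This is the only place where the $\forcs$-upset hypothesis on $\Che$ (combined with the image condition) enters; without it there would be no reason to expect the restricted relations to still form a factorization system, much less a two-acyclic one.

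Next I would use Theorem~\ref{infinite main} applied to $(\Che,\t',\ont',\int')$: since $\Pairs(\t')$ arises from a two-acyclic factorization system, joins and meets in $\Pairs(\t')$ are cover-determined, and by the (unlabeled) solid implication in the diagram this gives that $\Pairs(\t')$ is cover-separated. Then Proposition~\ref{prop map is quotient} identifies $\Pairs(\t)/\Theta$ with $\Pairs(\t')$ as complete lattices, so the quotient $\Pairs(\t)/\Theta$ is cover-separated.

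Finally, I would invoke Proposition~\ref{cov det sep}, which states precisely that a complete congruence on a complete lattice is cover-determined if and only if its quotient is cover-separated. Applied to our $\Theta$, this gives the conclusion. No step here is really the obstacle: the content was all loaded into Propositions~\ref{prop is factorization} and~\ref{prop map is quotient} (where the image condition and the $\forcs$-upset hypothesis do their work), and into the structural Theorem~\ref{infinite main} and Proposition~\ref{cov det sep}. The present proof is simply the final assembly step, and its write-up should consist of little more than the citation chain outlined above.
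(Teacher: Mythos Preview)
Your proposal is correct and follows exactly the same approach as the paper's own proof, which is simply the one-line observation that Theorem~\ref{infinite main} gives that $\Pairs(\t')$ is cover-separated, whence Proposition~\ref{cov det sep} yields the claim. The paper leaves the references to Propositions~\ref{prop is factorization} and~\ref{prop map is quotient} implicit (since they were just proved), but otherwise your citation chain matches precisely.
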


To prove the rest of Theorem~\ref{thm lattice quotients}, we need two simple, known lemmas.  
The second is essentially the easy direction of~\cite[Lemma~3.2]{DayBounded}, but appears in that source without proof.

\begin{lemma} \label{lem kappa and collapsing}
Let $L$ be a $\kappa$-lattice, let $\eta : L \to L'$ be a lattice homomorphism, and let $j$ be a completely join irreducible element of $L$.
Then $\eta(j) = \eta(j_*)$ if and only if $\eta(\kappa(j)) = \eta(\kappa(j)^*)$. 
\end{lemma}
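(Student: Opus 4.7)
The plan is to let $m = \kappa(j)$ and exploit the two defining identities of $\kappa$ and its inverse $\kappa^d$: namely $j \meet m = j_*$ (since $m \in \{y : j \meet y = j_*\}$) and $j \join m = m^*$ (since $\kappa^d(m) = j$ lies in $\{x : m \join x = m^*\}$). These two identities, together with the corollaries $j_* \leq m$ and $j \leq m^*$ that fall out of them, are the entire content of the lemma once one applies the homomorphism $\eta$ to them.

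For the forward direction, I would assume $\eta(j) = \eta(j_*)$ and apply $\eta$ to the identity $j \join m = m^*$. This gives
\[
\eta(m^*) \;=\; \eta(j) \join \eta(m) \;=\; \eta(j_*) \join \eta(m) \;=\; \eta(m),
\]
where the last equality uses $j_* \leq m$ (so $\eta(j_*) \leq \eta(m)$). For the reverse direction, the argument is exactly dual: assume $\eta(m) = \eta(m^*)$ and apply $\eta$ to $j \meet m = j_*$, yielding
\[
\eta(j_*) \;=\; \eta(j) \meet \eta(m) \;=\; \eta(j) \meet \eta(m^*) \;=\; \eta(j),
\]
where the last step uses $j \leq m^*$.

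There is no real obstacle here; the only thing worth verifying carefully is the pair of identities $j \meet m = j_*$ and $j \join m = m^*$, which follow immediately from the definition of a $\kappa$-lattice together with the fact that $\kappa^d$ is the two-sided inverse of $\kappa^u$. Once those are in hand, the rest is a two-line calculation in each direction.
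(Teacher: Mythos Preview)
Your proof is correct and takes essentially the same approach as the paper: set $m=\kappa(j)$, use the identities $j\join m=m^*$ and $j\meet m=j_*$ coming from the definitions of $\kappa$ and $\kappa^d$, and push them through the homomorphism $\eta$ together with the inequalities $j_*\le m$ and $j\le m^*$.
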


\begin{proof}
For brevity, put $m = \kappa(j)$. 
We have $j \join m = m^*$ and $j_* \join m = m$, so if $\eta(j)=\eta(j_*)$ then $\eta(m) = \eta(m^*)$. 
We have $m \meet j = j_*$, so if $\eta(m)=\eta(m^*)$ then $\eta(j) = \eta(j_*)$. 
\end{proof}

\begin{lemma}\label{Day lem} 
Suppose $L$ is a lattice and $\Theta$ is a lattice congruence on $L$.
Suppose we have elements $a$, $b$, $c$, $d$ of $L$ with $a \leq b$ and $a \equiv b \bmod \Theta$. 
Suppose that either $a \leq c \leq d \leq b \join c$ or else $a \meet d \leq c \leq d \leq b$. Then $c \equiv d \bmod \Theta$.
\end{lemma}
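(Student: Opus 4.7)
The plan is to exploit two standard facts about lattice congruences: first, that joining or meeting both sides of a congruence with a common element preserves the congruence; and second, the interval property, namely that if $u \equiv w \bmod \Theta$ and $u \leq v \leq w$, then $u \equiv v \equiv w$. The interval property itself is an immediate consequence of the congruence property applied to $u \meet v \equiv w \meet v$ (which simplify to $u$ and $v$ respectively) and dually for joins.

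Given the hypothesis $a \equiv b \bmod \Theta$, I would handle the two cases in parallel. In the first case, $a \leq c \leq d \leq b \join c$, I would join both sides of $a \equiv b$ with $c$. Since $a \leq c$ gives $a \join c = c$, this yields $c \equiv b \join c \bmod \Theta$. Now $c$ and $b \join c$ lie in the same congruence class and sandwich $d$, so the interval property immediately gives $c \equiv d \bmod \Theta$.

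The second case is dual. From $a \meet d \leq c \leq d \leq b$, I would meet both sides of $a \equiv b$ with $d$; since $d \leq b$ gives $b \meet d = d$, this produces $a \meet d \equiv d \bmod \Theta$. Since $c$ lies in the interval $[a \meet d, d]$, the interval property again yields $c \equiv d \bmod \Theta$.

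There is no real obstacle here: the lemma is a direct manipulation of the definition of a congruence together with the interval property, and I expect the write-up to be only a few lines. The only mild point to be careful about is to make sure the argument uses only finite (binary) meets and joins, so that it applies equally to congruences on lattices that are not assumed complete.
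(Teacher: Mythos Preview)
Your proof is correct and is essentially identical to the paper's: the paper joins $a\equiv b$ with $c$ to obtain $c=a\join c\equiv b\join c$, then meets with $d$ to get $c=c\meet d\equiv(b\join c)\meet d=d$, which is exactly your ``interval property'' argument spelled out, and it declares the second case similar just as you do.
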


\begin{proof}
We consider the case $a \leq c \leq d \leq b \join c$; the other case is similar.
Then $a \equiv b \bmod \Theta$, so $c=a \join c \equiv b \join c \bmod \Theta$.  
Since $c \leq d \leq b \join c$, we have $c = c \meet d \equiv (b \join c) \meet d =  d \bmod \Theta$.
\end{proof}

\medskip
\noindent
\textbf{Additional conditions:}
We continue to assume that $(\Sha, \t, \ont, \int)$ is a two-acyclic factorization system that obeys the image condition.
In addition, for the rest of the section, we will assume that $\Theta$ is a complete lattice congruence on $\Pairs(\t)$.
Furthermore, we define 
\[\Che_\Theta = \{ x \in \Sha :  (T_*(x), T_*(x)^\perp) \not\equiv (T(x), T(x)^\perp) \bmod \Theta  \}.\]
Lemma~\ref{lem kappa and collapsing} and Proposition~\ref{pairs kappa explicit} combine to imply that 
\[\Che_\Theta = \{ x \in \Sha :  ({}^{\perp} F(x), F(x))\not\equiv({}^{\perp} F^*(x), F^*(x)) \bmod \Theta  \}.\]
We will want both descriptions of $\Che_\Theta$.
\medskip

\begin{prop} \label{all quotients forcs upset} 
$\Che_\Theta$ is a $\forcs$-upset.
\end{prop}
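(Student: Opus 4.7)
The plan is to prove the contrapositive: assuming $x\forces y$ and that $(T_*(x),T_*(x)^\perp)\equiv(T(x),T(x)^\perp)\bmod\Theta$, show the analogous equivalence for $y$. I may assume $x\neq y$. I will handle the two clauses of the definition of direct forcing separately, applying Day's Lemma (Lemma~\ref{Day lem}) to a carefully chosen quadruple in each; Lemma~\ref{lem kappa and collapsing} lets me pass freely between the $\JIrr^c$ and $\MIrr^c$ descriptions of $\Che_\Theta$. To streamline notation, write $j_z=(T(z),T(z)^\perp)$, $j_{z,*}=(T_*(z),T_*(z)^\perp)$, $m_z=({}^{\perp}F(z),F(z))$, and $m_z^*=({}^{\perp}F^*(z),F^*(z))$.

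For case (i), where $x\into y$ and $x$ is $\ont$-minimal in $F(y)$, I apply Day's Lemma condition (i) with $a=j_{x,*}$, $b=j_x$, $c=m_y$, and $d=b\join c=j_x\join m_y$. The only substantive hypothesis to verify is $j_{x,*}\leq m_y$, i.e., $T_*(x)\subseteq{}^{\perp}F(y)$: for $x'\in T_*(x)$ with $x'\to x''\in F(y)$, I factor $x'\onto w\into x''$, use transitivity to get $x\onto w$ and $w\into y$, invoke $\ont$-minimality of $x$ in $F(y)$ to force $w=x$, and then derive $x=x'$ from the acyclicity of $\ont$, a contradiction. Day's Lemma then yields $m_y\equiv j_x\join m_y\bmod\Theta$. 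Since $x\in T(x)$ while $x\to y\in F(y)$ shows $x\notin{}^{\perp}F(y)$, we have $j_x\not\leq m_y$, so $j_x\join m_y$ strictly exceeds $m_y$; because $m_y$ is completely meet-irreducible with unique cover $m_y^*$, this join lies weakly above $m_y^*$, and convexity of congruence classes forces $m_y\equiv m_y^*$, so $y\notin\Che_\Theta$.

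Case (ii), where $y\onto x$ and $x$ is $\int$-maximal in $T(y)$, is dual. I apply Day's Lemma condition (ii) with $a=m_x$, $b=m_x^*$, $c=m_x\meet j_y$, and $d=j_y$; the substantive input is $j_y\leq m_x^*$, i.e., $T(y)\subseteq{}^{\perp}F^*(x)$, proved by the analogous factor-and-contradict argument: factor $x'\onto w\into x''$, use transitivity to get $y\onto w$ and $w\into x$, invoke $\int$-maximality of $x$ in $T(y)$ to force $w=x$, then use antisymmetry of $\int$ to conclude $x''=x$, contradicting $x''\in F^*(x)$. Day's Lemma yields $m_x\meet j_y\equiv j_y\bmod\Theta$; since $y\in T(y)$ and $y\to x\in F(x)$ give $j_y\not\leq m_x$, this meet lies strictly below $j_y$, hence $\leq j_{y,*}$ (as $j_y$ is completely join-irreducible with unique element $j_{y,*}$ immediately below), and convexity gives $j_{y,*}\equiv j_y$.

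The main subtlety I expect to emphasize in the write-up is that the naive analogue of case (ii) for case (i)---attempting to show $j_x\leq m_y^*$---fails: since $x\in T(x)\cap F^*(y)$ and $x\to x$, we generally have $x\notin{}^{\perp}F^*(y)$. The remedy, as above, is to introduce $j_x\join m_y$ as an auxiliary element and finish via convexity of congruence classes, rather than trying to match the endpoints of Day's Lemma exactly.
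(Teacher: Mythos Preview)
Your proof is correct and follows essentially the same approach as the paper: both prove the contrapositive, verify $T_*(x)\subseteq{}^{\perp}F(y)$ via the same factorization argument, and apply Lemma~\ref{Day lem} with $a=j_{x,*}$, $b=j_x$, $c=m_y$. The only difference is that the paper takes $d=m_y^*$ directly and checks $d\le b\join c$ (which reduces to $y\notin T(x)^\perp$, immediate from $x\to y$), whereas you take $d=b\join c$ and then pass to $m_y^*$ via convexity; this makes your closing remark about a ``naive analogue failing'' unnecessary, since the paper's choice of $d$ handles both cases symmetrically without any such obstruction.
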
 

\begin{proof}
We prove the equivalent assertion that $\Sha\setminus\Che_\Theta$ is a $\forcs$-downset.  
Suppose $x\not\in\Che_\Theta$, meaning that $(T_*(x), T_*(x)^\perp)\equiv(T(x), T(x)^\perp)$, and suppose $x \forces y$.

We suppose that $x$ is $\ont$-minimal in $F(y)$; the case where $x$ is $\int$-maximal in $T(y)$ is similar. 
We will apply the first case of Lemma~\ref{Day lem} with ${a = (T_*(x), T_*(x)^\perp)}$, $b = (T(x), T(x)^\perp)$, $c=({}^{\perp} F(y), F(y))$ and $d = ({}^{\perp} F^*(y), F^*(y))$. 
To that end, we must confirm that $(T_*(x), T_*(x)^\perp) \leq ({}^{\perp} F(y), F(y))$ and $ ({}^{\perp} F^*(y), F^*(y)) \leq (T(x), T(x)^\perp) \join ({}^{\perp} F(y), F(y))$. 

To show that $(T_*(x), T_*(x)^\perp) \leq ({}^{\perp} F(y), F(y))$, we must show that $T_*(x) \subseteq {}^{\perp} F(y)$. 
Suppose for the sake of contradiction that there exists $x'\in T_*(x)$ with $x'\not\in{}^\perp F(y)$.
That is, there exist $x$ and $y$ in $\Sha$ with $x\onto x'\neq x$ and $x'\to y'\into y$.
Factor the arrow $x'\to y'$ as $x' \onto z \into y'$, so that $x\onto z\into y$.
Since $x\neq x'$, also $x\neq z$, contradicting the $\ont$-minimality of $x$ in $F(y)$.

To show that $ ({}^{\perp} F^*(y), F^*(y)) \leq (T(x), T(x)^\perp) \join ({}^{\perp} F(y), F(y))$, we must show that $F^*(y)\supseteq T(x)^\perp\cap F(y)$. 
The only element of $F(y)$ not in $F^*(y)$ is $y$, so we simply must show that $y\not\in T(x)^\perp$. 
But $x \in F(y)$, so in particular $x \to y$ and thus $y \not \in T(x)^{\perp}$. 

Now Lemma~\ref{Day lem} says that $({}^{\perp} F(y), F(y)) \equiv ({}^{\perp} F^*(y), F^*(y)) \bmod \Theta$, so that $y\not\in\Che_\Theta$, using the second description of $\Che_\Theta$.
We have proved that $\Sha\setminus\Che_\Theta$ is a $\forcs$-downset, as desired.%
\end{proof}

In light of Proposition~\ref{all quotients forcs upset}, Proposition~\ref{prop is factorization} says the restriction of $(\Sha,\t,\ont,\int)$ to $\Che_\Theta$ is a two-acyclic factorization system satisfying the image condition, and Proposition~\ref{prop map is quotient} says the map $(X,Y)\mapsto(X\cap\Che_\Theta,Y\cap\Che_\Theta)$ is a complete surjective lattice homomorphism from $\Pairs(\t)$ to the lattice of maximal orthogonal pairs for the restriction.

\begin{prop} \label{all quotients inf}
If $\Theta$ is cover-determined, then $\Theta$ is the congruence associated to $(X,Y) \mapsto {(X \cap \Che_\Theta, Y \cap \Che_\Theta)}$.
\end{prop}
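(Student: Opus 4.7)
The plan is to show that $\Theta$ coincides with the complete congruence $\Theta'$ on $\Pairs(\t)$ induced by the map $(X,Y)\mapsto(X\cap\Che_\Theta,Y\cap\Che_\Theta)$ by proving that these two congruences contract the same covers of $\Pairs(\t)$; since both will be cover-determined, this forces $\Theta=\Theta'$.

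First I would verify that $\Theta'$ is cover-determined. By Proposition~\ref{all quotients forcs upset}, $\Che_\Theta$ is a $\forcs$-upset, so Proposition~\ref{prop is factorization} provides a two-acyclic factorization system on $\Che_\Theta$ and identifies $\Pairs(\t)/\Theta'$ with its lattice of maximal orthogonal pairs; Proposition~\ref{Che cov det} then yields cover-determination of $\Theta'$. Combined with the hypothesis on $\Theta$, it suffices to show that both congruences contract the same covers.

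Next I would reduce this to the canonical covers. Given any cover $(X_1,Y_1)\covered(X_2,Y_2)$ in $\Pairs(\t)$, Theorem~\ref{new cov thm} and Lemma~\ref{cor-element-moves} produce a unique $c^*\in\Cov(X_2)$ with $X_1=\Del(X_2,c^*)$ and $X_2\cap Y_1=\{c^*\}$. From Lemma~\ref{cov diamond} we have the diamond identities $X_1\meet T(c^*)=T_*(c^*)$ and $X_1\join T(c^*)=X_2$, and I would use these to show that an arbitrary complete congruence $\Phi$ contracts $(X_1,Y_1)\covered(X_2,Y_2)$ iff it contracts the canonical cover $(T_*(c^*),T_*(c^*)^\perp)\covered(T(c^*),T(c^*)^\perp)$: one direction meets the assumed equivalence with $T(c^*)$, the other joins with $X_1$, using that $T_*(c^*)\subseteq X_1$ (which follows from $X_2$ being a $\ont$-downset together with acyclicity of $\ont$). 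Then the conclusion is immediate: by the definition of $\Che_\Theta$, $\Theta$ contracts the canonical cover precisely when $c^*\notin\Che_\Theta$; and since $T(c^*)\setminus T_*(c^*)=\{c^*\}$, the intersections $T_*(c^*)\cap\Che_\Theta$ and $T(c^*)\cap\Che_\Theta$ agree under exactly the same condition, so $\Theta'$ also contracts the canonical cover iff $c^*\notin\Che_\Theta$.

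The main subtlety is the reduction in the middle step. The cover $(X_1,Y_1)\covered(X_2,Y_2)$ can delete many elements from $X_2$, namely the entire set $\{x\in X_2:x\onto c^*\}$, and not merely $c^*$ itself; one might worry that $\Theta'$ sees interaction with $\Che_\Theta$ through these extra deleted elements. The key observation that resolves this is precisely $T_*(c^*)\subseteq X_1$, which makes $X_1\cup T(c^*)$ differ from $X_1$ only by the single point $c^*$, so that all the meaningful information about contraction of the cover is concentrated at $c^*$ alone.
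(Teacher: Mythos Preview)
Your proposal is correct and follows essentially the same approach as the paper: both arguments reduce to showing that $\Theta$ and $\Theta'$ contract the same covers (using that each is cover-determined), and both pivot on the diamond identity (Lemma~\ref{cov diamond}/\ref{cov diamond cong}) to pass between an arbitrary cover and the canonical cover $(T_*(c),T_*(c)^\perp)\covered(T(c),T(c)^\perp)$. The only cosmetic difference is that you apply the diamond reduction uniformly to a general congruence $\Phi$ and then specialize to $\Theta$ and $\Theta'$, whereas the paper invokes Lemma~\ref{cov diamond cong} explicitly only for $\Theta$ and argues the $\Theta'$ side directly via $X_1\cap\Che_\Theta=X_2\cap\Che_\Theta\iff c\notin\Che_\Theta$.
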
 

The following lemma will be useful in proving Proposition~\ref{all quotients inf}.
The lemma depends neither on the image condition nor on the completeness of $\Theta$.
It follows immediately from Lemma~\ref{cov diamond} and the definition of a congruence. 
\begin{lemma}\label{cov diamond cong}
Suppose $(\Sha, \t, \ont, \int)$ is a two-acyclic factorization system and $\Theta$ is a congruence on $\Pairs(\t)$.
If $(X,Y)\in\Pairs(\t)$ and $c\in\Cov(X)$, then $(\Del(X,c),\Del(X,c)^\perp)\equiv(X,Y)$ if and only if $(T_*(c), T_*(c)^{\perp})\equiv (T(c), T(c)^{\perp})$ mod~$\Theta$.
\end{lemma}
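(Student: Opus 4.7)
The plan is to apply the congruence $\Theta$ to the two identities provided by Lemma~\ref{cov diamond}, namely $\Del(X,c) \meet T(c) = T_*(c)$ and $\Del(X,c) \join T(c) = X$, read as identities of maximal orthogonal pairs in $\Pairs(\t)$. Both implications reduce to one-line applications of the defining property of a lattice congruence (meets and joins of congruent pairs are congruent), once we identify how $T(c)$ and $T_*(c)$ sit inside $X$ and $\Del(X,c)$.

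For the forward direction, I would assume $(\Del(X,c),\Del(X,c)^\perp)\equiv(X,Y)\bmod\Theta$ and meet both sides with $(T(c),T(c)^\perp)$. On the left Lemma~\ref{cov diamond} gives $(T_*(c),T_*(c)^\perp)$. For the right, I would observe that $c\in X$ and, because $X$ is a $\ont$-downset by Proposition~\ref{TorsionImpliesDownset}, $T(c)\subseteq X$, so $(T(c),T(c)^\perp)\leq(X,Y)$ and the meet is just $(T(c),T(c)^\perp)$. This yields $(T_*(c),T_*(c)^\perp)\equiv(T(c),T(c)^\perp)\bmod\Theta$.

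For the backward direction, I would assume $(T_*(c),T_*(c)^\perp)\equiv(T(c),T(c)^\perp)\bmod\Theta$ and join both sides with $(\Del(X,c),\Del(X,c)^\perp)$. On the right, Lemma~\ref{cov diamond} directly produces $(X,Y)$. On the left, I would argue that $T_*(c)\subseteq\Del(X,c)$: any $x'\in T_*(c)$ has $c\ont x'$ with $x'\neq c$, so $x'\in X$ (downset property), and $x'\not\onto c$ by antisymmetry of $\onto$ (applied to the brick/order conditions via Proposition~\ref{Mult Fact facts} and the partial-order status of $\onto$). Hence $(T_*(c),T_*(c)^\perp)\leq(\Del(X,c),\Del(X,c)^\perp)$ and the join is $(\Del(X,c),\Del(X,c)^\perp)$, giving the desired congruence.

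There is no real obstacle here; the only non-trivial sub-step is verifying the containment $T_*(c)\subseteq\Del(X,c)$, which is an immediate consequence of Proposition~\ref{TorsionImpliesDownset} together with the antisymmetry of the partial order $\ont$ built into the order condition of a two-acyclic factorization system.
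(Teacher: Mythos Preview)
Your proposal is correct and is exactly the argument the paper has in mind: the paper's proof is the one-line ``It follows immediately from Lemma~\ref{cov diamond} and the definition of a congruence,'' and you have simply unpacked this. One small simplification: both auxiliary containments you verify, $T(c)\subseteq X$ and $T_*(c)\subseteq\Del(X,c)$, are already immediate from Lemma~\ref{cov diamond} itself (since meet in $\Pairs(\t)$ is intersection, $\Del(X,c)\cap T(c)=T_*(c)$ gives the second, and $\Del(X,c)\join T(c)=X$ gives $T(c)\le X$), so the appeal to Proposition~\ref{TorsionImpliesDownset} and antisymmetry of $\ont$ is not needed (and the reference to Proposition~\ref{Mult Fact facts} is misplaced).
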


\begin{proof}[Proof of Proposition~\ref{all quotients inf}]
Proposition~\ref{Che cov det} says that the complete congruence on $\Pairs$ associated to $(X,Y) \mapsto {(X \cap \Che_\Theta, Y \cap \Che_\Theta)}$ is cover-determined.
Since $\Theta$ is also cover-determined, we only need to show that if $(X_1, Y_1)\covered(X_2, Y_2)$ in $\Pairs(\t)$, then $(X_1, Y_1) \equiv (X_2, Y_2) \bmod \Theta$ if and only if $X_1 \cap \Che_\Theta = X_2 \cap \Che_\Theta$. 

Suppose $(X_1, Y_1)\covered(X_2, Y_2)$ in $\Pairs(\t)$.
Theorem~\ref{new cov thm} says that there exists $c$ such that $X_1=\Del(X_2,c)$.
Since $X_1\cap\Che_\Theta$ is a downset for the restriction of $\ont$ to $\Che_\Theta$, we have $X_1 \cap \Che_\Theta = X_2 \cap \Che_\Theta$ if and only if $c\not\in\Che_\Theta$.
By definition, $c\not\in\Che_\Theta$ if and only if $(T_*(c), T_*(c)^{\perp})\equiv (T(c), T(c)^{\perp}) \bmod \Theta$.
By Lemma~\ref{cov diamond cong}, this is if and only if $(X_1, Y_1) \equiv (X_2, Y_2) \bmod \Theta$.
\end{proof}

We have completed the proof of Theorem~\ref{thm lattice quotients}.
We now proceed to generalize and prove Corollary~\ref{con pairs finite}. 

Given a complete lattice $L$, let $\Concc(L)$ be the set of complete cover-determined congruences on $L$.
Then $\Concc(L)$ is a complete lattice under refinement order.  
More specifically, \cite[Proposition~2.7]{DIRRT} says that it is a complete meet-sublattice of the lattice of complete congruences on $L$, which is a complete meet-sublattice of $\Con(L)$.
It is thus a complete meet-semilattice, and since it also has a maximal element, it is a complete lattice by a standard argument.

Given a cover relation $a\covered b$ in $L$, we use the notation $\concc(a,b)$ for the finest complete cover-determined congruence having $a\equiv b$.
This is the meet, in $\Concc(L)$, of all complete cover-determined congruences with $a\equiv b$;  this meet has $a\equiv b$ because $\Concc(L)$ is a meet-sublattice of the lattice of partitions of~$L$.
A cover-determined complete congruence $\Theta$ is the join, in $\Concc(L)$, of the congruences $\concc(a,b)$ for all covers $a\covered b$ contracted by $\Theta$.

When $L$ is $\Pairs(\t)$ for a two-acyclic factorization system, Lemma~\ref{cov diamond cong} implies that a cover-determined congruence $\Theta$ is the join of the congruences $\concc(a,b)$ for all covers $a\covered b$ contracted by $\Theta$ such that $b$ is $(T(x),T(x)^\perp)$ and $a$ is $(T_*(x),T_*(x)^\perp)$.
We are led to define the \newword{cover-determined forcing preorder} on $\JIrr^c(\Pairs(\t))$:
We say that $(T(x),T(x)^\perp)$ forces $(T(y),T(y)^\perp)$ if and only if every cover-determined congruence contracting the cover $(T_*(x),T_*(x)^\perp)\covered(T(x),T(x)^\perp)$ also contracts the cover $(T_*(y),T_*(y)^\perp)\covered(T(y),T(y)^\perp)$.

We are now prepared to state and prove the generalization of Corollary~\ref{con pairs finite}.

\begin{cor} \label{con pairs infinite}  
Suppose $(\Sha, \t, \ont, \int)$ is a two-acyclic factorization system that obeys the image condition.
The map $x\mapsto(T(x), T(x)^{\perp})$ is an isomorphism from the transitive closure of $\forcs$ on $\Sha$ to the cover-determined forcing preorder on $\JIrr^c(\Pairs(\t))$.
This map induces an isomorphism from the poset of $\forcs$-downsets under containment to the lattice $\Concc(\Pairs(\t))$.
\end{cor}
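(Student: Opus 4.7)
The plan is to deduce both parts from Theorem~\ref{thm lattice quotients} together with Proposition~\ref{all quotients forcs upset} and the defining property of $\concc$. I will first establish the lattice isomorphism of the second assertion, then obtain the preorder isomorphism of the first assertion by analysing the principal congruences $\concc(T_*(x),T(x))$.

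By Theorem~\ref{thm lattice quotients} (with uniqueness supplied by Proposition~\ref{all quotients inf}), the assignment $\Theta \mapsto \Che_\Theta$ is a bijection from $\Concc(\Pairs(\t))$ to the collection of $\forcs$-upsets of $\Sha$. If $\Theta_1 \leq \Theta_2$ in $\Concc(\Pairs(\t))$, then every cover contracted by $\Theta_1$ is also contracted by $\Theta_2$, so $\Che_{\Theta_2} \subseteq \Che_{\Theta_1}$: the bijection is order-reversing. Composing with set-complementation gives an order-preserving bijection $\Theta \mapsto \Sha \setminus \Che_\Theta$ from $\Concc(\Pairs(\t))$ to the poset of $\forcs$-downsets, and since both sides are complete lattices whose order structure determines their join and meet, this is the desired lattice isomorphism.

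For the preorder statement, Proposition~\ref{JIrr and MIrr Characterize} already provides a bijection $x \mapsto (T(x), T(x)^\perp)$ from $\Sha$ to $\JIrr^c(\Pairs(\t))$, so the task is to verify that this bijection intertwines the transitive closure of $\forcs$ with the cover-determined forcing preorder. For the forward direction, transitivity reduces the problem to a single $\forcs$-step: if $x \forces y$ and $\Theta$ contracts the cover at $x$, then $x \in \Sha \setminus \Che_\Theta$, and Proposition~\ref{all quotients forcs upset} tells us that $\Sha \setminus \Che_\Theta$ is a $\forcs$-downset, so $y$ lies in it too, i.e.\ $\Theta$ contracts the cover at $y$. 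Conversely, suppose $(T(x),T(x)^\perp)$ cover-determined forces $(T(y),T(y)^\perp)$, and set $\Theta := \concc(T_*(x),T(x))$. I claim that $D := \Sha \setminus \Che_\Theta$ is the smallest $\forcs$-downset containing $x$: any $\forcs$-downset $D'$ with $x \in D'$ corresponds under the second assertion to a cover-determined congruence $\Theta'$ with $\Sha \setminus \Che_{\Theta'} = D'$, and since $x \in D'$ the congruence $\Theta'$ contracts the cover at $x$, whence the defining property of $\concc(T_*(x),T(x))$ gives $\Theta \leq \Theta'$ and thus $D \subseteq D'$. But the smallest $\forcs$-downset containing $x$ is exactly the set of elements reachable from $x$ by a directed $\forcs$-path, and the hypothesis forces $y \in D$, so $x$ is related to $y$ in the transitive closure of $\forcs$.

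The main obstacle I anticipate is purely bookkeeping: once the order-reversing character of $\Theta \mapsto \Che_\Theta$ has been recorded (so that $\Theta \mapsto \Sha \setminus \Che_\Theta$ becomes order-preserving), everything else reduces to a mechanical translation between $\forcs$-downsets and cover-determined congruences via the machinery already in place.
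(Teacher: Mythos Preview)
Your argument is correct and follows essentially the same route as the paper: both proofs rest entirely on Theorem~\ref{thm lattice quotients} (together with Proposition~\ref{JIrr and MIrr Characterize} for the bijection $x\mapsto(T(x),T(x)^\perp)$), and your more detailed treatment of the converse direction via $\concc(T_*(x),T(x))$ simply unpacks what the paper compresses into a single sentence. One small point to tighten: you verify only that $\Theta_1\le\Theta_2$ implies $\Che_{\Theta_2}\subseteq\Che_{\Theta_1}$, and an order-preserving bijection between lattices is not automatically a lattice isomorphism without the reverse implication; the missing direction is immediate (if $\Che_2\subseteq\Che_1$ then the quotient map for $\Che_2$ factors through that for $\Che_1$, so $\Theta_1\le\Theta_2$), but it should be stated.
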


\begin{proof}
By Proposition~\ref{JIrr and MIrr Characterize}, the map $x\mapsto(T(x), T(x)^{\perp})$ is a bijection from $\Sha$ to $\JIrr^c (\Pairs(\t))$.  
Theorem~\ref{thm lattice quotients} implies that for $x,y\in\Sha$, $(T(x),T(x)^\perp)$ forces $(T(y),T(y)^\perp)$ in the cover-determined forcing preorder if and only if there is a directed path in the relation $\forcs$ from $x$ to $y$.
Theorem~\ref{thm lattice quotients} also implies that cover-determined complete congruences are completely specified by the set of elements $x\in\Sha$ such that $(T_*(x), T_*(x)^{\perp})\covered(T(x), T(x)^{\perp})$ is contracted, and that the sets arising are precisely the $\forcs$-downsets.
The refinement order on cover-determined complete congruences is containment order on these $\forcs$-downsets.
\end{proof}

\section{Relation to other types of finite lattices} \label{lattice types sec}

\subsection{Distributive lattices} \label{FTFDL and FTFDSL}  
A lattice $L$ is distributive if $x \meet (y \join z) = (x \meet y) \join (x \meet z)$ and $x \join (y \meet z) = (x \join y) \meet (x \join z)$ for all $x$, $y$ and $z \in L$.
This condition is clearly stronger than semidistributivity, so every distributive lattice is semidistributive and thus, every finite distributive lattice is of the form $\Pairs(\t)$ where $(\Sha, \t, \ont, \int)$ is a two-acyclic factorization system. 

We now state three results that summarize the relationship between the Fundamental Theorem of Finite Distributive Lattices (FTFDL---Theorem~\ref{FTFDL}) and the Fundamental Theorem of Finite Semidistributive Lattices (FTFSDL---Theorem~\ref{FTFSDL1}).
These statements are easily proved, and we omit the proofs. 
Theorem~\ref{FTFDL redux} is stated in a way that allows convenient comparison with Theorem~\ref{FTFSDL2}.

\begin{prop}
Suppose $P$ is a set and $\t$ is a partial order on $P$.
Then $(P, \t, \t, \t)$ is a two-acyclic factorization system.
The map $(X,Y)\mapsto X$ is an isomorphism from $\Pairs(\t)$ to $\Downsets(P)$, with inverse $X\mapsto(X,P\setminus X)$.
\end{prop}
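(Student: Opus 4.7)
The plan is to verify the three listed claims directly by unwinding definitions; the only thing to keep track of carefully is the orientation convention that $x\to y$ corresponds to $x\ge y$ in the poset $P$.

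First I would check that $(P,\t,\t,\t)$ is a factorization system, i.e.\ $\Fact(\t)=(\t,\t)$ and $\Mult(\t,\t)=\t$. Unfolding the definition, $x\onto y$ means that every $z$ with $y\to z$ satisfies $x\to z$, i.e.\ every $z\le y$ satisfies $z\le x$; taking $z=y$ gives $x\ge y$, and conversely transitivity gives the forward direction, so $\onto$ coincides with $\t$. The argument for $\int$ is identical by symmetry. For $\Mult(\t,\t)=\t$, the relation $x\to' z$ holds iff there exists $y$ with $x\ge y\ge z$; reflexivity (take $y=z$) and transitivity show this is equivalent to $x\ge z$. The two-acyclicity conditions are then immediate: the order condition is automatic since $\ont=\int=\t$ is already a partial order, and the brick condition follows because $x\onto y\into x$ becomes $x\ge y\ge x$, forcing $x=y$ by antisymmetry.

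Next I would verify that $(X,Y)\mapsto X$ lands in $\Downsets(P)$. By Proposition~\ref{TorsionImpliesDownset}, the first component of any maximal orthogonal pair is a $\ont$-downset; since $\ont=\t$ this is precisely a downset of the poset $P$. Conversely, given a downset $X\subseteq P$, I would compute $X^\perp$ and ${}^\perp(P\setminus X)$. For the first, $y\in X$ is incompatible with $y\in X^\perp$ (witnessed by $x=y$), while $y\notin X$ combined with some $x\in X$ satisfying $x\ge y$ contradicts the downset property; hence $X^\perp=P\setminus X$. A symmetric argument gives ${}^\perp(P\setminus X)=X$: for $y\in X$ any $x\notin X$ with $y\ge x$ would force $x\in X$, while $y\notin X$ is ruled out by taking $x=y$. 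Therefore $X\mapsto(X,P\setminus X)$ lands in $\Pairs(\t)$ and is a two-sided inverse to $(X,Y)\mapsto X$.

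Finally, both maps are order-preserving for the trivial reason that both $\Pairs(\t)$ and $\Downsets(P)$ are ordered by containment in the first (resp.\ only) component. There is no real obstacle here: the statement is essentially a sanity check confirming that when the abstract framework of two-acyclic factorization systems is specialized to a partial order $\t$ with $\ont=\int=\t$, one recovers exactly the setup of Birkhoff's FTFDL. The one place to be attentive is the sign convention reversing ``arrows'' and ``$\le$,'' but this causes no actual difficulty once the convention is fixed.
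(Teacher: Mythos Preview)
Your proof is correct and complete; the paper itself omits the proof of this proposition, noting only that ``these statements are easily proved.'' Your direct verification of $\Fact(\t)=(\t,\t)$, $\Mult(\t,\t)=\t$, two-acyclicity, and the explicit computation of $X^\perp=P\setminus X$ and ${}^\perp(P\setminus X)=X$ for downsets $X$ is exactly the kind of routine unwinding the authors have in mind.
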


\begin{theorem}[FTFDL, rephrased]\label{FTFDL redux}
A finite poset $L$ is a distributive lattice if and only if it is isomorphic to $\Pairs(\t)$ for some partial order $\t$ on a finite set $P$.
In this case, $(P,\t)$ is isomorphic to $(\JIrr L,\ge)$, where $\ge$ is the partial order induced from~$L$. 
The map $x\mapsto(\set{j\in\JIrr L: j\leq x },\ \set{j\in\JIrr L:j\not\le x}))$ is an isomorphism from $L$ to $\Pairs(\t)$, with inverse $(X,Y)\mapsto\Join X$.
\end{theorem}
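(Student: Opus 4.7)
The plan is to deduce this theorem as a special case of the Fundamental Theorem of Finite Semidistributive Lattices (Theorem~\ref{FTFSDL1}), using the proposition immediately preceding the theorem statement, which identifies $\Pairs(\t)$ with $\Downsets(P)$ whenever $\t$ is a partial order on $P$. The backward direction is then essentially immediate: if $L \cong \Pairs(\t)$ for a partial order $\t$ on a finite set $P$, then $L \cong \Downsets(P)$, and downset lattices are distributive because their meet and join are intersection and union of subsets.

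For the forward direction, I would start from a finite distributive lattice $L$. Since every distributive lattice is semidistributive, Theorem~\ref{FTFSDL1} yields an isomorphism $L \cong \Pairs(\t_L)$ where $(\JIrr(L), \t_L, \ont_L, \int_L)$ is a two-acyclic factorization system. The central step is to show that in the distributive case, the three relations $\t_L$, $\ont_L$, and $\int_L$ all coincide with the partial order $\geq$ induced from $L$ on $\JIrr(L)$. Once this is done, the factorization system has the special form $(P, \t, \t, \t)$ of the preceding proposition, and the identification $(P, \t) \cong (\JIrr(L), \geq)$ follows.

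The key input is the explicit formula $\kappa(j) = \Join\{x \in L : j \not\leq x\}$, valid in any finite distributive lattice: distributivity expands $j \meet \kappa(j)$ into a join of terms each strictly below $j$, hence at most $j_*$, while $j_*$ itself is attained, and maximality follows from the definition of $\kappa$. From this formula one deduces the equivalence that $x \leq \kappa(j)$ if and only if $j \not\leq x$, for every $x \in L$. Then $i \onto_L j$ is $i \geq j$ by definition; $i \to_L j$ means $i \not\leq \kappa(j)$, which is $i \geq j$ by the equivalence; and $i \into_L j$ means $\kappa(j) \leq \kappa(i)$, which unfolds by two applications of the equivalence to $i \geq j$. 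The explicit isomorphism formula in the theorem statement then follows by specializing the one in FTFSDL and rewriting $\kappad(\{m \in \MIrr(L) : m \geq x\}) = \{j \in \JIrr(L) : \kappa(j) \geq x\}$ as $\{j \in \JIrr(L) : j \not\leq x\}$ via the same equivalence. The only real obstacle is the bookkeeping of identifications; the genuine content is entirely in the formula for $\kappa$ and its consequence.
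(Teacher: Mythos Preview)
Your proof is correct. The paper itself omits the proof, saying only that ``these statements are easily proved.'' Given the placement in Section~\ref{FTFDL and FTFDSL}, the intended argument is almost certainly the shorter one: Theorem~\ref{FTFDL} (the classical FTFDL) is already stated in the introduction, and the proposition immediately preceding Theorem~\ref{FTFDL redux} identifies $\Pairs(\t)$ with $\Downsets(P)$ whenever $\t$ is a partial order on $P$, via $X\mapsto(X,P\setminus X)$. Composing that identification with the isomorphism $x\mapsto\{j\in\JIrr(L):j\le x\}$ from Theorem~\ref{FTFDL} gives the stated map directly, and the uniqueness of $(P,\t)$ follows from the uniqueness of $P$ in Theorem~\ref{FTFDL}. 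No computation of $\kappa$ is needed.

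Your route through FTFSDL is longer but not wasted effort: it makes explicit exactly how the distributive case sits inside the semidistributive one, by showing that $\t_L$, $\ont_L$, and $\int_L$ all collapse to the induced order $\ge$ on $\JIrr(L)$. That is essentially the content of Proposition~\ref{dist char}, which the paper also leaves unproved, so your $\kappa$ computation (and the equivalence $x\le\kappa(j)\iff j\not\le x$) would serve equally well as a proof of that proposition.
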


\begin{prop}\label{dist char}
Let $(\Sha, \t, \ont, \int)$ be a finite two-acyclic factorization system. Then the following are equivalent:
\begin{enumerate}[\qquad\rm1.]
\item $\Pairs(\t)$ is distributive. \label{cond1}
\item $\t$ is a partial order. \label{cond5}
\item $\ont = \int$. \label{cond2}
\item $\t = \ont$. \label{cond3}
\item $\t = \int$. \label{cond4}
\end{enumerate}
\end{prop}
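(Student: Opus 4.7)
The plan is to establish the equivalences among conditions $(\ref{cond2})$, $(\ref{cond3})$, $(\ref{cond4})$, $(\ref{cond5})$ by elementary bookkeeping, then prove $(\ref{cond5}) \Rightarrow (\ref{cond1})$ by a direct computation with closed sets, and finally prove $(\ref{cond1}) \Rightarrow (\ref{cond5})$ by combining FTFSDL with FTFDL.

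For the equivalence of $(\ref{cond2})$--$(\ref{cond5})$: the implications $(\ref{cond3}) \Rightarrow (\ref{cond5})$ and $(\ref{cond4}) \Rightarrow (\ref{cond5})$ are immediate because the order condition makes $\ont$ and $\int$ partial orders. For $(\ref{cond5}) \Rightarrow (\ref{cond3})$, if $\t$ is transitive then $x \to y$ forces $x \onto y$ (since any $y \to z$ yields $x \to z$); combining this with Proposition~\ref{Mult Fact facts}(\ref{union}) gives $\t = \ont$, and $(\ref{cond5}) \Rightarrow (\ref{cond4})$ is dual. For $(\ref{cond2}) \Rightarrow (\ref{cond3})$, when $\ont = \int$, transitivity of $\ont$ collapses any composite $x \onto y \onto z$ to $x \onto z$, so $\t = \Mult(\ont, \ont) = \ont$. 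Together these yield the equivalences among $(\ref{cond2})$, $(\ref{cond3})$, $(\ref{cond4})$, $(\ref{cond5})$.

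For $(\ref{cond5}) \Rightarrow (\ref{cond1})$, I plan to show that when $\t$ is a partial order the closed subsets of $\Sha$ are precisely the $\t$-downsets. One inclusion follows from Proposition~\ref{TorsionImpliesDownset}, using that $\ont = \t$. For the converse, given a $\t$-downset $X$, reflexivity and transitivity of $\t$ give $X^{\perp} = \Sha \setminus X$ directly from the definitions, and then the same two properties give ${}^{\perp}(X^{\perp}) = X$. Since the $\t$-downsets form a sublattice of the Boolean lattice $2^{\Sha}$ under $\cap$ and $\cup$, this lattice is distributive, and by Proposition~\ref{step} so is $\Pairs(\t)$.

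For $(\ref{cond1}) \Rightarrow (\ref{cond5})$, which I expect to be the main obstacle, the strategy is to use the uniqueness clause of the FTFSDL. Since $\Pairs(\t) = L$ is distributive, hence semidistributive, Theorem~\ref{FTFSDL1} identifies $(\Sha, \t)$ with $(\JIrr(L), \t_L)$, reducing the task to showing $\t_L$ is a partial order. Applying the FTFDL, realize $L \cong \Downsets(\JIrr(L))$; under this identification $j \in \JIrr(L)$ corresponds to its principal downset $\set{i : i \leq j}$ and $j_*$ corresponds to that downset with $j$ removed. A short computation in $\Downsets(\JIrr(L))$ then shows that $i \join j_* \geq j$ if and only if $i \geq j$, so by Lemma~\ref{j kappa} we have $i \to_L j \iff i \geq j$, which is evidently a partial order. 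This last step, weaving together both fundamental theorems, is where all the nontrivial content lives; the rest of the argument is direct bookkeeping.
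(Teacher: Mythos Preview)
Your proof is correct. The paper omits the proof of this proposition entirely (stating only that it is ``easily proved''), so there is nothing to compare against; your argument---cycling through $(\ref{cond2})$--$(\ref{cond5})$ by elementary manipulations, identifying closed sets with $\t$-downsets for $(\ref{cond5})\Rightarrow(\ref{cond1})$, and invoking the uniqueness in FTFSDL together with Lemma~\ref{j kappa} and the FTFDL for $(\ref{cond1})\Rightarrow(\ref{cond5})$---fills the gap cleanly.
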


Using Proposition~\ref{dist char}, we see that if $L$ is finite and distributive then $x \forcs y$ if and only if $x=y$.
From there, by Corollary~\ref{con pairs finite}, we recover the well known fact that $\JIrr(\Con(L))$ 
is an antichain.

\subsection{Congruence uniform lattices}\label{cu sec}
We recall our discussion of lattice congruences 
from Section~\ref{quot sec}.
As we described, the set of congruences on a finite lattice $L$ forms a distributive lattice $\Con(L)$. Since $\Con(L)$ is distributive, it is isomorphic to the poset of downsets of $\JIrr(\Con(L))$. 
Proposition~\ref{JIrr of ConL} described surjective maps to $\JIrr(\Con(L))$ from $\JIrr L$, from $\MIrr L$, and from the set of covers of~$L$.
When $L$ is semidistributive, Lemma~\ref{cov diamond cong} implies that surjection from $\mathrm{Covers}(L)$ to $\JIrr(\Con(L))$ factors (as a composition of surjections) through $\JIrr L$.
Dually, the map factors through $\MIrr L$.
Also when $L$ is semidistributive, $\kappau$ gives a bijection between $\JIrr L$ and $\MIrr L$.
These maps are shown in the diagram below, and they all commute:
\[ \xymatrix{
& \mathrm{Covers}(L) \ar@{->>}[dl] \ar@{->>}[dr] & \\
\JIrr L \ar@{<->}[rr]^{\kappau, \kappad} \ar@{->>}[dr] && \MIrr L \ar@{->>}[dl] \\
& \JIrr(\Con(L)) & \\ 
} . \]

A finite lattice $L$ is \newword{conguence uniform} if the map $j \mapsto \con(j_*,j)$ from $\JIrr L$ to $\JIrr(\Con(L))$ and the map $m \mapsto \con(m,m^*)$ from $\MIrr L$ to $\JIrr(\Con(L))$ are both bijections.
In this case, $L$ is semidistributive (see \cite[Lemma~4.2]{DayBounded} and \cite[Theorem~5.1]{DayBounded}), with $\kappau$ and $\kappad$ being  the unique maps that make the diagram above commute.
Thus a finite lattice $L$ is conguence uniform if and only if it is semidistributive and one (and therefore both) of the maps $j \mapsto \con(j_*,j)$ and $m \mapsto \con(m,m^*)$ is a bijection.
Since the forcing preorder on $\JIrr L$ is the pullback of the partial order on $\JIrr(\Con(L))$, we can alternatively say that $L$ is congruence uniform if it is semidistributive and the forcing preorder on $\JIrr L$ is a partial order.
Thus we have the following corollary of the FTFSDL and Corollary~\ref{con pairs finite}.

\begin{cor}\label{cu cor}
Suppose $(\Sha,\t,\ont,\int)$ is a finite two-acyclic factorization system.
Then $\Pairs(\t)$ is congruence uniform if and only if $\forcs$ is acyclic.  
\end{cor}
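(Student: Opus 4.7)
The plan is to combine the FTFSDL with Corollary~\ref{con pairs finite} and unpack the characterization of congruence uniformity given in the paragraph preceding the corollary. First, I will note that $\Pairs(\t)$ is semidistributive by Theorem~\ref{FTFSDL1}, so the only issue is the second half of the definition of congruence uniformity, namely that the forcing preorder on $\JIrr(\Pairs(\t))$ is antisymmetric (a partial order). This is precisely the characterization I will invoke from the discussion above Corollary~\ref{cu cor}: a semidistributive lattice $L$ is congruence uniform if and only if the forcing preorder on $\JIrr(L)$ is antisymmetric.

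Next, I will apply Corollary~\ref{con pairs finite}, which says the map $x \mapsto (T(x),T(x)^{\perp})$ is a poset isomorphism from the transitive closure of $\forcs$ on $\Sha$ to the forcing preorder on $\JIrr(\Pairs(\t))$. Antisymmetry of one preorder transports to antisymmetry of the other under such an isomorphism, so the forcing preorder on $\JIrr(\Pairs(\t))$ is a partial order if and only if the transitive closure of $\forcs$ is a partial order.

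Finally, I will observe the elementary fact that the transitive closure of a relation on a finite (or arbitrary) set is antisymmetric if and only if the relation contains no directed cycle. Indeed, if $x_0 \forces x_1 \forces \cdots \forces x_n = x_0$ with the $x_i$ not all equal, then the transitive closure would relate each $x_i$ to each $x_j$ in the cycle, violating antisymmetry; conversely, distinct elements $x \neq y$ with $x$ and $y$ related to each other by the transitive closure of $\forcs$ immediately yield a directed cycle. Chaining these three equivalences gives the corollary.

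There is essentially no obstacle here: the real content has already been carried out in Corollary~\ref{con pairs finite} and in the paragraph characterizing congruence uniformity via the forcing preorder. The proof is a short citation-driven chain of equivalences, and I do not expect any step to require new ideas.
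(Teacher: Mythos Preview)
Your proposal is correct and matches the paper's own approach: the paper simply states that Corollary~\ref{cu cor} follows from the FTFSDL together with Corollary~\ref{con pairs finite}, via the characterization (given just before the corollary) that a finite semidistributive lattice is congruence uniform if and only if the forcing preorder on $\JIrr(L)$ is a partial order. Your expansion of this into the chain of equivalences---semidistributivity from Theorem~\ref{FTFSDL1}, transport of antisymmetry along the isomorphism of Corollary~\ref{con pairs finite}, and the elementary equivalence between acyclicity of $\forcs$ and antisymmetry of its transitive closure---is exactly what is intended.
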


As mentioned above, every finite congruence uniform lattice is also semidistributive.
Thus the concatenation of Corollary~\ref{cu cor} and Theorem~\ref{FTFSDL1} can be thought of as a Fundamental Theorem of Finite Congruence Uniform Lattices.

\begin{eg}\label{non cong unif ex}
This example continues Examples~\ref{2afs ex} and~\ref{forcs ex}.
The relation $\forcs$ shown in Figure~\ref{forcs fig} is not acyclic.
Indeed, the lattice pictured in Figure~\ref{Pairs fig} is a well known example
of a semidistributive lattice that is not congruence uniform.
\end{eg}

Another characterization of congruence uniformity, due to Day, uses the notion of doubling that we now recall.
Let $\2$ be the two-element lattice with elements $1<2$.
Let $L$ be a finite lattice and let $I$ be an interval in $L$.
Let  $L[I]$ be the set $(L\setminus I)\cup(I\times\2)$ and let $\pi : L[I] \to L$ be the obvious projection.
The \newword{doubling} of $I$ in $L$ is $L[I]$ equipped with the partial order that $x \leq y$ if 
\begin{enumerate}[\qquad\rm(i)]
\item $\pi(x) \leq \pi(y)$ and 
\item  if $x = (a,i)$ and $y = (b,j) \in I \times \2$, then $i \leq j$.
\end{enumerate}
The following is \cite[Corollary~5.4]{DayBounded}.

\begin{theorem}\label{cu double}
A finite lattice $L$ is congruence uniform if and only if there is a sequence of lattices $L_0,L_1,\ldots,L_k=L$ such that $L_0$ has exactly one element and for all $j=1,\ldots,k$, there is an interval $I_{j-1}$ in $L_{j-1}$ such that $L_j\cong L_{j-1}[I_{j-1}]$.
\end{theorem}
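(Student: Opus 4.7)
The plan is to prove both implications separately. For the backward direction, I would induct on the number $k$ of doublings, using that a single doubling preserves congruence uniformity. For the forward direction, I would induct on $|L|$, removing a ``last'' doubling by quotienting by a well-chosen atom of $\Con(L)$.

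For the backward direction, the one-element lattice is vacuously congruence uniform, so it suffices to show that $L[I]$ is congruence uniform whenever $L$ is and $I$ is an interval of $L$. I would first identify $\JIrr(L[I])$ and $\MIrr(L[I])$: each $j\in\JIrr(L)$ has a canonical lift in $L[I]$ (namely $j$ itself if $j\notin I$, or $(j,2)$ if $j\in I$), and these exhaust $\JIrr(L[I])$ except for one new join-irreducible $j_{\mathrm{new}}=(\min I,2)$ created by the doubling, with a dual description of $\MIrr(L[I])$. Next, the projection $\pi:L[I]\onto L$ is a surjective lattice homomorphism whose kernel $\Theta_{\mathrm{new}}$ is a join-irreducible congruence of $L[I]$, and pulling back congruences identifies $\Con(L)$ with the principal filter of $\Con(L[I])$ above $\Theta_{\mathrm{new}}$. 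Hence $\JIrr(\Con(L[I]))=\JIrr(\Con(L))\sqcup\set{\Theta_{\mathrm{new}}}$, and matching these enumerations through the bijections for $L$ confirms the two bijections defining congruence uniformity of $L[I]$.

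For the forward direction, assume $L$ is congruence uniform with $|L|>1$. Congruence uniformity makes the forcing preorder on $\JIrr(L)$ antisymmetric (it is the pullback of the partial order on $\JIrr(\Con(L))$), so I may choose a forcing-minimal $j\in\JIrr(L)$; equivalently, $\alpha:=\con(j_*,j)$ is an atom of $\Con(L)$. Semidistributivity together with Lemma~\ref{cov diamond cong} identifies the non-trivial $\alpha$-classes as the two-element sets $\set{x,x\join j}$ with $x\in[j_*,\kappau(j)]$, so the quotient map $\eta:L\onto L':=L/\alpha$ collapses the parallel intervals $[j_*,\kappau(j)]$ and $[j,\kappau(j)\join j]$ onto a common interval $I\subseteq L'$. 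Theorem~\ref{thm finite quotients} realizes $L'$ via restriction of the factorization system of $L$ to a $\forcs$-upset, and Corollary~\ref{cu cor} gives that $L'$ is itself congruence uniform; by induction, $L'$ is a sequence of doublings from the one-element lattice. An isomorphism $L\cong L'[I]$ is then given by sending $x\in[j_*,\kappau(j)]$ to $(\eta(x),1)$, sending $x\join j\in[j,\kappau(j)\join j]$ to $(\eta(x),2)$, and sending every other element of $L$ to its image in $L'$.

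The main obstacle is the structural description of $\alpha$ and the resulting isomorphism $L\cong L'[I]$. Specifically, one must show that forcing-minimality of $j$ forces $\alpha$ to contract only the covers within the ``diamond'' assembled from $[j_*,\kappau(j)]$ and $[j,\kappau(j)\join j]$ (and not to merge in extra classes further away), so that the non-trivial $\alpha$-classes have the predicted two-element form and their collapses assemble into a single interval $I$ of $L'$. Verifying that the explicit candidate bijection $L\to L'[I]$ respects covers, especially for covers of $L$ that connect the doubled interval to its complement, is then a case analysis using semidistributivity. Once this structural description is secured, the two inductions close to give both implications.
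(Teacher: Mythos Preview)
The paper does not give its own proof of this theorem: it is stated as a citation of Day, \cite[Corollary~5.4]{DayBounded}, with no argument supplied. So there is nothing in the paper to compare your proposal against directly.

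That said, your outline follows the standard shape of Day's original argument (induction on doublings for one direction, undoing a doubling via an atom of $\Con(L)$ for the other), and is a reasonable plan. A few comments on substance:

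\begin{itemize}
\item Your invocation of Lemma~\ref{cov diamond cong} does not do the work you attribute to it. That lemma tells you which \emph{covers} are contracted by a congruence, namely exactly those whose join-irreducible label lies in the contracted set. It does not by itself tell you that the nontrivial $\alpha$-classes are two-element, nor that they are precisely $\{x,\,x\vee j\}$ for $x\in[j_*,\kappa(j)]$. Establishing that an atom of $\Con(L)$ in a congruence uniform lattice has this interval-doubling shape is exactly the content of Day's argument, and it needs the ``polarity'' between $j$ and $\kappa(j)$ together with the minimality of $j$ in the forcing order; you should expect to prove separately that the map $x\mapsto x\vee j$ is a bijection (indeed an isomorphism) from $[j_*,\kappa(j)]$ to $[j,\kappa(j)^*]$ and that no other elements get identified.
\item Your appeal to Corollary~\ref{cu cor} to conclude that $L'=L/\alpha$ is congruence uniform is a detour. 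The clean argument is purely lattice-theoretic: $\Con(L/\alpha)$ is the filter above $\alpha$ in $\Con(L)$, and the bijection $\JIrr(L)\to\JIrr(\Con(L))$ restricts to the required bijection for $L'$ once $j$ is removed. If you do use Corollary~\ref{cu cor}, you would also need to check that the $\forcs$ relation for the restricted factorization system on $\Che$ is still acyclic, which is not quite the same as restricting $\forcs$ from $\Sha$.
\end{itemize}

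In short: your plan is correct in outline and matches Day's, but the structural description of the atom congruence that you flag as ``the main obstacle'' is indeed the entire content of the hard direction, and the tools you cite from this paper do not supply it.
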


In light of Corollary~\ref{cu cor} and Theorem~\ref{cu double}, it is natural to ask how doubling happens in a two-acyclic factorization system.
This question is answered by the following theorem, whose proof we omit.

\begin{theorem}\label{double fact sys} 
Suppose that $(\Sha,\t,\ont,\int)$ is a finite two-acyclic factorization system and suppose that $(X_1,Y_1)\le(X_2,Y_2)$ in $\Pairs(\t)$.
Then the doubling $\Pairs(\t)[(X_1,Y_1),(X_2,Y_2)]$ is a semidistributive lattice isomorphic to $\Pairs(\t')$ for a two-acyclic factorization system $(\Sha\cup\set{a},\t',\ont',\int')$ for some $a\not\in\Sha$.
The relations $\t'$, $\ont'$, and $\int'$ agree with $\t$, $\ont$, and $\int$ on $\Sha$. 
In addition, there are the reflexive relations $a \to' a$, $a \onto' a$, and $a \into' a$ and the following relations:
\begin{enumerate}[\qquad\rm(i)]
\item $x\to' a$ for all $x\in\Sha\setminus X_2$ and $a\to' y$ for all $y\in\Sha\setminus Y_1$. 
\item $a\onto' y$ for all $y\in X_1$ and $x\onto' a$ for all $x\in\Sha\setminus X_2$ such that $x\onto z$ for all $z\in X_1$.
\item $x\into' a$ for all $x\in Y_2$ and $a\into' y$ for all $y\in\Sha\setminus Y_1$ such that $z\into y$ for all $z\in Y_2$.
\end{enumerate}
\end{theorem}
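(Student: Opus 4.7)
The plan is to verify that $(\Sha \cup \set{a}, \t', \ont', \int')$, defined by the stated relations, is a two-acyclic factorization system, and then to exhibit an explicit order isomorphism between $\Pairs(\t')$ and the doubling $\Pairs(\t)[(X_1,Y_1),(X_2,Y_2)]$. Combined with Theorem~\ref{FTFSDL1}, this will yield both the semidistributivity and the asserted isomorphism.

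For the factorization-system axioms, I will proceed by case analysis on whether each element involved lies in $\Sha$ or equals $a$. The order condition and the brick condition are inherited on $\Sha$; the new cases involving $a$ reduce to consequences of $X_1 \subseteq X_2$, $Y_2 \subseteq Y_1$, Proposition~\ref{TorsionImpliesDownset} (so that $X_2$ is an $\ont$-downset and $Y_1$ an $\int$-upset), Proposition~\ref{AcyclicStronger}, and the maximal orthogonality of $(X_1, Y_1)$ and $(X_2, Y_2)$. To establish $\Fact(\t') = (\ont', \int')$, I will unpack definitions: for instance, the condition ``$x \onto z$ for all $z \in X_1$'' in~(ii) is exactly the rewriting of $\set{w \in \Sha : x \not\to w} \subseteq Y_1$ that $\Fact$ demands for producing an $\onto'$-arrow $x \onto' a$, and analogously for~(iii). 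For $\Mult(\ont', \int') = \t'$, the nontrivial inclusion $\supseteq$ for the new arrows $a \to' z$ (with $z \in \Sha \setminus Y_1$) and $x \to' a$ (with $x \in \Sha \setminus X_2$) will be handled by extracting $y \in X_1$ with $y \into z$ using maximal orthogonality of $(X_1, Y_1)$, respectively $y \in Y_2$ with $x \onto y$ using maximal orthogonality of $(X_2, Y_2)$, and factoring through $a$.

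For the isomorphism, given a maximal orthogonal pair $(Z_1, Z_2) \in \Pairs(\t')$, I set $X := Z_1 \cap \Sha$ and $Y := Z_2 \cap \Sha$. The added arrows force $a \in Z_1$ iff $Y \subseteq Y_1$ (equivalently $(X, Y) \geq (X_1, Y_1)$ in $\Pairs(\t)$), and $a \in Z_2$ iff $X \subseteq X_2$ (equivalently $(X, Y) \leq (X_2, Y_2)$). A short computation, using maximal orthogonality of $(X_1, Y_1)$ and $(X_2, Y_2)$ to handle the remaining ``$a$ in neither'' case, shows that $(X, Y)$ is always a maximal orthogonal pair in $\Pairs(\t)$. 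Hence each $(X, Y) \in I$ has exactly two preimages---the ``top'' copy with $a \in Z_1$ and the ``bottom'' copy with $a \in Z_2$---and each $(X, Y) \notin I$ has exactly one preimage, giving the desired bijection with $\Pairs(\t)[I]$. Order preservation is immediate, because the order on $\Pairs(\t')$ is containment of first coordinates: outside $I$ this reproduces the order on $\Pairs(\t)$, while inside $I$ it adds exactly the single relation $X \subsetneq X \cup \set{a}$ between the two copies of each doubled element.

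The main obstacle will be the verification of $\Mult(\ont', \int') = \t'$ in cases where $a$ occupies the middle of a factorization. For example, to show $x \onto' a \into' z$ implies $x \to' z$, I would first produce $v \in Y_2$ with $x \to v$ (using $x \notin X_2$), factor $x \onto w \into v$, invoke $v \into z$ from the extra clause of~(iii), conclude $w \into z$ by transitivity of $\int$, and thereby exhibit $x \onto w \into z$ as a factorization of $x \to z$. The delicate interplay between the extra conditions built into~(ii) and~(iii) and the original factorization structure is what makes this step technical.
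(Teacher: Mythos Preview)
The paper explicitly omits the proof of this theorem (``This question is answered by the following theorem, whose proof we omit''), so there is no argument in the paper to compare against. Your proposal is therefore being judged on its own merits.

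Your approach is the natural one and, as far as I can see, correct. A few remarks confirming the points you flag as delicate:

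For $\Fact(\t')=(\ont',\int')$, the case $x\in\Sha$, $y=a$ is the crucial one. Unwinding, $x$ lies in $\Fact(\t')_{\ont}$ over $a$ exactly when $x\notin X_2$ and $\{x\}^\perp\subseteq Y_1$; and $\{x\}^\perp\subseteq Y_1$ is equivalent to $X_1={}^\perp Y_1\subseteq{}^\perp(\{x\}^\perp)=T(x)$, which is precisely the clause ``$x\onto z$ for all $z\in X_1$'' in~(ii). So the extra condition in~(ii) really is forced by $\Fact$, as you say.

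For the brick condition at $y=a$, one must rule out $x\onto' a\into' x$ with $x\in\Sha$. Here $x\notin X_2$ gives some $w\in Y_2$ with $x\to w$; the clause in~(iii) gives $w\into x$, so $x\to w\into x$ forces $x=w\in Y_2\subseteq Y_1$ by Proposition~\ref{AcyclicStronger}, contradicting $x\notin Y_1$.

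For the bijection, your case split is right, but you should make explicit that when $(X,Y)\notin I$ the unique preimage is whichever of $(X,Y)$, $(X\cup\{a\},Y)$, $(X,Y\cup\{a\})$ is a maximal orthogonal pair, and that these three possibilities correspond exactly to the trichotomy ($Y\not\subseteq Y_1$ and $X\not\subseteq X_2$), ($Y\subseteq Y_1$ and $X\not\subseteq X_2$), ($Y\not\subseteq Y_1$ and $X\subseteq X_2$). For order preservation, the only nontrivial check is that $a$ in the first coordinate of one pair but not the other cannot occur when the underlying $\Sha$-parts are comparable in the wrong direction; this follows because $Y\subseteq Y_1$ is monotone downward in $\Pairs(\t)$.

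In short: your plan is sound and would constitute a full proof once the case checks are written out.
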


Part of Theorem~\ref{double fact sys} is the following proposition, which is well known.
(See, for example, \cite[Theorem~B]{DNT}.)
\begin{prop}\label{double sd}
If $L$ is a finite semidistributive lattice and $I$ is an interval in~$L$, then $L[I]$ is a semidistributive lattice.
\end{prop}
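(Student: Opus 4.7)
The plan is to derive Proposition~\ref{double sd} as a corollary of Theorem~\ref{double fact sys} together with the FTFSDL. By Theorem~\ref{FTFSDL1}, the finite semidistributive lattice $L$ is isomorphic to $\Pairs(\t)$ for some finite two-acyclic factorization system $(\Sha,\t,\ont,\int)$, and the interval $I$ corresponds to an interval $[(X_1,Y_1),(X_2,Y_2)]$ in $\Pairs(\t)$. Theorem~\ref{double fact sys} then produces a finite two-acyclic factorization system $(\Sha\cup\{a\},\t',\ont',\int')$ whose lattice of maximal orthogonal pairs is isomorphic to $\Pairs(\t)[I] \cong L[I]$. Since $\Sha\cup\{a\}$ is finite, Theorem~\ref{FTFSDL1} applied in the reverse direction immediately gives that $L[I]$ is a finite semidistributive lattice.

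All the work therefore sits inside Theorem~\ref{double fact sys}, whose proof the authors omit. A reasonable plan for that omitted proof has three steps. First, verify that $(\Sha \cup \{a\}, \t', \ont', \int')$ is a factorization system; this is a case analysis, distinguishing whether the elements involved lie in $X_1$, $X_2 \cap Y_1$, $Y_2$, or are equal to $a$, and systematically checking $\Fact(\t') = (\ont',\int')$ and $\Mult(\ont',\int') = \t'$ via Proposition~\ref{Mult Fact facts}. Second, establish two-acyclicity: the order condition comes almost for free, since the rules in (ii)--(iii) of Theorem~\ref{double fact sys} never create new $\ont'$- or $\int'$-cycles involving $a$, and the brick condition reduces to showing that $a \onto' y \into' a$ is impossible for $y \in \Sha \setminus \{a\}$, which ultimately rests on the comparability $(X_1,Y_1) \leq (X_2,Y_2)$ in $\Pairs(\t)$.

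The third and most delicate step is to exhibit an order isomorphism $\Pairs(\t') \to L[I]$. The natural candidate uses the position of $a$: pairs $(X',Y') \in \Pairs(\t')$ with $a \in Y'$ correspond to elements of $L[I]$ at or below the lower copy of $I$ (determined by $X' \cap \Sha$), pairs with $a \in X'$ correspond to elements at or above the upper copy of $I$ (determined by $Y' \cap \Sha$), and each element $(W,Z)$ of $I$ itself gives rise to the two pairs $(W, Z \cup \{a\})$ and $(W \cup \{a\}, Z)$, realizing the doubling. The main obstacle is verifying that this is a well-defined bijection respecting order, i.e., checking that rules (i)--(iii) of Theorem~\ref{double fact sys} are precisely tuned so that the closed sets of $\t'$ partition into these three classes. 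A helpful sanity check along the way is Lemma~\ref{cor-element-moves}: the new cover $(W, Z \cup \{a\}) \covered (W \cup \{a\}, Z)$ in $\Pairs(\t')$ should correspond, under this map, to the new cover $(W,Z,1) \covered (W,Z,2)$ in $L[I]$, and this will hold because in each case exactly the singleton $\{a\}$ is added to the first component.
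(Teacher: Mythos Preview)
Your approach matches the paper's exactly: the paper states that Proposition~\ref{double sd} is ``part of Theorem~\ref{double fact sys}'' (together with FTFSDL to pass between $L$ and $\Pairs(\t)$), and gives no further argument beyond citing \cite{DNT}. Your sketch of how one might actually prove the omitted Theorem~\ref{double fact sys} goes beyond anything the paper provides.
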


\subsection{General finite lattices} \label{MarkSec}
One may wonder whether there is a Fundamental Theorem of Finite Lattices, giving a canonical way of realizing an arbitrary lattice as something like $\Pairs(\t)$.
Such a result appears a paper by Barbut on analyzing results of opinion surveys \cite{Barb65}; it was rediscovered by Markowsky~\cite{Mark75}.
We restate the result in a manner that allows easy comparison to FTFSDL.
Whereas FTFSDL realizes a finite semidistributive lattice as $\Pairs(\t)$ for a relation $\t$ on a single set $\Sha$, the theorem realizes an arbitrary finite lattice as $\Pairs(\t)$ for a relation $\t$ between two sets $\El$ and $\Er$.
($\El$ and $\Er$ are the Cyrillic letters ``el" and ``er", which we hope are mnemonic for ``left" and ``right".)
Given a relation $\t$ between sets $\El$ and $\Er$, for any subset $X$ of $\El$, put $X^{\perp} = \{ y \in \Er : \ x \not\to y \ \forall x \in X \}$ and, for any subset $Y$ of $\Er$, put ${}^{\perp} Y = \{ x \in \El : \ x \not\to y \ \forall y \in Y \}$. 
We define $(X,Y)$ to be a maximal orthogonal pair if $Y = X^{\perp}$ and $X = {}^{\perp} Y$ and we write $\Pairs(\t)$ for the set of maximal orthogonal pairs, ordered by containment on the first element. 
We partially order $\Pairs(\t)$ by containment in the first entries of pairs (or equivalently reverse containment in the second entries).

We write $\Fact(\t)$ for the pair $(\ont,\int)$ of preorders $\ont$ and $\int$ on $\El$ and $\Er$ respectively defined as follows:
$x_1 \onto x_2$ if $x_2 \to y$ implies $x_1 \to y$;  
and $y_1 \into y_2$ if $x \to y_1$ implies $x \to y_2$. 
(If $\El=\Er$, then this coincides with the earlier definition of $\Fact(\t)$ in connection with FTFSDL.)  
Given $x \in\El$, a \newword{right companion} of $x$ is an element $y \in\Er$ such that $x \to y$ and such that if $x \onto x' \to y$ then $x= x'$.   
Similarly, for $y \in \Er$, a \newword{left companion} of $y$ is an element $x \in\El$ such that $x \to y$ and such that if $x \to y' \into y$ then $y = y'$.
We say that $\t$ is \newword{companionable} if every $x\in\El$ has a right companion and every $y \in\Er$ has a left companion.

\begin{theorem}[FTFL  \cite{Barb65,Mark75}]\label{FTFL} 
A finite 
poset $L$ is a lattice if and only if it is isomorphic to $\Pairs(\t)$ for a companionable relation $\t$ between finite sets $\El$ and~$\Er$.
In this case, $(\El, \Er, \t)$ is isomorphic to $(\JIrr L,\MIrr L,\t_L)$, where $j \to_L m$ if and only if $j \not\leq m$.
Writing $\Fact(\t_L)=(\ont_L,\int_L)$, we see that $\ont_L$ is the partial order induced on $\JIrr L$ as a subset of $L$, while $\int_L$ is the partial order induced on $\MIrr L$ as a subset of $L$.
The map
\[x\mapsto( \{ j \in \JIrr L: j \leq x \},\  \{ m \in \MIrr L : m \geq x \} )\]
is an isomorphism from $L$ to $\Pairs(\t_L)$, with inverse $(X,Y)\mapsto\Join X=\Meet Y$.
\end{theorem}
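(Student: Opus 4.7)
The plan is to parallel the proof of FTFSDL (Theorem~\ref{FTFSDL1}), dropping the semidistributivity hypothesis at the cost of working with a two-sorted relation. The $\El$ side will parametrize join-irreducibles, the $\Er$ side will parametrize meet-irreducibles, and the companion condition will play the role that $\kappa$ played before: it is precisely what forces $(\El,\Er,\t)$ to be minimal and to recover $(\JIrr(L),\MIrr(L),\t_L)$.

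First I would extend the closure machinery of Propositions~\ref{closure}, \ref{lattice}, \ref{step} and~\ref{perp is closed} to relations between two sets. For any $\t\subseteq\El\times\Er$, the operator $X\mapsto{}^\perp(X^\perp)$ is a closure operator on $\El$, and $(X,Y)\mapsto X$ identifies $\Pairs(\t)$ with the complete lattice of closed sets; this yields the reverse direction for free. For the forward direction, I would define $\t_L$ on $\JIrr(L)\times\MIrr(L)$ by $j\to_L m$ iff $j\not\le m$, and $\Phi(x)=(J_x,M_x)$ with $J_x=\{j\in\JIrr(L):j\le x\}$ and $M_x=\{m\in\MIrr(L):m\ge x\}$. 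Using Proposition~\ref{Jx Mx} one checks directly that $J_x^\perp=M_x$ and ${}^\perp M_x=J_x$, so $\Phi(x)\in\Pairs(\t_L)$; the same proposition makes $\Phi$ injective and order-preserving, and surjectivity follows by showing that any $(X,Y)\in\Pairs(\t_L)$ equals $(J_{\Join X},M_{\Join X})$ via the same perpendicularity identities. Companionability of $\t_L$ is then immediate from Proposition~\ref{Jx Mx}: for $j\in\JIrr(L)$, since $j\not\le j_*=\Meet\{m\in\MIrr(L):m\ge j_*\}$, some $m\in\MIrr(L)$ satisfies $m\ge j_*$ and $j\not\to_L m$; any $j'\in\JIrr(L)$ with $j'<j$ has $j'\le j_*\le m$, so $m$ is a right companion. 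Left companions are found dually.

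For the uniqueness statement, suppose $L\cong\Pairs(\t)$ for a companionable $\t$. I would identify $\El$ with $\JIrr(L)$ via $x\mapsto(\overline{\{x\}},\{x\}^\perp)$ and $\Er$ with $\MIrr(L)$ dually. Companionability enters in three ways: (i) if $y$ is a right companion of $x$, then $x\to y$ makes ${}^\perp(\{x\}^\perp\cup\{y\})$ the unique maximal closed subset of $\overline{\{x\}}$ omitting $x$, witnessing that $(\overline{\{x\}},\{x\}^\perp)$ is join-irreducible; (ii) if $\overline{\{x_1\}}=\overline{\{x_2\}}$, then $x_1\onto x_2\to y_1$ for any right companion $y_1$ of $x_1$ forces $x_1=x_2$; (iii) every join-irreducible closed set is some $\overline{\{x\}}$, using $X=\bigcup_{x\in X}\overline{\{x\}}$ in the closed-set lattice. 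The characterization of $\Fact(\t_L)$ then proceeds as in Proposition~\ref{Fact toL}: $j_1\onto_L j_2$ iff $\{m\in\MIrr(L):j_1\le m\}\supseteq\{m:j_2\le m\}$, which is equivalent to $j_1\ge j_2$ in $L$ because $L$ is meet-generated by $\MIrr(L)$; the argument for $\int_L$ is dual.

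The main obstacle is the uniqueness statement. Without companionability, distinct elements of $\El$ can collapse to the same closed set $\overline{\{x\}}$, or $\overline{\{x\}}$ can fail to be join-irreducible; verifying that companionability is exactly the condition that rules out both pathologies (and symmetrically on the $\Er$ side) is the substantive content beyond what FTFSDL already handles.
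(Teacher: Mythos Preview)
Your approach is sound and genuinely different from the paper's. The paper does not give an independent proof: it cites Markowsky's Theorems~5 and~9 from \cite{Mark75} for the isomorphism $L\cong\Pairs(\t_L)$ and for the criterion determining when $(\El,\Er,\t)\cong(\JIrr(L),\MIrr(L),\t_L)$, and then spends its effort showing that Markowsky's criterion (phrased in terms of certain subsets $\Delta\subseteq\El$ and $\Gamma\subseteq\Er$) is equivalent to companionability. Your proposal instead builds a self-contained argument paralleling the FTFSDL machinery---closure operators, $\overline{\{x\}}$ as the analogue of $T(x)$, right companions playing the role that made $T_*(x)$ closed. This is more work but makes the analogy with the semidistributive case transparent and removes the external dependence; the paper's route is shorter and situates the result in the existing literature.

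Two small points to tighten. First, in your companionability paragraph you write that some $m$ satisfies ``$m\ge j_*$ and $j\not\to_L m$''; you mean $j\to_L m$ (equivalently $j\not\le m$), since a right companion must have $j\to m$. Second, in the uniqueness step you construct bijections $\El\to\JIrr(L)$ and $\Er\to\MIrr(L)$ but do not explicitly verify that they carry $\t$ to $\t_L$; this is the easy check that $\overline{\{x\}}\not\subseteq{}^\perp\{y\}$ if and only if $x\to y$, which follows immediately from $\overline{\{x\}}=\{x':x\onto x'\}$.
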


Theorem~\ref{FTFL} shows that a finite lattice can always be written as $\Pairs(\t)$ by taking $\El = \JIrr L$, $\Er = \MIrr L$ and $j \to m$ if $j \not\leq m$, whether or not that lattice is semidistributive.
What is special in the semidistributive case is, first, that $\JIrr L$ and $\MIrr L$ can be identified with a single set $\Sha$ such that each element is its own left and right companion and, second, that we can recognize those $(\El, \Er, \t)$ which come from semidistributive lattices by axioms reminiscent of abelian categories.
In Section~\ref{ext sect}, we will see a different case in which $\El$ and $\Er$ can be identified.

The following proof spells out how Theorem~\ref{FTFL} is a restatement of Markowsky's theorem in~\cite{Mark75}.

\begin{proof}[Proof of Theorem~\ref{FTFL}]
By Proposition~\ref{Jx Mx}, $\JIrr L$ and $\MIrr L$ are join- and meet-generating subsets of $L$, so they can play the roles of $X$ and $Y$ in \cite[Theorem~5]{Mark75}.
By \mbox{\cite[Theorem~5(a)]{Mark75}}, the given map from $L$ to $\Pairs(\t_L)$ is an isomorphism.  

Now, let $(\El, \Er, \t)$ be two sets and a relation such that $L \cong \Pairs(\t)$. 
By \mbox{\cite[Theorem~9]{Mark75}}, we have $(\El, \Er, \t) \cong (\JIrr L, \MIrr L, \not\leq)$ if and only if   
\begin{enumerate}
\item For all $x \in \JIrr L$, if $\Delta \subseteq \JIrr L$ is such that $\{ y \in \Er : x \to y \}$ equals ${\{ y \in \Er : \exists x' \in \Delta \ \mbox{with}\ x' \to y \}}$, then $x \in \Delta$ and
\item For all $y \in \JIrr L$, if $\Gamma \subseteq \MIrr L$ is such that $\{ x \in \El : x \to y \}$ equals ${\{ x \in \El : \exists y' \in \Gamma \ \mbox{with}\ x \to y' \}}$, then $y \in \Gamma$.
\end{enumerate}
We will show that the first condition is equivalent to saying that every $x \in \El$ has a right companion; analogously, the second condition is is equivalent to saying that every $y \in \Er$ has a left companion.

So, suppose that $x$ has a right companion $y$ and suppose that $\Delta \subseteq \JIrr L$ is such that $\{ y \in \Er : x \to y \} = \{ y \in \Er : \exists x' \in \Delta \ \mbox{with}\ x' \to y \}$. By the definition of a right companion, $y$ is contained in the left hand side, so there is some $x' \in \Delta$ with $x' \to y$. If $x \not \onto x'$, there is some $y'$ with $x' \to y'$ and $x \not\to y'$, in which case $y'$ is in the right hand side but not the left, a contradiction. So $x \onto x'$ and $x' \to y$; then the definition of a right companion shows that $x=x'$, so we have shown that $x \in \Delta$.

Conversely, suppose that $x$ does not have a right companion.   
If $\Delta$ is the set ${\{ x' : x \onto x',\ x \neq x' \}}$, then  $\{ y \in \Er : x \to y \} = \{ y \in \Er : \exists x' \in \Delta \ \mbox{with}\ x' \to y \}$ but $x \not\in \Delta$.
\end{proof}

The idea that a relation $\t$ from $\El$ to $\Er$ gives rise to a
lattice, which can equally well be viewed as a lattice of closed sets in
$\El$ or in $\Er$, goes back to Birkhoff \cite{Birkhoff}. This is sometimes
described as a \newword{Galois connection}, see \cite{Ore}.

Subsequent to Barbut and Markowsky's results showing that there is a canonical way to realize
any finite lattice in this fashion, similar ideas were taken up by Wille
under the title of \newword{formal concept analysis} \cite{GW,W}. Related
subsequent developments can be found in \cite{CLaD}; a recent application of these ideas is in \cite[Section 5]{C+}

\subsection{Extremal lattices} \label{ext sect} 
Let $L$ be a finite lattice.
Let $x_0 < x_1 < \cdots < x_n$ be a chain of elements in $L$. Then, for each $1 \leq i \leq n$, there must be some join-irreducible element $j$ with $j \leq x_i$ and $j \not \leq x_{i-1}$, so $|\JIrr L| \geq n$. Similarly, $|\MIrr L | \geq n$. 
A lattice is called \newword{extremal} if it has a chain of length $N$ where $|\JIrr L| = |\MIrr L| = N$. 
An \newword{acyclic reflexive relation} 
is a reflexive relation that, when interpreted as a directed graph, is acyclic except for $1$-cycles.

We will want the following notation: given a bijection $\mu$ from $\JIrr L$ to $\MIrr L$, define a relation $\to^{\mu}$ on $\JIrr L$ by $i\to^{\mu} j$ if and only if $i\to_L \mu(j)$.
(Recall that, for $j \in \JIrr L$ and $m \in \MIrr L$, we write $j \to_L m$ if $j \not\leq m$.)
The following is a restatement of a characterization of extremal lattices due to Markowsky~\cite{Mark92}.

\begin{theorem}[FTFEL] \label{FTFEL}  
A finite poset $L$ is an extremal lattice if and only if it is isomorphic to $\Pairs(\t)$ for an acyclic reflexive relation $\t$ on a finite set~$\Sha$. 
In this case, $(\Sha, \t)$ is unique up to isomorphism. 
More precisely, in the case where $L$ is an extremal lattice, there is a unique bijection $\mu$ from $\JIrr L$ to $\MIrr L$
for which $\to^{\mu}$ is a reflexive relation, and $(\Sha, \t)$ is isomorphic to $(\JIrr L, \to^{\mu})$.
\end{theorem}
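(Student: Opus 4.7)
The plan is to derive Theorem~\ref{FTFEL} from Markowsky's theorem (Theorem~\ref{FTFL}), which realizes any finite lattice $L$ as the two-set lattice $\Pairs(\to_L)$ between $\JIrr(L)$ and $\MIrr(L)$. The extremal hypothesis will allow one to identify these two sets by a canonical bijection $\mu$, thereby converting the two-set description into a single-set one.

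First I would fix a chain $x_0 < x_1 < \cdots < x_N$ of length $N = |\JIrr(L)| = |\MIrr(L)|$. Because any chain in a finite lattice has length at most $\min(|\JIrr(L)|, |\MIrr(L)|)$, a counting argument forces that for each $i$ there is a unique $j_i \in \JIrr(L)$ with $j_i \le x_i$ and $j_i \not\le x_{i-1}$, and a unique $m_i \in \MIrr(L)$ with $m_i \ge x_{i-1}$ and $m_i \not\ge x_i$; both $i \mapsto j_i$ and $i \mapsto m_i$ are bijections with $\{1,\ldots,N\}$, the chain is saturated, and its endpoints are $\hat 0$ and $\hat 1$. Setting $\mu(j_i) = m_i$ defines a bijection $\mu: \JIrr(L) \to \MIrr(L)$, and from $j_i \le x_i$ combined with $m_i \not\ge x_i$ one reads off $j_i \not\le \mu(j_i)$, so $\to^\mu$ is reflexive.

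The key technical step is to show that in the above orderings the incidence matrix $A$, with $A[i,k]=1$ when $j_i \not\le m_k$ and $A[i,k]=0$ otherwise, is lower triangular with $1$'s on the diagonal. For $i < k$ the chain gives $j_i \le x_i \le x_{k-1} \le m_k$, so $A[i,k]=0$. On the diagonal, saturation gives $j_i \vee x_{i-1} = x_i$; if $j_i \le m_i$ held, combining with $x_{i-1} \le m_i$ would yield $x_i \le m_i$, contradicting $m_i \not\ge x_i$, so $A[i,i]=1$. Any bijection $\mu'$ making $\to^{\mu'}$ reflexive corresponds to a permutation $\sigma$ with $A[i,\sigma(i)]=1$ for all $i$; lower-triangularity forces $\sigma(i) \le i$ and hence $\sigma = \mathrm{id}$, giving the uniqueness of $\mu$. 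The same triangularity shows that a cycle $j_{i_1} \to^\mu j_{i_2} \to^\mu \cdots \to^\mu j_{i_\ell} = j_{i_1}$ forces $i_1 \ge i_2 \ge \cdots \ge i_1$, which collapses to $\ell = 1$, so $\to^\mu$ is acyclic. Transporting Markowsky's isomorphism $L \cong \Pairs(\to_L)$ along $\mu$ yields $L \cong \Pairs(\to^\mu)$.

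For the converse, suppose $\to$ is an acyclic reflexive relation on a finite set $\Sha$. Each $x \in \Sha$ is its own left and right companion: if $x \onto x' \to x$ with $x \ne x'$, then $x' \to x'$ combined with $x \onto x'$ yields $x \to x'$, producing the forbidden $2$-cycle $x \to x' \to x$. Hence $\to$ is companionable, so Theorem~\ref{FTFL} gives $|\JIrr(\Pairs(\to))| = |\MIrr(\Pairs(\to))| = |\Sha| =: N$. Topologically sort $\Sha = \{a_1, \ldots, a_N\}$ so that $a_i \to a_j$ with $i \ne j$ forces $i > j$; a direct verification shows that $B_k = \{a_1,\ldots,a_k\}$ and $C_k = \{a_{k+1},\ldots,a_N\}$ satisfy $B_k^\perp = C_k$ and ${}^\perp C_k = B_k$, producing the required length-$N$ chain in $\Pairs(\to)$. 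Uniqueness of $(\Sha, \to)$ then follows by combining Markowsky's uniqueness for the two-set description with the uniqueness of $\mu$: any such realization must be $(\JIrr(L), \to^\mu)$ for the same canonical $\mu$. The main obstacle is the triangularity argument: the saturation of the extremal chain must be used carefully to pin down the diagonal entries of $A$.
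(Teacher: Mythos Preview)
Your argument is correct, but it proceeds along a genuinely different route from the paper's. The paper invokes \cite[Theorem~13]{Mark92} as a black box for the equivalence ``$L$ is extremal $\iff$ there exists a bijection $\mu$ with $\to^{\mu}$ acyclic reflexive,'' and then only adds a short permutation-cycle argument for the uniqueness of $\mu$: if $\mu\neq\nu$ with $\to^{\mu}$ reflexive and $\to^{\nu}$ acyclic reflexive, pick a cycle $a_1,\dots,a_r$ of $\nu^{-1}\mu$ and observe $a_j\to\mu(a_j)=\nu(a_{j+1})$ gives a forbidden $\to^{\nu}$-cycle. You instead re-derive Markowsky's characterization from scratch: you build $\mu$ explicitly from a maximal chain, prove the incidence matrix $A$ is lower-unitriangular, and read off reflexivity, acyclicity, and uniqueness of $\mu$ all from that single triangularity fact; in the converse direction you topologically sort $\Sha$ to produce the length-$N$ chain directly rather than quoting \cite{Mark92}. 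Your approach is more self-contained and makes the combinatorics of the extremal chain completely explicit; the paper's is shorter by outsourcing the hard part to the cited result, and its uniqueness argument has the mild advantage of not depending on any particular chain or ordering.
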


\begin{proof}  
We prove the first two assertions of the theorem by combining \cite[Theorem~13]{Mark92} with other results of Markowsky that appear here as Theorem~\ref{FTFL}. 
In our notation, \cite[Theorem~13]{Mark92} says a finite lattice $L$ is extremal if and only if there is a bijection $\mu$ from $\JIrr L$ to $\MIrr L$ such that $\t^\mu$ is an acyclic reflexive relation.

Theorem~\ref{FTFL} says that in any case, $L$ is isomorphic to $\Pairs(\t_L)$.  
If there exists a bijection $\mu$ from $\JIrr L$ to $\MIrr L$, then there is an isomorphism from $\Pairs(\t_L)$ to $\Pairs(\t^\mu)$ sending $(X,Y)$ to $(X,\mu^{-1}(Y))$.  
We conclude that if $L$ is extremal, then there is a bijection $\mu$ from $\JIrr L$ to $\MIrr L$ such that $\t^\mu$ is an acyclic reflexive relation, and $L$ is isomorphic to $\Pairs(\t^\mu)$.

Conversely, if there exists an acyclic reflexive relation $\t$ on a set $\Sha$ such that $L$ is isomorphic to $\Pairs(\t)$, then $(\Sha,\Sha,\t)$ is a companionable relation; each $x\in\Sha$ is its own left companion and its own right companion.
Thus Theorem~\ref{FTFL} (FTFL) says that $(\Sha,\Sha,\t)$ is isomorphic to $(\JIrr L,\MIrr L,\t_L)$.
This isomorphism features bijections between $\Sha$ and $\JIrr L$ and between $\Sha$ and $\MIrr L$.
Composing these bijections, we obtain a bijection $\mu$ from $\JIrr L$ to $\MIrr L$ such that $\t^\mu$ coincides with $\t$.
Again appealing to \cite[Theorem~13]{Mark92}, we see that $L$ is extremal.
Furthermore, we have established the uniqueness of $(\Sha,\t)$ up to isomorphism.

The final assertion of the theorem is that the bijection $\mu$ is the unique bijection that makes $\t^\mu$ reflexive.
This more precise statement of uniqueness is not in Markowsky's work, but can be seen more generally as follows:
Let $A$ and $B$ be two finite sets with a relation $\t$ between them, and let $\mu$ and $\nu$ be bijections between $A$ and $B$. 
Define a relation $\t^\mu$ on $A$ with $x\to^\mu y$ if and only if $x\to \mu(y)$. 
If $\t^{\mu}$ is reflexive and $\t^{\nu}$ is an acyclic reflexive relation, then we claim that $\mu = \nu$.

Suppose to the contrary that $\mu \neq \nu$. Then we can find some $r \geq 2$ and some $a_1$, $a_2$, \ldots, $a_r \in A$ with $\mu(a_j) = \nu(a_{j+1})$, with indices periodic modulo $r$. Since $\mu$ is reflexive, we have $a_j \to \mu(a_j)$, and thus we have $a_1 \to^{\nu} a_2 \to^{\nu} a_3 \to^{\nu} \cdots \to^{\nu} a_r \to^{\nu} a_1$, contradicting that $\t^{\nu}$ was assumed to be an acyclic reflexive relation.
\end{proof}

If $L$ is a finite extremal lattice, then the relation $(\Sha, \t)$ of Theorem~\ref{FTFEL} is obtained from the relation $(\JIrr L, \MIrr L, \t_L)$
of Theorem~\ref{FTFL} by identifying $\JIrr L$ with $\MIrr L$ by the bijection $\mu$. 
If, instead, $L$ is a semidistributive lattice then $(\Sha, \t)$ is obtained analogously from $(\JIrr L, \MIrr L, \t_L)$ by identifying $\JIrr L$ and $\MIrr L$ by the bijection $\kappa$. In this latter case, $(\Sha, \t)$ need not be acyclic.

The condition of being extremal neither implies nor is implied by being semidistributive:  
We present examples of lattices that satisfy each condition without satisfying the other, followed by two propositions that describe how the conditions can be satisfied simultaneously.
In the examples, if any sort of arrow is drawn between two vertices, we mean that the $\t$ relation holds between them, and we have decorated this arrow to indicate
whether the $\ont$ or $\int$ relation also holds where $(\ont, \int):= \Fact(\t)$. Conveniently, if $x \onto y$ or $x \into y$, then $x \to y$, so we can always draw diagrams in this way.
All relations are assumed to be reflexive, although this is not indicated pictorially.

\begin{eg} \label{ex1} 
The quadruple $(\Sha, \t, \ont, \int)$ shown on the left of Figure~\ref{semi fig} obeys the axioms of a two-acyclic factorization system, but $\t$ has $3$-cycles.
Thus the corresponding lattice of closed sets, shown on the right of Figure~\ref{semi fig}, is semidistributive but not extremal.
\end{eg}

\begin{figure}
  $$
  \begin{tikzpicture}
  \node (1) at (-1,0) {\strut$a$};
  \node (2) at (0,1) {$b$};
  \node (3) at (0,-1) {$c$};
  \node (4) at (1,0) {\strut$d$};
  \draw[right hook->] (1) -- (2);
  \draw[->>] (2) -- (4);
  \draw[->>] (3) -- (1);
  \draw[right hook->] (4) --(3);
  \draw[<->] (2) --(3);
  \draw[draw=none] (-1,-1.5) rectangle (2,1.5);
\end{tikzpicture}
\begin{tikzpicture}
  \node (empty) at (0,0) {$\emptyset$};
  \node (1) at (-1,1) {$\{a\}$};
  \node (13) at (-1,2) {$\{a,c\}$};
  \node (4) at (1,1) {$\{d\}$};
  \node (42) at (1,2) {$\{b,d\}$};
  \node (1234) at (0,3) {$\{a,b,c,d\}$};
  \draw (empty) -- (1) --(13) --(1234)--(42) --(4)--(empty);
  \end{tikzpicture}$$
  
\caption{A two-acyclic factorization system and the closed sets of the corresponding semidistributive lattice, which is not extremal}
\label{semi fig}
\end{figure}

\begin{eg}  \label{ex2}
The quadruple $(\Sha, \t, \ont, \int)$ shown on the left of Figure~\ref{ext fig} is an acyclic reflexive relation. 
However, the middle $\t$-arrow cannot be factored into an $\ont$-arrow and an $\int$-arrow, so the corresponding lattice of closed sets, also pictured, is extremal without being semidistributive.  
\end{eg}


\begin{figure}
  $$    
  \begin{tikzpicture}
    \node (1) at (0,0) {\strut$a$};
    \node (2) at (1,0) {\strut$b$};
    \node (3) at (2,0) {\strut$c$};
    \node (4) at (3,0) {\strut$d$};
    \draw[right hook->] (1) -- (2);
    \draw [->] (2)--(3);
    \draw [->>] (3)--(4);
    \draw [draw=none] (0,-2) rectangle (4,2);
    \end{tikzpicture}
 \begin{tikzpicture}
    \node (empty) at (0,0) {$\emptyset$};
    \node (1) at (-1.5, 1) {$\{a\}$};
    \node (2) at (0,1) {$\{b\}$};
    \node (4) at (1.5,1) {$\{d\}$};
\node (12) at (-1.5,2) {$\{a,b\}$};
    \node (14) at (-0.3,2) {$\{a,d\}$};
\node (34) at (1.5,2) {$\{c,d\}$};
\node (234) at (1,3) {$\{b,c,d\}$};
\node (1234) at (0,4) {$\{a,b,c,d\}$};
\draw (empty) -- (1);
\draw (empty) -- (2);
\draw (empty) -- (4);
\draw (1) -- (12);
\draw (1) -- (14);
\draw (2) -- (12);
\draw (4) -- (14);
\draw (4) -- (34);
\draw (34) -- (234);
\draw (234) -- (1234);
\draw (12) -- (1234);
\draw (14) -- (1234);
\draw (2) -- (234);
  \end{tikzpicture} $$
 \caption{An acyclic reflexive relation and the closed sets forming the
   corresponding extremal lattice, which is not semidistributive}
   \label{ext fig}
\end{figure}

\begin{prop}\label{ext semid}  
Let $\t$ be an acyclic reflexive relation on a finite set $\Sha$ and let $(\ont, \int) = \Fact(\t)$. 
Then $\Pairs(\t)$ is semidistributive if and only if $\t = \Mult(\ont, \int)$, in which case $(\Sha, \t, \ont, \int)$ is 
a two-acyclic factorization system.
\end{prop}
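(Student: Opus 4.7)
The plan is to bridge the FTFEL (Theorem~\ref{FTFEL}) and the FTFSDL (Theorem~\ref{FTFSDL1}) via the observation that, when $L=\Pairs(\t)$ is both semidistributive and extremal, the bijection $\kappa \colon \JIrr(L) \to \MIrr(L)$ coming from semidistributivity satisfies the reflexivity property that \emph{uniquely} characterizes the bijection $\mu$ appearing in FTFEL.

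For the ``$\Leftarrow$'' direction, I will assume $\t = \Mult(\ont,\int)$. Since $(\ont,\int) = \Fact(\t)$ is part of the hypothesis, $(\Sha,\t,\ont,\int)$ is a factorization system. Two-acyclicity will be read off from the acyclicity of $\t$: by Proposition~\ref{Mult Fact facts}.\ref{union} we have $\ont \cup \int \subseteq \t$, so any nontrivial $\ont$-cycle or $\int$-cycle produces a $\t$-cycle (yielding the order condition), and any configuration $x \onto y \into x$ with $x \neq y$ produces $x \to y \to x$ (yielding the brick condition). FTFSDL then gives semidistributivity of $\Pairs(\t)$.

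For the ``$\Rightarrow$'' direction, set $L := \Pairs(\t)$ and suppose $L$ is semidistributive. FTFSDL provides that $(\JIrr(L), \t_L, \ont_L, \int_L)$ is a two-acyclic factorization system and, in particular, that $\t_L = \Mult(\ont_L,\int_L)$. Because $\t$ is acyclic reflexive on the finite set $\Sha$, FTFEL yields an isomorphism of relations $(\Sha,\t) \cong (\JIrr(L), \t^\mu)$ where $\mu$ is the unique bijection $\JIrr(L) \to \MIrr(L)$ making $\t^\mu$ reflexive. I will verify that $\kappa$ is also such a reflexive bijection: since $j \meet \kappa(j) = j_* < j$ we have $j \not\le \kappa(j)$, i.e.\ $j \to_L \kappa(j)$, so $j \to^\kappa j$. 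The uniqueness clause of FTFEL (whose stated proof only requires that \emph{one} of the two reflexive bijections gives an acyclic reflexive relation, which $\mu$ does) then forces $\kappa = \mu$. Hence under the FTFEL isomorphism, $\t$ is matched with $\t^\kappa = \t_L$, and therefore $\Fact(\t)$ is matched with $\Fact(\t_L) = (\ont_L,\int_L)$; transporting $\t_L = \Mult(\ont_L,\int_L)$ back along this isomorphism gives $\t = \Mult(\ont,\int)$.

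The main delicate point will be the bookkeeping of the two isomorphisms, specifically confirming that the FTFEL isomorphism $(\Sha,\t) \cong (\JIrr(L),\t_L)$ intertwines $(\ont,\int) = \Fact(\t)$ with $(\ont_L,\int_L) = \Fact(\t_L)$; this is automatic because $\Fact$ is defined purely in terms of the relation $\t$, but it is what allows us to conclude $\t = \Mult(\ont,\int)$ from $\t_L = \Mult(\ont_L,\int_L)$. Everything else is a routine application of the two fundamental theorems and Proposition~\ref{Mult Fact facts}.
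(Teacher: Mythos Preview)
Your proof is correct and follows essentially the same approach as the paper: for the ``$\Leftarrow$'' direction you both deduce two-acyclicity from $\ont,\int\subseteq\t$ together with the acyclicity of $\t$, and for the ``$\Rightarrow$'' direction you both show that $\kappa$ satisfies the reflexivity condition that uniquely characterizes the FTFEL bijection $\mu$, so that $\t=\t^{\kappa}=\t_L$ and hence $\t=\Mult(\ont,\int)$. Your write-up is a bit more explicit about the transport of $\Fact$ and $\Mult$ along the FTFEL isomorphism, but the argument is the same.
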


\begin{proof}  
Write $L$ for $\Pairs(\t)$.
By Theorem~\ref{FTFEL}, we can take $\Sha$ to be $\JIrr L$ and $\t$ to be derived from the bijection $\mu$ in Theorem~\ref{FTFEL}. 

Suppose $L$ is semidistributive so that, by Theorem~\ref{FTFSDL1}, $L$ is isomorphic to $\Pairs(\t_L)$ for the finite two-acyclic factorization system $(\JIrr L,\t_L,\ont_L,\int_L)$.
Since in particular $i\not\le\kappa(i)$ for all $i\in\JIrr L$, we see that $\kappa$ is the bijection $\mu$ from Theorem~\ref{FTFEL}.
Thus $\t=\t^{\kappa}=\t_L$ has the desired properties.

Conversely, suppose that $\t=\Mult(\ont, \int)$.  
By definition, $(\ont, \int) = \Fact(\t)$, so $(\Sha, \t, \ont, \int)$ is a factorization system.
Since $\ont\subseteq\t$ and $\int \subseteq \t$, the two-acyclicity of $(\Sha, \t, \ont, \int)$ follows from the acyclicity of $\t$.
\end{proof}

\begin{prop}\label{semid ext}
Let $(\Sha, \t, \ont, \int)$ be a finite two-acyclic factorization system. 
Then $\Pairs(\t)$ is extremal if and only if $\t$ is an acyclic reflexive relation.  
\end{prop}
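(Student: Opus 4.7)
The plan is to split the proof into the two directions, treating the forward direction as an immediate application of Theorem~\ref{FTFEL} and doing the real work on the reverse direction.

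For the forward direction, suppose $\t$ is an acyclic reflexive relation on the finite set $\Sha$. Then Theorem~\ref{FTFEL} (FTFEL) says directly that $\Pairs(\t)$ is an extremal lattice, with no further conditions needed. So this direction occupies only one sentence.

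For the reverse direction, suppose $\Pairs(\t)$ is extremal. Since $(\Sha,\t,\ont,\int)$ is a two-acyclic factorization system, Theorem~\ref{2afs kappa} (or the finite case Theorem~\ref{FTFSDL1}) tells us that $L := \Pairs(\t)$ is semidistributive and that $(\Sha,\t,\ont,\int)$ is isomorphic to $(\JIrr(L),\t_L,\ont_L,\int_L)$. Under this isomorphism the relation $\t$ corresponds to $\t_L$, which by definition satisfies $i \to_L j$ iff $i \not\le \kappau(j)$. In other words, if we write $\mu_0 = \kappau : \JIrr(L) \to \MIrr(L)$, then $\t_L$ is exactly the relation $\t^{\mu_0}$ in the notation of Theorem~\ref{FTFEL}. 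Moreover $\t^{\mu_0}$ is reflexive, because $i \not\le \kappau(i)$ holds for every $i \in \JIrr(L)$ by the very definition of $\kappau(i)$ as an element of $\{y : i \meet y = i_*\}$.

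Now I invoke extremality via Theorem~\ref{FTFEL}: because $L$ is extremal, there is a (unique) bijection $\mu : \JIrr(L) \to \MIrr(L)$ for which $\t^{\mu}$ is reflexive, and moreover this particular $\t^{\mu}$ is acyclic. The uniqueness statement in Theorem~\ref{FTFEL} applies to any bijection making $\t^{(\cdot)}$ reflexive, so it forces $\mu_0 = \mu$. Therefore $\t_L = \t^{\mu_0} = \t^{\mu}$ is acyclic and reflexive, and transferring back through the isomorphism of Theorem~\ref{FTFSDL1} shows that $\t$ itself is acyclic and reflexive.

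The only delicate point is making sure that the bijection $\kappau$ produced by semidistributivity coincides with the bijection $\mu$ produced by extremality; this is precisely what the uniqueness clause in Theorem~\ref{FTFEL} provides (any bijection yielding a reflexive $\t^{(\cdot)}$ must agree with the unique one produced from extremality). No additional computation or cycle chasing inside $\Sha$ should be necessary.
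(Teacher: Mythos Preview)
Your proof is correct and takes essentially the same approach as the paper: both use Theorem~\ref{FTFEL} for the easy direction and combine Theorem~\ref{FTFEL} with Theorem~\ref{FTFSDL1} for the converse. The only organizational difference is that the paper first extracts an acyclic reflexive $\t'$ from FTFEL, upgrades it to a two-acyclic factorization system via Proposition~\ref{ext semid}, and matches it with $(\Sha,\t,\ont,\int)$ using the uniqueness clause of FTFSDL1, whereas you identify $\t$ with $\t^{\kappau}$ via FTFSDL1 first and then invoke the uniqueness of $\mu$ in FTFEL to force $\kappau=\mu$; the underlying content is the same (indeed, the observation $\kappau=\mu$ already appears inside the paper's proof of Proposition~\ref{ext semid}).
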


\begin{proof}
By Theorem~\ref{FTFEL}, if $\t$ is an acyclic reflexive relation, then $\Pairs(\t)$ is extremal. 
Conversely, if $\Pairs(\t)$ is extremal, then it is isomorphic to $\Pairs(\t')$ for an acyclic reflexive relation $\t'$ on a set $\Sha'$.
By Proposition~\ref{ext semid}, $(\Sha', \t',\Mult(\t'))$ is a two-acyclic factorization system.
By Theorem~\ref{FTFSDL1}, $(\Sha', \t', \Mult(\t'))$ and $(\Sha, \t, \ont, \int)$ are isomorphic, so $\t$ is an acyclic reflexive relation.
\end{proof}

It was shown in \cite{TW} that if a lattice is extremal and semidistributive,
then it is also left modular (and therefore trim, since by definition a
lattice is trim if it is extremal and left modular). A central topic in \cite{TW} is the representation of lattices which are trim but not necessarily
semidistributive as maximal orthogonal pairs for a suitable relation.

\section{Motivating examples}\label{conn sec}
In this section, we connect the FTFSDL and its generalizations 
 to the two main examples that motivated it, namely posets of regions of hyperplane arrangements and lattices of torsion classes of finite-dimensional algebras.
Posets of regions, and their quotients, have motivated much of the interest in the combinatorial study of semidistributivity, while lattices of torsion classes provided clues leading to the definition of a two-acyclic factorization system.

Our discussion here does two things:  
In Section~\ref{P(A,B) sec}, we apply the FTFSDL and a known characterization of semidistributivity of the poset of regions to construct two-acyclic factorization systems for a class of posets of regions that includes the simplicial case.
In Section~\ref{rep ssec}, given a finite-dimensional algebra $A$, we construct a two-acyclic factorization system whose lattice of maximal orthogonal pairs is isomorphic to the lattice $\tors(A)$ of torsion classes of $A$.
As a consequence, $\tors(A)$ is a well separated $\kappa$-lattice.
When $\tors(A)$ is finite, this recovers the finite case of \cite[Theorem~1.3]{DIRRT}, namely that $\tors(A)$ is semidistributive.
We point out that~\cite[Theorem~1.3]{DIRRT} states that $\tors(A)$ is completely semidistributive even when it is infinite.
We do not have a combinatorial hypothesis which would let us prove this result, since a well separated $\kappa$-lattice need not be semidistributive (see Example~\ref{obnoxious}).

\subsection{Posets of regions} \label{P(A,B) sec}
Background on posets of regions can be found in \cite{regions9}.
Here we give the basic definitions.
A \newword{(real, central) hyperplane arrangement} $\cA$ is a finite collection of linear hyperplanes in $\RR^n$.
We assume throughout the adjectives real and central, but will not repeat them.
The \newword{regions} of $\cA$ are the closures of the connected components of the complement $\RR^n\setminus(\bigcup_{H\in\cA}H)$.
Fixing one region $B$ to be the \newword{base region}, each region $R$ is specified by its \newword{separating set} $S(R)$, the set of hyperplanes in $\cA$ that separate $R$ from $B$.
The \newword{poset of regions} $\Pos(\cA,B)$ of $\cA$ with respect to $B$ is the set of regions, partially ordered by containment of their separating sets.

The regions of a finite hyperplane arrangement are full-dimensional polyhedral cones.
A facet $F$ of $R$ is a \newword{lower facet} of $R$ if the hyperplane defining it is in the separating set of $R$.
Otherwise, $F$ is an \newword{upper facet}.
A region $R$ of $\cA$ is \newword{tight} with respect to $B$ if for every pair of lower facets of $R$, the intersection of the two facets is a codimension-$2$ face, and if for every pair of upper facets of $R$, the intersection of the two facets is a codimension-$2$ face.
The arrangement $\cA$ is \newword{tight} with respect to $B$ if each of its regions is tight with respect to $B$.
The arrangement is \newword{simplicial} if each of its maximal cones has exactly $n$ facets.
It is immediate that a simplicial arrangement is tight with respect to any choice of $B$.

The following is \cite[Theorem~9-3.8]{regions9} combined with \cite[Corollary~9-3.9]{regions9}.

\begin{theorem}\label{tight semi}
The poset of regions $\Pos(\cA,B)$ is a semidistributive lattice if and only if $\cA$ is tight with respect to $B$. 
If particular, if $\cA$ is simplicial, $\Pos(\cA,B)$ is a semidistributive lattice for any choice of $B$.
\end{theorem}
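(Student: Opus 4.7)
The plan is to apply the FTFSDL (Theorem~\ref{FTFSDL1}) by constructing, for each tight arrangement $\cA$ with base region $B$, a finite two-acyclic factorization system whose lattice of maximal orthogonal pairs is $\Pos(\cA,B)$; semidistributivity then follows automatically. The underlying set $\Sha$ will be the set of \emph{shards} of $\cA$ relative to $B$: for each hyperplane $H \in \cA$, one cuts $H$ along every codimension-$2$ flat $H \cap H'$ at which $H'$ locally cuts $H$ on the side determined by $B$, and the connected components of what remains are the shards. In the tight case it is classical (and central to shard theory) that $\Pos(\cA,B)$ is a lattice, that each shard $\sigma$ is associated to a unique join-irreducible region $j_\sigma$ (the minimal region containing $\sigma$ in its separating set) and dually to a unique meet-irreducible region $m_\sigma$, and that these assignments give bijections from $\Sha$ to $\JIrr(\Pos(\cA,B))$ and $\MIrr(\Pos(\cA,B))$ respectively.

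With these geometric ingredients in place, I would define $\sigma \ont \tau$ to mean $j_\sigma \ge j_\tau$ and $\sigma \int \tau$ to mean $m_\sigma \ge m_\tau$, and then set $\t := \Mult(\ont,\int)$. These are partial orders, so the order condition is immediate and, by Proposition~\ref{Mult Fact facts}, one already has $\Fact(\t) \supseteq (\ont,\int)$. The real content is the reverse inclusion together with the brick condition: geometrically, both amount to showing that the shard structure is rigid enough that no pair of distinct shards $\sigma \neq \tau$ can satisfy both $j_\sigma \geq j_\tau$ and $m_\sigma \leq m_\tau$, and that any arrow $\sigma \t \tau$ can be exhibited by an intermediate shard $\rho$ with $j_\sigma \geq j_\rho$ and $m_\rho \geq m_\tau$. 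Tightness is precisely the hypothesis that makes these local intermediate shards available.

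The candidate isomorphism $\Pos(\cA,B) \to \Pairs(\t)$ sends a region $R$ to the pair $(\{\sigma : j_\sigma \leq R\},\, \{\sigma : m_\sigma \ge R\})$, and the verification that this is a well-defined order isomorphism is again shard-theoretic. For the converse direction of the ``if and only if,'' I would argue contrapositively: if $\cA$ is not tight at some region $R$, then $R$ has either two lower facets or two upper facets meeting in a face of codimension at least three, and a direct local argument near this face yields three regions witnessing a failure of meet- or join-semidistributivity.

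The main obstacle will be establishing the rigidity and existence-of-intermediate-shard statements encoded as the brick condition and as $\Fact(\t) = (\ont,\int)$; both are where the tightness hypothesis gets used essentially, and both reduce to a careful analysis of how shards meet in codimension-$2$ flats of $\cA$. Once the factorization system is in place, Theorem~\ref{FTFSDL1} gives the isomorphism $\Pos(\cA,B) \cong \Pairs(\t)$ together with semidistributivity. The simplicial corollary is then immediate: in a simplicial arrangement every maximal cone has exactly $n$ pairwise codimension-$2$-intersecting facets, so tightness holds with respect to any base region.
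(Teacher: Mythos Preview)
The paper does not prove Theorem~\ref{tight semi} at all: it is quoted verbatim from \cite[Theorem~9-3.8 and Corollary~9-3.9]{regions9}, with no argument given. What the paper \emph{does} prove is the subsequent Theorem~\ref{PAB fact sys}, and the logic there is exactly the reverse of yours. Namely, the paper starts from the cited fact that $\Pos(\cA,B)$ is semidistributive (Theorem~\ref{tight semi}), observes that $\kappa(J(\Sigma))=M(\Sigma)$, and then invokes FTFSDL to \emph{deduce} that $(\Sha(\cA,B),\t,\ont,\int)$ is a two-acyclic factorization system and that $\Pos(\cA,B)\cong\Pairs(\t)$. You propose to construct the factorization system by hand from the geometry and then read off semidistributivity from FTFSDL; the paper takes semidistributivity as input and gets the factorization system for free.

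Your route is not circular in principle, but it is considerably harder than you indicate, and you should be careful about what you call ``classical.'' The shard-theoretic inputs you rely on---the bijection $\Sigma\mapsto J(\Sigma)$ with $\JIrr$, the existence of a unique minimal upper region, the identification $\kappa(J(\Sigma))=M(\Sigma)$---are stated in \cite{regions9} under the tightness hypothesis, but their proofs there are intertwined with the lattice-theoretic development that culminates in Theorem~\ref{tight semi} itself. More seriously, the steps you flag as ``the real content'' (verifying $\Fact(\t)=(\ont,\int)$, the brick condition, and that your candidate map $R\mapsto(\{\sigma:j_\sigma\le R\},\{\sigma:m_\sigma\ge R\})$ lands in $\Pairs(\t)$ and is a bijection) are exactly the places where a direct geometric proof would have to do all the work that the cited proof of semidistributivity does. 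The paper's approach sidesteps this entirely: once semidistributivity is known, FTFSDL hands you the factorization system, the two-acyclicity, and the isomorphism in one stroke. Finally, your sketch does not address the ``only if'' direction (non-tight implies not semidistributive) beyond a one-line gesture; that direction also lives in \cite{regions9} and is not reproved here.
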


A \newword{rank-two subarrangement} of $\cA$ is a subset $\cA'$ of $\cA$ with $|\cA'|\ge2$ such that there exists a codimension-$2$ subspace $U$ having $\cA'=\set{H\in\cA:U\subseteq H}$. 
Given a rank-two subarrangement $\cA'$ there is a unique $\cA'$-region $B'$ containing the $\cA$-region~$B$.
The \newword{basic hyperplanes} of $\cA'$ are the two hyperplanes that define the facets of $B'$.

Given distinct hyperplanes $H$ and $H'$, there is a unique rank-two subarrangement $\cA'$ containing $H$ and $H'$.
We say $H'$ \newword{cuts} $H$ if $H'$ is basic in $\cA'$ and $H$ is not basic in $\cA'$.
For each hyperplane $H\in\cA$, consider the subset $H\setminus(\bigcup_{H'}H'\cap H)$, where the union is taken over all $H'\in\cA$ such that $H'$ cuts $H$.
The closures of the connected components of this subset are called the \newword{shards} in $H$.
The set of shards of $\cA$ is the union of the sets of shards of all of the hyperplanes of $\cA$.
The decomposition of the hyperplanes of $\cA$ into shards depends strongly on the choice of $B$, and to emphasize that, we sometimes refer to shards of $\cA$ \emph{with respect to $B$}.

Given a shard $\Sigma$ of $\cA$ contained in a hyperplane $H$, an \newword{upper region} of $\Sigma$ is a region $R$ whose intersection with $\Sigma$ is $(n-1)$-dimensional and which has $H\in S(R)$.
A \newword{lower region} of $\Sigma$ is a region $R$ having an $(n-1)$-dimensional intersection with $\Sigma$, with $H\not\in S(R)$.
\cite[Proposition~9-7.8]{regions9} asserts that, when $\cA$ is tight with respect to $B$, the collection of upper regions of $\Sigma$ contains a unique minimal region (in the sense of $\Pos(\cA,B)$).
Furthermore, this region is \ji in $\Pos(\cA,B)$.
We write $J(\Sigma)$ for this element.
Every \ji element is $J(\Sigma)$ for a unique shard~$\Sigma$.  
Dually, there is a unique maximal region $M(\Sigma)$ among lower regions of~$\Sigma$, this region is \mi in $\Pos(\cA,B)$, and every \mi element arises in this way.

Let $\Sha(\cA,B)$ stand for the set of shards of $\cA$ with respect to $B$.
Define relations $\t$, $\ont$, and $\int$ on $\Sha(\cA,B)$ as follows.
For $\Sigma,\Sigma'\in\Sha(\cA,B)$, set $\Sigma\to\Sigma'$ if and only if $J(\Sigma)\not\le M(\Sigma')$, set $\Sigma\onto\Sigma'$ if and only if $J(\Sigma)\ge J(\Sigma')$, and set $\Sigma\into\Sigma'$ if and only if $M(\Sigma)\ge M(\Sigma')$.

\begin{theorem}\label{PAB fact sys} 
If $\cA$ is tight with respect to $B$, then $(\Sha(\cA,B),\t,\ont,\int)$ is a two-acyclic factorization system.
The map
\[R\mapsto( \{\Sigma\in\Sha(\cA,B):J(\Sigma) \leq R \},\  \left( \{\Sigma\in\Sha(\cA,B):M(\Sigma) \geq R \} \right) )\]
is an isomorphism from $\Pos(\cA,B)$ to $\Pairs(\t)$, with inverse 
\[(X,Y)\mapsto\Meet_{\Sigma\in X}J(\Sigma)=\Meet_{\Sigma\in Y}M(\Sigma).\]
\end{theorem}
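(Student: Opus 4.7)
The plan is to deduce Theorem~\ref{PAB fact sys} from the Fundamental Theorem of Finite Semidistributive Lattices by identifying shards with join-irreducible regions. By Theorem~\ref{tight semi}, tightness of $\cA$ with respect to $B$ makes $L := \Pos(\cA,B)$ a finite semidistributive lattice, so Theorem~\ref{FTFSDL1} applies: $L$ is isomorphic to $\Pairs(\t_L)$ for the two-acyclic factorization system $(\JIrr(L),\t_L,\ont_L,\int_L)$, via $R \mapsto (J_R, \kappad(M_R))$ with $J_R = \{j \in \JIrr(L) : j \le R\}$ and $M_R = \{m \in \MIrr(L) : m \ge R\}$.

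The discussion preceding the theorem established that $\Sigma \mapsto J(\Sigma)$ and $\Sigma \mapsto M(\Sigma)$ are bijections from $\Sha(\cA,B)$ onto $\JIrr(L)$ and $\MIrr(L)$ respectively. The key claim to verify is
\[
M(\Sigma) = \kappau(J(\Sigma)) \qquad \text{for every shard } \Sigma,
\]
so that the composite $J(\Sigma) \mapsto M(\Sigma)$ agrees with $\kappau$. Geometrically, $J(\Sigma)_*$ is the region adjacent to $J(\Sigma)$ across the hyperplane of $\Sigma$, sharing with it an upper facet contained in $\Sigma$; a region $y$ satisfies $J(\Sigma)_* \le y$ and $J(\Sigma) \not\le y$ precisely when $y$ is a lower region of $\Sigma$, and $M(\Sigma)$ is by definition the unique maximum such region. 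This matches the characterization of $\kappau$ in a finite semidistributive lattice as the maximum element $y$ with $J(\Sigma) \meet y = J(\Sigma)_*$, and is essentially the content of the shard theory developed in \cite{hyperplane,congruence,shardint} and \cite[Chapter~9]{regions9} for tight posets of regions.

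Granting $M(\Sigma) = \kappau(J(\Sigma))$, the three relations on $\Sha(\cA,B)$ transport to those on $\JIrr(L)$ under $\Sigma \leftrightarrow J(\Sigma)$: the definition $\Sigma \onto \Sigma' \iff J(\Sigma) \ge J(\Sigma')$ is literally $\ont_L$; the equivalence $\Sigma \into \Sigma' \iff M(\Sigma) \ge M(\Sigma') \iff \kappau(J(\Sigma)) \ge \kappau(J(\Sigma'))$ is $\int_L$; and $\Sigma \to \Sigma' \iff J(\Sigma) \not\le M(\Sigma') = \kappau(J(\Sigma'))$ is $\t_L$. Hence $(\Sha(\cA,B),\t,\ont,\int)$ and $(\JIrr(L),\t_L,\ont_L,\int_L)$ are isomorphic factorization systems, so the former is two-acyclic, and the FTFSDL isomorphism $R \mapsto (J_R,\kappad(M_R))$ translates under the bijection into the map described in the statement, with inverse $(X,Y) \mapsto \Join_{\Sigma \in X} J(\Sigma) = \Meet_{\Sigma \in Y} M(\Sigma)$.

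The main obstacle is the identification $M(\Sigma) = \kappau(J(\Sigma))$, which is exactly where the geometry of shards interacts with the semidistributive structure and uses the tightness hypothesis in an essential way; every other step is a formal translation between two presentations of the same factorization system.
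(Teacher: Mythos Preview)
Your overall strategy matches the paper's exactly: invoke Theorem~\ref{tight semi} for semidistributivity, establish $\kappau(J(\Sigma))=M(\Sigma)$, and then read off everything from FTFSDL via the bijection $\Sigma\leftrightarrow J(\Sigma)$.

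The one place you diverge is in the justification of $\kappau(J(\Sigma))=M(\Sigma)$. You assert the geometric equivalence ``$J_*(\Sigma)\le y$ and $J(\Sigma)\not\le y$ iff $y$ is a lower region of $\Sigma$'' and appeal to the shard literature. That claim is stronger than what is needed, and you correctly flag it as the main obstacle. The paper avoids it entirely with a two-line argument: the unique element $M^*(\Sigma)$ covering $M(\Sigma)$ is an upper region of $\Sigma$, so $J(\Sigma)\le M^*(\Sigma)$; hence anything strictly above $M(\Sigma)$ lies above $J(\Sigma)$ and is excluded from $\{R:J(\Sigma)\meet R=J_*(\Sigma)\}$, while $M(\Sigma)$ itself is in this set because $J_*(\Sigma)$ is a lower region of $\Sigma$ (so $J_*(\Sigma)\le M(\Sigma)$) and $J(\Sigma)\not\le M(\Sigma)$. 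Since semidistributivity guarantees a unique maximum, $M(\Sigma)=\kappau(J(\Sigma))$. This uses only the facts already recorded before the theorem (minimality of $J(\Sigma)$ among upper regions, maximality of $M(\Sigma)$ among lower regions), rather than a full characterization of the interval as the set of lower regions.
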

\begin{proof}
Theorem~\ref{tight semi} says that $\Pos(\cA,B)$ is semidistributive, and thus is a $\kappa$-lattice by Theorem~\ref{semi char}.
Given a join-irreducible element $J(\Sigma)$ of $\Pos(\cA,B)$, write $J_*(\Sigma)$ for the unique element covered by $J(\Sigma)$.
The unique element $M^*(\Sigma)$ covering $M(\Sigma)$ is an upper region of $\Sigma$, so $J(\Sigma)\le M^*(\Sigma)$.
Thus $M(\Sigma)$ is a maximal element of the set ${\set{R\in\Pos(\cA,B):J(\Sigma)\meet R=J_*(\Sigma)}}$.
We conclude that $\kappau(J(\Sigma))=M(\Sigma)$.
Now Theorem~\ref{FTFSDL1} (FTFSDL) says that $(\Sha(\cA,B),\t,\ont,\int)$ is a two-acyclic factorization system and gives the desired isomorphism.
\end{proof}

\subsection{Representation theory of finite-dimensional algebras} \label{rep ssec}

Let $k$ be a field and let $A$ be a finite-dimensional algebra over $k$. 
A full subcategory of the category of finite-dimensional $A$-modules is called a \newword{torsion class} 
if it is closed under extensions and quotients; meaning that 
\begin{enumerate}
\item If $X$, $Y$ and $Z$ are  finite-dimensional $A$-modules with $0 \to X \to Y \to Z \to 0$ is a short exact sequence, then any torsion class which contains $X$ and $Z$ must contain $Y$ and
\item If $X$ and $Y$ are  finite-dimensional $A$-modules and $X \to Y$ is a surjection, then any torsion class which contains $X$ must contain $Y$. \label{QuotCond}
\end{enumerate}

Torsion-free classes are defined similarly to torsion classes except that condition~(\ref{QuotCond}) is replaced by 
\begin{enumerate}
\item[(2$'$)]  If $X$ and $Y$ are  finite-dimensional $A$-modules and $X \to Y$ is an injection, then any torsion-free
class which contains $Y$ must contain $X$. 
\end{enumerate}
To each torsion class $\mathcal T$ there is an associated torsion-free class 
\[ \mathcal F= \cT^{\perp} = {\{M: \Hom(L,M)=0 \textrm { for all }L\in \mathcal T\}}. \]
Dually, the torsion class corresponding to a torsion-free 
class $\mathcal F$ can be obtained by 
\[ \mathcal T= {}^{\perp} \mathcal F = \{L: \Hom(L,M)=0 \textrm { for all } M\in\mathcal F\} . \]
Inclusion of torsion classes corresponds to reverse inclusion of torsion-free classes. 
Given such a torsion pair $(\mathcal T,\mathcal F)$, for any module $M$, there is a maximal submodule of $M$ which lies in $\mathcal T$. 
This is denoted $t_{\mathcal T} M$, or $tM$ if the intended torsion class is clear.  
The quotient $M/t_{\mathcal T}M$ is the maximal quotient of $M$ which is contained in~$\mathcal F$. 
A good reference for basic facts about torsion classes is \cite[Section VI.1]{ASS}.

We note that conditions (1) and (2) above describe torsion classes as the closed sets for a closure operator on $\operatorname{mod}(A)$.   
Thus, the torsion classes form a complete lattice $\tors(A)$.  
This closure operation is finitary, meaning that the closure of an arbitrary set is the union of the closures of its finite subsets.
(Sometimes such a closure is called ``algebraic''.  See \cite[Definition~4-1.1]{alg}.)
It is known that the closed sets of a finitary closure form an algebraic lattice.
(See, for example, \mbox{\cite[Lemma~4-1.4]{alg}} and \cite[Lemma~4-1.16]{alg}.)
Likewise, (1) and (2$'$) also define a finitary closure operator, so the lattice of torsion-free classes is algebraic. Since the lattice of torsion-free classes is isomorphic to the dual of $\tors(A)$, this shows that $\tors(A)$ is bi-algebraic. 
This result was first shown in \cite[Theorem 3.1(b)]{DIRRT} by a different argument.

The lattice $\tors(A)$ is completely semidistributive \cite[Theorem 3.1(a)]{DIRRT}.

We now explain how to construct a set $\Sha$ and relations $\ont$, $\int$, $\t$
in terms of the representation theory of $A$, so that
we recover the lattice of torsion classes of $A$ as $\Pairs(\t)$.
This construction was one of the inspirations for this project.  

An $A$-module is called a \newword{brick} if its endomorphism ring is a division algebra
(i.e., if all its non-zero endomorphisms are invertible). 
We write $\Bricks(A)$ for the collection of isomorphism classes of bricks.  

For $\br{L}$, $\br{M}\in \Bricks(A)$, we define $\br{L}\to \br{M}$ iff $\Hom(L,M)\ne 0$.
We define $\br{L} \onto \br{M}$ iff $M$ is filtered by quotients of $L$ and define $\br{L}\into \br{M}$ iff $L$ is filtered by submodules of $M$.
Note that a surjective morphism of modules 
from $L$ to $M$ implies $\br{L} \onto \br{M}$ but
the converse does not hold, and similarly for injective maps and $\int$. 

\begin{eg}
Let $A$ be the path algebra of the Kronecker quiver with two arrows $a$ and $b$.
We write a representation $V$ of $A$ as $V_1 \rightrightarrows V_2$.
Let $M$ be the representation $k \rightrightarrows k$ where $a=b=1$ and let $N$ be the representation $k^2 \rightrightarrows k$ where $a = \begin{sbm} 1\\ 0 \end{sbm}$ and $b=\begin{sbm} 0 \\ 1 \end{sbm}$. We clearly cannot have a surjection $M \to N$ as $N$ has larger dimension.
We cannot even have a surjection $M^{\oplus r} \to N$, as every map $M \to N$ has image lying in $(k \begin{sbm} 1\\1 \end{sbm} \rightrightarrows k) \ \subset N$. Note that this submodule of $N$ is isomorphic to $M$, and that the quotient of $N$ by this submodule is the simple module $k \rightrightarrows 0$, which is a quotient of $M$. Thus, $N$ is filtered by quotients of $M$ and $[M] \onto [N]$. 
\end{eg}

We will need one useful fact about bricks. 
This is a special case of \cite[Lemma 1.7(1)]{AsaiSemibricks}. 
We give the short proof from \cite{AsaiSemibricks}.
    \begin{lemma}\label{AsaiLemma} If $S$ is a brick, and $M$ is in the torsion class
      consisting of modules filtered by quotients of $S$, then any non-zero
      morphism from $M$ to $S$ is surjective. \end{lemma}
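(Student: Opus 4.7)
The plan is to reduce to the case where $M$ is itself a quotient of $S$, by picking a filtration of $M$ by quotients of $S$ and locating the lowest step on which $f$ is nonzero. Explicitly, given a nonzero morphism $f \colon M \to S$, fix a filtration $0 = M_0 \subset M_1 \subset \cdots \subset M_n = M$ such that each successive quotient $M_i/M_{i-1}$ is a quotient of $S$, and let $k$ be the smallest index for which $f(M_k) \neq 0$. Since $f(M_{k-1}) = 0$, the restriction $f|_{M_k}$ descends to a nonzero map $g \colon M_k/M_{k-1} \to S$.

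Now exploit the brick hypothesis. By hypothesis, there is a surjection $\pi \colon S \twoheadrightarrow M_k/M_{k-1}$, so $g \circ \pi \colon S \to S$ is an endomorphism of $S$. It is nonzero: if $g \circ \pi = 0$, then $g = 0$ since $\pi$ is surjective, contradicting the choice of $k$. Because $S$ is a brick, $g \circ \pi$ is an isomorphism; in particular it is surjective, which forces $g$ to be surjective. Therefore $f(M_k) = g(M_k/M_{k-1}) = S$, so $f$ itself is surjective.

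There is no real obstacle here: the argument is a short extraction of the ``first nonzero step'' in a filtration, and the brick condition does all the work. The only point to be careful about is that ``filtered by quotients of $S$'' is taken in the usual sense that successive quotients are quotients of $S$ (not that the whole of $M$ is a quotient), which is exactly what justifies the reduction in the first paragraph.
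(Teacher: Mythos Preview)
Your proof is correct and is essentially the same as the paper's: both pick a filtration of $M$ by quotients of $S$, locate the first step on which $f$ is nonzero (the paper phrases this as the maximal $i$ with $f|_{M_i}=0$, which is your $k-1$), descend to a nonzero map $M_k/M_{k-1}\to S$, precompose with the surjection $S\twoheadrightarrow M_k/M_{k-1}$, and invoke the brick condition to force surjectivity.
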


    \begin{proof} 
    Let $f$ be a nonzero morphism from $M$ to $S$. 
    There is a filtration
      $0< M_1<\dots<M_r=M$ with each $M_i/M_{i-1}$ a quotient of $S$. Let
      $i$ be maximal such that $f|_{M_i}= 0$.  Then $f$ descends to a map
      from $M_{i+1}/M_i$ to $S$. Composing this with the quotient map from
      $S$ to $M_{i+1}/M_i$, we obtain a non-zero endomorphism of $S$. Since
      $S$ is a brick, this endomorphism must be invertible, so the map from $M_{i+1}/M_i$ to
      $S$ must be surjective, and thus the map from $M$ to $S$ must be surjective.\end{proof} 
    The dual statement also holds for torsion-free classes.
    
    We also need the following simple lemma which says that a torsion
    class is determined by the bricks it contains.

    \begin{lemma}\label{bricks enough}  Let $\mathcal T$ be a torsion class. Then every module in
      $\mathcal T$ is filtered by bricks in $\mathcal T$. 
    \end{lemma}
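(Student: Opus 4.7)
The plan is to prove this by induction on $\dim_k M$. The base case $M=0$ is handled by the empty filtration. For the inductive step, the dichotomy is: either $M$ is itself a brick, in which case the filtration $0 < M$ suffices (noting $M \in \mathcal T$ by hypothesis), or $M$ is not a brick, in which case I will find a proper nonzero submodule $N \subset M$ such that both $N$ and $M/N$ lie in $\mathcal T$ and have dimension strictly less than $\dim_k M$. Then the induction hypothesis applied to $N$ and to $M/N$ yields filtrations by bricks in $\mathcal T$, which concatenate (pulling the filtration of $M/N$ back to $M$) to give the desired filtration of $M$.

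The key step is producing $N$ in the non-brick case. By definition of a brick, there exists a nonzero endomorphism $\phi \colon M \to M$ that is not an isomorphism. Since $M$ is finite-dimensional, $\phi$ is neither injective nor surjective, so $N := \phi(M)$ is a proper nonzero submodule of $M$. Crucially, $N$ is a quotient of $M$ (via $\phi$), so $N \in \mathcal T$ because torsion classes are closed under quotients; likewise $M/N$ is a quotient of $M$ and so lies in $\mathcal T$. Since $0 < \dim_k N < \dim_k M$ and $0 < \dim_k (M/N) < \dim_k M$, the induction hypothesis applies to both.

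I do not foresee a substantive obstacle here: the argument relies only on the finite-dimensionality of $M$ (to promote ``not an isomorphism'' to ``neither injective nor surjective'') and on closure of $\mathcal T$ under quotients. The only subtlety worth flagging is that one should \emph{not} attempt a composition series of $M$ and hope the simple subquotients lie in $\mathcal T$ — they need not — and that the trick is instead to use the image of an endomorphism, which is automatically both a sub and a quotient of $M$, the latter ensuring membership in $\mathcal T$.
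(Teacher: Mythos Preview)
Your proof is correct and essentially identical to the paper's: both take the image of a nonzero non-invertible endomorphism of $M$, observe that this image and the resulting quotient lie in $\mathcal T$ and have strictly smaller dimension, and then conclude by induction (the paper phrases this as a minimal-counterexample argument, which is the same thing). Your explicit remark that finite-dimensionality upgrades ``not an isomorphism'' to ``neither injective nor surjective'' makes precise a step the paper leaves implicit.
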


We can thus recover $\cT$ from the set of bricks in $\cT$. 

    \begin{proof} Let $M$ be a minimal-dimensional counter-example.
    Then $M$ is not a brick, so it has some non-invertible non-zero endomorphism.
      Let $I$ be the
      image of this endomorphism. It is both a submodule of $M$ and a quotient
      of $M$.  Since it is a quotient of $M$, $I\in \mathcal T$. Also,
      $M/I\in\mathcal T$. By the hypothesis that $M$ was the minimal
      counterexample, both $I$ and $M/I$ are filtered by bricks in
      $\mathcal T$. But then so is $M$. 
\end{proof}

\begin{theorem}\label{brick thm}
$(\Bricks(A),\t,\ont,\int)$ is a two-acyclic factorization system.
Furthermore, $(\cT,\cF)\mapsto(\cT\cap\Bricks(A),\cF\cap\Bricks(A))$ is an isomorphism from $\tors(A)$ to $\Pairs(\t)$.  
The inverse map takes $(X,Y)$ to $(\cT,\cF)$, where $\mathcal T$ consists of the modules filtered by quotients of bricks from $X$, and $\mathcal F$ consists of the modules filtered by submodules of modules from $Y$.  
\end{theorem}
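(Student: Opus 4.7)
The plan is to verify in order: (i) the factorization axioms for $(\Bricks(A), \t, \ont, \int)$, (ii) two-acyclicity, and (iii) the stated isomorphism with $\tors(A)$. I will proceed directly, using only the ambient representation theory and the two module-theoretic lemmas already set up (Lemma~\ref{AsaiLemma} and Lemma~\ref{bricks enough}).

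For (i), transitivity of $\ont$ and $\int$ is immediate from concatenating filtrations. For $\Fact(\t) = (\ont, \int)$, write $\cT(L)$ for the smallest torsion class containing $L$; the harder direction on $\ont$ is this: if $L$ forces every $\t$-arrow out of $M$, take $N := M/t_{\cT(L)} M$, which lies in the torsion-free class of $\cT(L)$, so $\Hom(L, N) = 0$; the forcing hypothesis then gives $\Hom(M, N) = 0$, which forces $M \in \cT(L)$. The converse is a straightforward induction on the filtration of $M$ by quotients of $L$; the $\int$ case is dual. The subtle axiom is $\Mult(\ont, \int) = \t$. For $\Mult(\ont, \int) \subseteq \t$, given $[L] \onto [M] \into [N]$, let $M/M_{r-1}$ be the top piece of the filtration of $M$ by submodules of $N$: the quotient $M \twoheadrightarrow M/M_{r-1}$ is a nonzero morphism from $M \in \cT(L)$, so its target cannot lie in the torsion-free class of $\cT(L)$, yielding $\Hom(L, M/M_{r-1}) \neq 0$ and hence (via $M/M_{r-1} \hookrightarrow N$) $\Hom(L, N) \neq 0$. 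For the converse, take a nonzero map $L \to N$ with image $I$; if $I$ is a brick, set $M = I$, and otherwise replace $I$ by the image of a non-invertible nonzero endomorphism and iterate on decreasing dimension until a brick $M$ is reached which is simultaneously a quotient of $L$ and a submodule of $N$.

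For (ii), Lemma~\ref{AsaiLemma} and its torsion-free dual do the work. For the brick condition, $[L] \onto [M] \into [L]$ provides a nonzero $L \to M$ (bottom of the $\onto$-filtration) and a nonzero $M \to L$ (top of the $\int$-filtration); the first is injective by the dual of Lemma~\ref{AsaiLemma} and the second is surjective by Lemma~\ref{AsaiLemma}. Their composition is a nonzero endomorphism of the brick $M$, hence an isomorphism, forcing $L \cong M$. The two order conditions reduce similarly: $[L] \onto [M] \onto [L]$ produces nonzero maps both ways, each surjective by Lemma~\ref{AsaiLemma}, so they are isomorphisms.

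For (iii), the map $(\cT, \cF) \mapsto (\cT \cap \Bricks(A), \cF \cap \Bricks(A))$ lands in $\Pairs(\t)$ because Lemma~\ref{bricks enough} filters any module in $\cT$ by bricks in $\cT$, and the left exactness of $\Hom(-, N)$ in its first argument propagates brick-level Hom-vanishing to $\cT$-level Hom-vanishing; the dual handles the second coordinate. Injectivity is Lemma~\ref{bricks enough} applied again. For surjectivity, given $(X, Y) \in \Pairs(\t)$, take $\cT$ to be the smallest torsion class containing $X$, which is exactly the class of modules filtered by quotients of bricks in $X$: any brick $B \in \cT$ receives a nonzero map from some $L \in X$ through the bottom of its filtration, so $L \to B$ holds and $B \notin Y = X^\perp$, giving $\cT \cap \Bricks(A) = X$. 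Dually, using the torsion-free analogue of Lemma~\ref{bricks enough}, any module $N \in \cF$ is filtered by bricks in $\cF \cap \Bricks(A) = Y$, showing simultaneously that $\cF = \Filt$ of submodules of bricks in $Y$ and that $\cF \cap \Bricks(A) = Y$. The main obstacle is the $\Mult(\ont, \int) \subseteq \t$ step in part (i): the naive composition $L \to M \to N$ may vanish, and the key trick is to detour through the top piece of the $\int$-filtration of $M$, which lies in $\cT(L)$ and embeds into $N$, so the Hom-vanishing characterization of the torsion-free class produces the required map $L \to N$ directly.
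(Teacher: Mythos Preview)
Your overall architecture matches the paper's, and most steps are sound, but there are two genuine gaps.

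\textbf{Gap in $\Fact(\t)=(\ont,\int)$.} In the hard direction for $\ont$, you set $N:=M/t_{\cT(L)}M$ and then invoke ``the forcing hypothesis'' to get $\Hom(M,N)=0$. But the hypothesis $[L]\onto'[M]$ (where $(\ont',\int')=\Fact(\t)$) is a statement quantified only over \emph{bricks}: for every brick $P$, $\Hom(M,P)\neq 0$ implies $\Hom(L,P)\neq 0$. The module $N=M/tM$ is typically not a brick, so you cannot apply the hypothesis to it. The paper patches this exactly as one would expect: pick a minimal-dimensional module $N'$ that is simultaneously a submodule and a quotient of $M/tM$; such an $N'$ is a brick, is a submodule of $M/tM$ (hence lies in the torsion-free class, so $\Hom(L,N')=0$), and is a quotient of $M$ (so $\Hom(M,N')\neq 0$). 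Now the brick-level hypothesis gives the contradiction.

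\textbf{Gap in surjectivity.} You argue that every brick $B\in\cT$ receives a nonzero map from some $L\in X$, conclude $B\notin Y=X^\perp$, and then assert $\cT\cap\Bricks(A)=X$. But $B\notin Y$ does \emph{not} imply $B\in X$: for a maximal orthogonal pair, $X\cup Y$ need not exhaust $\Bricks(A)$. What you need is $B\in{}^{\perp}Y=X$, i.e., $\Hom(B,C)=0$ for every $C\in Y$. This does follow from the filtration of $B$ by quotients of elements of $X$, together with $\Hom(L,C)=0$ for $L\in X$, $C\in Y$ (push a putative nonzero map $B\to C$ down to a subquotient and precompose with the surjection from $L$). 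The dual correction applies to your claim $\cF\cap\Bricks(A)=Y$.

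A side remark: your direct argument for $\Mult(\ont,\int)\subseteq\t$, which you flag as ``the main obstacle,'' is correct but unnecessary. Once you have established $\Fact(\t)=(\ont,\int)$, the containment $\Mult(\Fact(\t))\subseteq\t$ is Proposition~\ref{Mult Fact facts}.\ref{mf}, valid for any reflexive relation. The paper uses exactly this shortcut.
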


\begin{proof}
We first prove that $\Fact(\t)=(\ont,\int)$.
Write $(\ont',\int')$ for $\Fact(\t)$.

Suppose $\br{L} \onto \br{M}$.
Specifically, suppose that $M$ has a filtration $0 = M_0 \subset M_1 \subset \cdots \subset M_r = M$ with surjections from $L$ to $M_j/M_{j-1}$.
If $\br{N}$ in $\Bricks(A)$ has $\br{M}\to \br{N}$,
let $\psi : M \to N$ be a nonzero map. 
Let $j$ be minimal such that $\psi$ does not restrict to zero on $M_j$, so $\psi$ descends to a nonzero map from $M_j/M_{j-1}$ to $N$. 
The composition $L \to M_j/M_{j-1} \to N$ is then nonzero, so $\br{L}\to \br{N}$.
We see that $\br{L}\onto' \br{M}$.

Conversely, suppose that $\br{L}\onto' \br{M}$. 
We will show that $M$ is filtered by quotients of $L$.
Let $\mathcal T$ be the torsion class consisting of modules filtered by quotients of $L$. 
Let $tM$ be the torsion part of $M$ with respect to $\mathcal T$.
If $tM=M$, then we are done, since this means that $M\in\mathcal T$, and thus $M$ is filtered by quotients of $L$. 
Otherwise, consider $M/tM$.  
It is in the torsion-free class corresponding to $\mathcal T$, so it admits no morphism from any module in $\mathcal T$, and in particular, it admits no morphism from $L$. 
If $M/tM$ were a brick, we would have found a contradiction, because we would have $\br{M}\to \br{M/tM}$ but $\br{L} \not\to \br{M/tM}$, contradicting our assumption that $\br{L} \onto \br{M}$. 
In general, though, we can take a module $N$ which is minimal-dimensional among modules which are both quotients and submodules of $M/tM$. 
This $N$ must be a brick, because if it has a non-invertible endomorphism, its image would be a smaller quotient and submodule of $M/tM$.  
Now $\Hom(L,N)=0$, but $\Hom(M,N)\ne 0$, contrary to our assumption that $\br{L}\onto' \br{M}$.
We conclude that $\br{L}\onto \br{M}$.

The argument that $\br{L}\into \br{M}$ if and only if $\br{L}\into' \br{M}$ is dual, and we see that $\Fact(\t)=(\ont,\int)$.

We next prove that $\Mult(\ont,\int)=\t$.
Since $\Fact(\t)=(\ont,\int)$, Proposition~\ref{Mult Fact facts}.\ref{mf} says that $\Mult(\ont,\int)\subseteq\t$. 
Conversely, if $\Hom(L,M)\ne 0$, we can define $N$ to be the image of such a map.  
This $N$ need not be a brick, but there is a minimal-dimensional module $N'$ which is both a submodule and a quotient module of $N$.
Then $\br{L}\onto \br{N'} \into \br{M}$.
We have showed that $\Mult(\ont,\int)=\t$.

Suppose that we have two non-isomorphic bricks $S$ and $T$ with
$\br{S}\onto \br{T}$. We will show that
$\Hom(T,S)=0$. Assume otherwise.
 Lemma \ref{AsaiLemma} says that any non-zero map from $T$ to $S$ must be surjective. 
But there is also a non-zero map from $S$ to $T$, and composing these we get a noninvertible endomorphism of $T$, contrary to the hypothesis that $T$ is a
brick.

It follows that $S$ is not in the torsion class consisting of modules filtered
by quotients of $T$, so we do not have $\br{T} \onto \br{S}$. This establishes
the order condition for $\ont$. It also follows that $T$ is not in the
torsion-free class of modules filtered by submodules of $S$, so we do not
have $\br{T}\into \br{S}$. This establishes the brick condition.
The proof of the order condition for $\int$ is dual to the order condition for $\ont$.

Finally, we verify the isomorphism.
Let $(\cT,\cF)$ be a torsion pair for $\Mod A$.  
Define $X=\Bricks(A)\cap \cT$ and $Y=\Bricks(A)\cap \cF$.  
As recalled above, $\cT$ consists of those modules which have no non-zero morphisms to any module in $\cF$. Since by the dual of Lemma \ref{bricks enough} any module in $\cF$ is filtered by submoduless of bricks in~$\cF$, we can also describe $\cT$ as consisting of those modules which have
no non-zero morphisms to any module in $Y$. We therefore have that $X={}^\perp Y$. Dually, $Y=X^\perp$.

Conversely, suppose $(X,Y)\in\Pairs(\t)$.
Define $\cT$ to consist of modules filtered by quotients of modules from $X$, and $\cF$ to consist of modules filtered by submodules of modules from $Y$.  
Clearly, there are no morphisms from any module in $\cT$ to any module in $\cF$.  
We now prove by induction on the dimension of $M$ that if $M$ is in $\mathcal T^\perp$, then $M\in \cF$.  
Let $B$ be minimal dimensional among modules which are both submodules and quotient modules of $M$.  $B$ is necessarily a brick.
Since $B$ is a submodule of $M$, we must have $B\in\cT^\perp$, so $B\in Y\subset \cF$.  
Let $K$ be the kernel of a surjection from $M$ to $B$.  
Now $K\in\mathcal T^\perp$, so by the induction hypothesis, $K\in \cF$.  
So $M$, which is the extension of $B$ and $K$, is also in $\cF$.  
The dual argument shows that $^\perp \cF=\cT$. 
\end{proof}

Combining Theorem~\ref{brick thm} with Theorem~\ref{2afs iff ws kappa} or with Theorem~\ref{FTFSDL1}, we obtain the following corollaries.

\begin{corollary}\label{brick cor}
The poset $\tors(A)$ is a well separated $\kappa$-lattice.
\end{corollary}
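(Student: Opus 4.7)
The plan is to observe that this corollary is essentially immediate from the work already done. The heavy lifting is Theorem~\ref{brick thm}, which has just been established: it exhibits $(\Bricks(A),\t,\ont,\int)$ as a two-acyclic factorization system and provides an explicit isomorphism $\tors(A) \cong \Pairs(\t)$ via $(\cT,\cF)\mapsto(\cT\cap\Bricks(A),\cF\cap\Bricks(A))$. Once we have this identification, there is no further representation-theoretic content needed.

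With that identification in hand, the conclusion follows from Theorem~\ref{2afs kappa} (the forward direction of Theorem~\ref{2afs iff ws kappa}), which asserts precisely that $\Pairs(\t)$ is a well separated $\kappa$-lattice whenever $(\Sha,\t,\ont,\int)$ is a two-acyclic factorization system, with no finiteness assumption required. So the structure of the proof is a one-line invocation: apply Theorem~\ref{brick thm} to identify $\tors(A)$ with $\Pairs(\t)$, then apply Theorem~\ref{2afs kappa} to this factorization system.

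There is no real obstacle here, since both input theorems are already in hand. The only thing worth flagging for the reader is the contrast with \cite[Theorem~1.3]{DIRRT}, which actually proves the stronger statement that $\tors(A)$ is \emph{completely} semidistributive. Our combinatorial machinery yields only the well separated $\kappa$-lattice condition, which by the counterexamples in Section~\ref{counter sec} (notably Example~\ref{obnoxious}) is strictly weaker than complete semidistributivity. Thus the corollary, while an immediate consequence of our framework, does not recapture the full strength of the representation-theoretic result; this is consistent with the remark made in the introduction to Section~\ref{conn sec} that we lack a combinatorial hypothesis on two-acyclic factorization systems which would force semidistributivity.
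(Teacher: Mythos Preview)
Your proposal is correct and matches the paper's approach exactly: the paper simply states that the corollary follows by combining Theorem~\ref{brick thm} with Theorem~\ref{2afs iff ws kappa}, which is precisely the one-line invocation you describe. Your additional remarks about the contrast with \cite[Theorem~1.3]{DIRRT} and Example~\ref{obnoxious} also mirror the paper's own commentary following the corollaries.
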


\begin{corollary}\label{fin brick cor}
If $\tors(A)$ is finite, then it is a semidistributive lattice.
\end{corollary}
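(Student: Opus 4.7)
The plan is to deduce this as an immediate consequence of the factorization system constructed in Theorem~\ref{brick thm}, together with the structural results already established for $\Pairs(\t)$. By Theorem~\ref{brick thm} we have an isomorphism $\tors(A) \cong \Pairs(\t)$ where $(\Bricks(A),\t,\ont,\int)$ is a two-acyclic factorization system. Corollary~\ref{brick cor} (which is just Theorem~\ref{2afs iff ws kappa} applied to this system) then tells us that $\tors(A)$ is a well separated $\kappa$-lattice, without any finiteness hypothesis.

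So the remaining task is just to convert the $\kappa$-lattice conclusion into semidistributivity when $\tors(A)$ is finite. This is precisely the content of Corollary~\ref{fin kappa}, which asserts that for finite lattices the conditions ``$\kappa$-lattice'' and ``semidistributive'' coincide. Chaining these three results, Theorem~\ref{brick thm} $\Rightarrow$ Corollary~\ref{brick cor} $\Rightarrow$ Corollary~\ref{fin kappa}, finishes the proof in essentially one line.

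An equivalent route, which I might write out instead if a more self-contained argument is preferred, goes directly through the FTFSDL (Theorem~\ref{FTFSDL1}). The only preliminary point is to verify that $\Bricks(A)$ is finite whenever $\tors(A)$ is: by Proposition~\ref{JIrr and MIrr Characterize} applied to the two-acyclic factorization system on $\Bricks(A)$, the bricks are in bijection with the completely join-irreducibles of $\Pairs(\t) \cong \tors(A)$, so $|\Bricks(A)| \leq |\tors(A)| < \infty$. With $(\Bricks(A),\t,\ont,\int)$ now a \emph{finite} two-acyclic factorization system, Theorem~\ref{FTFSDL1} immediately yields that $\Pairs(\t)$, and hence $\tors(A)$, is semidistributive.

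There is really no obstacle to overcome at this stage: all the substantive work lies in Theorem~\ref{brick thm}, where the representation-theoretic definitions of $\t$, $\ont$, and $\int$ are shown to satisfy the factorization and two-acyclicity axioms, and in the FTFSDL itself. The finite-$\tors(A)$ case is simply the point at which the general theory of well separated $\kappa$-lattices specializes, via Corollary~\ref{fin kappa}, to the classical semidistributive conclusion.
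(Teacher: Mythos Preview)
Your proposal is correct and matches the paper's own argument: the paper simply states that Corollaries~\ref{brick cor} and~\ref{fin brick cor} follow by combining Theorem~\ref{brick thm} with Theorem~\ref{2afs iff ws kappa} or Theorem~\ref{FTFSDL1}, and both of your routes spell out exactly these two options. Your observation that $|\Bricks(A)|<\infty$ via Proposition~\ref{JIrr and MIrr Characterize} is the one detail the paper leaves implicit in the FTFSDL route.
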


Corollary~\ref{fin brick cor} recovers \cite[Theorem 4.5]{GM}. This is a special case of \cite[Theorem 1.3]{DIRRT}, which says that $\tors(A)$ is completely semidistributive, without the hypothesis that $\tors(A)$ is finite.
However, Corollary~\ref{brick cor} accomplishes something different from \cite[Theorem~1.3]{DIRRT}:  Recall that Examples~\ref{JIrr empty} and~\ref{obnoxious} show that there are no implications between being a well separated $\kappa$-lattice and being a completely semidistributive lattice.

\section*{Acknowledgements} H.T. would like to thank Laurent Demonet and
Osamu Iyama for their hospitality at Nagoya University and helpful
conversations.
D.E.S. would like to thank the attendees of the Maurice Auslander Distinguished Lectures in 2019 for their helpful remarks.
All the authors would like to thank the anonymous referees for their careful reading and valuable suggestions.

\end{document}